\documentclass[a4paper]{amsart}
\usepackage[utf8]{inputenc}
\usepackage[margin = 1in]{geometry}
\usepackage{amsmath, amssymb, amsfonts, amsthm, tensor, dsfont}
\usepackage[foot]{amsaddr}
\usepackage[svgnames]{xcolor}
\usepackage[style = alphabetic, sorting = nyt]{biblatex}
\addbibresource{sample.bib}
\usepackage{tikz-cd}
\usepackage[all]{xy}
\usepackage{hyperref}
\usepackage[hyphenbreaks]{breakurl}
\usepackage{subfiles}
\usepackage{verbatim}

\setcounter{tocdepth}{1}

\hypersetup{colorlinks = true, citecolor = MediumBlue, linkcolor = Crimson}

\newtheorem{thm}{Theorem}[section]
\newtheorem{lem}[thm]{Lemma}

\newtheorem{cor}[thm]{Corollary}

\theoremstyle{definition}

\newtheorem{mydef}[thm]{Definition}

\newcommand{\C}{\mathbb{C}}
\newcommand{\R}{\mathbb{R}}

\newcommand{\E}{\mathcal{E}}

\newcommand{\cH}{\mathcal{H}}

\newcommand{\al}{\alpha}
\newcommand{\bb}{\beta}

\newcommand{\om}{\omega}
\newcommand{\ee}{\varepsilon}
\newcommand{\te}{\theta}
\newcommand{\tte}{\tilde{\theta}}
\newcommand{\la}{\lambda}
\newcommand{\Poe}{P_{\om_{\ee}}}

\newcommand{\tX}{\tilde{X}}
\newcommand{\tu}{\tilde{u}}
\newcommand{\PSH}{\text{PSH}}

\begin{document}

\title{Complete Geodesic Metrics in big classes}
\author{Prakhar Gupta }
\address{Department of Mathematics, University of Maryland, College Park, Maryland, USA}
\email{pgupta8@umd.edu}
\begin{abstract}
    Let $(X,\om)$ be a compact K\"ahler manifold and $\theta$ be a smooth closed real $(1,1)$-form that represents a big cohomology class. In this paper, we show that for $p\geq 1$, the high energy space $\E^{p}(X,\theta)$ can be endowed with a metric $d_{p}$ that makes $(\E^{p}(X,\theta),d_{p})$ a complete geodesic metric space. The weak geodesics in $\E^{p}(X,\theta)$ are the metric geodesic for $(\E^{p}(X,\theta), d_{p})$. Moreover, for $p > 1$, the geodesic metric space $(\E^{p}(X,\te), d_{p})$ is uniformly convex. 
\end{abstract}

\keywords{K\"ahler Manifolds, Pluripotential Theory, Monge-Amp\`ere Measures, Finite Energy Classes}

\subjclass[2000]{Primary: 32U05; Secondary: 32Q15, 53C55}

\maketitle

\tableofcontents

\section{Introduction} \label{sec: introduction}
On a compact K\"ahler manifold $(X,\om)$, the problem of finding the canonical metric in the same cohomology class as $\om$ has a long history. Calabi defined the space 
\[
\mathcal{H}_{\om} = \{ u \in C^{\infty}(X)  : \om + dd^{c}u  > 0\}
\]
of functions that, up to normalization, is equivalent to the space of all K\"ahler metrics cohomologous to $\om$. In \cite{mabuchiriemannianstructure},\cite{semmesriemannianmetric}, and \cite{donaldsonriemannianmetric}, the authors discovered a Riemannian structure on $\mathcal{H}_{\om}$ whose geodesic equation is a homogeneous complex Monge-Amp\`ere equation in one higher dimension. In \cite{chenspaceofkahlermetrics}, Chen proved that this Riemannian structure gives rise to an honest metric $d_{2}$ on $\mathcal{H}_{\om}$ by showing that the $C^{1,1}$ geodesics joining endpoints are length minimizing. 

In \cite{darvascompletionofspaceofkahlerpotentials}, Darvas showed that the completion of $(\mathcal{H}_{\om}, d_{2})$ is given by $(\E^{2}(X,\om), d_{2})$ where $\E^{2}(X,\om)$ is the space of potentials with finite $L^{2}$-energy, confirming a conjecture of Guedj, \cite{guedj2014metriccompletion}. See Section~\ref{sec: preliminaries} to see the definition of finite energy spaces. He further showed that the potentials $u_{0}, u_{1} \in \E^{2}(X,\om)$ can be joined by a weak geodesic that lies in $\E^{2}(X,\om)$ and the path is a metric geodesic for $(\E^{2}(X,\om), d_{2})$. By a \emph{metric geodesic} on a metric space $(M,d)$, we mean a path $[0,1] \ni t \mapsto u_{t} \in M$ such that for any $t_{0}, t_{1} \in [0,1]$, $d(u_{t_{0}}, u_{t_{1}}) = |t_{0} - t_{1}|d(u_{0}, u_{1})$.  In \cite{darvasgeoemtryoffiniteenergy}, Darvas extended the result to Finsler metric structures on $\mathcal{H}_{\om}$. In particular, for the $L^{p}$-Finsler structure on $\mathcal{H}$, for $p\geq 1$, he obtained a metric $d_{p}$ on $\mathcal{H}_{\om}$ whose completion is $(\E^{p}(X,\om),d_{p})$. The case of $p = 1$ has found several applications in finding the canonical metrics (see \cite{darvasrubinsteinpropernessconjecture}, \cite{chenchengcsckI}, \cite{chenchengcscKII}).  In \cite{DarvasLuuniformconvexity}, the authors proved geodesic stability of the $K$-energy with respect to $d_{1}$ metric by approximation from $(\E^{p}(X,\om), d_{p})$ for $p> 1$ which they showed are \textit{uniformly convex}.

By finding a formula for the distance in terms of pluripotential theoretic functions, in \cite{darvas2021l1}, Darvas-Di Nezza-Lu showed that the space $\E^{1}(X,\theta)$, for $\theta$ representing a big cohomology class, has a complete geodesic metric $d_{1}$. In \cite{dinezza2018lp}, by approximating from the K\"ahler case, Di Nezza-Lu found a complete geodesic metric $d_{p}$ on $\E^{p}(X,\beta)$, where $\beta$ represents a big and nef cohomology class. In both cases, the weak geodesics are metric geodesics as well. See Section~\ref{subsec: recap of pluripotential theory} to see the definition of big and nef cohomology classes, and see Section~\ref{subsec: weak geodesics} to see the definition of weak geodesics.

In this paper, using a different approximation scheme, we are able to extend the result of \cite{dinezza2018lp} to the big setting:

\begin{thm}\label{thm: Main theorem} Given a smooth closed real $(1,1)$-from $\theta$ that represents a big cohomology class, the space $\E^{p}(X,\theta)$ admits a complete geodesic metric $d_{p}$. Moreover, the weak geodesics of $\E^{p}(X,\theta)$ are metric geodesics in $(\E^{p}(X,\te), d_{p})$. 
\end{thm}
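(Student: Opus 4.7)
The plan is to define $d_{p}$ on $\E^{p}(X,\te)$ via the $L^{p}$-Finsler length of weak geodesics and to establish its metric-space properties by approximating the big class $\{\te\}$ by the big classes $\{\te_{\ee}\}$ associated to the perturbed forms $\te_{\ee} := \te + \ee\om$, transferring the analogous theory from these perturbations and letting $\ee \searrow 0$. Concretely, for $u_{0}, u_{1} \in \E^{p}(X,\te)$ joined by the weak geodesic $[0,1] \ni t \mapsto u_{t}$ (which stays in $\E^{p}(X,\te)$ by the results recalled in the introduction), I would set
\[
d_{p}(u_{0}, u_{1})^{p} := \int_{X} |\dot{u}_{0}^{+}|^{p} \, \te_{u_{0}}^{n},
\]
where $\te_{u_{0}}^{n}$ is the non-pluripolar Monge-Amp\`ere measure of $u_{0}$ and $\dot{u}_{0}^{+}$ is the right time-derivative at $t = 0$. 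A constant-speed / comparison-principle argument should make this expression independent of the endpoint used, yielding symmetry of $d_{p}$.

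To verify the remaining metric axioms, I would first construct analogous metrics $d_{p,\ee}$ on each $\E^{p}(X,\te_{\ee})$ and then transfer their properties to $d_{p}$ as $\ee \searrow 0$. The passage to the limit rests on three ingredients: (i) monotone convergence $V_{\te_{\ee}} \searrow V_{\te}$ of minimal-singularity envelopes; (ii) continuity of the non-pluripolar Monge-Amp\`ere operator and of the $p$-energy $E_{p}$ in the reference form under uniformly controlled energies; and (iii) a quantitative comparison $d_{p}(V_{\te}, u) \asymp E_{p}(u)^{1/p}$, which provides compactness of $E_{p}$-sublevel sets. With these in hand, the triangle inequality and non-degeneracy transfer from the $\ee > 0$ level; completeness follows by extracting a capacity-convergent subsequence from a $d_{p}$-Cauchy sequence using (iii) and identifying the limit in $\E^{p}(X,\te)$ by lower semicontinuity of $E_{p}$; and the metric-geodesic property $d_{p}(u_{s},u_{t}) = |s-t|\, d_{p}(u_{0},u_{1})$ along the weak geodesic is an immediate consequence of the $t$-independence of the integral defining $d_{p}$.

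The main obstacle I expect is the approximation step itself. In the big-and-nef setting of \cite{dinezza2018lp} one has $\te_{\ee} := \te + \ee\om$ K\"ahler, so the entire K\"ahler theory of \cite{darvascompletionofspaceofkahlerpotentials} and \cite{darvasgeoemtryoffiniteenergy} is available at each level; here, however, $\{\te_{\ee}\}$ is still only big (generally not nef), so each appeal to K\"ahler positivity must be replaced by a pluripotential argument compatible with the singular envelope $V_{\te_{\ee}}$. Proving stability of the key Pythagorean-type inequality relating $d_{p,\ee}$ to the $p$-energy gap under these non-nef perturbations — and therefore non-degeneracy of the limiting $d_{p}$ — is the delicate technical point, and is essentially the content of the ``different approximation scheme'' announced in the introduction.
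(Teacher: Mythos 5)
Your proposal has a genuine gap at its core: the approximation scheme you choose has no tractable base case. You propose to construct auxiliary metrics $d_{p,\ee}$ on $\E^{p}(X,\te+\ee\om)$ and let $\ee \searrow 0$, but — as you yourself note — when $\te$ is merely big, the perturbed class $\{\te+\ee\om\}$ is again only big and generally not nef, so the K\"ahler (or big-and-nef) theory of \cite{darvasgeoemtryoffiniteenergy} and \cite{dinezza2018lp} is not available at any level $\ee>0$. Constructing $d_{p,\ee}$ is then exactly the problem you set out to solve, and the transfer argument becomes circular. Deferring this to ``the delicate technical point'' does not close the gap, because the entire strategy hinges on it. The paper's actual scheme is different in kind: it does not perturb the form $\te$ at all, but perturbs the singularity type. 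One fixes, via Demailly regularization, a sequence $\psi_{k}\nearrow V_{\te}$ of $\te$-psh functions with analytic singularities; for each fixed $\psi_k$ a resolution $\mu:\tX\to X$ of the analytic singularities produces an order-preserving bijection $\E^{p}(X,\te,\psi_{k})\leftrightarrow\E^{p}(\tX,\tte)$ with $\tte$ big and \emph{nef}, so the Di Nezza--Lu metric can be imported to $\E^{p}(X,\te,\psi_{k})$; the metric on $\E^{p}(X,\te)$ is then obtained by letting $k\to\infty$ on the dense class $\cH_{\te}$ of envelopes $P_{\te}(f)$, $f\in C^{1,\bar1}(X)$, and finally by monotone approximation, with the comparison $d_{p}^{p}\asymp I_{p}$ supplying well-definedness, non-degeneracy and completeness. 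That resolution-of-singularities step is the missing idea your plan needs.

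Two further points would fail even granting a base case. First, your direct definition $d_{p}(u_{0},u_{1})^{p}:=\int_{X}|\dot{u}_{0}^{+}|^{p}\te^{n}_{u_{0}}$ is only meaningful after one proves regularity and endpoint-independence of the speed; in the paper this formula is \emph{derived} (Theorems~\ref{thm: left hand speed same as right hand speed}, \ref{thm: convergence for smooth enough potentials} and Lemma~\ref{lem: distance is speed of geodesic}) for potentials of the form $P_{\te}(f)$ with $f\in C^{1,\bar1}(X)$, using the contact-set description of $\te^{n}_{P_{\te}(f)}$, and then extended by approximation — it cannot simply be posited for arbitrary $u_{0},u_{1}\in\E^{p}(X,\te)$. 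Second, the triangle inequality does not ``transfer'' formlessly: in the paper it is inherited from the metrics on $\E^{p}(X,\te,\psi_{k})$ (hence ultimately from the big-and-nef case) through the limit defining $d_{p}$, and this requires the convergence statements for $I_{p}$ along decreasing sequences (Theorem~\ref{thm: Ip converges along decreasing sequences}) that your outline does not address.
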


When $\te$ is big and nef, then the metric $d_{p}$ constructed in Theorem~\ref{thm: Main theorem} agrees with the one constructed in \cite{dinezza2018lp}, which in turn agrees with the one constructed in \cite{darvasgeoemtryoffiniteenergy} when $\te$ is K\"ahler. In case $p = 1$, the metric $d_{1}$ in Theorem~\ref{thm: Main theorem} agrees with the metric constructed in \cite{darvas2021l1}. 

Several other works have explored the metric structure of finite energy classes in varying generality. In \cite{TrusianiL1metric}, Trusiani shows that the space $\E^{1}(X,\theta,\phi)$ has a complete metric $d_{1}$ where $\phi$ is a model singularity. See Section~\ref{subsec: prescribed singularity setting} to see the definitions in the prescribed singularity setting. In \cite{Xia2019MabuchiGO}, Xia showed that the space $\E^{p}(X,\theta,\phi)$ has a \emph{locally complete metric} $d_{p}$, moreover he asked if the space $(\E^{p}(X,\te,\phi),d_{p})$ is a geodesic metric space. Theorem~\ref{thm: Main theorem}, answers this question in the minimal singularity setting. Also, Theorem~\ref{thm: Complete metric space structure on analytic singularity space} answers this question when $\phi$ has analytic singularity type.  In \cite{darvas2021mabuchi}, Darvas showed that $\E_{\chi}(X,\om)$ has a complete metric $d_{\chi}$ where $\E_{\chi}(X,\om)$ is the \emph{low energy space}. In \cite{GuptaCompletemetrictopology}, the author showed that $\E_{\chi}(X,\theta,\phi)$ has a complete metric $d_{\chi}$ in the prescribed singularity setting. In all these works, the metric space was not shown to admit geodesics.

\subsection{Uniform Convexity}
In \cite{mabuchiriemannianstructure}, Mabuchi found that $\mathcal{H}_{\om}$ with the Riemannian structured obtained from 
\[
\langle \phi, \psi \rangle_{u} = \frac{1}{\text{Vol}(\om)} \int_{X} \phi \psi \om^{n}_{u}
\]
gives $\mathcal{H}$ a non-positively curved Riemannian structure. As the metric space structure of $\E^{p}(X,\om)$ was better understood, so was the nature of their non-positive curvature. 

In \cite{darvas2021mabuchi}, building on the work of Calabi-Chen \cite{CalabiChenSpaceofKahlermetricsII}, Darvas showed that $\E^{2}(X,\om)$ is non-positively curved in the sense of Alexandrov. In \cite{DarvasLuuniformconvexity}, Darvas-Lu proved uniform convexity of metric spaces $(\E^{p}(X,\om), d_{p})$ for $p > 1$. We prove that the approximation scheme used to construct the metric space $(\E^{p}(X,\te), d_{p})$ in the big case, preserves the uniform convexity. 

\begin{thm}\label{mainthm: uniform convexity in the big case}
If $\te$ represents a big cohomology class then the metric space $(\E^{p}(X,\te), d_{p})$ as defined in Theorem~\ref{thm: Main theorem} is uniformly convex. This means for $u, v_{0}, v_{1} \in \E^{p}(X,\te)$, if $v_{\la}$ is the weak geodesic joining $v_{0}$ and $v_{1}$, then
\begin{align*}
        d_{p}(u,v_{\lambda})^{2} &\leq (1-\la)d_{p}(u,v_{0})^{2} + \la d_{p}(u,v_{1})^{2} - (p-1)\la(1-\la)d_{p}(v_{0},v_{1})^{2}, \text{ if } 1< p \leq 2 \text{ and }\\
d_{p}(u,v_{\la})^{p} &\leq (1-\la)d_{p}(u,v_{0})^{p} + \la d_{p}(u,v_{1})^{p} - \la^{p/2}(1-\la)^{p/2}d_{p}(v_{0},v_{1})^{p}, \text{ if } p \leq 2.
    \end{align*}     
\end{thm}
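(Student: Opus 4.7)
My plan is to transfer the uniform convexity inequalities from the approximating (big and nef, in fact Kähler) setting, where they are already known by \cite{DarvasLuuniformconvexity} and its extension to the big and nef case in \cite{dinezza2018lp}, to the big setting via the very approximation scheme that produces $d_{p}$ in Theorem~\ref{thm: Main theorem}. Since the theorem statement explicitly advertises that the approximation scheme preserves uniform convexity, the proof should be essentially a stability/limit argument.

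First, I would set $\te_{\ee} = \te + \ee\om$ (or whatever approximating family is used in the proof of Theorem~\ref{thm: Main theorem}) so that $\{\te_{\ee}\}$ is big and nef and converges to $\{\te\}$ as $\ee \downarrow 0$. For $u, v_{0}, v_{1} \in \E^{p}(X,\te)$, I would construct companions $u^{\ee}, v_{0}^{\ee}, v_{1}^{\ee} \in \E^{p}(X,\te_{\ee})$ (via the envelope/rooftop constructions built into the definition of $d_{p}$) enjoying
\[
d_{p,\ee}(u^{\ee}, v_{j}^{\ee}) \to d_{p}(u, v_{j}), \qquad d_{p,\ee}(v_{0}^{\ee}, v_{1}^{\ee}) \to d_{p}(v_{0}, v_{1}).
\]
Letting $v_{\la}^{\ee}$ denote the weak geodesic joining $v_{0}^{\ee}, v_{1}^{\ee}$ in $\E^{p}(X,\te_{\ee})$, uniform convexity in the big and nef case yields, for $1 < p \leq 2$,
\[
d_{p,\ee}(u^{\ee}, v_{\la}^{\ee})^{2} \leq (1-\la)\, d_{p,\ee}(u^{\ee}, v_{0}^{\ee})^{2} + \la\, d_{p,\ee}(u^{\ee}, v_{1}^{\ee})^{2} - (p-1)\la(1-\la)\, d_{p,\ee}(v_{0}^{\ee}, v_{1}^{\ee})^{2},
\]
and the analogous inequality for $p \geq 2$.

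The heart of the proof is then to pass to the limit as $\ee \downarrow 0$. One needs $v_{\la}^{\ee}$ to converge (monotonically in a suitable sense) to the weak geodesic $v_{\la}$ joining $v_{0}, v_{1}$ in $\E^{p}(X,\te)$, and crucially $d_{p,\ee}(u^{\ee}, v_{\la}^{\ee}) \to d_{p}(u, v_{\la})$. The former follows from the maximum principle applied to the homogeneous Monge–Ampère equation defining the weak geodesic, once the endpoints are shown to converge appropriately; the latter follows from the continuity properties of $d_{p}$ under the approximation already established in the course of proving Theorem~\ref{thm: Main theorem}.

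The principal obstacle, as I see it, is precisely this convergence at the interior point $v_{\la}$: while convergence of the endpoints of the geodesics is built into the construction, and uniform convexity in the approximating spaces gives automatic control along the geodesic, one must verify that both the weak geodesic itself and the distance $d_{p,\ee}(u^{\ee},\,\cdot\,)$ are jointly continuous under the scheme used for $\te_{\ee}\to\te$. Once this compatibility is pinned down—likely by combining monotone stability of weak geodesics with the same rooftop envelope estimates used to construct $d_{p}$—the uniform convexity inequality for $(\E^{p}(X,\te), d_{p})$ follows by taking $\liminf$ on the left and $\lim$ on the right of the displayed inequality.
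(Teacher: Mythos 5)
Your overall plan (transfer the inequalities from the approximating spaces and pass to the limit) is the right shape, but the two concrete steps you lean on are exactly where the difficulty lies, and both are wrong or missing as stated. First, the approximating family $\te_{\ee}=\te+\ee\om$ does not work: if $\te$ is big but not nef, then $\te+\ee\om$ is still not nef in general, so the Di Nezza--Lu/Darvas--Lu results are not available for it. This is precisely why the paper's construction of $d_{p}$ does not use $\te+\ee\om$ at all; instead it fixes $\te$-psh potentials $\psi_{k}\nearrow V_{\te}$ with analytic singularities, proves uniform convexity of $(\E^{p}(X,\te,\psi_{k}),d_{p})$ by resolving the singularities of $\psi_{k}$ (Theorem~\ref{thm: uniform convexity in analytic singularity setting}, via the bijection with $\E^{p}(\tX,\tte)$ for a big and nef $\tte$ and Theorem~\ref{thm: uniform convexity in big and nef}), and then approximates $u,v_{0},v_{1}$ by the projections $u^{k}=P_{\te}[\psi_{k}](u)$, $v_{j}^{k}=P_{\te}[\psi_{k}](v_{j})$.

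Second, and more seriously, your claim that the interior-point convergence ``follows from the maximum principle'' plus endpoint convergence hides the genuine obstacle: the projection does not commute with taking geodesics, i.e.\ in general $P_{\te}[\psi_{k}](v_{\la})\neq v_{\la}^{k}$, where $v_{\la}^{k}$ is the weak geodesic joining $v_{0}^{k},v_{1}^{k}$ in $\E^{p}(X,\te,\psi_{k})$. The only convergence available from the construction (Theorem~\ref{thm: general approximation from analytic singularity type}) is for pairs of projections $d_{p}(P_{\te}[\psi_{k}](a),P_{\te}[\psi_{k}](b))\to d_{p}(a,b)$, so one cannot directly conclude $d_{p}(u^{k},v_{\la}^{k})\to d_{p}(u,v_{\la})$. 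The paper closes this gap with two ingredients you do not have: the contraction property $d_{p}(P_{\te}[\psi_{k}](a),P_{\te}[\psi_{k}](b))\leq d_{p}(a,b)$ (Theorem~\ref{thm: contraction property}), which shows $P_{\te}[\psi_{k}](v_{\la})$ nearly satisfies the metric conditions of a $\la$-point between $v_{0}^{k}$ and $v_{1}^{k}$, and the quantitative consequence of uniform convexity (Theorem~\ref{thm: points close to geodesic}) in $\E^{p}(X,\te,\psi_{k})$, which then forces $d_{p}(v_{\la}^{k},P_{\te}[\psi_{k}](v_{\la}))\to 0$; only after this does the limit pass. (Theorem~\ref{thm: general approximation from analytic singularity type} itself also needs the contraction property to justify exchanging the two limits.) Without these steps, your argument does not go through; note also that your monotone-stability idea, which does work for $\om_{\ee}=\bb+\ee\om$ in the genuinely big and nef case (Section~\ref{sec: uniform convexity for big and nef}), has no counterpart here because the $v_{\la}^{k}$ live in different prescribed-singularity spaces and are not monotone approximants of $v_{\la}$.
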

This proves in particular that $(\E^{2}(X,\te), d_{2})$ is a CAT(0) space. This also shows that the weak geodesics are unique geodesics in $(\E^{p}(X,\te),d_{p})$ for $p > 1$. When $p = 1$, $(\E^{1}(X,\te), d_{1})$ does not have unique geodesics as follows from the comments following \cite[Theroem 4.17]{darvasgeoemtryoffiniteenergy}.

The fact that $(\E^{2}(X,\te), d_{2})$ is a CAT(0) space opens the avenue for studying gradient flows in this space. From the work of \cite{Mayer1998GradientFO}, if $G: \E^{2}(X,\te) \to (-\infty, \infty]$ is a convex $d_{2}$-lower semicontinuous functional, then we can run a weak gradient flow. From \cite{Back2012ThePP}, the gradient flow will converge \emph{$d_{2}$-weakly} to a minimizer of $G$ if the minimizer exists. If we can prove the expected convexity of the Mabuchi $K$-energy in the big case (see \cite{DiNezzaLugeodesicdistanceandmameasure} for the big and nef case), then we can run the weak Calabi flow and prove that the flow converges to a minimizer if it exists, as was done in the K\"ahler case in \cite{StreetsLongtimeexistenceofcalabiflow}, \cite{StreetsConsistencyofKenergyminimizingmovements}, and \cite{BDL17}.

Using similar methods as in the proof of Theorem~\ref{mainthm: uniform convexity in the big case}, in a forthcoming work \cite{GuptaGeodesicsegmentsofgeodesicrays} we prove the Buseman convexity of the metric spaces $(\E^{p}(X,\te), d_{p})$ for $p \geq 1$, opening the door to study the space of geodesic rays in $\E^{p}(X,\te)$,  as done in the K\"ahler setting in \cite{DarvasLuuniformconvexity}.

\subsection{Strategy of the proof}
We will give a brief overview of the proof whose details are in the rest of the paper. The crucial idea is that we can approximate the geometry of $\E^{p}(X,\te)$ from the geometry of $\E^{p}(X,\te, \psi_{k})$ where $\psi_{k}$ has analytic singularities and $\psi_{k} \nearrow V_{\te}$. Moreover, the geometry on $\E^{p}(X,\te,\psi_{k})$ can be imported from the geometry of $\E^{p}(\tilde{X},\bb)$ where $\bb$ represents a big and nef class. 

More precisely, we show that if $\psi \in \PSH(X,\theta)$ has analytic singularities, then the space $\E^{p}(X,\theta, \psi)$ has a complete geodesic metric $d_{p}$. We show this using a modification $\mu: \tilde{X} \to X$ that principalizes the singularities of $\psi$, that can be subtracted, giving us a bijection between $\E^{p}(\tilde{X}, \beta)$ and $\E^{p}(X,\te,\psi)$, where $\beta$ is a smooth closed real $(1,1)$-form on $\tilde{X}$ representing a big and nef cohomology class. Then we import the complete geodesic metric $d_{p}$ on $\E^{p}(\tilde{X},\beta)$, as found by Di Nezza-Lu in \cite{dinezza2018lp}, to $\E^{p}(X,\theta,\psi)$ using the bijection. 

Using Demailly's regularization theorem, we find a sequence of $\te$-psh functions $\psi_{k} \nearrow V_{\theta}$, where each $\psi_{k}$ has analytic singularities. Then we approximate the metric $d_{p}$ on $\E^{p}(X,\theta)$ from the metric $d_{p}$ on $\E^{p}(X,\theta, \psi_{k})$ and show that it is a complete geodesic metric. 

We prove the uniform convexity of the $d_{p}$ metric by approximation as well. The metric $d_{p}$ on $(\E^{p}(X,\bb)$ where $\bb$ represents a big and nef cohomology class, was constructed by approximation from the K\"ahler case. We show that the same approximation method carries over to show that the metric space $(\E^{p}(X,\bb), d_{p})$ is uniformly convex for $p > 1$. 

In the big case, we first prove a contraction property for the metrics $d_{p}$. In particular, we show in Theorem~\ref{thm: contraction property} that if $\psi \in\PSH(X,\te)$ has anlytic singularities, then the map $P_{\te}[\psi](\cdot) : \E^{p}(X,\te) \to \E^{p}(X,\te, \psi)$ is a contraction, i.e., 
$$d_{p}(P_{\te}[\psi](u_{0}), P_{\te}[\psi](u_{1})) \leq d_{p}(u_{0},u_{1}),$$ 
for any $u_{0}, u_{1} \in \E^{p}(X,\te)$. Using this contraction, and approximation from the analytic singularity setting, we show that the metric space $(\E^{p}(X,\te), d_{p})$ is uniformly convex for $p > 1$.

\subsection{Organization}\label{subsec: organization} In Section~\ref{sec: preliminaries}, we will recall key concepts from pluripotential theory and several results from the literature that we will use in our results. In Section~\ref{sec: analytic singularities to big and nef}, we will describe how to import metric geometry from big and nef classes to the potentials with prescribed analytic singularity through desingularization and subtracting the divisorial singularity. In Section~\ref{sec: metric in the big case} we will define the metric on $\E^{p}(X,\theta)$ by approximating it as described above. In Section~\ref{sec: properties}, we show that the metric obtained is geodesic and complete and we prove other relevant properties of the metric. In Section~\ref{sec: uniform convexity for big and nef} we prove uniform convexity of $(\E^{p}(X,\bb), d_{p})$ for $p > 1$ where $\bb$ represents a big and nef cohomology class. In Section~\ref{sec: contraction property and a consequence} we prove the contraction property that we use in Section~\ref{sec: uniform convexity in the big case} to prove uniform convexity in the big case. 

\subsection{Acknowledgements} I would like to thank my advisor Tam\'as Darvas for proposing this problem and for his continued guidance. I also want to thank Antonio Trusiani for useful conversations during the summer school in Le Croisic. I want to thank Antonio Trusiani and Mingchen Xia for asking interesting questions on the first draft of this paper some of which have improved this paper. While preparing this work, the author became aware of \cite{dinezza2023entropy}, and some results in the preliminaries section overlap with their work. This research was partially supported by NSF CAREER grant DMS-1846942.

\section{Preliminaries} \label{sec: preliminaries}
In this paper, $(X,\om)$ is a compact K\"ahler manifold of complex dimension $n$ and $\int_{X}\om^{n} = 1$. 

\subsection{Quick recap of pluripotential theory}\label{subsec: recap of pluripotential theory}
Given a smooth closed real $(1,1)$-form $\theta$, we say that an upper semicontinuous function $u : X \to \R\cup \{-\infty\}$ is a $\theta$-psh function if locally on $U \subset X$ where $dd^{c}g = \theta$, $u+g$ is plurisubharmonic. This implies that $\theta + dd^{c}u \geq 0$ as $(1,1)$-currents. We denote by $\PSH(X,\theta)$ the set of all $\theta$-psh functions that are not identically $-\infty$. 

We denote by $\{\theta\}$ the $H^{1,1}(X,\R)$ cohomology class of $\theta$. We say that $\theta$ represents a K\"ahler class, if there exists a smooth $\theta$-psh function $u$ such that $\theta  + dd^{c} u > 0$. $\theta$ represents a nef class if $\{\theta + \ee \om\}$ is a K\"ahler class for all $\ee > 0$. We say $\theta$ represents a big class if there exists a potential $u \in \PSH(X,\theta)$ such that $\theta+dd^{c}u \geq \ee \om$ for some small enough $\ee > 0$. If $u,v \in \PSH(X,\theta)$ satisfy $u \leq v + C$ for some constant $C$, then we say $u$ is more singular than $v$ and denote it by $u \preceq v$. If $\theta$ represents a big cohomology class, then 
\[
V_{\theta} = \sup\{ u \in \PSH(X,\theta) : u \leq 0\}
\]
is a $\theta$-psh function that has minimal singularities. From now on we fix a smooth closed real $(1,1)$-form $\theta$ that represents a big cohomology class. 

We say that $\psi \in \PSH(X,\theta)$ has analytic singularities of type $(\mathcal{I},c)$ if there exists a rational number $c > 0$ and a coherent ideal sheaf $\mathcal{I}$ such that for all $x \in X$, there exists a neighborhood $U \subset X$ of $x$ such that $\mathcal{I}$ is generated on $U$ by holomorphic functions $(f_{1},\dots, f_{N})$ and 
\[
\psi_{|U} = c \log \left( \sum_{j=1}^{N} |f_{j}|^{2} \right) + h
\]
where $h$ is a bounded function defined on $U$. From \cite[Lemma 2.4]{darvas2023transcendental}, we notice that analytic singularity is stable under max. This means that if $u,v \in \PSH(X,\theta)$ have analytic singularities, then $\max(u,v) \in \PSH(X,\theta)$ has analytic singularities as well. From Demailly's regularization result, there exists $\psi \in \PSH(X,\theta)$ such that $\theta + dd^{c} \psi > \ee \om$ and $\psi$ has analytic singularities. 

In \cite{Boucksom2008MongeAmpreEI}, the authors defined a non-pluripolar product of $\theta$-psh functions. If $u_{1},\dots,u_{n} \in \PSH(X,\theta)$, they defined their non-pluripolar product $\langle \theta_{u_{1}} \wedge \dots \wedge \theta_{u_{n}}\rangle$ as a non-pluripolar measure. For simplicity, we write $\theta^{n}_{u} := \langle \theta_{u} \wedge \dots \wedge \theta_{u}\rangle$. If $\theta$ is big, then we say $\int_{X}\theta^{n}_{V_{\theta}} = \text{Vol}(\theta)$. From \cite{Wittnystrommonotonicity}, and \cite{Darvasmonotonicity}, we notice that for any $u_{1} , \dots, u_{n} \in \PSH(X,\theta)$, $\int_{X}\langle \theta_{u_{1}} \wedge \dots \wedge \theta_{u_{n}} \rangle \leq \text{Vol}(\theta)$.

\subsection{Finite energy classes}\label{subsec: finite energy classes}

Finite energy classes in the K\"ahler setting were introduced by Guedj-Zeriahi \cite{GZweightedmongeampereenergy} to solve the Complex Monge-Amp\`ere equation on a compact K\"ahler manifold for a very general right-hand side. In this paper, we deal with $\E^{p}$ energy classes that we now describe. We define the space of potentials of full mass as
\[
\E(X,\theta) = \{ u \in \PSH(X,\theta) :  \int_{X}\theta^{n}_{u} = \int_{X}\theta^{n}_{V_{\theta}} \}.
\]
For $p \geq 1$, the potentials with finite $p$-energy are defined as 
\[
\E^{p}(X,\theta) = \{ u \in \E(X,\theta) : \int_{X}|u-V_{\theta}|^{p} \theta^{n}_{u}< \infty\}.
\]

\subsection{Prescribed singularity setting} \label{subsec: prescribed singularity setting}
Darvas-Di Nezza-Lu developed pluripotential theory in the prescribed singularity setting in several papers including \cite{Darvasmonotonicity}, \cite{darvaslogconcavity}, and \cite{darvas2019metric}. See \cite{darvas2023relative} to see the survey on this. Here we briefly recall the definitions of finite energy spaces in the prescribed singularity setting. We say that $\phi \in \PSH(X,\theta)$ with $\int_{X}\theta^{n}_{\phi} > 0$ is a model singularity type if 
\[
\phi = P_{\theta}[\phi] := \sup \{ u \in \PSH(X,\theta) : u \preceq \phi, u \leq 0\}. 
\]
Model singularities were introduced by Darvas-Di Nezza-Lu in \cite{Darvasmonotonicity} to solve the complex Monge-Amp\`ere equation with prescribed singularities. We can also define the finite energy classes relative to $\phi$. We denote by 
\[
\PSH(X,\theta, \phi) = \{ u \in \PSH(X,\theta)  : u \preceq \phi\}.
\]
The space of potentials of full mass relative to $\phi$ is 
\[
\E(X,\theta) = \{ u \in \PSH(X,\theta,\phi) : \int_{X} \theta^{n}_{u} = \int_{X}\theta^{n}_{\phi}\}.
\]
We define the space of $\phi$-relative finite $p$-energy potentials by
\[
\E^{p}(X,\theta, \phi) = \{ u \in \E(X,\theta, \phi) : \int_{X}|u-\phi|^{p}\theta^{n}_{u} < \infty\}.
\]

We would need the following result about the $\E^{p}(X,\te,\phi)$ spaces. 
\begin{thm}\label{thm: Ip converges along decreasing sequences}
    For $u,v \in \E^{p}(X,\te,\phi)$, we define 
    \[
    I_{p}(u,v) = \int_{X} |u-v|^{p}(\te^{n}_{u} + \te^{n}_{v}).
    \]
    If $u_{0}^{j}, u_{1}^{j}, u_{0}, u_{1} \in \E^{p}(X,\te,\phi)$ such that $u_{0}^{j} \searrow u_{0}$ and $u_{1}^{j} \searrow u_{1}$, then $I_{p}(u_{0}^{j}, u_{1}^{j}) \to I_{p}(u_{0}, u_{1})$ as $j \to \infty$. 
\end{thm}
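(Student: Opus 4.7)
The plan is to prove convergence of each of the two summands in $I_p$ separately; by symmetry it suffices to show
\[
\int_X |u_0^j - u_1^j|^p \, \theta^n_{u_0^j} \xrightarrow{j \to \infty} \int_X |u_0 - u_1|^p \, \theta^n_{u_0}.
\]
First I would invoke the Darvas--Di Nezza--Lu continuity theorem for non-pluripolar Monge--Amp\`ere operators along decreasing sequences in $\E(X,\te,\phi)$: since $u_0^j \searrow u_0$, we have $\te^n_{u_0^j} \to \te^n_{u_0}$ weakly as Borel measures on $X$, and moreover $u_i^j \to u_i$ in (non-pluripolar) capacity. Also, by shifting $\phi$ and the potentials by a common constant (which preserves both the measures and the difference $u_0^j - u_1^j$), I may assume $u_i^j \le \phi$ so that $0 \le \phi - u_i^j \le \phi - u_i$, giving the pointwise bound
\[
|u_0^j - u_1^j|^p \le 2^{p-1}\bigl((\phi - u_0)^p + (\phi - u_1)^p\bigr).
\]

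Next I would truncate. For $M > 0$, write $|u_0^j - u_1^j|^p = h_M^j + r_M^j$ with $h_M^j := \min(|u_0^j - u_1^j|^p, M)$ and $r_M^j := (|u_0^j - u_1^j|^p - M)_+$. The bounded function $h_M^j$ is quasi-continuous and converges in capacity to $h_M := \min(|u_0 - u_1|^p, M)$; combined with the weak convergence $\te^n_{u_0^j} \to \te^n_{u_0}$, a standard capacity-based argument (in the spirit of Bedford--Taylor continuity, adapted to the non-pluripolar setting) yields
\[
\int_X h_M^j \, \te^n_{u_0^j} \xrightarrow{j \to \infty} \int_X h_M \, \te^n_{u_0}.
\]

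The main obstacle is the uniform tail control: I must show $\sup_j \int_X r_M^j \, \te^n_{u_0^j} \to 0$ as $M \to \infty$. Using the pointwise domination above, this reduces to uniform integrability of $(\phi - u_0)^p$ and $(\phi - u_1)^p$ against the varying measures $\te^n_{u_0^j}$. This is not immediate because the potential and the measure come from different sequences, and requires a generalized H\"older/fundamental inequality in $\E^p(X,\te,\phi)$ of the form
\[
\int_X (\phi - u)^p \, \te^n_w \le C_{n,p}\bigl(\|u\|_p^p + \|w\|_p^p + 1\bigr)
\]
for $u, w \in \E^p(X,\te,\phi)$, where $\|\cdot\|_p$ denotes the relative $p$-energy. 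Such an inequality is available from the work of Darvas--Di Nezza--Lu; combined with the observation that $u_0^j$ is sandwiched between $u_0$ and $u_0^0$ in $\E^p(X,\te,\phi)$ so that $\|u_0^j\|_p$ is uniformly bounded in $j$, it yields the required uniform integrability.

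Finally, sending $j \to \infty$ then $M \to \infty$ and applying monotone convergence on the limit side ($h_M \nearrow |u_0 - u_1|^p$ as $M \to \infty$) gives the desired convergence. Running the same argument with the roles of $u_0$ and $u_1$ swapped handles the other summand of $I_p$, completing the proof.
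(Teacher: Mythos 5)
Your overall scheme (splitting $I_p$ into the two summands, truncating, and passing to the limit in the truncated integrals) is sound in its first steps: since all the potentials lie in $\E(X,\te,\phi)$, the measures $\te^n_{u_0^j}$, $\te^n_{u_0}$ all have total mass $\int_X\te^n_\phi$, decreasing convergence gives convergence in capacity, and the convergence $\int_X h_M^j\,\te^n_{u_0^j}\to\int_X h_M\,\te^n_{u_0}$ follows from \cite[Theorem 2.6]{darvas2023relative} exactly as you say. The genuine gap is the tail control. The inequality you invoke, $\int_X(\phi-u)^p\,\te^n_w\le C\bigl(\|u\|_p^p+\|w\|_p^p+1\bigr)$, combined with the sandwich $u_0\le u_0^j\le u_0^0$, only yields $\sup_j\int_X g\,\te^n_{u_0^j}<\infty$ for the fixed dominating function $g=2^{p-1}\bigl((\phi-u_0)^p+(\phi-u_1)^p\bigr)$; it does \emph{not} yield $\sup_j\int_X(g-M)_+\,\te^n_{u_0^j}\to0$ as $M\to\infty$. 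Uniform boundedness of integrals against a varying family of measures is strictly weaker than uniform integrability, and since $u_0,u_1$ are only assumed to lie in $\E^p$ you have no higher-power (de la Vall\'ee Poussin) leverage to upgrade one to the other. In fact the statement you would need here --- convergence, or uniform smallness of tails, of $\int_X(\phi-u_0)^p\,\te^n_{u_0^j}$ --- is itself an instance of the ordered case ($u\le v$) of the theorem, so as written your argument assumes essentially the hard part.

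For comparison, the paper avoids any direct uniform-integrability claim: it first records the rooftop identity $I_p(u,v)=I_p(u,\max(u,v))+I_p(v,\max(u,v))$, observes that $\max(u_0^j,u_1^j)\searrow\max(u_0,u_1)$, and thereby reduces the general case to the ordered case $u_0^j\le u_1^j$, which it handles by extending the proof of \cite[Proposition 2.20]{Darvas2019GeometricPT} to the big, prescribed-singularity setting; that is where the analytic content your step 5 is missing actually lives, and its proof uses the ordering in an essential way rather than a bare energy bound. To repair your proposal you would either have to prove a genuine uniform tail estimate for $\E^p$ potentials (e.g.\ comparison-principle based sublevel-set estimates giving $\sup_j\int_{\{g\ge M\}}g\,\te^n_{u_0^j}\to0$), or do what the paper does and reduce to the monotone case before passing to the limit.
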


\begin{proof}
    The fact that $I_{p}(u,v) < \infty$ follows from the arguments in \cite[Section 2]{GuptaCompletemetrictopology} by modifying the proof for the weight $\chi(t) = |t|^{p}$. 

    The same proof as in \cite[Theorem 4.1]{GuptaCompletemetrictopology}, shows that 
    \begin{equation}\label{eq: max Pythagorean theorem for Ip}
    I_{p}(u,v) = I_{p}(u,\max(u,v)) + I_{p}(v,\max(u,v)).        
    \end{equation}

    First, assume that $u_{0}^{j} \leq u_{1}^{j}$, so consequently $u_{0} \leq u_{1}$. Now we observe that the proof in \cite[Proposition 2.20]{Darvas2019GeometricPT}, works in the generality of prescribed singularity setting with big classes as well. Thus we obtain that in the case $u_{0}^{j} \leq u_{1}^{j}$ and $u_{0}\leq u_{1}$, 
    \[
    \int_{X}|u_{0}^{j} - u_{1}^{j}|^{p} \te^{n}_{u_{0}^{j}} \to \int_{X}|u_{0}-u_{1}|^{p}\te^{n}_{u_{0}} 
    \]
    and 
    \[
    \int_{X}|u_{0}^{j} - u_{1}^{j}|^{p}\te^{n}_{u_{1}^{j}} \to \int_{X} |u_{0}^{j} - u_{1}^{j}|^{p}\te^{n}_{u_{1}}
    \]
    as $j \to \infty$. Adding the two we get 
    \[
    I_{p}(u_{0}^{j}, u_{1}^{j}) \to I_{p}(u_{0},u_{1})
    \]
    as $j \to \infty$. More generally, if $u_{0}^{j} \searrow u_{0}$ and $u_{1}^{j}\searrow u_{1}$, then $\max(u_{0}^{j}, u_{1}^{j}) \searrow \max(u_{0}, u_{1})$. Now the potentials $u_{0}^{j} \leq \max(u_{0}^{j}, u_{1}^{j})$ and $u_{1}^{j} \leq \max(u_{0}^{j}, u_{1}^{j})$. Thus 
    \[
    I_{p}(u_{0}^{j}, \max(u_{0}^{j}, u_{1}^{j})) \to I_{p}(u_{0}, \max(u_{0},u_{1}))
    \]
    and 
    \[
    I_{p}(u_{1}^{j}, \max( u_{0}^{j}, u_{1}^{j})) \to I_{p}(u_{1}, \max(u_{0}, u_{1}))
    \]
    as $j \to \infty$. Adding the two, and using Equation~\eqref{eq: max Pythagorean theorem for Ip}, we get 
    \[
    I_{p}(u_{0}^{j},u_{1}^{j}) \to I_{p}(u_{0}, u_{1})
    \]
    as $j \to \infty$. 
\end{proof}
\subsection{PSH Envelopes}\label{subsec: envelopes}
Given a measurable function $f: X \to \R\cup \{\pm \infty\}$, and $\te$ smooth real closed $(1,1)$-from representing a big cohomology class, we define 
\[
P_{\te}(f) = (\sup\{ u \in \PSH(X,\te) : u \leq f\})^{*},
\]
where $u^{*}(x) = \limsup_{y\to x}u(y)$ denotes the upper semicontinuous regularization. We say $P_{\te}(f) = -\infty$, if the candidate set is empty, otherwise $P_{\te}(f) \in \PSH(X,\te)$. In general, $P_{\te}(f) \leq f$ away from a pluripolar set as the upper semicontinuous regularization only changes the function away from a pluripolar set. If $f$ is upper semicontinuous, then $P_{\te}(f) \leq f$ everywhere. If $f,g : X \to \R\cup\{\pm\infty\}$ are two measurable functions, we define the rooftop envelope $P_{\te}(f,g):= P_{\te}(\min\{f,g\})$. 

Given $f$ and $\te$ as above, and $\phi \in \PSH(X,\te)$, we  define the envelope with respect to the singularity type of $\phi$ by
\[
P_{\te}[\phi](f) := \left(\lim_{C\to\infty}P_{\te}(\phi+  C, f)\right)^{*}.
\]
If $f$ is bounded, then $P_{\te}(\phi+  C,f)$ is an increasing sequence of $\te$-psh functions that are bounded from above by $f^{*}$, thus the limit in the above equation exits. Moreover, $P_{\te}[\phi](\cdot)$ depends only on the singularity type of $\phi$. The function $P_{\te}[\phi](\cdot)$ also satisfies the following concavity property. We recall

\begin{lem}[{\cite[Lemma 2.12]{darvas2023relative}}]\label{lem: concavity of envelope operator}
    Given a continuous function $f : X \to \R$, the operator $\PSH(X,\te) \ni u \mapsto P_{\te}[u](f) \in \PSH(X,\te)$ is concave. This means for $t \in (0,1)$, 
    \[
    tP_{\te}[u](f) + (1-t)P_{\te}[v](f) \leq P_{\te}[tu + (1-t)v](f).
    \]
\end{lem}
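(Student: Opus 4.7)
The approach is to reduce the claim to the underlying envelopes at each ``cap level'' $C$ and then pass to the limit. For each $C>0$, set $u_C := P_{\te}(u+C, f)$, $v_C := P_{\te}(v+C, f)$, and $w_C := P_{\te}(tu+(1-t)v+C, f)$. The pivotal observation is that the convex combination $h_C := tu_C + (1-t)v_C$ is itself a valid candidate in the envelope defining $w_C$: it lies in $\PSH(X,\te)$ because $\te$-psh functions form a convex cone (using linearity of $dd^c$), it satisfies $h_C \leq f$ since each of $u_C, v_C$ is, and it satisfies $h_C \leq tu+(1-t)v+C$ from $u_C \leq u+C$ and $v_C \leq v+C$. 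By maximality of the envelope one gets $h_C \leq w_C$ pointwise.

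Next I pass to the limit $C \to \infty$. Each of $u_C, v_C, w_C$ is pointwise nondecreasing in $C$, so increasing pointwise limits $\tilde u, \tilde v, \tilde w$ exist, and by definition $P_{\te}[u](f) = \tilde u^*$, $P_{\te}[v](f) = \tilde v^*$, $P_{\te}[tu+(1-t)v](f) = \tilde w^*$. Passing to the limit in $h_C \leq w_C$ gives $t\tilde u + (1-t)\tilde v \leq \tilde w$, and applying USC regularization yields $(t\tilde u + (1-t)\tilde v)^* \leq \tilde w^* = P_{\te}[tu+(1-t)v](f)$.

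The remaining step, which I expect to be the main subtlety, is identifying $(t\tilde u + (1-t)\tilde v)^*$ with $tP_{\te}[u](f) + (1-t)P_{\te}[v](f) = t\tilde u^* + (1-t)\tilde v^*$. The inequality $(t\tilde u + (1-t)\tilde v)^* \leq t\tilde u^* + (1-t)\tilde v^*$ is automatic from subadditivity of the $*$-operation together with the fact that the right-hand side is already USC. For the reverse direction, both sides are $\te$-psh (for the left-hand side this uses that $tu_C + (1-t)v_C \in \PSH(X,\te)$, so the USC regularization of its increasing limit is $\te$-psh; for the right-hand side this is a convex combination), and they coincide off the pluripolar set $\{\tilde u < \tilde u^*\} \cup \{\tilde v < \tilde v^*\}$. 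Since two $\te$-psh functions that agree off a pluripolar set are equal, we conclude equality, which gives $tP_{\te}[u](f) + (1-t)P_{\te}[v](f) \leq P_{\te}[tu+(1-t)v](f)$ as claimed.

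The conceptual heart of the argument is the one-line convex-combination candidate check in the first paragraph; the anticipated main obstacle is the careful bookkeeping showing that USC regularization commutes with the convex combination at the end, which rests on the standard pluripotential-theoretic fact that a $\te$-psh function is determined by its values outside any pluripolar set. The continuity hypothesis on $f$ is used implicitly to ensure that $P_\te(u+C,f) \leq f$ holds everywhere (not just off a pluripolar set), keeping the bounds clean throughout.
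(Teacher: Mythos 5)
The paper does not give its own proof of this lemma; it is quoted directly from \cite[Lemma 2.12]{darvas2023relative}. Your argument is correct and is essentially the standard proof of that result: the convex combination $tP_{\te}(u+C,f)+(1-t)P_{\te}(v+C,f)$ is a candidate for $P_{\te}(tu+(1-t)v+C,f)$, and your passage to the limit in $C$ — including the identification $(t\tilde u+(1-t)\tilde v)^{*}=t\tilde u^{*}+(1-t)\tilde v^{*}$ via equality off the pluripolar set where the increasing limits differ from their regularizations, plus the fact that two $\te$-psh functions agreeing off a pluripolar set coincide — is handled correctly.
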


If $f \in C^{1,\bar{1}}(X)$, which means that $f$ has bounded Laplacian, then we have good control on the Monge-Amp\`ere measures of the envelopes $P_{\te}[\phi](f)$. For that we recall, 
\begin{thm}[\cite{EleonoraTrapaniMAmeasureoncontactsets} ]\label{thm: measure on contact sets}
    If $\theta$ represents a big cohomology class, $\phi \in \PSH(X,\theta)$, and $f \in C^{1,\bar{1}}(X)$ , then 
    \[
    \theta^{n}_{P_{\theta}[\phi](f)} = \mathds{1}_{\{ P_{\theta}[\phi](f) = f\}} \theta^{n}_{f}.
    \]
\end{thm}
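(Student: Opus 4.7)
The plan is to obtain the identity by passing to the limit in the rooftop approximants. By the definition of $P_{\te}[\phi](f)$ as $\bigl(\lim_{C\to\infty} P_{\te}(\phi+C, f)\bigr)^{*}$, the sequence $\psi_{C} := P_{\te}(\phi + C, f)$ is increasing in $C$ and uniformly bounded above by $f$. Along such increasing, uniformly bounded sequences of $\te$-psh functions, the non-pluripolar Monge--Amp\`ere operator is continuous (Witt Nystr\"om, Darvas--Di Nezza--Lu), so one has the weak convergence $\te^{n}_{\psi_{C}} \to \te^{n}_{P_{\te}[\phi](f)}$ as $C \to \infty$. Thus it suffices to understand $\te^{n}_{\psi_{C}}$ for each fixed $C$ and then take a limit.

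For each fixed $C$, I would establish the two-piece contact formula
\[
\te^{n}_{\psi_{C}} = \mathds{1}_{\{\psi_{C} = f\}} \te^{n}_{f} + \mathds{1}_{\{\psi_{C} = \phi + C\} \cap \{\phi + C < f\}} \te^{n}_{\phi},
\]
where I used that $\te^{n}_{\phi + C} = \te^{n}_{\phi}$, since adding a constant does not change the associated current. The derivation is by the classical envelope strategy: on the open locus where $\psi_{C} < \min(\phi + C, f)$, a local balayage shows that $\psi_{C}$ can be perturbed upward while still respecting both obstacles, forcing $\te^{n}_{\psi_{C}}$ to vanish there by maximality; on the $f$-contact set, the $C^{1,\bar{1}}$ regularity of $f$ combined with a Bedford--Taylor comparison produces $\te^{n}_{\psi_{C}} = \te^{n}_{f}$ on that set, and a symmetric argument on the $(\phi+C)$-contact set yields the second term.

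The main obstacle is controlling the $\phi$-contact contribution in the limit $C\to\infty$. The crucial observation is that $\te^{n}_{\phi}$ is a fixed non-pluripolar measure, whereas the sets $\{\phi + C < f\} = \{\phi < f - C\}$ decrease in $C$ to a subset of $\{\phi = -\infty\}$, which is pluripolar. Hence $\te^{n}_{\phi}\bigl(\{\phi + C < f\}\bigr) \to 0$ by dominated convergence, so the second term above vanishes in mass as $C \to \infty$. Meanwhile, the monotone convergence $\psi_{C} \nearrow P_{\te}[\phi](f)$ quasi-everywhere, together with $\psi_{C} \leq f$, ensures that the sets $\{\psi_{C} = f\}$ increase quasi-everywhere to $\{P_{\te}[\phi](f) = f\}$. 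Combining these ingredients with the weak convergence from the first step, the right-hand side of the displayed formula passes to $\mathds{1}_{\{P_{\te}[\phi](f) = f\}} \te^{n}_{f}$, yielding the desired identity.
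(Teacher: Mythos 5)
First, note that the paper does not prove this statement at all: it is quoted verbatim from \cite{EleonoraTrapaniMAmeasureoncontactsets}, so what you are attempting is a reproof of that cited theorem rather than a reconstruction of an argument in this paper. Measured against that, your proposal has a genuine gap at its core. The decisive step is your two-piece contact formula for $\psi_{C}=P_{\te}(\phi+C,f)$, and in particular the claim that $\mathds{1}_{\{\psi_{C}=f\}}\te^{n}_{\psi_{C}}=\mathds{1}_{\{\psi_{C}=f\}}\te^{n}_{f}$. The ``Bedford--Taylor comparison'' you invoke is the classical argument that works when the envelope itself is twice differentiable almost everywhere (bounded Laplacian): one then applies the real-analysis lemma that a $W^{2,\cdot}$ function vanishing on a set has vanishing second derivatives a.e.\ on that set. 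Here $\psi_{C}$ is an envelope with a merely plurisubharmonic obstacle $\phi+C$ in a big class, and no such regularity is available; the two-obstacle formula of \cite{EleonoraTrapaniMAmeasureoncontactsets} (Proposition 3.5, recalled as Theorem~\ref{measures of rooftop envelope potentials}) requires \emph{both} obstacles to be $C^{1,\bar{1}}$, precisely to have this regularity input. Establishing the $f$-contact equality with a psh second obstacle is essentially the content of the theorem you are trying to prove, so as written the argument is circular. The ``symmetric argument'' giving equality with $\te^{n}_{\phi}$ on the $(\phi+C)$-contact set is likewise unjustified ($\phi$ has no regularity); fortunately only an upper bound is needed there, and the known inequality $\te^{n}_{P_{\te}(u,v)}\leq \mathds{1}_{\{P=u\}}\te^{n}_{u}+\mathds{1}_{\{P=v\}}\te^{n}_{v}$ together with $\{\psi_{C}=\phi+C\}\subset\{\phi\leq f-C\}$ and the non-pluripolarity of $\te^{n}_{\phi}$ does make that term vanish in the limit, as you say.

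There is also a gap in the limiting step even if one grants the formula for each fixed $C$. From $\psi_{C}\nearrow P_{\te}[\phi](f)$ quasi-everywhere you conclude that the contact sets $\{\psi_{C}=f\}$ increase q.e.\ to $\{P_{\te}[\phi](f)=f\}$; but pointwise convergence $\psi_{C}(x)\to f(x)$ does not force $\psi_{C}(x)=f(x)$ for some finite $C$, so the union $S=\bigcup_{C}\{\psi_{C}=f\}$ may a priori be strictly smaller than the contact set. Your argument then only yields $\te^{n}_{P_{\te}[\phi](f)}=\mathds{1}_{S}\te^{n}_{f}\leq \mathds{1}_{\{P_{\te}[\phi](f)=f\}}\te^{n}_{f}$, and the reverse inequality (equivalently, that $\te^{n}_{f}$ puts no mass on $\{P_{\te}[\phi](f)=f\}\setminus S$) is not addressed. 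The parts of the scheme that are sound are the weak convergence $\te^{n}_{\psi_{C}}\to\te^{n}_{P_{\te}[\phi](f)}$ (increasing convergence plus monotonicity of masses, as in Lemma~\ref{lem: weak convergence of measures}) and the vanishing of the $\phi$-contribution; the heart of the theorem, however, is exactly the step you pass over, and for it one should follow (or cite) the argument of \cite{EleonoraTrapaniMAmeasureoncontactsets} rather than a pointwise comparison.
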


In the same paper, the authors also prove 

\begin{thm}[{\cite[Proposition 3.5]{EleonoraTrapaniMAmeasureoncontactsets}}]\label{measures of rooftop envelope potentials}
    If $f_{0},f_{1} \in C^{1,\bar{1}}(X)$, and if we denote by $\Lambda_{0} = \{ P_{\theta}(f_{0},f_{1}) = f_{0}\}$ and $\Lambda_{1} = \{P_{\theta}(f_{0},f_{1}) = f_{1}\}$, then
    \[
    \theta^{n}_{P_{\theta}(f_{0},f_{1})} = \mathds{1}_{\Lambda_{0}}\theta^{n}_{f_{0}} + \mathds{1}_{\Lambda_{1}\setminus\Lambda_{0}}\theta^{n}_{f_{1}}.
    \]
\end{thm}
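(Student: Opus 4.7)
The plan is to combine Theorem~\ref{thm: measure on contact sets} with a localization argument that isolates the active barrier at each point, then resolve the overlap $\Lambda_0\cap\Lambda_1$ via a regularity argument.

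First, I would show that $u := P_{\theta}(f_0,f_1)$ is a bounded $\theta$-psh function whose Monge-Amp\`ere measure concentrates on the contact set $\Lambda_0\cup\Lambda_1$. Boundedness is immediate since $f_0,f_1$ are bounded and $V_{\theta}-C$ is an admissible candidate in the defining supremum for large $C$. Off the contact set, $u<\min(f_0,f_1)$ on a small ball, so an upward perturbation of $u$ remains $\leq f_0$ and $\leq f_1$ there; by the standard balayage/maximality lemma, $u$ is locally maximal off $\Lambda_0\cup\Lambda_1$, and hence $\theta^n_u=0$ on $X\setminus(\Lambda_0\cup\Lambda_1)$.

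Next, I would perform a local reduction to the single-barrier setting on the disjoint open sets $U_0:=\{f_0<f_1\}$ and $U_1:=\{f_1<f_0\}$. Fix a ball $B\Subset U_0$ with $\delta:=\inf_B(f_1-f_0)>0$. Any $\theta$-psh function $v$ that agrees with $u$ outside $B$ and satisfies $v\leq f_0$ on $B$ automatically satisfies $v\leq f_1$ globally by the gap $\delta$, hence is admissible in the defining supremum of the rooftop envelope. Therefore $u|_B$ is the maximal such $v$ and is governed locally by the single barrier $f_0$. Running the perturbation argument underlying Theorem~\ref{thm: measure on contact sets} with smooth bumps supported in $B$ yields $\theta^n_u|_B = \mathds{1}_{\{u=f_0\}\cap B}\,\theta^n_{f_0}|_B$; exhausting $U_0$ by such balls gives $\theta^n_u|_{U_0}=\mathds{1}_{\Lambda_0\cap U_0}\,\theta^n_{f_0}$, and symmetrically $\theta^n_u|_{U_1}=\mathds{1}_{\Lambda_1\cap U_1}\,\theta^n_{f_1}$.

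Finally, on the coincidence set $E:=\{f_0=f_1\}$, one has $f_0-f_1\equiv 0$ on $E$, and since $f_0,f_1\in C^{1,\bar 1}(X)\subset W^{2,\infty}_{\text{loc}}$, a standard Calder\'on--Zygmund/Lebesgue-density argument gives $dd^c(f_0-f_1)=0$ almost everywhere on $E$, so $\theta^n_{f_0}|_E=\theta^n_{f_1}|_E$ (both having $L^{\infty}$ densities). This reconciles the two local formulas on the overlap $\Lambda_0\cap\Lambda_1\subset E$ and assembles them into the claimed identity $\theta^n_u = \mathds{1}_{\Lambda_0}\,\theta^n_{f_0} + \mathds{1}_{\Lambda_1\setminus\Lambda_0}\,\theta^n_{f_1}$. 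The main obstacle is the localization in the second step: Theorem~\ref{thm: measure on contact sets} is a global statement, so one must inspect its proof to confirm that the perturbation argument restricts cleanly to a ball. If this is inconvenient, one may instead avoid the overlap altogether by approximating $(f_0,f_1)$ with $(f_0-\delta,f_1)$, letting $\delta\to 0^+$, and passing to the limit using monotonicity of the rooftop envelopes together with continuity of Monge-Amp\`ere along bounded monotone sequences of $\theta$-psh functions.
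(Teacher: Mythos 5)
The paper does not prove this statement itself --- it is quoted from Di Nezza--Trapani --- so your proposal can only be judged on its own terms, and as written it has a genuine gap. Your three steps cover $X\setminus(\Lambda_{0}\cup\Lambda_{1})$ (vanishing of the measure off the contact set) and, via the ball-by-ball reduction, the open sets $\{f_{0}<f_{1}\}$ and $\{f_{1}<f_{0}\}$; but they never identify $\theta^{n}_{u}$ on the contact points lying in the coincidence set $E=\{f_{0}=f_{1}\}$, which is exactly where the two obstacles interact. The Calder\'on--Zygmund remark in your last step only shows that the two \emph{candidate} densities agree a.e.\ on $E$ (indeed $dd^{c}f_{0}=dd^{c}f_{1}$ a.e.\ there), i.e.\ that the right-hand side of the formula is unambiguous; it does not show that $\theta^{n}_{u}$ equals this common density on $(\Lambda_{0}\cup\Lambda_{1})\cap E$, and no other step addresses this region. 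The degenerate case $f_{0}\equiv f_{1}$ makes the gap visible: there $U_{0}=U_{1}=\emptyset$, your steps 2--3 are vacuous, and the claim reduces to the single-obstacle Theorem~\ref{thm: measure on contact sets}, which your argument never brings to bear on $E$. In addition, the localization in step 2 is itself only asserted: the proof of Theorem~\ref{thm: measure on contact sets} is a global statement about envelopes $P_{\theta}[\phi](f)$ (obtained through global regularity/approximation arguments), and you acknowledge that one must ``inspect its proof'' to see that it restricts to a ball --- so this step is deferred rather than proved. Two smaller points: $P_{\theta}(f_{0},f_{1})$ is \emph{not} bounded for a general big class (it only has minimal singularities, since $V_{\theta}$ itself is unbounded when $\theta$ is not nef), so the balayage step must be phrased via plurifine locality of the non-pluripolar product rather than classical boundedness arguments; and the suggested $\delta$-shift $(f_{0}-\delta,f_{1})$ does not avoid the difficulty --- it merely moves the coincidence set to $\{f_{0}-\delta=f_{1}\}$, and for each $\delta$ you still face the same two-obstacle identification before any limit can be taken.

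A way to close the gap, consistent with tools already in the paper, is to avoid localization altogether: by the argument of Lemma~\ref{lem: Hthetea closed under rooftop envelope}, $u:=P_{\theta}(f_{0},f_{1})=P_{\theta}(h)$ with $h:=P_{C\om}(f_{0},f_{1})\in C^{1,\bar{1}}(X)$, so Theorem~\ref{thm: measure on contact sets} applies globally and gives $\theta^{n}_{u}=\mathds{1}_{\{u=h\}}\theta^{n}_{h}$. Since $u\leq h\leq\min(f_{0},f_{1})$, one has $\Lambda_{0}\cup\Lambda_{1}\subset\{u=h\}$, and your step 1 shows $\theta^{n}_{u}$ puts no mass on $\{u<\min(f_{0},f_{1})\}$, hence $\theta^{n}_{u}=\mathds{1}_{\Lambda_{0}\cup\Lambda_{1}}\theta^{n}_{h}$. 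Finally, on $\Lambda_{0}$ one has $h=f_{0}$ and on $\Lambda_{1}\setminus\Lambda_{0}$ one has $h=f_{1}$, so the a.e.\ equality of second derivatives on coincidence sets of $C^{1,\bar{1}}$ functions (the very fact you invoke in step 3) yields $\theta^{n}_{h}=\theta^{n}_{f_{0}}$ a.e.\ on $\Lambda_{0}$ and $\theta^{n}_{h}=\theta^{n}_{f_{1}}$ a.e.\ on $\Lambda_{1}\setminus\Lambda_{0}$, which gives the stated formula on all of $X$, including the troublesome set $\{f_{0}=f_{1}\}$.
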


A corollary of this result is that 
\begin{cor}\label{measures of rooftop potentials are usually nice}If $u_{0} = P_{\theta}(f_{0})$ and $u_{1} = P_{\theta}(f_{1})$ for $f_{0},f_{1} \in C^{1,\bar{1}}(X)$, then except for at most countably many $\tau \in \R$,
\[
    \theta^{n}_{P_{\theta}(u_{0},u_{1} + \tau)} = \mathds{1}_{\{P_{\theta}(u_{0},u_{1}+\tau) = u_{0}\}}\theta^{n}_{u_{0}} + \mathds{1}_{\{P_{\theta}(u_{0},u_{1}+\tau) = u_{1} + \tau\}}\theta^{n}_{u_{1}}.
    \]
\end{cor}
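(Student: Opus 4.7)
The plan is to reduce the corollary to the previous theorem (\cite[Proposition 3.5]{EleonoraTrapaniMAmeasureoncontactsets}, reproduced above as Theorem~\ref{measures of rooftop envelope potentials}) by identifying the rooftop of the envelopes with the envelope of the rooftop, and then translating the contact sets that appear against $f_0, f_1$ into the contact sets that appear against $u_0, u_1$.

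\smallskip

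First I would verify the identity $P_{\te}(u_0, u_1+\tau) = P_{\te}(f_0, f_1+\tau)$. Since $f_0, f_1$ are continuous, $u_i \leq f_i$ pointwise, so the inequality $\leq$ is immediate. Conversely, $P_{\te}(f_0,f_1+\tau)$ is a $\te$-psh function bounded above by $f_0$, hence $\leq P_{\te}(f_0) = u_0$; likewise $\leq u_1+\tau$; therefore it is a candidate in the definition of $P_{\te}(u_0, u_1+\tau)$. Applying Theorem~\ref{measures of rooftop envelope potentials} to the pair $(f_0, f_1+\tau)$ yields
\[
\te^{n}_{P_{\te}(u_0, u_1+\tau)} = \mathds{1}_{\Lambda_0^\tau} \te^{n}_{f_0} + \mathds{1}_{\Lambda_1^\tau \setminus \Lambda_0^\tau} \te^{n}_{f_1},
\]
where $\Lambda_0^\tau = \{P_{\te}(u_0,u_1+\tau) = f_0\}$ and $\Lambda_1^\tau = \{P_{\te}(u_0,u_1+\tau) = f_1+\tau\}$.

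\smallskip

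Next I would show the set-theoretic identities $\Lambda_0^\tau = \{u_0 = f_0\} \cap \{P_{\te}(u_0,u_1+\tau) = u_0\}$ and $\Lambda_1^\tau = \{u_1 = f_1\} \cap \{P_{\te}(u_0,u_1+\tau) = u_1+\tau\}$. For the first, if $P_{\te}(u_0,u_1+\tau) = f_0$ at a point, then the chain $P_{\te}(u_0,u_1+\tau) \leq u_0 \leq f_0$ forces equality throughout; the reverse inclusion is immediate. By Theorem~\ref{thm: measure on contact sets} applied to $u_0 = P_{\te}(f_0) = P_{\te}[V_{\te}](f_0)$ (using that $P_{\te}(f)$ has minimal singularities when $f$ is bounded), we have $\te^{n}_{u_0} = \mathds{1}_{\{u_0 = f_0\}} \te^{n}_{f_0}$, so
\[
\mathds{1}_{\Lambda_0^\tau} \te^{n}_{f_0} = \mathds{1}_{\{P_{\te}(u_0,u_1+\tau) = u_0\}} \mathds{1}_{\{u_0=f_0\}} \te^{n}_{f_0} = \mathds{1}_{\{P_{\te}(u_0,u_1+\tau) = u_0\}} \te^{n}_{u_0}.
\]
The analogous identity $\mathds{1}_{\Lambda_1^\tau} \te^{n}_{f_1} = \mathds{1}_{\{P_{\te}(u_0,u_1+\tau) = u_1+\tau\}} \te^{n}_{u_1}$ follows in the same way.

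\smallskip

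What remains is to remove the ``$\setminus \Lambda_0^\tau$'' from the second term, i.e.\ to show $\te^{n}_{f_1}(\Lambda_0^\tau \cap \Lambda_1^\tau) = 0$. This is where the countable exceptional set enters, and is the only nontrivial step: on $\Lambda_0^\tau \cap \Lambda_1^\tau$ one has $f_0 = f_1+\tau$, so $\Lambda_0^\tau \cap \Lambda_1^\tau \subset \{f_0 - f_1 = \tau\}$. The sets $\{f_0 - f_1 = \tau\}$ are pairwise disjoint as $\tau$ varies, and $\te^{n}_{f_1}$ is a finite (signed) Borel measure on the compact manifold $X$; therefore only countably many values of $\tau$ can satisfy $\te^{n}_{f_1}(\{f_0 - f_1 = \tau\}) > 0$. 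Outside this countable set, combining the three displays above gives exactly the claimed formula.
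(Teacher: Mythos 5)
Your proposal is correct and follows essentially the same route as the paper: apply Theorem~\ref{measures of rooftop envelope potentials} to the pair $(f_{0}, f_{1}+\tau)$, discard the overlap term using that only countably many level sets $\{f_{0}-f_{1}=\tau\}$ can carry positive $\theta^{n}_{f_{1}}$-mass, and translate the contact sets via $P_{\theta}(f_{0},f_{1}+\tau)=P_{\theta}(u_{0},u_{1}+\tau)$ together with Theorem~\ref{thm: measure on contact sets}. You merely spell out a few identifications (the envelope identity and the set-theoretic equalities) that the paper leaves implicit.
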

\begin{proof}
    Since the total measure of $\theta^{n}_{f_{1}}$ is finite, except for countably many $\tau \in \R$, $\theta^{n}_{f_{1}}(\{f_{0} = f_{1} + \tau\}) = 0$. Therefore, except for countably many $\tau \in \R$, 
    \[
    \theta^{n}_{P_{\theta}(f_{0},f_{1}+\tau)} = \mathds{1}_{\{ P_{\theta}(f_{0}, f_{1} + \tau) = f_{0}\}} \theta^{n}_{f_{0}} + \mathds{1}_{\{P_{\theta}(f_{0}, f_{1} + \tau) = f_{1} + \tau \}} \theta^{n}_{f_{1}}.
    \]

    Notice that $P_{\theta}(f_{0},f_{1} + \tau) = P_{\theta}(u_{0}, u_{1} + \tau)$ and Theorem~\ref{thm: measure on contact sets} says that $\theta^{n}_{u_{0}} = \mathds{1}_{\{ P_{\theta}(f_{0}) = f_{0}\}}\theta^{n}_{f_{0}}$ and $\theta^{n}_{u_{1}} = \mathds{1}_{\{P_{\theta}(f_{1}) = f_{1} \} } \theta^{n}_{f_{1}}$. We use this to write 
    \begin{align*}
           &\mathds{1}_{\{P_{\theta}(u_{0}, u_{1} + \tau) = u_{0}\}}\theta^{n}_{u_{0}} + \mathds{1}_{\{P_{\theta}(u_{0}, u_{1}+ \tau) = u_{1}+\tau\}}\theta^{n}_{u_{1}} \\ 
        = &\mathds{1}_{\{P_{\theta}(u_{0}, u_{1} + \tau) = u_{0}\}}\mathds{1}_{\{ P_{\theta}(f_{0}) = f_{0}\}}\theta^{n}_{f_{0}}+ \mathds{1}_{\{P_{\theta}(u_{0}, u_{1}+ \tau) = u_{1}+\tau\}}\mathds{1}_{\{P_{\theta}(f_{1}) = f_{1} \} } \theta^{n}_{f_{1}} \\
        =& \mathds{1}_{\{ P_{\theta}(f_{0}, f_{1} + \tau) = f_{0}\}} \theta^{n}_{f_{0}} + \mathds{1}_{\{P_{\theta}(f_{0}, f_{1} + \tau) = f_{1} + \tau \}} \theta^{n}_{f_{1}} \\
        = &\te^{n}_{P_{\te}(f_{0}, f_{1}+ \tau)} \\
        = &\theta^{n}_{P_{\theta}(u_{0},u_{1}+\tau)}
    \end{align*}
    for all but countably many $\tau \in \R$.
\end{proof}

\subsection{Weak geodesics and rooftop envelopes}\label{subsec: weak geodesics}
Following Berndtsson \cite{Berndtssonweakgeodesics} and Darvas-Di Nezza-Lu \cite{Darvas2017OnTS}, we define weak geodesics as follows. Let $X$ be a compact K\"ahler manifold and let $\theta$ represent a big cohomology class. Let $S = (0,1) \times \R \subset \C$ be the vertical strip in the complex plane. Let $\pi: X \times S \to X$ be the projection map. For $u_{0}, u_{1} \in \PSH(X,\theta)$, a path  $(0,1) \ni t \mapsto v_{t} \in \PSH(X,\theta)$ is a subgeodesic joining $u_{0}$ and $u_{1}$ if the map 
\[
X\times S \ni (x,z) \mapsto v_{\text{Re}(z)}(x)
\]
is a $\pi^{*}\theta$-psh function on $X\times S$ and $\limsup_{t \to 0,1}v_{t} \leq u_{0,1}$. We denote by 
\[
\mathcal{S} = \{ (0,1) \ni t \mapsto v_{t} \in \PSH(X,\theta) : v_{t} \text{ is a subgeodesic joining } u_{0} \text{ and } u_{1}\}.
\]
For arbitrary $u_{0},u_{1} \in \PSH(X,\theta)$, there may not be any subgeodesics joining them. If $u_{0}, u_{1} \in \E(X,\theta)$, then $P_{\theta}(u_{0},u_{1}):= \sup\{u\in \PSH(X,\theta) : u \leq u_{0},u_{1}\}\in \E(X,\theta)$ (see \cite[Theorem 2.10]{Darvas2017OnTS}), so the path $t \mapsto P_{\theta}(u_{0},u_{1})$ is a subgeodesic.

In the case $\mathcal{S}$ is not empty, we define the \emph{weak geodesic} joining $u_{0}$ and $u_{1}$ by
\[
u_{t}(x) = \sup_{v \in \mathcal{S}}v_{t}(x).
\]

Each subgeodesic $v_{t}$ is convex in the $t$-variable. Thus $v_{t} \leq (1-t)u_{0} + t u_{1}$. Taking supremum over all $v \in \mathcal{S}$, we get $u_{t} \leq (1-t)u_{0} + t u_{1}$. Now taking limit $t \to 0,1$ we get $\lim_{t \to 0,1}u_{t} \leq u_{0,1}$. Even if $X \times S \ni (x,z) \mapsto u_{\text{Re}(z)}(x)$ is not $\pi^{*}\theta$-psh, its upper semicontinuous regularization $u^{*}$ is $\pi^{*}\theta$-psh. But for $u^{*}_{t}$, we observe that $u_{t}^{*} \leq ((1-t)u_{0} + tu_{1})^{*} = (1-t)u_{0}  + tu_{1}$. Taking limit to 0 or 1 we get $\lim_{t}u_{t}^{*} \leq u_{0,1}$. Thus $u^{*}_{t}$ is a candidate for $\mathcal{S}$. Hence we do not take the upper semicontinuous regularization in the definition of weak geodesic $u_{t}$. 

If $u_{0}, u_{1} \in \E^{p}(X,\theta)$ then by \cite[Theorem 2.10]{Darvas2017OnTS} $P_{\theta}(u_{0},u_{1}) \in \E^{p}(X,\theta)$. This means the weak geodesic $u_{t}$ joining $u_{0},u_{1}$ satisfy $u_{t} \in \E^{p}(X,\theta)$. The same result holds when $u_{0}, u_{1} \in \E^{p}(X,\theta,\phi)$ due to \cite[Theorem 2.9]{GuptaCompletemetrictopology}.

We recall the following useful lemmas from \cite{Darvas2019GeometricPT}. The results in op. cit. are for the K\"ahler case, but the proofs go through for the big case without change. 

\begin{lem}[{\cite[Lemma 3.16]{Darvas2019GeometricPT}}]\label{geodesics determine rooftop envelops}
Let $u_{0}, u_{1} \in  \PSH(X,\theta)$ and let $u_{t}$ be the weak geodesic joining $u_{0}$ and $u_{1}$. Then for any $\tau \in \R$,
\[
\inf_{t\in (0,1)}(u_{t} - t\tau) = P_{\te}(u_{0}, u_{1} - \tau).
\]
\end{lem}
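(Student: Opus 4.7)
The plan is to prove the two inequalities $P_{\te}(u_0, u_1 - \tau) \leq \inf_t(u_t - t\tau)$ and $\inf_t(u_t - t\tau) \leq P_{\te}(u_0, u_1 - \tau)$ separately. For the first, set $P := P_{\te}(u_0, u_1 - \tau)$ and consider the affine-in-$t$ candidate path $v_t := P + t\tau$. Its total lift $(x,z) \mapsto P(x) + \tau\,\mathrm{Re}(z)$ on $X \times S$ differs from $\pi^*P$ by the pluriharmonic function $\tau\,\mathrm{Re}(z)$, and is therefore $\pi^*\te$-psh. The boundary conditions $v_0 = P \leq u_0$ and $v_1 = P + \tau \leq u_1$ follow directly from the defining inequalities of $P$, so $v \in \mathcal{S}$, and hence $v_t \leq u_t$. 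Rearranging gives $P \leq u_t - t\tau$ for every $t \in (0,1)$, and taking the infimum over $t$ delivers the first inequality.

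For the reverse, define $\phi(x) := \inf_{t \in (0,1)}(u_t(x) - t\tau)$ and consider the $\pi^*\te$-psh function $U(x,z) := u_{\mathrm{Re}(z)}(x) - \tau\,\mathrm{Re}(z)$ on $X \times S$, which depends only on $\mathrm{Re}(z)$. By Kiselman's minimum principle (applied in a coordinate chart after subtracting a local $\te$-potential $g(x)$ to obtain a genuinely psh function, and using that $g$ is independent of $z$ so commutes with the infimum), the upper semicontinuous regularization $\phi^*$ is $\te$-psh. On the other hand, the weak geodesic is convex in $t$ and bounded above by $(1-t)u_0 + tu_1$, so $\limsup_{t \to 0^+} u_t \leq u_0$ and $\limsup_{t \to 1^-} u_t \leq u_1$; letting $t \to 0^+$ and $t \to 1^-$ in the infimum defining $\phi$ yields $\phi \leq u_0$ and $\phi \leq u_1 - \tau$. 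Since both $u_0$ and $u_1 - \tau$ are upper semicontinuous, we deduce $\phi^* \leq \min(u_0, u_1 - \tau)$. Thus $\phi^*$ is a candidate in the definition of $P$, and therefore $\phi \leq \phi^* \leq P$.

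Combining the two inequalities gives $P \leq \phi \leq \phi^* \leq P$, so all four agree; in particular $\phi = \phi^*$, so no upper semicontinuous regularization is needed in the statement. The main technical obstacle I expect is justifying Kiselman's minimum principle in this global compact $\te$-psh setting: one has to localize by adding a local potential of $\te$, verify that this addition commutes with the infimum in $t$ (which it does since the potential does not depend on $z$), and invoke the classical principle on a product $V \times S$ where $V \subset X$ is a coordinate chart. Everything else is bookkeeping of candidate sub/super-conditions and propagating the boundary inequalities $\limsup_{t \to 0,1} u_t \leq u_{0,1}$, which are built into the definition of the weak geodesic, through the infimum.
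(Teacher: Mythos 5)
Your proof is correct and follows essentially the same route as the paper: comparison with an explicit (affine-in-$t$) subgeodesic candidate for one inequality, and Kiselman's minimum principle for the other. The only cosmetic difference is that the paper first reduces to $\tau=0$ by noting $u_t - t\tau$ is the weak geodesic joining $u_0$ and $u_1-\tau$ and then uses the constant subgeodesic $P_\te(u_0,u_1)$, which is equivalent to your direct treatment of general $\tau$.
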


\begin{proof}
    Since $t \mapsto v_{t} := u_{t} - t\tau$ is the weak geodesic joining $u_{0}$ and $u_{1} - \tau$, it is enough to prove the result for $\tau = 0$. 

    Since $P_{\theta}(u_{0},u_{1}) \leq u_{0},u_{1}$, the map $t \mapsto w_{t}:= P_{\te}(u_{0},u_{1})$ is a weak subgeodesic joining $u_{0}$ and $u_{1}$, therefore $P_{\theta}(u_{0},u_{1}) = w_{t} \leq u_{t}$ for all $t$. Therefore, $P_{\theta}(u_{0},u_{1}) \leq \inf_{t\in(0,1)}(u_{t})$.  

    For the other direction, we notice that Kiselman's minimum principle \cite[Chapter 1, Theorem 7.5]{demaillyagbook} implies $w:= \inf_{t \in (0,1)} (u_{t}) \in \PSH(X,\theta)$. Since $w \leq u_{0},u_{1}$, we have $w \leq P_{\theta}(u_{0},u_{1})$. 
\end{proof}

\begin{lem}[{\cite[Lemma 3.17]{Darvas2019GeometricPT}}]\label{rooftop envelops set and derivatives of geodesics}
Let $u_{0}, u_{1} \in \PSH(X,\theta)$ have minimal singularity and let $u_{t}$ be the weak geodesic joining $u_{0}$ and $u_{1}$. Then for any $\tau \in \R$,
\[
\{ \dot{u}_{0} \geq \tau \} = \{ P_{\te}(u_{0}, u_{1} - \tau) = u_{0}\}
\]
on $X \setminus \{V_{\te} = -\infty\}$.
\end{lem}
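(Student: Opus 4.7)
The plan is to reduce the claim to a pointwise statement about one-dimensional convex functions of $t$, by invoking the preceding Lemma~\ref{geodesics determine rooftop envelops} to translate the rooftop envelope into an infimum along the weak geodesic. Fix $x \in X \setminus \{V_\te = -\infty\}$. Because $u_0, u_1$ have minimal singularities, both $u_0(x)$ and $u_1(x)$ are finite, and the convex function $t \mapsto u_t(x)$ on $(0,1)$ is finite-valued with $\lim_{t \to 0^+} u_t(x) = u_0(x)$; in particular the right derivative $\dot{u}_0(x) \in [-\infty,+\infty)$ is well-defined.

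Now I would set $g(t) := u_t(x) - t\tau$, a convex function on $(0,1)$ with finite right limit $g(0^+) = u_0(x)$ and right derivative at $0$ equal to $\dot{u}_0(x) - \tau$. Lemma~\ref{geodesics determine rooftop envelops} gives
\[
\inf_{t \in (0,1)} g(t) = P_\te(u_0, u_1 - \tau)(x).
\]
The key elementary ingredient to combine with this identity is: for a convex function $g$ on $(0,1)$ with finite right limit $g(0^+)$, one has $\inf_{(0,1)} g = g(0^+)$ if and only if $D^+g(0) \geq 0$. The forward direction is immediate from the subdifferential inequality $g(t) \geq g(0^+) + t\, D^+g(0)$, and the converse uses that for convex $g$ the difference quotients $(g(t)-g(0^+))/t$ are nondecreasing in $t$ with infimum $D^+g(0)$, so $g(t) \geq g(0^+)$ on $(0,1)$ forces $D^+g(0) \geq 0$. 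Applied with $D^+g(0) = \dot{u}_0(x)-\tau$, this yields
\[
P_\te(u_0, u_1 - \tau)(x) = u_0(x) \;\Longleftrightarrow\; \dot{u}_0(x) \geq \tau,
\]
which is exactly the stated set equality on $X \setminus \{V_\te = -\infty\}$.

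The main technical point I anticipate is justifying the pointwise boundary limit $u_t(x) \to u_0(x)$ as $t \to 0^+$ for every $x \in X \setminus \{V_\te = -\infty\}$, since the definition of weak geodesic only furnishes the global inequality $\limsup_{t \to 0} u_t \leq u_0$. However, this is standard for geodesics joining potentials with minimal singularities: the convexity upper bound $u_t(x) \leq (1-t)u_0(x)+t u_1(x)$ and the subgeodesic lower bound $v_t := \max(u_0 - Ct,\, u_1 - C(1-t))$, valid for $C$ larger than the (bounded) oscillation of $u_0 - u_1$ on $\{V_\te > -\infty\}$, together sandwich $u_t(x)$ so that $u_t(x) \to u_0(x)$ as $t \to 0^+$. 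Once this is in place the rest is the purely one-variable convexity argument above.
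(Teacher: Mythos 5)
Your proposal is correct and follows essentially the same route as the paper: invoke Lemma~\ref{geodesics determine rooftop envelops} to rewrite $P_{\te}(u_{0},u_{1}-\tau)$ as $\inf_{t\in(0,1)}(u_{t}-t\tau)$ and then conclude pointwise from convexity of $t\mapsto u_{t}(x)-t\tau$. The only difference is that you spell out the boundary continuity $u_{t}(x)\to u_{0}(x)$ via the subgeodesic $\max(u_{0}-Ct,\,u_{1}-C(1-t))$, a detail the paper leaves implicit.
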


\begin{proof}
    Since $u_{0}, u_{1}$ have minimal singularity, $P_{\te}(u_{0}, u_{1}- \tau)$ and $u_{t}$ have minimal singularity as well. Thus on $X \setminus \{V_{\te} = -\infty\}$, $u_{0}, u_{1}, P_{\te}(u_{0}, u_{1} - \tau)$ are all finite. By the previous lemma, $\inf_{t \in (0,1)} (u_{t} - t\tau) = P_{\theta}(u_{0}, u_{1} - \tau)$. Thus for $x \in X$, $P_{\te}(u_{0}, u_{1} - \tau)(x) = u_{0}(x)$ iff $\inf_{t \in (0,1)} (u_{t} - t\tau)(x) = u_{0}(x)$. Since $(u_{t} - t\tau)(x)$ is convex in $t$, this equality is possible iff $\dot{u}_{0}(x) \geq \tau$. 
\end{proof}

Combining Lemma~\ref{rooftop envelops set and derivatives of geodesics} and Corollary~\ref{measures of rooftop potentials are usually nice} we get the following result. 
\begin{thm} \label{thm: left hand speed same as right hand speed}
    Let $u_{0} = P_{\theta}(f_{0})$ and $u_{1} = P_{\theta}(f_{1})$ for $f_{0}, f_{1} \in C^{1,\bar{1}}(X)$. If $u_{t}$ is the weak geodesic joining $u_{0}$ and $u_{1}$, then for all $p\geq 1$, 
    \[
    \int_{X}|\dot{u}_{0}|^{p}\theta^{n}_{u_{0}} = \int_{X}|\dot{u}_{1}|^{p}\theta^{n}_{u_{1}}.
    \]
\end{thm}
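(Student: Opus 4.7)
Set $P_\tau := P_\theta(u_0, u_1 - \tau)$. Since $u_0=P_\theta(f_0)$ and $u_1=P_\theta(f_1)$ are envelopes of $C^{1,\bar{1}}$ functions they have minimal singularities; hence the weak geodesic $u_t$ has minimal singularities and the one-sided derivatives $\dot{u}_0$, $\dot{u}_1$ are well-defined and bounded on the complement of the pluripolar set $\{V_\theta=-\infty\}$, where the non-pluripolar Monge--Amp\`ere measures $\theta^n_{u_0}$ and $\theta^n_{u_1}$ are concentrated. The plan is to reduce the identity to the statement that the push-forward measures $(\dot{u}_0)_*\theta^n_{u_0}$ and $(\dot{u}_1)_*\theta^n_{u_1}$ on $\R$ coincide; once that is established, taking $F(t) = |t|^p$ in $\int_X F(\dot{u}_i)\,\theta^n_{u_i} = \int_\R F\,d(\dot{u}_i)_*\theta^n_{u_i}$ gives the conclusion.

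The identification of the contact sets proceeds in two steps. By Lemma~\ref{rooftop envelops set and derivatives of geodesics}, $\{P_\tau = u_0\} = \{\dot{u}_0 \geq \tau\}$ outside a pluripolar set. For the other contact set, I apply the same lemma to the reversed geodesic $s \mapsto v_s := u_{1-s}$ joining $v_0 = u_1$ and $v_1 = u_0$, whose initial speed is $\dot{v}_0 = -\dot{u}_1$; after the elementary identity $P_\theta(u_0+\tau,u_1) = P_\theta(u_0, u_1-\tau) + \tau$, this yields $\{P_\tau = u_1-\tau\} = \{\dot{u}_1 \leq \tau\}$ outside a pluripolar set.

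Now I invoke Corollary~\ref{measures of rooftop potentials are usually nice} applied to the envelopes $u_0=P_\theta(f_0)$ and $u_1 = P_\theta(f_1)$: for all but countably many $\tau \in \R$,
\[
\theta^n_{P_\tau} \;=\; \mathds{1}_{\{P_\tau = u_0\}}\theta^n_{u_0} + \mathds{1}_{\{P_\tau = u_1-\tau\}}\theta^n_{u_1} \;=\; \mathds{1}_{\{\dot{u}_0\geq\tau\}}\theta^n_{u_0} + \mathds{1}_{\{\dot{u}_1\leq\tau\}}\theta^n_{u_1}.
\]
Since $P_\tau$ has minimal singularities, $\int_X \theta^n_{P_\tau} = \vol(\theta) = \int_X\theta^n_{u_0}$, so integrating the above identity and rearranging yields
\[
\theta^n_{u_1}(\{\dot{u}_1 \leq \tau\}) = \theta^n_{u_0}(\{\dot{u}_0 < \tau\})
\]
for all but countably many $\tau \in \R$.

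This identity says that the cumulative distribution functions of the push-forward measures $(\dot u_0)_*\theta^n_{u_0}$ and $(\dot u_1)_*\theta^n_{u_1}$ on $\R$ agree off a countable set; since both are right-continuous non-decreasing with only countably many jumps, they agree everywhere, and therefore the two push-forward measures coincide. Applying the change-of-variables formula to $F(t)=|t|^p$ (which is Borel and bounded on the bounded range of the derivatives) gives $\int_X |\dot u_0|^p\,\theta^n_{u_0} = \int_X|\dot u_1|^p\,\theta^n_{u_1}$. The delicate point is the symmetric identification of the second contact set in step two, which requires carefully tracking the sign when reversing the geodesic and the shift when replacing $(u_0, u_1-\tau)$ by $(u_0+\tau, u_1)$ in the rooftop envelope.
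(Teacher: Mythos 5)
Your proposal is correct and takes essentially the same route as the paper: both rest on Lemma~\ref{rooftop envelops set and derivatives of geodesics} (applied also to the reversed geodesic, with the shift $P_{\te}(u_{0}+\tau,u_{1}) = P_{\te}(u_{0},u_{1}-\tau)+\tau$), Corollary~\ref{measures of rooftop potentials are usually nice}, and the full-mass count $\int_X\te^n_{P_{\te}(u_0,u_1-\tau)} = \vol(\te)$ to obtain $\te^{n}_{u_{1}}(\{\dot{u}_{1}\leq \tau\}) = \te^{n}_{u_{0}}(\{\dot{u}_{0}<\tau\})$ for all but countably many $\tau$. The only difference is packaging: you conclude that the push-forward measures $(\dot u_0)_*\te^n_{u_0}$ and $(\dot u_1)_*\te^n_{u_1}$ coincide and then integrate $|t|^p$, while the paper applies the layer-cake formula to the positive and negative parts of $\dot u_0$ directly; the two are equivalent.
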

\begin{proof}
    The proof is the same as in \cite[Lemma 3.30]{Darvas2019GeometricPT}. We will show that 
    \[
    \int_{\{\dot{u}_{0} > 0\}} |\dot{u}_{0}|^{p} \theta^{n}_{u_{0}} = \int_{\{ \dot{u}_{1} > 0\}} |\dot{u}_{1}|^{p} \theta^{n}_{u_{1}},
    \]
    and a similar proof shows that 
    \[
    \int_{\{\dot{u}_{0} < 0\}} |\dot{u}_{0}|^{p} \theta^{n}_{u_{0}} = \int_{\{\dot{u}_{1} < 0\}} |\dot{u}_{1}|^{p}\theta^{n}_{u_{1}}.
    \]
    \begin{align*}
        \int_{\{\dot{u}_{0} > 0\}} |\dot{u}_{0}|^{p}\theta^{n}_{u_{0}} &= p \int_{0}^{\infty} \tau^{p-1}\theta^{n}_{u_{0}}(\{ \dot{u}_{0} \geq \tau\})d\tau \\
        &= p\int_{0}^{\infty}\tau^{p-1}\theta^{n}_{u_{0}}(\{P_{\te}(u_{0},u_{1}-\tau)\}=u_{0})d\tau.
         \intertext{Corollary~\ref{measures of rooftop potentials are usually nice} imply that $\text{Vol}_{\te}(X) = \te^{n}_{u_{0}}(\{P_{\te}(u_{0},u_{1}-\tau)=u_{0}\}) + \te^{n}_{u_{1}}(\{P_{\te}(u_{0},u_{1}-\tau) = u_{1} - \tau\})$ which gives}
        &= p\int_{0}^{\infty} \tau^{p-1}( \text{Vol}_{\theta}(X) - \theta^{n}_{u_{1}}(\{P_{\te}(u_{0}, u_{1} - \tau) = u_{1} - \tau\})) d\tau\\
        &= p \int_{0}^{\infty} \tau^{p-1}\theta^{n}_{u_{1}}(\{P_{\te}(u_{0}+\tau, u_{1}) < u_{1}\}) d\tau. 
        \intertext{Applying Lemma~\ref{rooftop envelops set and derivatives of geodesics} to the reverse geodesic joining $u_{1}$ and $u_{0}$, we get $\{P_{\te}(u_{0}+ \tau, u_{1})<u_{1}\} = \{\dot{u}_{1} > \tau\}$. Thus}
        &= p \int_{0}^{\infty} \tau^{p-1}\theta^{n}_{u_{1}}(\{\dot{u}_{1} > \tau\})d\tau \\
        &=\int_{\{\dot{u}_{1} > 0\}} |\dot{u}_{1}|^{p}\theta^{n}_{u_{1}}.
    \end{align*}
\end{proof}

The following Lemma from \cite{darvas2019metric} will be useful in constructing some approximations. 
\begin{lem}[{\cite[Lemma 4.3]{darvas2019metric}}]\label{The surprising Lemma from DDLs}
    Let $u, v \in \PSH(X,\theta)$ such that $u\leq v$ and  $\int_{X}\theta^{n}_{u} > 0$ and $b \in \left( 1, \left( \frac{\int_{X}\theta^{n}_{v}}{\int_{X}\theta^{n}_{v} - \int_{X}\theta^{n}_{u}} \right)^{\frac{1}{n}}\right)$, then $P_{\theta}(bu + (1-b)v) \in \PSH(X,\theta)$.  Here, 
    \[
    P_{\te}(bu + (1-b)v) = (\sup\{ h \in \PSH(X,\te) : h \leq bu + (1-b)v)\})^*
    \]
    where $f^*(x) = \limsup_{y \to x} f(y)$ is the upper semicontinuous regularization of $f$. 
\end{lem}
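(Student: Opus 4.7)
The function $\phi := bu + (1-b)v = u - (b-1)(v-u)$ lies pointwise below $u$, and a direct computation gives
\[
\theta + dd^{c}\phi \;=\; b\,\theta_{u} - (b-1)\,\theta_{v},
\]
a signed $(1,1)$-current; so $\phi$ fails to be $\theta$-psh in general and the nontrivial content of the lemma is producing an actual $\theta$-psh function pointwise below $\phi$. The plan is a truncation-and-limit argument whose quantitative ingredient is a Monge-Amp\`ere mass estimate that is positive precisely by the hypothesis on $b$.

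\textbf{Step 1 (truncation).} Normalize $v \leq 0$ and set $u_{M} := \max(u, V_{\theta} - M)$, $v_{M} := \max(v, V_{\theta} - M)$, which are bounded $\theta$-psh with $u_{M} \searrow u$, $v_{M} \searrow v$. By the DDL-style monotonicity of non-pluripolar products, $\vol(\theta_{u_{M}}) \searrow \vol(\theta_{u})$ and $\vol(\theta_{v_{M}}) \searrow \vol(\theta_{v})$, so the hypothesis on $b$ is inherited by $(u_{M}, v_{M})$ for large $M$. Since $\phi_{M} := b u_{M} + (1-b)v_{M}$ is bounded, $V_{\theta} - C$ is a valid $\theta$-psh candidate for $C$ large, and $w_{M} := P_{\theta}(\phi_{M}) \in \PSH(X,\theta)$ for every $M$.

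\textbf{Step 2 (uniform mass/sup estimate).} The heart of the argument is the uniform lower bound
\[
\int_{X} \theta^{n}_{w_{M}} \;\geq\; \vol(\theta_{v_{M}}) - b^{n}\bigl(\vol(\theta_{v_{M}}) - \vol(\theta_{u_{M}})\bigr),
\]
whose right-hand side is strictly positive precisely by the (truncation-preserved) hypothesis $b < \bigl(\vol(\theta_{v})/(\vol(\theta_{v}) - \vol(\theta_{u}))\bigr)^{1/n}$. The estimate itself follows by localizing $\theta^{n}_{w_{M}}$ to the contact set $\{w_{M} = \phi_{M}\}$ via Theorem~\ref{thm: measure on contact sets} (after a smoothing reduction of $\phi_{M}$ to $C^{1,\bar{1}}$ data), computing $(b\theta_{u_{M}} - (b-1)\theta_{v_{M}})^{n}$ by the binomial expansion, and controlling the mixed masses $\int \theta_{u_{M}}^{k} \wedge \theta_{v_{M}}^{n-k}$ by $\vol(\theta_{u_{M}}), \vol(\theta_{v_{M}})$ through the BEGZ-type mixed Monge-Amp\`ere inequality. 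Combining the resulting positive mass bound with a Guedj-Zeriahi sup/mass control (applied after normalization $w_{M} - \sup_{X} w_{M}$) yields $\sup_{X} w_{M} \geq -K$ uniformly in $M$.

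\textbf{Step 3 (limit) and obstacle.} With $\{w_{M}\}$ having uniformly bounded sup, the family is relatively compact in the $L^{1}$ topology of $\PSH(X,\theta)$; any subsequential limit $h$ is $\theta$-psh and, using $w_{M} \leq \phi_{M}$ quasi-everywhere together with the convergence $\phi_{M} \to bu + (1-b)v$, one obtains $h \leq bu + (1-b)v$ quasi-everywhere. Thus $h$ is a valid candidate in the supremum defining $P_{\theta}(bu + (1-b)v)$, proving the latter lies in $\PSH(X,\theta)$. The main obstacle is Step 2: the sharp combination $\vol(\theta_{v}) - b^{n}(\vol(\theta_{v}) - \vol(\theta_{u}))$ only emerges after a careful accounting of signs in the binomial expansion of the signed current $b\theta_{u} - (b-1)\theta_{v}$, and converting mass control to sup control in the big class requires the full strength of the Guedj-Zeriahi/BEGZ machinery — this is exactly the place where the hypothesis on $b$ is used non-trivially.
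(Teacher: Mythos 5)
This lemma is not proved in the paper at all: it is quoted verbatim from \cite[Lemma 4.3]{darvas2019metric}, so there is no in-paper argument to compare yours against, and your proposal has to stand on its own. It does not. The decisive gap is Step 2(b): a lower bound on the non-pluripolar Monge--Amp\`ere mass $\int_X\theta^n_{w_M}$ can never yield a lower bound on $\sup_X w_M$, because the mass is invariant under replacing $w_M$ by $w_M-c$ for any constant $c$ (e.g.\ $V_\theta-M^2$ has full mass for every $M$ while its sup tends to $-\infty$). There is no Guedj--Zeriahi type statement converting total mass into height, so the uniform bound $\sup_X w_M\geq -K$ — the only thing that makes your Step 3 compactness argument run — is unsupported, and with it the whole scheme collapses.

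In fact the structure of your truncation shows the approach cannot be repaired in this form. Since $u_M=\max(u,V_\theta-M)$ and $v_M=\max(v,V_\theta-M)$ both have minimal singularities, one has $\phi_M=bu_M-(b-1)v_M\geq V_\theta-bM$, hence $w_M=P_\theta(\phi_M)\geq V_\theta-bM$ has minimal singularities and full mass $\vol(\theta)$ for every $M$ and every $b>1$; your ``quantitative ingredient'' is therefore trivially true and trivially useless, and the hypothesis $b<\bigl(\int\theta^n_v/(\int\theta^n_v-\int\theta^n_u)\bigr)^{1/n}$ is never genuinely used (note also that your auxiliary claim $\vol(\theta_{u_M})\searrow\vol(\theta_u)$ is false — here $\vol(\theta_{u_M})\equiv\vol(\theta)$; convergence of masses holds along increasing, not decreasing, sequences). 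If the argument worked as written it would prove the statement for all $b>1$, which is false: the restriction on $b$ is exactly what prevents the envelopes from sliding to $-\infty$ as the truncation is removed, and controlling that drift in height (not in mass) is the actual content of the DDL lemma. As a secondary point, even granting a sup bound, your contact-set/binomial computation treats the signed current $b\theta_{u_M}-(b-1)\theta_{v_M}$ as if Theorem~\ref{thm: measure on contact sets} applied to it, and the ``smoothing reduction to $C^{1,\bar 1}$ data'' is not justified; these would need genuine arguments, though they are moot given the main gap.
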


Another useful result we need is 
\begin{lem}[{\cite[Theorem 2.6]{darvas2023relative}}]\label{lem: weak convergence of measures}
    Let $\te^{1}, \dots \te^{n}$ be smooth real closed $(1,1)$-forms representing a big cohomology class and let $u_{j}, u_{j}^{k} \in \PSH(X,\te)$ be such that $u_{j}^{k} \to u_{j}$ in capacity as $k \to \infty$ for all $j \in \{1, \dots, n\}$. If $\chi_{k}, \chi \geq 0$ are quasi-continuous functions that are uniformly bounded and $\chi_{k} \to \chi$ in capacity, then 
    \[
    \liminf_{k \to \infty} \int_{X} \chi_{k} \te^{1}_{u_{1}^{j}} \wedge \dots \wedge \te^{n}_{u_{n}^{k}} \geq \int_{X} \chi \te^{1}_{u_{1}} \wedge \dots \wedge \te^{n}_{u_{n}}.
    \]
    Moreover if 
    \[
    \int_{X} \te^{1}_{u_{1}} \wedge \dots \wedge \te^{n}_{u_{n}} \geq \limsup_{k\to \infty} \int_{X} \te^{1}_{u_{1}^{k}} \wedge \dots \wedge \te^{n}_{u_{n}^{k}},
    \]
    then the measures
    \[
    \chi_{k}\te^{1}u_{1}^{k} \wedge \dots \wedge \te^{n}_{u_{n}^{k}} \to \chi \te^{1}_{u_{1}} \wedge \dots \wedge \te^{n}_{u_{n}}
    \]
    weakly. 
\end{lem}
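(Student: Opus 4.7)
The plan is to prove the two assertions by first reducing to bounded potentials via canonical truncation, then invoking classical Bedford--Taylor continuity in the bounded case, and finally handling the quasi-continuous weights through capacity approximation.

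\textbf{Step 1: Truncation and plurifine locality.} For $N \in \N$, set $u_j^N := \max(u_j, V_\te - N)$ and $u_j^{k,N} := \max(u_j^k, V_\te - N)$. Since $u_j^k \to u_j$ in capacity, it follows that $u_j^{k,N} \to u_j^N$ in capacity as $k \to \infty$, and each $u_j^{k,N}$, $u_j^N$ has the same singularities as $V_\te$ (hence is essentially bounded on $X \setminus \{V_\te = -\infty\}$). By the plurifine locality of the non-pluripolar product, on the open plurifine set $O_N := \{u_j > V_\te - N \text{ for all } j\}$ the measure $\te^{1}_{u_1^N} \wedge \dots \wedge \te^{n}_{u_n^N}$ agrees with $\te^{1}_{u_1} \wedge \dots \wedge \te^{n}_{u_n}$, and analogously for the $k$-superscripted potentials. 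Since the non-pluripolar product puts no mass on $\{V_\te = -\infty\}$ and $O_N \nearrow X \setminus \{V_\te = -\infty\}$ up to a pluripolar set, monotone convergence gives that integrating $\chi$ against $\te^1_{u_1^N} \wedge \dots \wedge \te^n_{u_n^N}$ over $O_N$ recovers $\int_X \chi\, \te^1_{u_1} \wedge \dots \wedge \te^n_{u_n}$ as $N \to \infty$, and the same for $\chi_k$.

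\textbf{Step 2: The bounded case.} After Step 1 it suffices, for fixed $N$, to prove the lower-semicontinuity inequality for the essentially bounded potentials $u_j^{k,N} \to u_j^N$ in capacity. For bounded quasi-psh functions converging in capacity, classical Bedford--Taylor theory gives that the Monge--Amp\`ere currents $\te^{1}_{u_1^{k,N}} \wedge \dots \wedge \te^{n}_{u_n^{k,N}}$ converge weakly to $\te^{1}_{u_1^N} \wedge \dots \wedge \te^{n}_{u_n^N}$ as $k \to \infty$.

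\textbf{Step 3: Quasi-continuous weights.} Fix $\ee > 0$. By quasi-continuity, choose an open $U_\ee \subset X$ with $\CAP(U_\ee) < \ee$ such that all $\chi_k$ and $\chi$ are continuous on $X \setminus U_\ee$. Using a Tietze-type extension and the uniform bound on $\chi_k$, we may find continuous $\tilde\chi_k,\tilde\chi$ on $X$, uniformly bounded by the same constant $B$, agreeing with $\chi_k,\chi$ outside $U_\ee$, with $\tilde\chi_k \to \tilde\chi$ uniformly on $X\setminus U_\ee$ (enlarging $U_\ee$ if necessary, using that $\chi_k \to \chi$ in capacity). The Chern--Levine--Nirenberg estimate, together with the uniform mass bound on the total measures $\te^{1}_{u_1^{k,N}} \wedge \dots \wedge \te^{n}_{u_n^{k,N}}$ in the bounded setting, shows that the masses of $U_\ee$ with respect to all these measures are bounded by a uniform constant times some modulus $\eta(\ee) \to 0$. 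Combining weak convergence from Step 2, uniform convergence of $\tilde\chi_k$ to $\tilde\chi$ off $U_\ee$, and the uniform smallness of the measures of $U_\ee$, one obtains
\[
\liminf_{k\to\infty}\int_X \chi_k\, \te^{1}_{u_1^{k,N}} \wedge \dots \wedge \te^{n}_{u_n^{k,N}} \;\geq\; \int_X \chi\, \te^{1}_{u_1^N} \wedge \dots \wedge \te^{n}_{u_n^N} - C\eta(\ee).
\]
Letting $\ee \to 0$ and then $N \to \infty$ (using Step 1 on both sides) yields the desired $\liminf$ inequality.

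\textbf{Step 4: The weak convergence statement.} Let $B$ be a uniform bound on $\chi_k, \chi$. Apply the $\liminf$ inequality already proved to both $\chi_k$ and $B - \chi_k$ (the latter is still quasi-continuous, nonnegative, uniformly bounded, and $\to B-\chi$ in capacity). The mass hypothesis $\int_X \te^1_{u_1} \wedge \dots \wedge \te^n_{u_n} \geq \limsup_k \int_X \te^1_{u_1^k} \wedge \dots \wedge \te^n_{u_n^k}$ combines with the two $\liminf$ bounds to force equality of $\limsup$ and $\liminf$, hence convergence of $\int_X \chi_k\, \te^1_{u_1^k} \wedge \dots \wedge \te^n_{u_n^k}$ to $\int_X \chi\, \te^1_{u_1} \wedge \dots \wedge \te^n_{u_n}$ for every such test weight, which is exactly the asserted weak convergence of measures.

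The main obstacle is Step 3: establishing enough uniformity in the weights $\chi_k$ and in the masses on sets of small capacity to pass from Bedford--Taylor weak convergence in the bounded case to an inequality against quasi-continuous weights converging only in capacity. Once that uniform control is in place, Steps 1, 2 and 4 are essentially formal.
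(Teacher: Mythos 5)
First, note that the paper does not prove this lemma at all: it is imported verbatim as \cite[Theorem 2.6]{darvas2023relative}, so the only fair comparison is with the standard argument in the literature (Darvas--Di Nezza--Lu), whose ingredients --- truncation to minimal singularity type, plurifine locality, Bedford--Taylor convergence, capacity control of quasi-continuous weights, and the complementary-weight trick $B-\chi_k$ for the second assertion --- are exactly the ones you list. Your Step 4 is the correct standard deduction. However, the way Steps 1--2 are glued contains a genuine gap. You claim that it suffices to prove the $\liminf$ inequality for the truncated potentials $u_j^{k,N}$ and then ``let $N\to\infty$ on both sides''. The problem is the direction of the inequality on the $k$-side: by plurifine locality one only has $\mathds{1}_{O_N^k}\,\te^1_{u_1^k}\wedge\dots\wedge\te^n_{u_n^k} = \mathds{1}_{O_N^k}\,\te^1_{u_1^{k,N}}\wedge\dots\wedge\te^n_{u_n^{k,N}}$ on the $k$-dependent set $O_N^k=\bigcap_j\{u_j^k>V_{\te^j}-N\}$, while the truncated product carries extra mass on $X\setminus O_N^k$ (its total mass is the full mixed volume, which is $\geq$ that of the untruncated non-pluripolar product). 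Hence $\int_X\chi_k\,\te^1_{u_1^{k,N}}\wedge\dots$ can be strictly larger than $\int_X\chi_k\,\te^1_{u_1^k}\wedge\dots$, and the inequality you prove at the truncated level does not transfer to the original one; the needed comparison goes the wrong way, and nothing in your argument controls, uniformly in $k$, the spurious mass that truncation creates on $\{u_j^k\le V_{\te^j}-N\}$. The standard repair is to insert into the weight a quasi-continuous cutoff built from the potentials themselves (e.g.\ $\gamma_N^k:=\prod_j\min\{\max\{u_j^k-V_{\te^j}+N,0\},1\}$, which vanishes off $O_N^k$, is uniformly bounded and converges in capacity), prove the inequality for the weights $\chi_k\gamma_N^k$ against the truncated products, note that $\int_X\chi_k\,\te^1_{u_1^k}\wedge\dots\ \ge\ \int_X\chi_k\gamma_N^k\,\te^1_{u_1^{k,N}}\wedge\dots$ by plurifine locality, and only send $N\to\infty$ on the limit side by monotone convergence.

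A second, smaller inaccuracy: the truncated potentials $\max(u_j^k,V_{\te^j}-N)$ are \emph{not} bounded when the class is big but not nef ($V_{\te^j}$ blows down on the augmented base locus), so ``classical Bedford--Taylor for bounded potentials'' does not apply on all of $X$. One must localize to the ample locus, where $V_{\te^j}$ is locally bounded, and there one also needs a uniform upper bound on $\sup_X u_j^k$ (which does not follow formally from convergence in capacity and should be arranged or justified) plus a Chern--Levine--Nirenberg--type estimate adapted to this relative setting to make your Step 3 smallness $\eta(\ee)$ uniform in $k$ and $N$. These points are exactly where the literature proof does real work, so as written your outline is a correct skeleton of the known argument but not yet a proof.
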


\subsection{Modifications}
A holomorphic map $\mu: \tilde{X} \to X$ between compact K\"ahler manifolds $(\tilde{X}, \tilde{\om})$ and $(X,\om)$ is called a modification if outside a closed analytic set $E \subset \tilde{X}$, $\mu:\tilde{X} \setminus E \to X\setminus \mu(E)$ is a biholomorphism and $\mu(E) \subset X$ is also a closed analytic subset. We say that $E$ is the exceptional set, and $\mu(E)$ is the center of the modification. In this paper, modifications arise from resolving singularities of quasi plurisubharmonic functions with analytic singularity type. See Section~\ref{sec: analytic singularities to big and nef} for more details.

If $\theta$ is a smooth closed $(1,1)$-form on $X$ representing a big class, then $\mu^{*}\theta$ is also a big class on $\tilde{X}$ (see \cite[Proposition 4.12]{Boucksomvolumeofalinebundle}). If $u \in \PSH(X,\theta)$, then $u \circ \mu \in \PSH(\tilde{X},\mu^{*}\theta)$. In this particular case, the reverse is also true. 
\begin{lem}\label{lem: on pushforward of psh functions}
    Let $\mu: \tilde{X} \to X$ be a modification with exceptional set $E$ and center $\mu(E)$. If $\theta$ represents a big cohomology class on $X$ and $v \in \PSH(\tilde{X}, \mu^{*}\theta)$, then there exists a unique $u \in \PSH(X,\theta)$ such that $v = u\circ \mu$. 
\end{lem}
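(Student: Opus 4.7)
The plan is to construct $u$ off the center $\mu(E)$ using the biholomorphism provided by $\mu$, then extend across $\mu(E)$ via the classical extension theorem for plurisubharmonic functions, and finally verify that the extension pulls back to $v$ on all of $\tilde X$. Uniqueness will be immediate from surjectivity of $\mu$.

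First, I would set $u_0 := v \circ (\mu|_{\tilde X \setminus E})^{-1}$ on $X \setminus \mu(E)$. Working on a trivializing open set $U \subset X$ where $\theta = dd^c g$ for some smooth local potential $g$, one has $u_0 + g = (v + g \circ \mu) \circ \mu^{-1}$ on $U \setminus \mu(E)$. Since $v + g \circ \mu$ is plurisubharmonic on $\mu^{-1}(U)$ and $\mu$ restricts to a biholomorphism off $E$, this exhibits $u_0$ as a $\theta$-psh function on $X \setminus \mu(E)$. Because $v$ is quasi-psh on the compact manifold $\tilde X$, it is bounded above, and properness of $\mu$ then forces $u_0$ to be locally bounded above near $\mu(E)$.

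Next, $\mu(E) \subset X$ is a closed analytic subset of complex codimension at least one, in particular pluripolar. By the classical extension theorem across (complete) pluripolar sets, $u_0 + g$ admits a unique psh extension to each local chart, given on the contact set by the upper semicontinuous regularization of the trivial extension. Patching these gives the desired $u \in \PSH(X,\theta)$, characterized by
\[
u(x) = \limsup_{\substack{y \to x \\ y \in X \setminus \mu(E)}} u_0(y).
\]

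Finally, to compare $u \circ \mu$ with $v$, note both are $\mu^*\theta$-psh on $\tilde X$ and coincide off the proper analytic subset $E$, which has Lebesgue measure zero in $\tilde X$. Since a quasi-psh function is determined pointwise by its mean values over shrinking balls (via the submean inequality together with upper semicontinuity, which forces equality in the limit for psh functions at every point), we conclude $u \circ \mu = v$ throughout $\tilde X$. Uniqueness of $u$ follows because $\mu$ is surjective: any other $\theta$-psh $u'$ with $u' \circ \mu = v$ must equal $u$ pointwise. The only nontrivial input here is the extension theorem across the analytic set $\mu(E)$; the remaining steps are essentially formal, so I do not anticipate a serious obstacle.
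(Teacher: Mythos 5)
Your proposal is correct and follows essentially the same route as the paper: push $v$ forward via the biholomorphism off $E$, use boundedness from above to extend $\theta$-psh across the analytic set $\mu(E)$, and then identify $u\circ\mu$ with $v$ since both are $\mu^{*}\theta$-psh and agree outside a set of measure zero. Your extra details (local potentials, limsup extension, uniqueness via surjectivity of $\mu$) only make explicit what the paper leaves implicit.
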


\begin{proof}
    Since $\mu : \tilde{X} \setminus E \to X \setminus \mu(E)$ is a biholomorphism, we know $v \circ \mu^{-1}$ is a $\theta$-psh function on $X\setminus \mu(E)$. Since $\mu(E)$ is an analytic set and $v \circ \mu^{-1}$ is bounded from above, it extends over $\mu(E)$ to all of $X$. We call this extension $u$. Thus there exists $u \in \PSH(X,\theta)$ such that on $\tilde{X}\setminus E$, $u\circ \mu = v$. Since both $u\circ \mu$ and $v$ are $\mu^{*}\theta$-psh functions that agree almost everywhere, they must agree everywhere. Thus $u\circ \mu = v$. By the same argument, $u$ is unique as well.
\end{proof}

In general, we can pullback smooth forms and push forward currents. However, for positive $(1,1)$-currents, we can define the pullback as follows. If $u\in \PSH(X,\theta)$, then $\mu^{*}(\theta_{u}) := \mu^{*}\theta + dd^{c}u\circ \mu$. Moreover, it satisfies $\mu_{*}\mu^{*}\theta_{u} = \theta_{u}$. We recall 

\begin{thm}[{\cite[Theorem 3.1]{DiNezzastabilityofmongeampereclasses}}]\label{thm :DiNezza on pluripolar product being invariant} If $\mu : \tilde{X} \to X$ is a modification and $\theta_{1} \dots \theta_{n}$ are real smooth closed $(1,1)$-forms on $\tilde{X}$ representing big cohomology classes and $u_{j} \in \PSH(\tilde{X},\theta_{j})$, then 
\[
\mu_{*}\langle \theta_{1,u_{1}} \wedge \dots \wedge \theta_{n,u_{n}} \rangle = \langle \mu_{*}\theta_{1,u_{1}} \wedge \dots \wedge \mu_{*} \theta_{n,u_{n}}\rangle.
\]
\end{thm}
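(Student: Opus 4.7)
The plan is to reduce the claim to the case of \emph{bounded} $\theta_j$-psh potentials, handle the bounded case by exploiting that $\mu$ is a biholomorphism off an analytic (hence pluripolar) set, and then pass to the limit via the canonical truncations that define the non-pluripolar product.

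First I would recall the definition of the non-pluripolar product. Writing $u_j^{(k)} := \max(u_j, V_{\theta_j} - k)$ for the bounded canonical truncations on $\tilde X$, one has by definition
\[
\langle \theta_{1,u_1}\wedge\dots\wedge\theta_{n,u_n}\rangle \;=\; \lim_{k\to\infty}\,\mathbf{1}_{\bigcap_j\{u_j>V_{\theta_j}-k\}}\; \theta_{1,u_1^{(k)}}\wedge\dots\wedge \theta_{n,u_n^{(k)}},
\]
where the wedge products on the right are classical Bedford--Taylor products of bounded quasi-psh functions. An analogous formula defines the non-pluripolar product of the pushed-forward positive $(1,1)$-currents $\mu_*\theta_{j,u_j}$ on $X$, using the fact (Lemma~\ref{lem: on pushforward of psh functions}) that any $\mu^*\theta_j$-psh truncation on $\tilde X$ descends to a $\theta_j$-like pushforward on $X$.

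The heart of the argument is the bounded case: for locally bounded $v_j\in\PSH(\tilde X,\theta_j)$, I claim
\[
\mu_*\bigl(\theta_{1,v_1}\wedge\dots\wedge \theta_{n,v_n}\bigr) \;=\; \mu_*\theta_{1,v_1}\wedge\dots\wedge \mu_*\theta_{n,v_n}
\]
as Radon measures on $X$. On the open set $X\setminus \mu(E)$ the map $\mu$ is a biholomorphism, so both sides agree tautologically from Bedford--Taylor theory (the wedge product commutes with biholomorphic pullback/pushforward). The remaining task is to check that neither side puts any mass on the pluripolar sets $\mu(E)\subset X$ or $E\subset \tilde X$: since $v_j$ is bounded, the classical Bedford--Taylor product $\theta_{1,v_1}\wedge\dots\wedge\theta_{n,v_n}$ does not charge pluripolar sets, hence its pushforward does not charge $\mu(E)$. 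A symmetric argument applied on $X$ to the pushed-forward currents $\mu_*\theta_{j,v_j}$ (which can be written locally as $dd^c$ of a bounded psh function plus smooth corrections) shows their Bedford--Taylor product likewise does not charge $\mu(E)$, and the two measures therefore coincide globally.

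To pass from bounded potentials to the general case, apply the bounded identity with $v_j=u_j^{(k)}$ and multiply both sides by the plurifine-open indicator $\mathbf{1}_{\bigcap_j\{u_j>V_{\theta_j}-k\}}$. Since $\mu:\tilde X\setminus E\to X\setminus\mu(E)$ is a biholomorphism and the sets $\{u_j>V_{\theta_j}-k\}$ are plurifine open, $\mu$ carries them to the analogous plurifine open sets on $X$ up to the pluripolar set $\mu(E)$; plurifine locality of the Bedford--Taylor product then shows the truncated identities are compatible, and letting $k\to\infty$ yields the desired formula.

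\textbf{Main obstacle.} The subtle point I expect to be most technical is the bounded case at $\mu(E)$: one must rule out any ``extra'' Bedford--Taylor mass of $\mu_*\theta_{1,v_1}\wedge\dots\wedge\mu_*\theta_{n,v_n}$ concentrated on $\mu(E)$. This requires observing that $\mu_*\theta_{j,v_j}$ is, locally on $X$, still of the form $\alpha_j + dd^c w_j$ for a smooth form $\alpha_j$ and a (locally) bounded psh $w_j$ away from $\mu(E)$, and then quoting the standard fact that Bedford--Taylor products of bounded psh functions assign no mass to pluripolar sets; the plurifine-locality formulation of the product will be the right tool to make this rigorous.
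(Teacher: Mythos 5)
This statement is not proved in the paper at all: it is imported verbatim from \cite[Theorem 3.1]{DiNezzastabilityofmongeampereclasses}, so there is no internal proof to compare against, and your proposal has to be judged on its own. The skeleton you chose is indeed the right one (both measures put no mass on the relevant analytic sets, and they agree off those sets because $\mu$ is a biholomorphism there), but the way you dispose of what you yourself flag as the main obstacle is wrong.

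The gap is your claim that, for bounded $v_{j}$, the pushforward currents $\mu_{*}\theta_{j,v_{j}}$ ``can be written locally as $dd^{c}$ of a bounded psh function plus smooth corrections,'' so that Bedford--Taylor theory applies on $X$ and rules out mass on $\mu(E)$. This is false. Note first that the $\theta_{j}$ in the statement are forms on $\tilde{X}$, not pullbacks from $X$, so Lemma~\ref{lem: on pushforward of psh functions} (which concerns $\mu^{*}\theta$-psh functions) is not available, and $\mu_{*}\theta_{j,v_{j}}$ is in general not of the form (smooth form on $X$) $+\,dd^{c}$(global quasi-psh function). Worse, pushing forward under a modification genuinely creates singular potentials along $\mu(E)$ even from bounded data: if $\mu$ is the blow-up of a point $p$ and $\tilde{\om}$ is K\"ahler on $\tilde{X}$ (so $v=0$ is bounded), then $\int_{B_{r}(p)}\mu_{*}\tilde{\om}\wedge (dd^{c}|z|^{2})^{n-1}$ is comparable to $r^{2n-2}$ times a positive constant governed by $\tilde{\om}|_{E}$, so $\mu_{*}\tilde{\om}$ has a strictly positive Lelong number at $p$ and hence unbounded local potentials there. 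Consequently your ``bounded case'' right-hand side is not a Bedford--Taylor product near $\mu(E)$, and the symmetric no-mass argument collapses. The correct justification is simpler and makes the truncation scaffolding unnecessary for this point: the right-hand side is by definition a non-pluripolar product (in the Boucksom--Eyssidieux--Guedj--Zeriahi sense for arbitrary closed positive $(1,1)$-currents, via local potentials), and such products never charge pluripolar sets, in particular not the analytic set $\mu(E)$; likewise the left-hand side charges neither $E$ nor the analytic set $\mu^{-1}(\mu(E))$, so its pushforward puts no mass on $\mu(E)$. What then remains is only the statement that on $X\setminus\mu(E)$, where $\mu$ is a biholomorphism, the two measures coincide by plurifine locality and biholomorphic invariance of the non-pluripolar product; that is the step where your canonical truncations are legitimately useful, not the mass estimate at $\mu(E)$.
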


Applying this theorem to $\mu^{*}\theta_{u}$, we obtain that $\mu_{*}((\mu^{*}\theta)_{u\circ \mu}^{n}) = \theta^{n}_{u}$.

\subsection{Spaces of finite entropy}\label{subsec: space of finite entropy}
If $\theta$ represents a big class, and $\phi \in \PSH(X,\theta)$ is a model potential, we say that $u \in \PSH(X,\theta,\phi)$ has finite entropy if the corresponding non-pluripolar measure $\theta^{n}_{u}$ has finite entropy with respect to the background K\"ahler volume form $\om^{n}$. We define 
\[
\text{Ent}(\om^{n}, \theta^{n}_{u}) = \int_{X} \log \left( \frac{ \theta^{n}_{u}}{\om^{n}} \right) \theta^{n}_{u}
\]
if $\theta^{n}_{u}$ has a density with respect to $\om^{n}$ and the entropy is $+\infty$ otherwise. 

We denote by 
\[
\text{Ent}(X,\theta,\phi) = \{ u \in \E(X,\theta, \phi) : \int_{X} \log \left( \frac{\theta^{n}_{u}}{\om^{n}}\right) \theta^{n}_{u} < \infty\}.
\]

The following lemma tells us that pulling back a potential of finite entropy under a modification still has finite entropy. This observation is also made in \cite{dinezza2023entropy}. We give a proof here for completeness. 

\begin{lem}\label{lem: one way finite entropy}
    If $\mu : \tilde{X} \to X$ is a modification and $u \in \text{Ent}(X,\theta)$ has finite entropy, then $u\circ \mu \in \text{Ent}(\tilde{X}, \mu^{*}\theta)$. 
\end{lem}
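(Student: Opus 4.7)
The plan is to compute the entropy of $\nu := (\mu^{*}\te)^{n}_{u\circ\mu}$ on $\tilde{X}$ directly, by transferring the integral back to $X$ via the biholomorphism $\mu \colon \tilde{X}\setminus E \to X\setminus \mu(E)$, where $E$ is the exceptional set. The entire discrepancy between $\tilde{\om}^{n}$ and the pullback $\mu^{*}\om^{n}$ is absorbed by the smooth Jacobian $J := \mu^{*}\om^{n}/\tilde{\om}^{n}$, which is bounded above and vanishes exactly on $E$.

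First, $u\circ \mu \in \PSH(\tilde{X},\mu^{*}\te)$ is automatic, and Theorem~\ref{thm :DiNezza on pluripolar product being invariant} applied to $(\mu^{*}\te)_{u\circ\mu}$ gives $\mu_{*}\nu = \te^{n}_{u}$, whence $\int_{\tilde{X}}\nu = \int_{X}\te^{n}_{u} = \text{Vol}(\te) = \text{Vol}(\mu^{*}\te)$, so $u\circ\mu \in \E(\tilde{X},\mu^{*}\te)$. Since $u$ has finite entropy, $\te^{n}_{u}$ is absolutely continuous with respect to $\om^{n}$, and we set $f := \te^{n}_{u}/\om^{n}$. By locality of non-pluripolar products on the biholomorphic locus, $\nu = (f\circ\mu)\,J\,\tilde{\om}^{n}$ on $\tilde{X}\setminus E$; as $\nu$ does not charge the pluripolar set $E$, this equality extends to $\tilde{X}$, so $\nu$ is absolutely continuous with density $(f\circ\mu)J$. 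Change of variables through $\mu$ then yields
\[
\text{Ent}(\tilde{\om}^{n},\nu) = \int_{\tilde{X}} (f\circ\mu)J\log\bigl((f\circ\mu)J\bigr)\,\tilde{\om}^{n} = \int_{X} f\log f\,\om^{n} + \int_{X} f\,(\log J)\circ\mu^{-1}\,\om^{n}.
\]
The first integral on the right is finite by hypothesis, and the upper bound for the second is trivial since $(\log J)\circ\mu^{-1} \leq \log\sup_{\tilde{X}}J < \infty$.

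The main obstacle I expect is the lower bound for the second integral, because $\log J$ has $-\infty$ poles along the exceptional divisor while $f$ is only $L^{1}$. I would handle this by the Young-type inequality $ab \leq a\log a - a + e^{b}$ for $a \geq 0$ and $b \in \R$, applied with $a = f$ and $b = \log^{-}(J\circ\mu^{-1})$. Using $e^{\log^{-}x} \leq 1 + x^{-1}$, the problem reduces to showing that $\int_{X}(J\circ\mu^{-1})^{-1}\om^{n} < \infty$; one more change of variables gives
\[
\int_{X}(J\circ\mu^{-1})^{-1}\om^{n} = \int_{\tilde{X}} J^{-1}\cdot J\,\tilde{\om}^{n} = \text{Vol}(\tilde{X},\tilde{\om}) < \infty,
\]
so both terms on the right of the entropy identity are finite and hence $u\circ\mu \in \text{Ent}(\tilde{X},\mu^{*}\te)$.
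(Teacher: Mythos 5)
Your proof is correct and follows essentially the same route as the paper: write the pullback measure's density as $(f\circ\mu)\cdot J$ with $J=\mu^{*}\om^{n}/\tilde{\om}^{n}$ bounded above, split the logarithm, and transfer the $f\log f$ term back to $X$ by change of variables. The only difference is your Young-inequality treatment of the negative part of the Jacobian term, which is correct but superfluous, since $t\log t\geq -1/e$ makes the entropy automatically bounded below, so finiteness only requires the upper bound that both arguments establish.
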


\begin{proof}
     Let $\theta^{n}_{u} = f \om^{n}$ . Also assume that $\tilde{\om}$, the K\"ahler form on $\tilde{X}$ has $\int_{\tilde{X}} \tilde{\om}^{n} = 1$. Let $g \in C^{\infty}(X)$ be the function such that $(\mu^{*}\om)^{n} = g \tilde{\om}^{n}$. Then $(\mu^{*}\theta + dd^{c}u\circ \mu)^{n} = f\circ \mu (\mu^{*}\om)^{n} = f\circ \mu \cdot g \tilde{\om}^{n}$. 
    To show that $u \circ \mu$ has finite entropy, we need to show that 
    \[
    \int_{\tilde{X}}\log(f\circ \mu \cdot g ) f\circ \mu \cdot g \tilde{\om}^{n}
    \]
    is bounded from above. 
    \begin{align*}
        \int_{\tilde{X}}\log(f\circ \mu \cdot g ) f\circ \mu \cdot g \tilde{\om}^{n} &= \int_{\tilde{X}} \log(f\circ \mu \cdot g) f\circ \mu (\mu^{*}\om)^{n} \\
        &= \int_{\tilde{X}} \log(f\circ \mu)f\circ \mu (\mu^{*}\om)^{n} + \int_{\tilde{X}}\log(g) f\circ \mu (\mu^{*}\om)^{n}.
        \intertext{Since $g$ is bounded from above, we have $\log(g) \leq C$ where $C$ is a constant. In the first integral, we can push it forward to $X$}
        &\leq \int_{X} f\log(f) \om^{n} +C \int_{\tilde{X}} f\circ \mu (\mu^{*}\om)^{n} \\
        &= \int_{X}f\log(f) \om^{n} + C \int_{X} f\om^{n} \\
        &= \int_{X}f\log (f) \om^{n} + C\int_{X}\theta^{n}_{u}.
    \end{align*}
    Since the entropy of $\theta^{n}_{u} = f\om^{n}$ is bounded, the above integral is finite. Hence $u \circ \mu \in \text{Ent}(\tilde{X},\mu^{*}\theta)$. 
\end{proof}

We recall another result from \cite{dinezza2023entropy}.
\begin{lem}[{\cite[Proposition 2.3]{dinezza2023entropy}}]\label{lem: quasi-smoooth potentials have bounded entropy}
    If $f \in C^{1,\bar{1}}(X)$, then $P_{\theta}[\phi](f) \in \text{Ent}(X,\theta,\phi)$.
\end{lem}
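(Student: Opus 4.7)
The plan is to reduce the finite-entropy condition to an absolute-continuity statement for the Monge--Amp\`ere measure of $P_\theta[\phi](f)$, with a bounded density. The entire argument is driven by Theorem~\ref{thm: measure on contact sets}, which for $f \in C^{1,\bar 1}(X)$ supplies the identity
\[
\theta^n_{P_\theta[\phi](f)} = \mathds{1}_{\{P_\theta[\phi](f) = f\}}\, \theta^n_f.
\]

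First, I would observe that since $f \in C^{1,\bar 1}(X)$ and $\theta$ is smooth, the $(1,1)$-current $\theta + dd^c f$ has $L^\infty$ coefficients. Thus the top-degree form $\theta^n_f = (\theta + dd^c f)^n$ is pointwise (a.e.)\ a multiple of $\omega^n$ by a bounded function, say $\theta^n_f = h_0 \, \omega^n$ with $h_0 \in L^\infty(X)$. The display above then gives $\theta^n_{P_\theta[\phi](f)} = h\, \omega^n$, where $h = \mathds{1}_{\{P_\theta[\phi](f)=f\}} h_0 \in L^\infty(X)$. Because $t \mapsto t\log t$ (with the convention $0\log 0 = 0$) is bounded on every compact subinterval of $[0,\infty)$ and $X$ has finite volume, this immediately yields
\[
\mathrm{Ent}(\omega^n,\theta^n_{P_\theta[\phi](f)}) = \int_X h\log h\,\omega^n < \infty.
\]

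It remains to verify that $P_\theta[\phi](f) \in \E(X,\theta,\phi)$, so that the finite-entropy conclusion genuinely lands inside $\mathrm{Ent}(X,\theta,\phi)$. Since $f \in C^{1,\bar 1}(X)$ is bounded on the compact manifold $X$, this is a standard fact from the Darvas--Di Nezza--Lu theory of model envelopes: when $\phi$ is a model potential with $\int_X \theta^n_\phi > 0$ and $g$ is bounded, the envelope $P_\theta[\phi](g)$ has the same singularity type as $\phi$ and satisfies $\int_X \theta^n_{P_\theta[\phi](g)} = \int_X \theta^n_\phi$. Applying this with $g = f$ closes the argument.

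There is no serious technical obstacle here; once Theorem~\ref{thm: measure on contact sets} is in hand the proof is essentially a single $L^\infty$-bound, and the only subtlety is remembering to check the full-mass clause, since $\mathrm{Ent}(X,\theta,\phi)$ is defined as a subset of $\E(X,\theta,\phi)$ rather than of $\PSH(X,\theta,\phi)$.
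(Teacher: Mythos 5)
Your overall strategy---reduce everything to the contact-set formula of Theorem~\ref{thm: measure on contact sets}, deduce that $\theta^n_{P_\theta[\phi](f)}$ is absolutely continuous with bounded density, and then separately check the full-mass clause---is the right one (the paper itself does not reprove this lemma but imports it from \cite{dinezza2023entropy}), and your treatment of the mass issue is correct: the sandwich $\phi+\inf_X f\le P_{\te}[\phi](f)\le \phi+\sup_X f$ shows the envelope has the same singularity type as $\phi$, hence the same non-pluripolar mass, so it lies in $\E(X,\te,\phi)$.

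The genuine gap is in the density bound. The paper's convention is that $f\in C^{1,\bar1}(X)$ means only that $f$ has bounded Laplacian, i.e.\ $\mathrm{tr}_\omega(\theta+dd^cf)\in L^\infty$. This does \emph{not} give $L^\infty$ coefficients for $\theta+dd^cf$: with no one-sided bound on $dd^cf$, the eigenvalues may be large of both signs while their trace stays bounded, and away from the contact set $(\theta+dd^cf)^n$ need not even be a positive measure, so ``$\theta^n_f=h_0\,\omega^n$ with $h_0\in L^\infty$'' is not available globally. The standard repair is to argue only on $D=\{P_{\te}[\phi](f)=f\}$: bounded Laplacian gives $f\in W^{2,p}$ for every $p<\infty$, so $f$ is twice differentiable a.e., and since the $\theta$-psh function $P_{\te}[\phi](f)$ lies below $f$ with equality on $D$, one gets $\theta+dd^cf\ge 0$ at a.e.\ point of $D$; combined with the trace bound this forces every eigenvalue of $\theta+dd^cf$ to lie in $[0,C]$ a.e.\ on $D$, whence $\mathds{1}_{D}\,\theta^n_f\le C^n\omega^n$ and Theorem~\ref{thm: measure on contact sets} yields $\theta^n_{P_\theta[\phi](f)}\le C^n\omega^n$. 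From that point your entropy estimate and the conclusion are fine; but as written, the assertion that $\theta+dd^cf$ has bounded coefficients is false under the paper's definition of $C^{1,\bar 1}$, and the proof needs this localization-to-the-contact-set step (positivity of $\theta+dd^c f$ a.e.\ on $D$) to go through.
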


\subsection{Monge-Amp\`ere energy} 
For a smooth closed real $(1,1)$-form $\theta$ that represents a big cohomology class, we define the Monge-Amp\`ere Energy for $u \in \PSH(X,\theta)$ with  minimal singularities by 
\[
I(u) = \frac{1}{(n+1)} \sum_{j=0}^{n} \int_{X} (u-V_{\theta}) \theta^{j}_{u} \wedge \theta^{n-j}_{V_{\theta}}.
\]

We recall 
\begin{thm}[{\cite[Theorem 3.12]{Darvas2017OnTS}}]\label{thm: linearity of MongeAmpere along geodesic in big case}
    If $u_{0}, u_{1} \in \PSH(X,\theta)$ have minimal singularities, then the Monge-Amp\`ere energy is linear along the weak geodesic. More precisely, if $u_{t}$ is the weak geodesic joining $u_{0}$ and $u_{1}$, then 
    \[
    I(u_{t}) = (1-t)I(u_{0}) + tI(u_{1}).
    \]
\end{thm}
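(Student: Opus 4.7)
The plan is to prove affinity by first establishing the identity for geodesics with a maximality extension and then approximating. More precisely, a natural two-step approach is to first verify the identity in a favorable situation, and then use monotone approximation to reach general minimal-singularity endpoints.

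First, I would exploit that the weak geodesic $u_t$ corresponds to the upper envelope of subgeodesics, so the function $U(x,z) := u_{\mathrm{Re}(z)}(x)$ on $X \times S$ is $\pi^{\ast}\theta$-psh and, by standard balayage/maximality arguments, satisfies
\[
\langle (\pi^{\ast}\theta + dd^{c} U)^{n+1}\rangle = 0
\]
in the non-pluripolar sense on $X \times (0,1) \times \R$. Assuming for the moment that $u_0,u_1$ are sufficiently regular (e.g.\ bounded, or with enough regularity that $\dot{u}_t$ can be handled as an $L^{\infty}$ function), a direct integration-by-parts computation along the lines of \cite[Section 3]{Darvas2017OnTS} gives
\[
\frac{d}{dt} I(u_t) \;=\; \int_X \dot{u}_t\, \theta_{u_t}^{n},
\]
and the vanishing of $(\pi^{\ast}\theta + dd^{c} U)^{n+1}$ translates into $\frac{d^2}{dt^2} I(u_t) = 0$. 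Thus $I(u_t)$ is affine in $t$ in this regular setting.

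Second, I would pass to general minimal-singularity endpoints by monotone approximation. Using Demailly regularization combined with $\max(\,\cdot\,,\, V_\theta + C)$, one produces decreasing sequences $u_0^k \searrow u_0$ and $u_1^k \searrow u_1$ of $\theta$-psh potentials that are more regular (e.g.\ with analytic singularities dominated by $V_\theta$, hence bounded modulo $V_\theta$). By definition of the weak geodesic as a supremum of subgeodesics, the corresponding weak geodesics $u_t^k$ also decrease to $u_t$ monotonically for each $t \in (0,1)$. In the regular setting of step one the identity $I(u_t^k) = (1-t) I(u_0^k) + t I(u_1^k)$ holds, and it remains to pass to the limit.

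The main obstacle is precisely this passage to the limit: showing $I(u_t^k) \to I(u_t)$ for each $t$, together with $I(u_j^k) \to I(u_j)$ for $j=0,1$. Here the minimal-singularity hypothesis is essential, as it guarantees that all the $u_0^k,\, u_1^k,\, u_t^k$ have uniformly bounded oscillation relative to $V_\theta$ and that the non-pluripolar mixed products stay of full mass $\mathrm{Vol}(\theta)$. Expanding
\[
I(u) \;=\; \frac{1}{n+1}\sum_{j=0}^{n} \int_X (u - V_\theta)\, \theta_u^{j}\wedge \theta_{V_\theta}^{n-j},
\]
the convergence of each summand follows by combining the monotone convergence of $(u^k - V_\theta)$ (uniformly bounded quasi-continuous functions) with the continuity of mixed non-pluripolar products along monotone sequences, in the spirit of Lemma~\ref{lem: weak convergence of measures}. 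Once these limits are justified, affinity of $I$ along $u_t$ follows by taking $k \to \infty$ in the identity valid at each level.
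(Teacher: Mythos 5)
This statement is not proved in the paper at all: it is imported verbatim as \cite[Theorem 3.12]{Darvas2017OnTS}, so the only comparison available is with that reference. Your first step is exactly the strategy of the cited proof: for the weak geodesic $U(x,z)=u_{\mathrm{Re}(z)}(x)$ one has $\langle(\pi^{*}\theta+dd^{c}U)^{n+1}\rangle=0$, the first variation $\tfrac{d}{dt}I(u_{t})=\int_{X}\dot{u}_{t}\,\theta^{n}_{u_{t}}$ holds, and together these force $t\mapsto I(u_{t})$ to be affine. Be aware, though, that these two inputs (maximality of the geodesic and the integration by parts in the big, non-smooth setting) are precisely the content of the cited result, and you leave both as citations; as a sketch that is acceptable, but it means the substance of the proof is outsourced.

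The genuine problem is your second step, which is both unnecessary and, as written, incoherent. It is unnecessary because the hypothesis of minimal singularities already places you in the ``regular setting'' of your first step: $|u_{0}-V_{\theta}|,|u_{1}-V_{\theta}|\leq C$ gives $|\dot{u}_{t}|\leq \sup_{X}|u_{0}-u_{1}|<\infty$, so $\dot{u}_{t}$ is an $L^{\infty}$ function and all non-pluripolar masses equal $\mathrm{Vol}(\theta)$; in a genuinely big class ``bounded'' potentials do not exist, and minimal singularities is the correct regularity class in which the cited argument is carried out directly, not a generality to be reached by approximation. It is incoherent because any decreasing approximation from above, $u_{0}^{k}\searrow u_{0}$ with $u_{0}^{k}\geq u_{0}$, automatically consists of minimal-singularity potentials, whereas a potential that has analytic singularities and is simultaneously ``bounded modulo $V_{\theta}$'' would force $V_{\theta}$ itself to have analytic singularity type, which fails in general; moreover Demailly approximants of a $\theta$-psh function are only $(\theta+\varepsilon_{k}\omega)$-psh, so they need not lie in $\PSH(X,\theta)$, and $\max(\cdot,V_{\theta}+C)$ is not a regularizing operation (for large $C$ it just returns $V_{\theta}+C$; presumably you meant $\max(\cdot,V_{\theta}-C)$, which is the identity on minimal-singularity potentials for large $C$). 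If you delete the approximation step and simply observe that minimal singularities is exactly the setting in which the maximality statement and the first-variation formula of \cite{Darvas2017OnTS} are established, what remains is the intended argument.
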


\subsection{Metric geometry in the big and nef case}\label{subsec: metric geometry in big and nef case}
The metric geometry of $\E^{p}(X,\bb)$, when $\bb$ represents a big and nef cohomology class, was studied by Di Nezza-Lu in \cite{dinezza2018lp}. We will briefly describe how they defined the $d_{p}$ metric on $\E^{p}(X,\bb)$. They defined 
\begin{equation}\label{eq: definition of Hbb}
\cH_{\bb} = \{ u \in \PSH(X,\bb) \, | \, u = P_{\bb}(f) \text{ for } f \in C(X) \text{ such that } dd^{c}f \leq C(f)\om\}. \end{equation}

As $\bb$ is big and nef, $\om_{\ee}:= \bb + \ee\om$ represents a K\"ahler class, although it may not be a K\"ahler form. The metric $d_{p}$ on $\cH_{\bb}$ is defined by approximation from $\E^{p}(X,\om_{\ee})$. In particular, if $u_{0}, u_{1} \in \cH_{\bb}$, such that $u_{0} = P_{\bb}(f_{0})$ and $u_{1} = P_{\bb}(f_{1})$, then we define $u_{0, \ee} = P_{\om_{\ee}}(f_{0})$ and $u_{1,\ee} = P_{\om_{\ee}}(f_{1})$ and 
\[
d_{p}(u_{0}, u_{1}) := \lim_{\ee \to 0} d_{p}(u_{0,\ee}, u_{1,\ee}).
\]
More generally, on $\E^{p}(X,\bb)$, the metric $d_{p}$ is defined by approximation from $\cH_{\bb}$. In particular, if $u_{0}, u_{1} \in \E^{p}(X,\bb)$, then we can find $u_{0}^{j}, u_{1}^{j} \in \cH_{\bb}$ such that $u_{0}^{j} \searrow u_{0}$ and $u_{1}^{j} \searrow u_{1}$ and we define 
\[
d_{p}(u_{0}, u_{1}) := \lim_{j\to \infty} d_{p}(u_{0}^{j}, u_{1}^{j}).
\]

In \cite{dinezza2018lp}, Di Nezza-Lu proved

\begin{thm}[\cite{dinezza2018lp}]
    If $\bb$ represents a big and nef cohomology class, then the function $d_{p}$ defined as above is a complete geodesic metric on $\E^{p}(X,\bb)$. They also showed in the proof of \cite[Theorem 3.17]{dinezza2018lp} that the weak geodesic $u_{t}$ joining $u_{0}, u_{1} \in \E^{p}(X,\bb)$ are metric geodesics as well.
\end{thm}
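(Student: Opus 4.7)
The plan is to transfer the metric structure from the K\"ahler setting $(\E^{p}(X, \om_{\ee}), d_{p})$ to $(\E^{p}(X, \bb), d_{p})$ in two successive limits: first $\ee \to 0$ to define $d_{p}$ on $\cH_{\bb}$, and then extending to $\E^{p}(X, \bb)$ via decreasing approximations. All metric and geodesic properties will be inherited by monotonicity/continuity.

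First I would verify that the limit $\lim_{\ee \to 0} d_{p}(u_{0,\ee}, u_{1,\ee})$ exists whenever $u_{0}, u_{1} \in \cH_{\bb}$. Since $u_{j,\ee} = P_{\om_{\ee}}(f_{j})$ is decreasing in $\ee$ and converges to $u_{j} = P_{\bb}(f_{j})$, and since the K\"ahler $d_{p}$ admits a Pythagorean-type comparison with the $I_{p}$ pairings, I would show that $\ee \mapsto d_{p}(u_{0,\ee}, u_{1,\ee})$ is monotone (or at least Cauchy). Here Theorem~\ref{thm: measure on contact sets} and Corollary~\ref{measures of rooftop potentials are usually nice} (in the K\"ahler setting) give explicit formulas for $\te^{n}_{u_{j,\ee}}$ supported on contact sets $\{P_{\om_{\ee}}(f_{j}) = f_{j}\}$, which increase as $\ee$ decreases, and allow passage to the limit of the integrals defining $d_{p}$ via Lemma~\ref{lem: weak convergence of measures}. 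Triangle inequality, symmetry and the bound $d_{p}(u_{0}, u_{1}) = 0 \iff u_{0} = u_{1}$ on $\cH_{\bb}$ all descend from the K\"ahler case.

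Second, I would extend $d_{p}$ to $\E^{p}(X, \bb)$ by approximation. Since $\E^{p}(X, \bb)$ potentials can be approximated from above by elements of $\cH_{\bb}$ (using $P_{\bb}$ applied to quasi-continuous truncations), I would fix such decreasing sequences $u_{0}^{j} \searrow u_{0}$ and $u_{1}^{j} \searrow u_{1}$ and show that $d_{p}(u_{0}^{j}, u_{1}^{j})$ is Cauchy with limit independent of the choice of approximations. The key input is a quantitative comparison
\[
c_{n,p}^{-1} I_{p}(u_{0}^{j}, u_{1}^{j})^{1/p} \leq d_{p}(u_{0}^{j}, u_{1}^{j}) \leq c_{n,p} I_{p}(u_{0}^{j}, u_{1}^{j})^{1/p}
\]
coming from the first step, combined with the convergence $I_{p}(u_{0}^{j}, u_{1}^{j}) \to I_{p}(u_{0}, u_{1})$ along decreasing sequences guaranteed by Theorem~\ref{thm: Ip converges along decreasing sequences}. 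Metric axioms extend by continuity.

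Third, for completeness and the geodesic property, I would proceed as follows. Given a Cauchy sequence $(v_{k}) \subset \E^{p}(X, \bb)$, I would first reduce (by Cauchyness and a diagonalization) to the case where $(v_{k})$ is decreasing or $d_{p}$-bounded; then the candidate limit $v := (\limsup v_{k})^{*}$, or more robustly $v := \lim_{j} (\sup_{k \geq j} v_{k})^{*}$, lies in $\E^{p}(X, \bb)$ by standard finite-energy arguments, and one shows $d_{p}(v_{k}, v) \to 0$ using the $I_{p}$ comparison and Theorem~\ref{thm: Ip converges along decreasing sequences}. For the geodesic claim, given $u_{0}, u_{1} \in \E^{p}(X, \bb)$ with weak geodesic $u_{t}$, I would first handle the case $u_{0}, u_{1} \in \cH_{\bb}$ by showing the K\"ahler geodesics $u_{t, \ee}$ converge to $u_{t}$ and that the metric geodesic identity $d_{p}(u_{s, \ee}, u_{t, \ee}) = |s - t| d_{p}(u_{0, \ee}, u_{1, \ee})$ passes to the limit via the explicit formula of Theorem~\ref{thm: left hand speed same as right hand speed} relating $d_{p}$ to $\int_{X} |\dot u_{0}|^{p} \te^{n}_{u_{0}}$; then extend to $\E^{p}(X, \bb)$ by decreasing approximation, invoking Theorem~\ref{thm: Ip converges along decreasing sequences} once more. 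The main obstacle I anticipate is showing that the weak geodesic $u_{t}$ in the big and nef class is indeed the limit of the K\"ahler weak geodesics $u_{t, \ee}$ (as opposed to some other subgeodesic), and that the $d_{p}$-length functional is lower semicontinuous under such monotone limits — this requires careful interplay between Kiselman's minimum principle (as in Lemma~\ref{geodesics determine rooftop envelops}), the convexity in $t$ of $u_{t, \ee}$, and control of the endpoint speeds via $P_{\bb}$-envelopes.
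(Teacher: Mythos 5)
This statement is quoted from \cite{dinezza2018lp} and the paper does not reprove it; it only recalls the construction (approximation from the K\"ahler forms $\om_{\ee}=\bb+\ee\om$ on $\cH_{\bb}$, then decreasing approximation to reach $\E^{p}(X,\bb)$, with $I_{p}$-comparison, completeness and the geodesic property of weak geodesics). Your proposal reconstructs essentially that same two-step approximation scheme, including the key point that the K\"ahler geodesics $u_{t,\ee}$ decrease to the weak geodesic $u_{t}$ (handled exactly as in the candidate-subgeodesic argument the paper uses in Section~\ref{sec: uniform convexity for big and nef}), so it is in line with the cited proof rather than a different route.
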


We list some properties of $(\E^{p}(X,\bb),d_{p})$ from their paper that we will frequently use. 

\begin{thm}[Pythagorean identity, {\cite[Theorem 3.14]{dinezza2018lp}}]\label{thm: Pythagorean identity in big and nef} If $u,v \in \E^{p}(X,\bb)$, then 
\[
d_{p}^{p} (u,v) = d_{p}^{p}(u, P_{\bb}(u,v)) + d_{p}^{p}(v, P_{\bb}(u,v)).
\]
\end{thm}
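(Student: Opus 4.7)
The plan is to deploy a double approximation argument that mirrors the two-step construction of $d_{p}$ on $\E^{p}(X,\bb)$ recalled in Section~\ref{subsec: metric geometry in big and nef case}: first reduce from $\{\bb\}$ to the nearby K\"ahler classes $\{\om_{\ee}\} = \{\bb + \ee\om\}$ where Darvas' Pythagorean identity in the K\"ahler case is already known, and then extend from $\cH_{\bb}$ to $\E^{p}(X,\bb)$ via decreasing approximation.

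For the first step, assume $u, v \in \cH_{\bb}$ with $u = P_{\bb}(f_{0})$ and $v = P_{\bb}(f_{1})$ for continuous $f_{0}, f_{1}$ satisfying $dd^{c}f_{i} \leq C\om$. Set $u_{\ee} = P_{\om_{\ee}}(f_{0})$ and $v_{\ee} = P_{\om_{\ee}}(f_{1}) \in \cH_{\om_{\ee}}$. Unwinding the envelope definition gives $P_{\om_{\ee}}(u_{\ee}, v_{\ee}) = P_{\om_{\ee}}(f_{0}, f_{1})$, and since $\min(f_{0}, f_{1})$ still has distributional Laplacian bounded above, this decreases as $\ee \to 0$ to $P_{\bb}(f_{0}, f_{1}) = P_{\bb}(u,v) \in \cH_{\bb}$. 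Because $\om_{\ee}$ represents a K\"ahler cohomology class, Darvas' Pythagorean identity applies and yields
\[
d_{p}^{p}(u_{\ee}, v_{\ee}) = d_{p}^{p}(u_{\ee}, P_{\om_{\ee}}(f_{0}, f_{1})) + d_{p}^{p}(v_{\ee}, P_{\om_{\ee}}(f_{0}, f_{1})).
\]
Sending $\ee \to 0$ and invoking the defining limit $d_{p}(\cdot,\cdot) = \lim_{\ee\to 0} d_{p}(\cdot_{\ee},\cdot_{\ee})$ from \eqref{eq: definition of Hbb} et seq.\ makes the identity descend to $u, v \in \cH_{\bb}$.

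For the second step, pick decreasing sequences $u_{j}, v_{j} \in \cH_{\bb}$ with $u_{j} \searrow u$ and $v_{j} \searrow v$. Then $P_{\bb}(u_{j}, v_{j}) \in \cH_{\bb}$ decreases to $P_{\bb}(u, v)$, the identity holds along the sequence by Step~1, and the definition $d_{p}(u, v) = \lim_{j} d_{p}(u_{j}, v_{j})$ applied simultaneously to the three pairs $(u_{j}, v_{j})$, $(u_{j}, P_{\bb}(u_{j}, v_{j}))$, $(v_{j}, P_{\bb}(u_{j}, v_{j}))$ yields the identity for $u, v$.

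The main technical hurdle is the interchange of limits in Step~1: the right-hand side involves $d_{p}^{p}(u_{\ee}, P_{\om_{\ee}}(f_{0}, f_{1}))$ where both arguments move with $\ee$, so the defining limit for $d_{p}$ on $\cH_{\bb}$ does not apply verbatim. The resolution is that the pair $(u_{\ee}, P_{\om_{\ee}}(f_{0}, f_{1}))$ is comparable, $P_{\om_{\ee}}(f_{0}, f_{1}) \leq u_{\ee}$, so Darvas' formula reduces $d_{p}^{p}$ to an $I_{p}$-type integral of $|u_{\ee} - P_{\om_{\ee}}(f_{0}, f_{1})|^{p}$ against the Monge-Amp\`ere measure of the lower envelope. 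Convergence as $\ee \to 0$ then follows by combining the measure-on-contact-set identities (Theorem~\ref{thm: measure on contact sets} and Corollary~\ref{measures of rooftop potentials are usually nice}) with the weak convergence statement in Lemma~\ref{lem: weak convergence of measures}, with Theorem~\ref{thm: Ip converges along decreasing sequences} handling the monotone passage of Step~2.
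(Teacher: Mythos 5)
The paper does not actually prove this statement: it is imported verbatim from \cite[Theorem 3.14]{dinezza2018lp}, and your two-step approximation (K\"ahler classes $\om_{\ee}=\bb+\ee\om$ first, then decreasing sequences from $\cH_{\bb}$ to $\E^{p}(X,\bb)$) is essentially a reconstruction of the Di Nezza--Lu proof; your Step~2 is also exactly the argument the paper itself uses later for the big-case analogue (Theorem~\ref{thm: Pythagorean formula for big classes}). Both steps are sound: $P_{\bb}(u_{j},v_{j})\searrow P_{\bb}(u,v)$ and the independence of the defining limits from the approximating data make the identity pass to the limit.

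One remark on your ``main technical hurdle'': it is not actually there. Since $P_{\om_{\ee}}(u_{\ee},v_{\ee})=P_{\om_{\ee}}(h)$ for the \emph{fixed} admissible test function $h=\min(f_{0},f_{1})$ (or $h=P_{C\om}(f_{0},f_{1})$, which is $C^{1,\bar 1}$ as in Lemma~\ref{lem: Hthetea closed under rooftop envelope}), the pair $(u_{\ee},P_{\om_{\ee}}(f_{0},f_{1}))$ is precisely the $\ee$-approximation of the fixed pair $(u,P_{\bb}(u,v))$, so the defining limit for $d_{p}$ on $\cH_{\bb}$ applies verbatim to each of the three terms and no extra convergence argument is needed. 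By contrast, the workaround you sketch is shaky as stated: for comparable potentials $w\leq u$ in the K\"ahler case one does not have the exact identity $d_{p}^{p}(w,u)=\int_{X}|u-w|^{p}\om_{w}^{n}$ (only the speed formula $d_{p}^{p}=\int|\dot{u}_{0}|^{p}\om_{u_{0}}^{n}$ and the two-sided comparison with $I_{p}$ as in Theorem~\ref{thm: control by p-energy}), so that reduction would need to be replaced by the slope/contact-set analysis. Fortunately it is superfluous; with that paragraph deleted and the observation above inserted, your argument is correct and matches the cited proof.
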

For $u_{0}, u_{1} \in \E^{p}(X,\bb)$ we define 
\[
I_{p}(u,v) = \int_{X} |u-v|^{p}(\bb^{n}_{u} + \bb^{n}_{v}).
\]
The following theorem shows that $I_{p}$ controls the distance $d_{p}$. 

\begin{thm}[{\cite[Proposition 3.12]{dinezza2018lp}}]\label{thm: control by p-energy} Given $u_{0}, u_{1} \in \E^{p}(X,\bb)$, there exists a constant $C> 1$ that depends only on the dimension, such that 
\[
\frac{1}{C} I_{p}(u_{0},u_{1}) \leq d_{p}^{p} (u_{0}, u_{1}) \leq C I_{p}(u_{0},u_{1}).
\]
\end{thm}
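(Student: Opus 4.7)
The plan is to prove the inequality by reduction to the K\"ahler case via the approximation scheme described for $(\E^{p}(X,\bb),d_{p})$, leveraging the corresponding inequality from Darvas' K\"ahler theory, together with the Pythagorean identity to strip away the ``non-ordered'' complication. The analogue of this estimate in the K\"ahler class $\{\om_{\ee}\} = \{\bb + \ee \om\}$ is a known result (an application of Darvas' work on $\E^{p}(X,\om)$), and both $I_{p}$ and $d_{p}$ behave well under the approximations $u_{j,\ee} = P_{\om_{\ee}}(f_{j}) \searrow u_{j}$ and $u_{j}^{k}\searrow u_{j}$.

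\textbf{Step 1: Reduction to ordered endpoints.} By Theorem~\ref{thm: Pythagorean identity in big and nef}, with $w := P_{\bb}(u_{0},u_{1}) \in \E^{p}(X,\bb)$, one has $d_{p}^{p}(u_{0},u_{1}) = d_{p}^{p}(u_{0},w) + d_{p}^{p}(u_{1},w)$. Since $w \leq u_{0},u_{1}$, it suffices to prove the sandwich estimate for ordered pairs $w \leq u_{j}$ together with the comparability $I_{p}(u_{0},w) + I_{p}(u_{1},w) \asymp I_{p}(u_{0},u_{1})$ (up to a dimensional constant). The latter follows from the pointwise bound $|u_{0}-u_{1}| \leq |u_{0}-w| + |u_{1}-w|$, the fact that $\bb^{n}_{w}$ is supported on $\{w = u_{0}\} \cup \{w = u_{1}\}$ (Theorem~\ref{measures of rooftop envelope potentials} applied after regularization), and $\bb^{n}_{u_{j}} \leq \bb^{n}_{u_{j}} + \bb^{n}_{w}$.

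\textbf{Step 2: Reduction to $\cH_{\bb}$.} For ordered $u_{0} \leq u_{1}$ in $\E^{p}(X,\bb)$, one picks decreasing approximants $u_{j}^{k}\searrow u_{j}$ with $u_{j}^{k} \in \cH_{\bb}$ (and preserving the order). By the definition of $d_{p}$ on $\E^{p}(X,\bb)$ recalled in Subsection~\ref{subsec: metric geometry in big and nef case}, $d_{p}(u_{0}^{k},u_{1}^{k}) \to d_{p}(u_{0},u_{1})$, while Theorem~\ref{thm: Ip converges along decreasing sequences} gives $I_{p}(u_{0}^{k},u_{1}^{k}) \to I_{p}(u_{0},u_{1})$. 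Hence it suffices to prove the estimate on $\cH_{\bb}$.

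\textbf{Step 3: K\"ahler approximation.} For $u_{j} = P_{\bb}(f_{j}) \in \cH_{\bb}$, set $u_{j,\ee} = P_{\om_{\ee}}(f_{j})$, so that $u_{j,\ee} \searrow u_{j}$ and $d_{p}(u_{0,\ee},u_{1,\ee}) \to d_{p}(u_{0},u_{1})$ by definition. Since $\om_{\ee}$ is K\"ahler, Darvas' estimate in the K\"ahler setting (the analogue of the statement for $\E^{p}(X,\om_{\ee})$, proved with a dimensional constant independent of the K\"ahler representative) gives
\[
\tfrac{1}{C}\, I_{p}^{\om_{\ee}}(u_{0,\ee},u_{1,\ee}) \leq d_{p}^{p}(u_{0,\ee},u_{1,\ee}) \leq C\, I_{p}^{\om_{\ee}}(u_{0,\ee},u_{1,\ee}),
\]
where the superscript indicates the measures are $(\om_{\ee})^{n}_{u_{j,\ee}}$. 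The plan is to pass $\ee \to 0$ and show $I_{p}^{\om_{\ee}}(u_{0,\ee},u_{1,\ee}) \to I_{p}(u_{0},u_{1})$.

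\textbf{Step 4: Convergence of $I_{p}^{\om_{\ee}}$.} Use Theorem~\ref{thm: measure on contact sets} to write $(\om_{\ee})^{n}_{u_{j,\ee}} = \mathds{1}_{\{u_{j,\ee} = f_{j}\}}\om_{\ee}^{n}$ and $\bb^{n}_{u_{j}} = \mathds{1}_{\{u_{j} = f_{j}\}} \bb^{n}$. The contact sets $\{u_{j,\ee} = f_{j}\}$ decrease to $\{u_{j} = f_{j}\}$ (up to pluripolar sets), $u_{j,\ee} \to u_{j}$ in capacity, $|u_{j,\ee}-f_{j}|^{p}$ converges to $|u_{j}-f_{j}|^{p}$ quasi-uniformly, and the total masses satisfy $\int_{X}\om_{\ee}^{n} \to \int_{X}\bb^{n} = \vol(\bb)$ (which is where big-and-nef is used). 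Lemma~\ref{lem: weak convergence of measures} together with these mass convergences yields the required weak convergence of the measures tested against the bounded quasi-continuous integrand $|u_{j,\ee}-f_{j'}|^{p}$, hence $I_{p}^{\om_{\ee}}(u_{0,\ee},u_{1,\ee}) \to I_{p}(u_{0},u_{1})$.

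The main obstacle will be \textbf{Step 4}: carefully justifying that, although the measures $(\om_{\ee})^{n}_{u_{j,\ee}}$ and $\bb^{n}_{u_{j}}$ live in different cohomology classes, their masses and contact-set structure are compatible enough for Lemma~\ref{lem: weak convergence of measures} to apply. Once this convergence is in hand, the sandwich estimate from the K\"ahler case transfers intact to the big and nef case, and Steps 1--2 then upgrade it from $\cH_{\bb}$ to all of $\E^{p}(X,\bb)$.
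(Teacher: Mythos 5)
This statement is not proved in the paper at all: it is imported verbatim from \cite[Proposition 3.12]{dinezza2018lp}, so your proposal can only be compared with the argument in that source — which it essentially reconstructs: reduce to $\cH_{\bb}$ by decreasing approximation, then to the K\"ahler classes $\{\om_{\ee}\}$, invoke Darvas' comparability of $d_{p}^{p}$ and $I_{p}$ there with a dimensional constant, and pass to the limit in both quantities. The strategy is the right one, but two points need repair.

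First, Step 1 is both unnecessary and not justified as written. The approximating definitions of $d_{p}$ (from $\cH_{\bb}$ to $\E^{p}(X,\bb)$, and from $\E^{p}(X,\om_{\ee})$ to $\cH_{\bb}$), Theorem~\ref{thm: Ip converges along decreasing sequences}, and Darvas' K\"ahler estimate all apply to \emph{arbitrary} pairs, so no reduction to ordered endpoints is needed; you can simply delete Step 1 and run Steps 2--4 directly. Moreover, the claimed comparability $I_{p}(u_{0},w)+I_{p}(u_{1},w)\asymp I_{p}(u_{0},u_{1})$ with $w=P_{\bb}(u_{0},u_{1})$ does not follow from the reasons you give: expanding $|u_{0}-u_{1}|^{p}\leq 2^{p-1}\bigl(|u_{0}-w|^{p}+|u_{1}-w|^{p}\bigr)$ against $\bb^{n}_{u_{0}}$ produces the cross term $\int_{X}|u_{1}-w|^{p}\bb^{n}_{u_{0}}$, which is controlled by neither $I_{p}(u_{0},w)$ nor $I_{p}(u_{1},w)$ without an additional input (such as the fundamental inequality comparing $\int|u-v|^{p}\te^{n}_{u}$ with $\int|u-v|^{p}\te^{n}_{v}$ for $u\leq v$); in addition, Theorem~\ref{measures of rooftop envelope potentials} is only available for $C^{1,\bar{1}}$ data, and ``applied after regularization'' hides a nontrivial limiting argument.

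Second — and this is the gap you flag yourself — Lemma~\ref{lem: weak convergence of measures} cannot be applied in Step 4 as stated: the background forms vary ($\om_{\ee}\to\bb$) and $u_{j,\ee}$ is only $\om_{\ee}$-psh, so the measures $(\om_{\ee})^{n}_{u_{j,\ee}}$ are not of the form covered by that lemma (fixed forms, potentials converging in capacity). The contact-set formula is also misquoted: Theorem~\ref{thm: measure on contact sets} gives $(\om_{\ee})^{n}_{u_{j,\ee}}=\mathds{1}_{\{u_{j,\ee}=f_{j}\}}(\om_{\ee}+dd^{c}f_{j})^{n}$ and $\bb^{n}_{u_{j}}=\mathds{1}_{\{u_{j}=f_{j}\}}(\bb+dd^{c}f_{j})^{n}$, not indicator functions times $\om_{\ee}^{n}$ or $\bb^{n}$. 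The standard repair, used in the source and quoted later in this paper (Section~\ref{sec: uniform convexity for big and nef}), is \cite[Lemma 3.5]{dinezza2018lp}: for $u=P_{\bb}(f)$ with $dd^{c}f\leq C\om$, the densities $\rho_{\ee}$ of $(\om_{\ee}+dd^{c}u_{\ee})^{n}$ with respect to $\om^{n}$ are uniformly bounded and decrease pointwise to the density of $(\bb+dd^{c}u)^{n}$; combined with the uniform bound $|u_{0,\ee}-u_{1,\ee}|\leq\sup_{X}|f_{0}-f_{1}|$ and the pointwise convergence $u_{j,\ee}\searrow u_{j}$, dominated convergence yields $I_{p}^{\om_{\ee}}(u_{0,\ee},u_{1,\ee})\to I_{p}(u_{0},u_{1})$. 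With Step 1 removed and Step 4 rewritten along these lines, your argument goes through and coincides with the proof in \cite{dinezza2018lp}.
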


We recall the following

\begin{thm}[{\cite[Theorem 1.2]{DiNezzaLugeodesicdistanceandmameasure}}]\label{thm: distance for finite entropy} If $\bb$ represents a big and nef cohomology class, $u_{0},u_{1} \in \text{Ent}(X,\bb)$ have minimal singularity type, and $u_{t}$ is the weak geodesic joining $u_{0}$ and $u_{1}$, then 
\[
d_{p}(u_{0}, u_{1}) = \int_{X} |\dot{u}_{t}|^{p}\bb^{n}_{u_{t}} \qquad \text{ for all } t \in [0,1].
\]
\end{thm}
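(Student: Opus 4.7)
The plan is to prove this by approximation from the K\"ahler case, in which the analogous formula is well-known. First I would reduce to potentials of the form $u_i = P_{\bb}(f_i)$ with $f_i \in C^{1,\bar{1}}(X)$: these potentials have finite entropy (Lemma~\ref{lem: quasi-smoooth potentials have bounded entropy}) and minimal singularities, and any $u \in \text{Ent}(X,\bb)$ with minimal singularities can be approximated (say, decreasingly) by such potentials. The $d_p$-metric is continuous along such decreasing approximations (Theorem~\ref{thm: control by p-energy} together with Theorem~\ref{thm: Ip converges along decreasing sequences}), as is the right-hand side via the measure convergence in Lemma~\ref{lem: weak convergence of measures}. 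Hence it suffices to treat the regular case.

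For $u_0 = P_{\bb}(f_0), u_1 = P_{\bb}(f_1)$, I would approximate from the K\"ahler setting by introducing $\om_{\ee} = \bb + \ee\om$ (a K\"ahler class) and $u_{i,\ee} = P_{\om_{\ee}}(f_i)$. In the K\"ahler setting, the identity
\[
d_p(u_{0,\ee}, u_{1,\ee})^p = \int_X |\dot u_{t,\ee}|^p (\om_{\ee})^n_{u_{t,\ee}} \qquad \text{for all } t \in [0,1]
\]
is classical (following Chen--Darvas, and indeed underlies the Finsler structure on $\cH_\om$). By the definition of $d_p$ in the big and nef case (Section~\ref{subsec: metric geometry in big and nef case}), the left-hand side converges to $d_p(u_0,u_1)^p$ as $\ee \to 0$.

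For the right-hand side, I would first observe via Theorem~\ref{thm: left hand speed same as right hand speed} that the integral is constant in $t$ at each fixed $\ee$, so it suffices to handle the endpoint $t = 0$. Using Lemma~\ref{rooftop envelops set and derivatives of geodesics} and a layer-cake decomposition (as carried out in the proof of Theorem~\ref{thm: left hand speed same as right hand speed}), one rewrites
\[
\int_X |\dot u_{0,\ee}|^p (\om_{\ee})^n_{u_{0,\ee}} = p\int_{0}^{\infty} \tau^{p-1} (\om_{\ee})^n_{u_{0,\ee}} \bigl(\{P_{\om_{\ee}}(u_{0,\ee}, u_{1,\ee} - \tau) = u_{0,\ee}\}\bigr)\, d\tau
\]
together with the symmetric expression for $\tau < 0$. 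Now the MA measures of envelope potentials $P_{\om_{\ee}}(f_0, f_1 - \tau)$ are governed by Theorem~\ref{measures of rooftop envelope potentials} and Corollary~\ref{measures of rooftop potentials are usually nice}, and these structural formulas are stable under the perturbation $\om_{\ee} \to \bb$ by Lemma~\ref{lem: weak convergence of measures}. Passing to the limit $\ee \to 0$ and then invoking Theorem~\ref{thm: left hand speed same as right hand speed} (now applied to $u_0, u_1 \in \cH_{\bb}$) to transfer the formula from $t=0$ to arbitrary $t \in [0,1]$ yields the desired identity.

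The main obstacle is the limit $\ee \to 0$ of the right-hand side: the derivatives $\dot u_{t,\ee}$ have no direct pointwise convergence, so one cannot simply use dominated convergence. The envelope-theoretic reformulation above circumvents this by reducing the integral to distribution-function data involving only envelopes and MA measures, both of which behave well under capacity limits (via Lemma~\ref{lem: weak convergence of measures} and the continuity of $P_{\bb}(\cdot, \cdot)$ in the $f_i$). A secondary technical point is justifying the Fubini/dominated-convergence interchange once the integrand is controlled by $I_p(u_{0,\ee}, u_{1,\ee})$, which itself converges to $I_p(u_0,u_1)$ by Theorem~\ref{thm: Ip converges along decreasing sequences}.
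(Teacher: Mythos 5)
First, note that the paper does not prove this statement at all: it is quoted verbatim from Di Nezza--Lu \cite[Theorem 1.2]{DiNezzaLugeodesicdistanceandmameasure}, so there is no internal proof to compare against, and your attempt has to stand on its own. As written it has a genuine gap, and it sits exactly where the finite entropy hypothesis is supposed to do its work. You invoke Theorem~\ref{thm: left hand speed same as right hand speed} twice to get constancy of $t \mapsto \int_X |\dot{u}_{t}|^{p}\bb^{n}_{u_{t}}$ --- once at fixed $\ee$ and once at the end ``to transfer the formula from $t=0$ to arbitrary $t$'' --- but that theorem only equates the two \emph{endpoint} integrals at $t=0$ and $t=1$; it says nothing about interior times. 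The interior-time statement is the actual content of Di Nezza--Lu's theorem: for interior $t$ the slice measures $\bb^{n}_{u_{t}}$ are not contact-set measures of envelopes, so the layer-cake/envelope reformulation you use at $t=0$ (via Theorem~\ref{measures of rooftop envelope potentials} and Corollary~\ref{measures of rooftop potentials are usually nice}) is unavailable, and controlling $\int_X |\dot{u}_{t}|^{p}\bb^{n}_{u_{t}}$ there is precisely where the entropy bound on the endpoints is needed (it gives the weak compactness of the densities of the slice measures that makes the limiting arguments go through). Your proposal never uses the entropy hypothesis in any essential way; if the argument were correct as stated it would prove the formula for arbitrary minimal-singularity endpoints in $\E^{p}(X,\bb)$, which is strictly stronger than the cited theorem and not known --- a sign that a step must fail.

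The first reduction is also not justified on the right-hand side. To pass from general $u_{i}\in\text{Ent}(X,\bb)$ with minimal singularities to $u_{i}=P_{\bb}(f_{i})$, $f_{i}\in C^{1,\bar{1}}(X)$, you need $\int_X |\dot{u}^{j}_{t}|^{p}\bb^{n}_{u^{j}_{t}} \to \int_X |\dot{u}_{t}|^{p}\bb^{n}_{u_{t}}$ at interior $t$ along the decreasing approximation. Lemma~\ref{lem: weak convergence of measures} requires uniformly bounded \emph{quasi-continuous} integrands converging \emph{in capacity}; you establish neither quasi-continuity nor capacity convergence of $|\dot{u}^{j}_{t}|^{p}$ (the time-derivatives of the approximating geodesics have no obvious convergence at interior times), so this step is asserted rather than proved --- and again it is exactly the place where Di Nezza--Lu exploit finite entropy. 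The $\ee\to 0$ analysis you sketch at $t=0$ for $\cH_{\bb}$-type data is plausible (it mirrors arguments the paper does carry out, e.g.\ in Section~\ref{sec: uniform convexity for big and nef}), but it only yields the endpoint identity, not the claim for all $t\in[0,1]$.
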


In the case when $p = 1$, we have some special properties for the distance $d_{1}$. 

\begin{thm}[{\cite[Theorem 3.18]{dinezza2018lp}}]\label{thm: d1 in big and nef} If $u_{0}, u_{1} \in \E^{1}(X,\bb)$, then 
\[
d_{1}(u_{0}, u_{1}) = I(u_{0}) + I(u_{1}) -2 I(P_{\bb}(u_{0},u_{1})).
\]  
\end{thm}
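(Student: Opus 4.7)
The plan is to combine the Pythagorean identity (Theorem \ref{thm: Pythagorean identity in big and nef}) with an auxiliary ``monotone formula'' asserting that $d_1(u,v) = I(u) - I(v)$ whenever $u \geq v$ in $\E^1(X,\bb)$. Granting this formula and writing $\phi := P_\bb(u_0,u_1) \in \E^1(X,\bb)$, we have $\phi \leq u_i$ for $i=0,1$, so
\[
d_1(u_0,u_1) = d_1(u_0,\phi) + d_1(u_1,\phi) = (I(u_0) - I(\phi)) + (I(u_1) - I(\phi)) = I(u_0) + I(u_1) - 2I(\phi),
\]
which is the desired identity. Thus the entire proof reduces to establishing the monotone formula on $\E^1(X,\bb)$.

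To prove $d_1(u,v) = I(u) - I(v)$ for $u \geq v$, I would first treat the case $u,v \in \cH_\bb$, where both potentials have minimal singularity type and, by Lemma \ref{lem: quasi-smoooth potentials have bounded entropy}, finite entropy. Let $u_t$ denote the weak geodesic from $u$ to $v$. The constant path $w_t \equiv v$ is a valid subgeodesic joining $v$ to $v$, and its boundary values are dominated by $(u,v)$, so $u_t \geq v$ for all $t$. Combined with convexity of $u_t$ in $t$ and the elementary fact that a convex real function $f$ on $[0,1]$ satisfying $f(0) \geq f(1)$ and $f \geq f(1)$ is non-increasing, this forces $\dot u_t \leq 0$. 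Theorem \ref{thm: distance for finite entropy} then gives $d_1(u,v) = \int_X |\dot u_t|\, \bb^n_{u_t} = -\int_X \dot u_t\, \bb^n_{u_t}$. Combining this with the standard derivative formula $\frac{d}{dt} I(u_t) = \int_X \dot u_t\, \bb^n_{u_t}$ and the linearity $I(u_t) = (1-t)I(u) + tI(v)$ from Theorem \ref{thm: linearity of MongeAmpere along geodesic in big case} yields $\int_X \dot u_t\, \bb^n_{u_t} = I(v) - I(u)$, whence $d_1(u,v) = I(u) - I(v)$.

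To extend the monotone formula to all of $\E^1(X,\bb)$, I would approximate $u \geq v$ in $\E^1$ by decreasing sequences $u^j, v^j \in \cH_\bb$ with $u^j \geq v^j$ (replacing $v^j$ by $P_\bb(\min(u^j,v^j))$ if necessary). Then $d_1(u^j,v^j) \to d_1(u,v)$ by the very definition of $d_1$ on $\E^1(X,\bb)$, while $I(u^j) \to I(u)$ and $I(v^j) \to I(v)$ by the classical continuity of the Monge-Amp\`ere energy along decreasing sequences in $\E^1$; the monotone formula therefore survives the limit. Together with the Pythagorean reduction above, this completes the argument.

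The main technical obstacle is justifying the derivative formula $\frac{d}{dt} I(u_t) = \int_X \dot u_t\, \bb^n_{u_t}$ along weak geodesics in the big and nef setting, which requires that $\dot u_t$ be interpreted in a suitable weak (essentially $L^1$) sense and that the relevant Monge-Amp\`ere machinery be available at the regularity afforded by $\cH_\bb$; this is standard but nontrivial. A secondary subtlety is ensuring monotonicity of $u_t$ in $t$ pointwise (rather than merely in some average sense) so that $|\dot u_t| = -\dot u_t$ can be used unambiguously; this is handled by the convexity-plus-floor argument indicated above.
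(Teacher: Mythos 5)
Your overall scheme --- reduce to the comparable case via the Pythagorean identity (Theorem~\ref{thm: Pythagorean identity in big and nef}), prove the monotone formula $d_{1}(u,v)=I(u)-I(v)$ for $u\geq v$ first on $\cH_{\bb}$ and then extend by decreasing approximation using continuity of $d_{1}$ and of $I$ --- is exactly the route of \cite{dinezza2018lp}, and it is the same argument this paper runs for the big case in Theorem~\ref{thm: d1 agrees with DNL d1}. The Pythagorean reduction, the monotonicity argument ($u_{t}\geq v$ by comparison with the constant subgeodesic, hence $\dot{u}_{t}\leq 0$ by convexity), and the approximation step are all sound.

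The one point you may not leave as a black box is the ``standard derivative formula'' $\frac{d}{dt}I(u_{t})=\int_{X}\dot{u}_{t}\,\bb^{n}_{u_{t}}$, which you yourself flag as the main obstacle: in the big and nef setting this is precisely Lemma~\ref{lem: comparable potential in E1 and big and nef}, whose proof in this paper uses the very identity you are proving (together with Theorem~\ref{thm: distance for finite entropy}), so citing it --- or any version of the derivative formula obtained from the $d_{1}$-formula --- would be circular. The good news is that it has an independent proof at the regularity of $\cH_{\bb}$, and no actual differentiation of $I$ is needed: as in Lemma~\ref{lem: monge ampere energy and geodesic in big case}, combine the two-sided estimate $\int_{X}(u-v)\,\bb^{n}_{u}\leq I(u)-I(v)\leq \int_{X}(u-v)\,\bb^{n}_{v}$ from \cite[Theorem 2.4]{darvas2021l1} with linearity of $I$ along the weak geodesic (Theorem~\ref{thm: linearity of MongeAmpere along geodesic in big case}), convexity plus monotone convergence to pass to the endpoint slopes, and the equality of endpoint speeds (Theorem~\ref{thm: left hand speed same as right hand speed}) to get $\int_{X}\dot{u}_{0}\,\bb^{n}_{u_{0}}=I(u_{1})-I(u_{0})=\int_{X}\dot{u}_{1}\,\bb^{n}_{u_{1}}$. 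Since Theorem~\ref{thm: distance for finite entropy} already gives $d_{1}(u,v)=\int_{X}|\dot{u}_{0}|\,\bb^{n}_{u_{0}}$, the identity at $t=0$ suffices, and your proof closes. (A minor remark: finite entropy for elements of $\cH_{\bb}$ follows from the boundedness of the density of their Monge--Amp\`ere measure as in \cite[Lemma 3.5]{dinezza2018lp}, rather than literally from Lemma~\ref{lem: quasi-smoooth potentials have bounded entropy}, which assumes $f\in C^{1,\bar{1}}(X)$.)
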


This theorem allows us to have the following stronger result when the potentials $u_{0}$ and $u_{1}$ are comparable. 
\begin{lem}\label{lem: comparable potential in E1 and big and nef}
    If $\bb$ represents a big and nef cohomology class,  $u_{0}, u_{1} \in \text{Ent}(X,\bb)$ having minimal singularity satisfy $u_{0} \leq u_{1}$, and $u_{t}$ is the weak geodesic joining $u_{0}$ and $u_{1}$, then 
    \[
    I(u_{1}) - I(u_{0}) = \int_{X} \dot{u}_{t}\bb^{n}_{u_{t}} \qquad \text{ for all } t \in [0,1].
    \]
\end{lem}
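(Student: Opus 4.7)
The plan is to combine three ingredients already available in the excerpt: the Pythagorean / envelope formula for $d_{1}$ in Theorem~\ref{thm: d1 in big and nef}, the finite-entropy distance formula in Theorem~\ref{thm: distance for finite entropy}, and a sign observation about the geodesic derivative that is forced by the ordering $u_{0}\le u_{1}$.

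First, since $u_{0}\le u_{1}$, the rooftop envelope collapses: $P_{\bb}(u_{0},u_{1})=u_{0}$. Plugging this into Theorem~\ref{thm: d1 in big and nef} (valid because finite entropy plus minimal singularities implies $u_{0},u_{1}\in\E^{1}(X,\bb)$, using that $u_{j}-V_{\bb}$ is bounded) gives
\[
d_{1}(u_{0},u_{1}) \;=\; I(u_{0})+I(u_{1})-2I(u_{0}) \;=\; I(u_{1})-I(u_{0}).
\]

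Next I would show $\dot u_{t}\ge 0$. The constant path $v_{t}\equiv u_{0}$, $t\in(0,1)$, is a subgeodesic joining $u_{0}$ and $u_{1}$: the associated function on $X\times S$ is the pullback of $u_{0}$, hence $\pi^{*}\bb$-psh, and the boundary inequalities $\limsup_{t\to 0}v_{t}=u_{0}$ and $\limsup_{t\to 1}v_{t}=u_{0}\le u_{1}$ hold. Since the weak geodesic is the supremum over all subgeodesics, $u_{t}\ge u_{0}$ for every $t\in(0,1)$. Combined with convexity of $t\mapsto u_{t}(x)$, this forces both one-sided derivatives $\dot u_{t}(x)$ to be nonnegative (for every $x$ where they are defined, hence for $\bb^{n}_{u_{t}}$-a.e.\ $x$ since minimal singularities ensure $u_{t}$ is finite on the support).

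Finally, apply Theorem~\ref{thm: distance for finite entropy} with $p=1$, which is applicable because $u_{0},u_{1}\in\text{Ent}(X,\bb)$ have minimal singularities:
\[
d_{1}(u_{0},u_{1})=\int_{X}|\dot u_{t}|\,\bb^{n}_{u_{t}}=\int_{X}\dot u_{t}\,\bb^{n}_{u_{t}},
\]
where in the second equality we used the nonnegativity just established. Equating the two expressions for $d_{1}(u_{0},u_{1})$ yields the identity for all $t\in[0,1]$.

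There is no real obstacle here; the only subtlety is making sure $\dot u_{t}\ge 0$ $\bb^{n}_{u_{t}}$-almost everywhere, which is handled by the constant-subgeodesic comparison above. This proof will be reused as a black-box ingredient later (in parallel with Theorem~\ref{thm: linearity of MongeAmpere along geodesic in big case}, which shows $\tfrac{d}{dt}I(u_{t})=I(u_{1})-I(u_{0})$ directly and offers an alternative route via differentiating the linearity of $I$ along the geodesic).
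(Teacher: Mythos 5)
Your proposal is correct and follows essentially the same route as the paper's own proof: collapse $P_{\bb}(u_{0},u_{1})=u_{0}$ and use Theorem~\ref{thm: d1 in big and nef} to get $d_{1}(u_{0},u_{1})=I(u_{1})-I(u_{0})$, establish $\dot{u}_{t}\geq 0$ via the constant subgeodesic $u_{0}$ together with convexity, and then equate with the $p=1$ case of Theorem~\ref{thm: distance for finite entropy}. No gaps.
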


\begin{proof}
    Since the path $w_{t} \mapsto u_{0}$ is a subgeodesic joining $u_{0}$ and $u_{1}$, therefore $u_{t} \geq u_{0}$. This means  that $\dot{u}_{0} \geq 0$. By convexity of $u_{t}$ in the $t$ variable, we get that $0 \leq \dot{u}_{0} \leq \dot{u}_{t}$.  

    Since $u_{0} \leq u_{1}$, we have $P_{\bb}(u_{0},u_{1}) = u_{0}$. Thus Theorem~\ref{thm: d1 in big and nef} implies 
    \[
    d_{1}(u_{0}, u_{1}) = I(u_{1}) - I(u_{0}).
    \]
    On the other hand, Theorem~\ref{thm: distance for finite entropy} along with the observation that $\dot{u}_{t} \geq 0$ imply that
    \[
    d_{1}(u_{0}, u_{1}) = \int_{X}|\dot{u}_{t}|\bb^{n}_{u_{t}} = \int_{X}\dot{u}_{t} \bb^{n}_{u_{t}} \qquad \text{ for all } t \in [0,1].
    \]
    Combining the two expressions for $d_{1}(u_{0},u_{1})$ we get 
    \[
    I(u_{1}, u_{0}) = \int_{X} \dot{u}_{t} \bb^{n}_{u_{t}} \qquad \text{ for all } t \in [0,1].
    \]
\end{proof}

When $\theta$ is big, and not necessarily nef, we can have the above result in a slightly restrictive setting as in the following lemma. 

\begin{lem}\label{lem: monge ampere energy and geodesic in big case}
    Let $\theta$ represent a big cohomology class and let $u_{0} = P_{\theta}(f_{0})$ and $u_{1} = P_{\theta}(f_{1})$ for $f_{0},f_{1} \in C^{1,\bar{1}}(X)$ satisfy $u_{0} \leq u_{1}$. If $u_{t}$ is the weak geodesic joining $u_{0}$ and $u_{1}$, then 
    \[
        I(u_{1}) - I(u_{0}) = \int_{X} \dot{u}_{0}\theta^{n}_{u_{0}} = \int_{X} \dot{u}_{1} \theta^{n}_{u_{1}}.
    \]
\end{lem}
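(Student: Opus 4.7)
The plan is to split the statement into its two equalities. The second, $\int_X \dot u_0 \theta^n_{u_0} = \int_X \dot u_1 \theta^n_{u_1}$, will follow directly from Theorem~\ref{thm: left hand speed same as right hand speed} with $p = 1$ once I show the one-sided derivatives are non-negative. Since $u_0 \leq u_1$, the constant path $w_t \equiv u_0$ is a subgeodesic with $\limsup_{t \to 0,1} w_t \leq u_{0,1}$, so $u_t \geq u_0$ on $(0,1)$. Combined with convexity of $u_t$ in $t$, this forces monotonicity: for $0 < s < t < 1$, $u_s \leq \tfrac{t-s}{t}u_0 + \tfrac{s}{t}u_t \leq u_t$. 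Consequently $\dot u_0, \dot u_1 \geq 0$, so $|\dot u_{0,1}| = \dot u_{0,1}$ and Theorem~\ref{thm: left hand speed same as right hand speed} at $p = 1$ gives the second equality immediately.

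For the first equality, I differentiate $I(u_t)$ at $t = 0^+$ in two ways. By Theorem~\ref{thm: linearity of MongeAmpere along geodesic in big case}, $I(u_t) = (1-t)I(u_0) + t I(u_1)$, so $\tfrac{I(u_t) - I(u_0)}{t} = I(u_1) - I(u_0)$ for every $t \in (0,1]$. On the other hand, the standard cocycle formula for the Monge-Amp\`ere energy (obtained by differentiating $I$ along the affine path $s \mapsto u_0 + s(u_t - u_0)$ and integrating the identity $\tfrac{d}{ds}I = \int \dot v_s \theta^n_{v_s}$ in $s$) gives
\[
I(u_t) - I(u_0) = \frac{1}{n+1}\sum_{j=0}^n \int_X (u_t - u_0)\, \theta_{u_t}^j \wedge \theta_{u_0}^{n-j},
\]
reducing the claim to verifying that for each $j$,
\[
\lim_{t \searrow 0} \int_X \frac{u_t - u_0}{t}\, \theta_{u_t}^j \wedge \theta_{u_0}^{n-j} = \int_X \dot u_0 \, \theta^n_{u_0}.
\]
By convexity in $t$, the difference quotient $(u_t - u_0)/t$ decreases pointwise to $\dot u_0$; from $u_t \leq (1-t)u_0 + t u_1$ it is bounded above by $u_1 - u_0 \in L^\infty$ (both endpoints have minimal singularities), and it is non-negative and quasi-continuous. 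Since $u_t \searrow u_0$ in capacity and all potentials have minimal singularities, the mixed products $\theta_{u_t}^j \wedge \theta_{u_0}^{n-j}$ have total mass equal to $\vol(\theta)$, matching that of the limit $\theta^n_{u_0}$, so no mass escapes. Hence Lemma~\ref{lem: weak convergence of measures} applies and delivers the needed convergence; summing over $j$ yields $I(u_1) - I(u_0) = \int_X \dot u_0 \, \theta^n_{u_0}$.

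The main technical step is the passage to the limit inside the mixed non-pluripolar products $\theta_{u_t}^j \wedge \theta_{u_0}^{n-j}$ for $j \geq 1$: the underlying measure varies with $t$, so one cannot simply invoke dominated convergence against a fixed measure as in the $j = 0$ case. Instead, one must combine the decreasing convergence in capacity of both the integrand $(u_t - u_0)/t \searrow \dot u_0$ and the currents $\theta_{u_t} \to \theta_{u_0}$ with the no-mass-loss provided by minimal singularities, which is precisely what Lemma~\ref{lem: weak convergence of measures} is designed to handle.
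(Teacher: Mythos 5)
Your treatment of the second equality is fine and matches the paper: since $u_0\le u_1$ the geodesic is non-decreasing in $t$, so $\dot u_0,\dot u_1\ge 0$ and Theorem~\ref{thm: left hand speed same as right hand speed} with $p=1$ applies. The problem is the first equality, at the step where you pass to the limit in the mixed terms $\int_X \frac{u_t-u_0}{t}\,\theta^j_{u_t}\wedge\theta^{n-j}_{u_0}$ by invoking Lemma~\ref{lem: weak convergence of measures}. That lemma requires the integrands to be quasi-continuous, uniformly bounded, and to converge \emph{in capacity} to a \emph{quasi-continuous} limit. Your limit function is $\dot u_0=\inf_{t>0}(u_t-u_0)/t$, a decreasing limit of quasi-continuous functions, hence a priori only quasi-upper-semicontinuous; you give no argument that it is quasi-continuous, nor that $(u_t-u_0)/t\to\dot u_0$ in capacity (a quasi-Dini argument would need continuity of the limit on the good sets). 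So the hypotheses of the lemma are simply not verified, and this is exactly the direction that monotonicity alone cannot supply: fixing $t_0$, letting $t\to 0$ first (where Lemma~\ref{lem: weak convergence of measures} does apply, since $\chi_{t_0}$ is a fixed bounded quasi-continuous function), and then letting $t_0\to 0$ by monotone convergence against the fixed measure $\theta^n_{u_0}$ only yields $\limsup_{t\to 0}\int_X\frac{u_t-u_0}{t}\,\theta^j_{u_t}\wedge\theta^{n-j}_{u_0}\le\int_X\dot u_0\,\theta^n_{u_0}$, i.e.\ the single inequality $I(u_1)-I(u_0)\le\int_X\dot u_0\,\theta^n_{u_0}$, not the termwise equality your argument needs. (Your cocycle formula, obtained by differentiating $I$ along affine paths, is also asserted rather than justified in the big setting, though it is indeed available for minimal-singularity potentials; the essential obstruction is the liminf direction above.)

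The paper's proof is built precisely to avoid varying mixed Monge--Amp\`ere measures: it uses the estimate $\int_X(u-v)\theta^n_u\le I(u)-I(v)\le\int_X(u-v)\theta^n_v$ from \cite[Theorem 2.4]{darvas2021l1} against the two fixed measures $\theta^n_{u_0}$ and $\theta^n_{u_1}$, together with linearity of $I$ along the geodesic (Theorem~\ref{thm: linearity of MongeAmpere along geodesic in big case}) and monotone convergence of the difference quotients, to obtain the sandwich $\int_X\dot u_0\,\theta^n_{u_0}\ge I(u_1)-I(u_0)\ge\int_X\dot u_1\,\theta^n_{u_1}$, and then closes it with Theorem~\ref{thm: left hand speed same as right hand speed} at $p=1$. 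Your argument can be repaired along the same lines: keep your upper bound at $t=0$, run the symmetric estimate at $t=1$ to get $I(u_1)-I(u_0)\ge\int_X\dot u_1\,\theta^n_{u_1}$, and let Theorem~\ref{thm: left hand speed same as right hand speed} force all three quantities to coincide. As written, however, the exact limit you rely on at $t=0$ is not established, so the first equality is not proved.
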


\begin{proof}
    The proof extends the ideas in the proof of \cite[Proposition 3.18]{dinezza2018lp} to the big case. The idea is to use Theorem~\ref{thm: left hand speed same as right hand speed}, Theorem~\ref{thm: linearity of MongeAmpere along geodesic in big case} along with \cite[Theorem 2.4]{darvas2021l1} which says that for $u, v \in \PSH(X,\theta)$ with minimal singularity type, $\int_{X} (u-v)\theta^{n}_{u} \leq I(u) - I(v) \leq \int_{X} (u-v)\theta^{n}_{v}$. 

    By convexity of the geodesic $u_{t}$ in the $t$-direction, we have $0 \leq \dot{u}_{0} \leq \dot{u}_{t} \leq \dot{u}_{1}$. Thus $u_{t}$ is increasing with $t$. Thus we have 
    \begin{align*}
        \int_{X} \dot{u}_{0} \theta^{n}_{u_{0}} &= \int_{X} \lim_{t \to 0} \frac{u_{t} - u_{0}}{t} \theta^{n}_{u_{0}} \\
        &= \lim_{t\to 0} \int_{X} \frac{u_{t} - u_{0}}{t} \theta^{n}_{u_{0}} \\
        &\geq \lim_{t \to 0} \frac{I(u_{t}) - I(u_{0})}{t} \\
        &= \lim_{t\to 0} I(u_{1}) - I(u_{0}).
    \end{align*}
    In the second line, we could exchange limit with integral because of the convexity of $u_{t}$ in the $t$ variable and the monotone convergence theorem. In the third line, we used the inequality mentioned above, and in the last line, we used that $I$ is affine along the weak geodesics. Similarly, we can show that 
    \begin{align*}
        \int_{X} \dot{u}_{1} \theta^{n}_{u_{1}} &= \int_{X} \lim_{t\to 1} \frac{u_{1} - u_{t}}{1-t} \theta^{n}_{u_{1}} \\
        &= \lim_{t\to 1} \int_{X} \frac{u_{1}- u_{t}}{1-t} \theta^{n}_{u_{1}} \\
        &\leq \lim_{t \to 1} \frac{I(u_{1}) - I(u_{t})}{1-t} \\
        &= I(u_{1}) - I(u_{0}).
    \end{align*}
    Combining these two we get $\int_{X} \dot{u}_{0}\theta^{n}_{u_{0}} \geq I(u_{1}) - I(u_{0}) \geq \int_{X} \dot{u}_{1} \theta^{n}_{u_{1}}$. Combining with Theorem~\ref{thm: left hand speed same as right hand speed} for $p=1$, we get that 
    \[
    \int_{X} \dot{u}_{0}\theta^{n}_{u_{0}} = I(u_{1}) - I(u_{0}) = \int_{X}\dot{u}_{1}\theta^{n}_{u_{1}}.
    \]
\end{proof}

\section{From the Big and Nef to the Prescribed Analytic Singularity } \label{sec: analytic singularities to big and nef}
 $(X,\om)$ be a compact K\"ahler manifold and $\theta$ be a closed smooth $(1,1)$-form representing a big cohomology class. We fix $\psi \in \PSH(X,\theta)$ a model potential that has analytic singularities of type $(\mathcal{I},c)$. By Hironaka's embedded desingularization theorem, we can find a modification $\mu: \tilde{X} \to X$ such that $\mu^{*}\mathcal{I} = \mathcal{O}(-E)$ where $E = \sum_{i} \lambda_{i} E_{i}$ is a simple normal crossing divisor.   We can choose metrics $h_{i}$ on $\mathcal{O}(E_{i})$ and canonical sections $s_{i}$ of $\mathcal{O}(E_{i})$. Let $R_{h_{i}}$ be the curvature for the metrics $h_{i}$ on $\mathcal{O}(E_{i})$. We denote 
\[
|s|^{2}_{h} = \prod_{i=1}^{k}|s_{i}|^{2\lambda_{i}}_{h_{i}} \quad \text{ and } \quad R_{h} = \sum_{i=1}^{k} \lambda_{i} R_{h_{i}}
\]

Thus for this modification, we have 
\[
\psi \circ \mu = c \log |s|^{2}_{h} + g
\]
where $g$ is a bounded function. See \cite[Section 5.9]{demaillyanalyticmethods} for more details. 

Now, $\mu^{*} \theta  +dd^{c}\psi \circ \mu \geq 0$. Thus $\mu^{*}\theta + c dd^{c}\log|s|^{2}_{h} + dd^{c}g \geq 0$. By the Ponicar\'e-Lelong formula
\[
[[E]] = R_{h} + dd^{c}\log|s|^{2}_{h},
\]
where $[[E]]$ is the current of integration along $E$, we can write $\mu^{*}\theta -cR_{h} + c[[E]] + dd^{c}g \geq 0$. Define
\begin{equation}\label{eq: equation for theta tilde}
    \tilde{\theta} = \mu^{*}\theta - cR_{h},
\end{equation}
so that 
\begin{equation}\label{eq: primary modification equation}
\mu^{*}\theta + dd^{c}(\psi\circ \mu) = \tilde{\theta} +c[[E]]+dd^{c}g. 
\end{equation}

 On $\tilde{X}\setminus E$ (we abuse the notation to denote by $E$  the analytic set on which the divisor $E$ is supported), $\tilde{\theta} + dd^{c} g \geq 0$. As $g$ is bounded from above, $g$ extends uniquely to all of $\tilde{X}$ to a $\tilde{\theta}$-psh function $g$. Thus $\tilde{\theta} + dd^{c}g \geq 0$ on all of $\tilde{X}$. Since $g$ is a bounded $\tte$-psh function, $\tilde{\theta}$ represents a nef class. This follows from the following argument using Demailly's regularization theorem. 

 Since $\tilde{\theta} + dd^{c} g\geq 0$, we have $\tilde{\theta} + \ee \tilde{\om} + dd^{c}g \geq \ee \tilde{\om}$, where $\tilde{\om}$ is an arbitrary K\"ahler form on $\tilde{X}$. Demailly's regularization theorem implies there is a K\"ahler potential $\psi$ in the class $\{\theta + \ee \tilde{\om}\}$ with analytic singularities such that $\psi \geq g$ which is smooth outside its singular locus. Since $g$ is bounded from below, $\psi$ has no singular locus, thus $\psi$ is a smooth K\"ahler potential in $\{\tilde{\theta} + \ee \tilde{\om}\}$, so $\tilde{\theta} +\ee\tilde{\om}$ is a K\"ahler class. This shows that $\tilde{\theta}$ is nef. 

We can go back and forth between the spaces $\PSH(X,\theta,\psi)$ and $\PSH(\tilde{X},\tilde{\theta})$ that preserves various pluripotential theoretic relationships.The following theorem describes the correspondence between $\PSH(X,\theta,\psi) \leftrightarrow \PSH(\tilde{X},\tilde{\theta})$. This correspondence is well known in the community (see \cite[Lemma 4.3]{darvas2023twisted} and \cite[Section 4.1]{Trusianicontinuitymethod}), but we write a proof here for completeness, as our definition of analytic singularities is slightly more general than in \cite{Trusianicontinuitymethod}. 

\begin{thm}\label{thm: bijection between analytic singularity and big and nef}
Let $\theta$ represent a big cohomology class on $X$ and $\psi \in \PSH(X,\theta)$ has analytic singularities. Let $\mu: \tilde{X} \to X$ be the desingularization of the singularities of $\psi$ and $\tilde{\theta}$ be a closed smooth $(1,1)$-form on $\tilde{X}$ as described above. Then the map $\PSH(X,\theta, \psi) \ni   u \mapsto \tilde{u}:= (u-\psi)\circ \mu + g \in \PSH(\tilde{X}, \tilde{\theta})$ is an order-preserving bijection. 
\end{thm}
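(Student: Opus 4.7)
The strategy is to verify four things in order: that the map is well defined (i.e.\ $\tilde u \in \PSH(\tilde X, \tte)$), that it is injective, that it is surjective, and that it preserves order. Well-definedness is the substantive step; the others are essentially formal.

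For well-definedness, I would work outside the exceptional divisor $E$ first, where $\mu$ is biholomorphic and $\psi \circ \mu = c \log|s|^2_h + g$ is finite. On $\tilde X \setminus E$ the formula for $\tu$ becomes
\[
    \tu \;=\; u \circ \mu \;-\; c \log|s|^2_h.
\]
Using the Poincar\'e--Lelong formula $dd^c \log|s|^2_h = [[E]] - R_h$ and the definition $\tte = \mu^* \te - c R_h$, a direct calculation gives
\[
    \tte + dd^c \tu \;=\; \mu^*(\te + dd^c u) \;-\; c [[E]].
\]
So the task reduces to showing $\mu^*\te_u \geq c[[E]]$ as currents on $\tX$. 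This is exactly where I use the hypothesis $u \preceq \psi$: since $u \leq \psi + C$, we get $u \circ \mu \leq c \log|s|^2_h + g + C$ locally on $\tX$, so the Lelong number of $u \circ \mu$ along each $E_i$ is at least $c \lambda_i$. Siu's decomposition of the positive current $\mu^*\te_u$ then yields $\mu^*\te_u \geq c \sum_i \lambda_i [[E_i]] = c[[E]]$, as desired. Finally, since $u - \psi \leq C$ on $X$ and $g$ is bounded, $\tu$ is bounded from above on $\tX \setminus E$, so it extends uniquely across the analytic set $E$ to a $\tte$-psh function on $\tX$ by the standard extension theorem for quasi-psh functions.

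For surjectivity, given $\tv \in \PSH(\tX, \tte)$ I set $u \circ \mu := \tv + c \log|s|^2_h$ on $\tX \setminus E$ (equivalently $u \circ \mu = \tv - g + \psi \circ \mu$, matching $(u - \psi)\circ \mu + g = \tv$). The same Poincar\'e--Lelong computation reversed gives
\[
    \mu^*\te + dd^c(u \circ \mu) \;=\; \tte + dd^c \tv + c[[E]] \;\geq\; 0,
\]
so $u \circ \mu \in \PSH(\tX, \mu^*\te)$. By Lemma~\ref{lem: on pushforward of psh functions}, this descends to a unique $u \in \PSH(X, \te)$. Boundedness of $\tv$ from above together with $c \log|s|^2_h = \psi \circ \mu - g$ yields $u \circ \mu \leq \psi \circ \mu + C$ on $\tX \setminus E$, hence $u \leq \psi + C$ on $X \setminus \mu(E)$; upper semicontinuity then propagates this inequality everywhere, so $u \in \PSH(X, \te, \psi)$, and by construction the image of $u$ is $\tv$.

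Injectivity and order preservation are immediate from the explicit formula: if $\tu_1 = \tu_2$, then $u_1 = u_2$ on the complement of the pluripolar set $\mu(E) \cup \{\psi = -\infty\}$, and two $\te$-psh functions agreeing off a pluripolar set coincide; likewise $u_1 \leq u_2$ is equivalent to $\tu_1 \leq \tu_2$ since $\mu$ is a bijection away from a pluripolar set and both $-\psi$ and $+g$ are applied symmetrically. The main obstacle in the whole argument is the Lelong-number/Siu-decomposition step that produces the extra current $c[[E]]$; once that is in hand, the construction is formal.
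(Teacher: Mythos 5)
Your proposal is correct, and its surjectivity, injectivity and order-preservation steps are essentially the paper's argument (the paper obtains bijectivity by constructing the inverse via Lemma~\ref{lem: on pushforward of psh functions}, exactly as you do). The one genuine difference is in the forward direction: you establish $\tte+dd^{c}\tu\geq 0$ globally, including along $E$, by showing $\mu^{*}\te_{u}\geq c[[E]]$ through a Lelong number estimate (coming from $u\preceq\psi$) and Siu's decomposition theorem. The paper avoids this machinery entirely: it only computes on $\tX\setminus E$, where $dd^{c}\log|s|^{2}_{h}=-R_{h}$ so that $\tte+dd^{c}\tu=\mu^{*}(\te+dd^{c}u)\geq 0$ is immediate, and then uses $u\preceq\psi$ (plus boundedness of $g$) solely to see that $\tu$ is bounded above near $E$ and hence extends across the analytic set $E$ by the standard removable-singularity theorem for quasi-psh functions. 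Note that your own last step is exactly this extension argument, so your Siu/Lelong step, while valid, is redundant: once $\tu$ is $\tte$-psh off $E$ and bounded above, the extension theorem already handles $E$, and the hypothesis $u\preceq\psi$ is needed only there. What your heavier route buys is a direct identity $\tte+dd^{c}\tu=\mu^{*}\te_{u}-c[[E]]$ valid as currents on all of $\tX$, which makes the role of the singularity hypothesis quantitatively explicit, but for the statement at hand the paper's restrict-and-extend argument is shorter and uses only elementary facts.
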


\begin{proof}
    Let $u \in \PSH(X,\theta,\psi)$. On $X\setminus E$, $dd^{c}\log|s|^{2}_{h} +R_{h} = 0$. Thus on $X\setminus E$, $u\circ \mu - c\log|s|^{2}_{h}$ is a $(\mu^{*}\theta - cR_{h})$-psh function. As $u\circ \mu - c\log|s|^{2}_{h} = u\circ \mu - \psi \circ \mu + g$ and $(u-\psi)\circ \mu$ is bounded from above, we get $(u-\psi)\circ \mu + g$ is bounded from above, so it extends to a $(\mu^{*}\theta - cR_{h})$-psh function on all of $\tilde{X}$. As $\tilde{\theta} = \mu^{*}\theta - cR_{h}$, $(u-\psi)\circ \mu + g$ is $\tilde{\theta}$-psh.

Now we go in the other direction. Let $v \in \PSH(\tilde{X},\tilde{\theta})$. So $\tilde{\theta} + dd^{c}v \geq 0$.  From Equation~\ref{eq: primary modification equation},
\[
\mu^{*}\theta - c[[E]] + dd^{c}(\psi\circ \mu-g + v) \geq 0.
\]
Thus 
\[
\mu^{*}\theta + dd^{c}(\psi\circ \mu-g + v) \geq 0.
\]
Thus $(\psi\circ \mu -g+ v)$ is a $\mu^{*}\theta$-psh function. From Lemma~\ref{lem: on pushforward of psh functions}, we see that there exists a unique $u \in \PSH(X,\theta)$ such that $u\circ \mu = \psi\circ\mu-g + v$.  On $X\setminus \mu(E)$, $u = \psi -g\circ \mu^{-1} + v\circ \mu^{-1} \leq \psi +C$. Thus this inequality holds everywhere. Thus $u \in \PSH(X,\theta,\psi)$. 

Clearly, the map $u \mapsto (u-\psi)\circ \mu+g$ is order-preserving. 
\end{proof}

\begin{cor}\label{cor: minimal singularity to minimal singularity}
In the bijection, $\PSH(X,\theta,\psi) \ni u \mapsto \tilde{u} := (u-\psi)\circ \mu + g \in \PSH(\tilde{X}, \tilde{\theta})$, $u$ has the same singularity type as $\psi$ if and only if $\tilde{u}$ has minimal singularity type. 
\end{cor}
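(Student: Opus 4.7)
The key idea is to leverage Theorem~\ref{thm: bijection between analytic singularity and big and nef} by noting that $\psi$ itself maps under the bijection to $\tilde\psi = (\psi - \psi) \circ \mu + g = g$, which is bounded. So the corollary reduces to the auxiliary equivalence: a function $\tilde u \in \PSH(\tX, \tte)$ has minimal singularity type if and only if $\tilde u$ is bounded. I would first verify this equivalence. If $v \in \PSH(\tX, \tte)$ is bounded, then $v - \sup v \leq 0$ is a candidate in the defining supremum of $V_{\tte}$, giving $V_{\tte} \geq v - \sup v$, so $v \preceq V_{\tte}$; and $v \geq \inf v$ combined with $V_{\tte} \leq 0$ gives $v \geq V_{\tte} + \inf v$, so $V_{\tte} \preceq v$. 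In particular $g$ has minimal singularity type, so $V_{\tte}$ must itself be bounded, and conversely any function of minimal singularity type is bounded.

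For the forward implication of the corollary, assume $u$ and $\psi$ have the same singularity type, so $|u - \psi| \leq C$ on $\{\psi > -\infty\}$. Then $(u - \psi) \circ \mu$ is bounded on the open set where it is defined, hence $\tilde u = (u - \psi) \circ \mu + g$ is bounded on the Zariski open set $\tX \setminus E$. Since $\tilde u \in \PSH(\tX, \tte)$ and $E$ is pluripolar, the identity $\tilde u(x) = \limsup_{y \to x, \, y \notin E} \tilde u(y)$ holds for every $x \in \tX$, so the uniform bounds extend to all of $\tX$; by the preceding paragraph, $\tilde u$ has minimal singularity type.

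For the reverse implication, if $\tilde u$ has minimal singularity type then $\tilde u$ is bounded, so $(u - \psi) \circ \mu = \tilde u - g$ is bounded on $\tX \setminus E$; via the biholomorphism $\mu : \tX \setminus E \to X \setminus \mu(E)$ this yields $|u - \psi| \leq C'$ on $X \setminus \mu(E)$. The main technical obstacle is to propagate this bound across the closed analytic subset $\mu(E)$, which is pluripolar in $X$. I would invoke the standard fact that any $\theta$-psh function $h$ satisfies $h(x) = \limsup_{y \to x, \, y \notin \mu(E)} h(y)$ for every $x \in X$ (since the upper semicontinuous regularization of $h|_{X \setminus \mu(E)}$ is $\theta$-psh and agrees with $h$ off a pluripolar set, hence equals $h$); applying this to both $u$ and $\psi$ in conjunction with the uniform bound on $X \setminus \mu(E)$ yields $|u - \psi| \leq C'$ everywhere on $X$, so $u$ and $\psi$ share the same singularity type.
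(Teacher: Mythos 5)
Your proof is correct and follows essentially the same route as the paper: both directions transfer the two-sided bound between $u-\psi$ and $\tu-g$ through the biholomorphism $\mu:\tX\setminus E \to X\setminus\mu(E)$ and extend it across the exceptional/pluripolar sets, using that $g$ is bounded so that minimal singularity type in $\PSH(\tX,\tte)$ amounts to boundedness. Your explicit verification of that equivalence and the $\limsup$ arguments for propagating the bounds across $E$ and $\mu(E)$ are details the paper leaves implicit, not a different method.
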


\begin{proof}
    If $u$ has the same singularity type as $\psi$, then for some $C$, $\psi - C \leq u$. Thus $-C \leq u-\psi$. Thus $-C \leq (u-\psi)\circ \mu = \tilde{u} - g$. As $g$ is bounded, we get $\tilde{\mu}$ has the minimal singularity type. 

    Similarly, if $\tilde{u}$ has the minimal singularity type, then $-C \leq (u-\psi)\circ \mu + g$. Thus on $X\setminus \mu(E)$, $-C \leq u-\psi + g\circ \mu^{-1}$ that implies $\psi - C' \leq u$ as $g$ is bounded. Since both are $\theta$-psh functions, the inequality holds everywhere, therefore $\psi - C' \leq u$, hence $u$ has the same singularity type as $\psi$. 
\end{proof}

Now we will describe how the bijection described above preserves the non-pluripolar product. 
\begin{thm}\label{thm: pushforward of non-pluripolar measure}
    Given $u_{1}, \dots, u_{n} \in \PSH(X,\theta,\psi)$, and corresponding $\tilde{u}_{j} := (u_{j} - \psi)\circ \mu + g \in \PSH(\tilde{X},\tilde{\theta})$, their non-pluripolar product satisfy 
    \[
    \mu_{*}\langle \tilde{\theta}_{\tilde{u}_{1}} \wedge \dots \wedge \tilde{\theta}_{\tilde{u}_{n}}\rangle = \langle\theta_{u_{1}} \wedge \dots \wedge \theta_{u_{n}}\rangle.
    \]
\end{thm}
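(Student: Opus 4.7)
The plan is to reduce everything to the invariance result of Di Nezza (Theorem~\ref{thm :DiNezza on pluripolar product being invariant}) by showing that on the complement of the exceptional divisor $E$, the currents $\tilde{\theta}_{\tilde{u}_j}$ and $(\mu^{*}\theta)_{u_j \circ \mu}$ coincide, and then using that $E$ is pluripolar so the non-pluripolar products agree.

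First I would compute the difference $\tilde{u}_j - u_j \circ \mu$ on $\tilde{X} \setminus E$. By the definition $\tilde{u}_j = (u_j - \psi) \circ \mu + g$ and the identity $\psi \circ \mu = c \log |s|^{2}_h + g$, we have $\tilde{u}_j = u_j \circ \mu - c \log |s|^{2}_h$ on $\tilde{X} \setminus E$. Combining with $\tilde{\theta} = \mu^{*}\theta - cR_h$ and the smooth Poincaré--Lelong identity $dd^c \log |s|^{2}_h = -R_h$ valid on $\tilde{X}\setminus E$, we obtain the pointwise identity of smooth-plus-$dd^c$-of-psh currents
\[
\tilde{\theta} + dd^c \tilde{u}_j = \mu^{*}\theta - cR_h + dd^c(u_j \circ \mu) - c\, dd^c \log |s|^{2}_h = \mu^{*}\theta + dd^c(u_j \circ \mu) = (\mu^{*}\theta)_{u_j \circ \mu}
\]
on $\tilde{X}\setminus E$. (On the whole of $\tilde{X}$, the corresponding identity incorporates the Lelong current $c[[E]]$, which is precisely Equation~\eqref{eq: primary modification equation} applied after subtracting $\psi\circ\mu$.)

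Next, since the non-pluripolar product of $(1,1)$-currents is by definition a non-pluripolar measure on $\tilde{X}$, and since $E$ is an analytic set and therefore pluripolar, the measures $\langle \tilde{\theta}_{\tilde{u}_1} \wedge \dots \wedge \tilde{\theta}_{\tilde{u}_n}\rangle$ and $\langle (\mu^{*}\theta)_{u_1 \circ \mu} \wedge \dots \wedge (\mu^{*}\theta)_{u_n \circ \mu}\rangle$ put no mass on $E$. Because they agree as positive currents on $\tilde{X}\setminus E$ by the previous step, and the construction of the non-pluripolar product is local, both measures coincide on $\tilde{X}\setminus E$, and hence as measures on $\tilde{X}$.

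Finally, I would push forward by $\mu$. Theorem~\ref{thm :DiNezza on pluripolar product being invariant} gives
\[
\mu_{*}\langle (\mu^{*}\theta)_{u_1 \circ \mu} \wedge \dots \wedge (\mu^{*}\theta)_{u_n \circ \mu}\rangle = \langle \theta_{u_1} \wedge \dots \wedge \theta_{u_n}\rangle,
\]
and combined with the equality of non-pluripolar measures just established, this yields the desired identity. The main subtlety is making sure that the local cancellation of $cR_h$ against $c\,dd^c\log|s|^{2}_h$ is applied only off $E$, and that one does not need to track the Lelong contribution $c[[E]]$ in the non-pluripolar product; this is precisely handled by the fact that $E$ is pluripolar and the non-pluripolar product ignores pluripolar sets.
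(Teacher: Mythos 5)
Your proposal is correct and follows essentially the same route as the paper: both reduce to the invariance result of Theorem~\ref{thm :DiNezza on pluripolar product being invariant} together with $\mu_{*}\mu^{*}\theta_{u_{j}} = \theta_{u_{j}}$, after identifying $\tilde{\theta}_{\tilde{u}_{j}}$ with $(\mu^{*}\theta)_{u_{j}\circ\mu}$ up to the Lelong current $c[[E]]$. The only cosmetic difference is that the paper takes the non-pluripolar part of the global identity $\tilde{\theta}_{\tilde{u}_{j}} = (\mu^{*}\theta)_{u_{j}\circ\mu} - c[[E]]$, whereas you verify the equality of currents off $E$ and invoke locality plus the fact that non-pluripolar products put no mass on the pluripolar set $E$ --- which is precisely the justification for the paper's ``taking the non-pluripolar part'' step.
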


\begin{proof}From Equation~\eqref{eq: primary modification equation}, we can write 
\[
\tilde{\theta}  = \mu^{*}\theta - c[[E]] + dd^{c}\psi\circ\mu - dd^{c}g.
\]

Adding $dd^{c}\tilde{u}_{j}$ both sides we get 
\[
\tilde{\theta} + dd^{c}\tilde{u}_{j} = \mu^{*}\theta + dd^{c}u_{j}\circ \mu - c[[E]].
\]
Taking the non-pluripolar part, we get 
\[
\langle \tilde{\theta} + dd^{c}\tilde{u}_{j} \rangle = \langle \mu^{*}\theta + dd^{c}u_{j}\circ \mu \rangle.
\]
Now we take the non-pluripolar product to get 
\[
\langle \tilde{\theta}_{\tilde{u}_{1}} \wedge \dots \wedge \tilde{\theta}_{\tilde{u}_{n}} \rangle = \langle\mu^{*}(\theta_{u_{1}}) \wedge \dots \wedge \mu^{*}(\theta_{u_{n}})\rangle.
\]
Taking push-forward of both the measures, applying Theorem~\ref{thm :DiNezza on pluripolar product being invariant}, and observing that $\mu_{*}\mu^{*}(\theta_{u_{j}}) = \theta_{u_{j}}$ we get 
\[
\mu_{*}\langle \tilde{\theta}_{\tilde{u}_{1}} \wedge \dots \wedge \tilde{\theta}_{\tilde{u}_{n}}\rangle = \langle \theta_{u_{1}} \wedge \dots \wedge \theta_{u_{n}} \rangle
\]
as desired. 
\end{proof}

A consequence of the above theorem is that the bijection $\PSH(X,\theta,\psi) \leftrightarrow \PSH(\tilde{X},\tilde{\theta})$ preserves the mass and the finite energy classes of the potentials. 

\begin{cor}\label{cor: bijection preserves mass and energy}
Under the bijection $\PSH(X,\theta,\psi) \ni u \mapsto \tilde{u}:= (u-\psi)\circ \mu + g \in \PSH(\tilde{X},\tilde{\theta})$, we have $\int_{X} \theta^{n}_{u} = \int_{\tilde{X}} \tilde{\theta}^{n}_{\tilde{u}}$ and 
\[
\int_{X} |u-\psi|^{p}\theta^{n}_{u} < \infty  \iff \int_{\tilde{X}} |\tilde{u}-V_{\tilde{\theta}}|^{p} \tilde{\theta}_{\tilde{u}} < \infty
\]
Thus the map $u \mapsto \tilde{u}$ is also a bijection between $\E^{p}(X,\theta,\psi)$ and $\E^{p}(\tilde{X},\tilde{\theta})$. 
\end{cor}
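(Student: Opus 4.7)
The plan is to derive both statements directly from Theorem~\ref{thm: pushforward of non-pluripolar measure}, the bounded error between $g$ and $V_{\tilde\theta}$, and the standard change of variables for pushforward measures.

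\emph{Mass equality.} Applying Theorem~\ref{thm: pushforward of non-pluripolar measure} with $u_1 = \cdots = u_n = u$ gives $\mu_{*}\tilde\theta_{\tilde u}^{n} = \theta_{u}^{n}$. Since $\mu$ is proper (indeed biholomorphic outside the exceptional divisor), total masses are preserved by pushforward, so $\int_{\tilde X}\tilde\theta_{\tilde u}^{n} = \int_{X}\theta_{u}^{n}$. Taking $u = \psi$, which under the bijection corresponds to $\tilde\psi = g$, yields $\int_{\tilde X}\tilde\theta_{g}^{n} = \int_{X}\theta_{\psi}^{n} > 0$ because $\psi$ is a model singularity type. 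Since $\tilde\theta$ is nef (established in Section~\ref{sec: analytic singularities to big and nef}) and admits a bounded potential $g$ whose non-pluripolar Monge–Amp\`ere mass is strictly positive, $\tilde\theta$ is in fact big and nef, and therefore $V_{\tilde\theta}$ is bounded and $\int_{\tilde X}\tilde\theta_{V_{\tilde\theta}}^{n} = \int_{\tilde X}\tilde\theta_{g}^{n}$. Chaining the mass identities, the full mass conditions are equivalent: $u \in \E(X,\theta,\psi) \iff \tilde u \in \E(\tilde X,\tilde\theta)$.

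\emph{Energy equivalence.} By the definition of the bijection, $\tilde u - g = (u - \psi)\circ \mu$, so $|u-\psi|^{p}\circ \mu = |\tilde u - g|^{p}$ off the pluripolar locus where $u$ and $\psi$ simultaneously take the value $-\infty$ (which is not charged by the non-pluripolar measure). The change of variables for pushforward, combined with $\mu_{*}\tilde\theta_{\tilde u}^{n} = \theta_{u}^{n}$, gives
\[
\int_{X} |u-\psi|^{p}\,\theta_{u}^{n} \;=\; \int_{\tilde X} |\tilde u - g|^{p}\,\tilde\theta_{\tilde u}^{n}.
\]
Since $g$ and $V_{\tilde\theta}$ are both bounded on $\tilde X$, we have $|g - V_{\tilde\theta}| \leq M$ for some constant $M$. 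Applying the elementary inequality $|a+b|^{p} \leq 2^{p-1}(|a|^{p} + |b|^{p})$ twice and using that $\int_{\tilde X}\tilde\theta_{\tilde u}^{n} \leq \mathrm{Vol}(\tilde\theta) < \infty$, the integrals $\int_{\tilde X}|\tilde u - g|^{p}\tilde\theta_{\tilde u}^{n}$ and $\int_{\tilde X}|\tilde u - V_{\tilde\theta}|^{p}\tilde\theta_{\tilde u}^{n}$ are finite simultaneously. This yields the equivalence
\[
\int_{X}|u-\psi|^{p}\theta_{u}^{n} < \infty \;\iff\; \int_{\tilde X}|\tilde u - V_{\tilde\theta}|^{p}\tilde\theta_{\tilde u}^{n} < \infty.
\]

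Combining the two paragraphs shows that the bijection of Theorem~\ref{thm: bijection between analytic singularity and big and nef} restricts to a bijection between $\E^{p}(X,\theta,\psi)$ and $\E^{p}(\tilde X,\tilde\theta)$. The only nontrivial step is the verification that $\tilde\theta$ is big—equivalently, that $V_{\tilde\theta}$ is bounded—so that comparing the reference $g$ with the envelope $V_{\tilde\theta}$ is legitimate. This is where the mass computation $\int_{\tilde X}\tilde\theta_{g}^{n} = \int_{X}\theta_{\psi}^{n} > 0$, already a corollary of Theorem~\ref{thm: pushforward of non-pluripolar measure}, is essential.
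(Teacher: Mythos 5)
Your proof is correct and takes essentially the same route as the paper: push forward $\tilde{\theta}^{n}_{\tilde{u}}$ via Theorem~\ref{thm: pushforward of non-pluripolar measure}, change variables to identify $\int_{X}|u-\psi|^{p}\theta^{n}_{u}$ with $\int_{\tilde{X}}|\tilde{u}-g|^{p}\tilde{\theta}^{n}_{\tilde{u}}$, and then trade $g$ for $V_{\tilde{\theta}}$ using that both are bounded together with $|a+b|^{p}\leq 2^{p-1}(|a|^{p}+|b|^{p})$. The only difference is that you make explicit the full-mass equivalence (via $\int_{\tilde{X}}\tilde{\theta}^{n}_{g}=\int_{X}\theta^{n}_{\psi}>0$ and the fact that the bounded potential $g$ has the same mass as $V_{\tilde{\theta}}$), a point the paper leaves implicit.
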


\begin{proof}
    Applying Theorem~\ref{thm: pushforward of non-pluripolar measure} to any potential $u \in \PSH(X,\theta,\psi)$, we get that $\mu_{*}\tilde{\theta}_{\tilde{u}}^{n} = \theta^{n}_{u}$. Integrating it we find
    \[
    \int_{X} \theta^{n}_{u} = \int_{X} \mu_{*} \tilde{\theta}^{n}_{\tilde{u}} = \int_{\tilde{X}} \tilde{\theta}^{n}_{\tilde{u}}. 
    \]
    Thus $u$ and $\tilde{u}$ have the same mass. Similarly, integrating the function $|u-\psi|^{p}$ we get 
    \[
    \int_{X}|u-\psi|^{p}\theta^{n}_{u} = \int_{X} |u-\psi|^{p}\mu_{*}\tilde{\theta}^{n}_{\tilde{u}} = \int_{\tilde{X}} |(u-\psi)\circ \mu|^{p} \tilde{\theta}^{n}_{\tilde{u}} = \int_{\tilde{X}}|\tilde{u}-g|^{p}\tilde{\theta}^{n}_{\tilde{u}}.
    \]
    Now if $\int_{\tilde{X}} |\tilde{u} - V_{\tilde{\theta}}|^{p}\tte_{\tilde{u}}^{n} < \infty$, then 
    \[
    \int_{\tX} |\tu - g|^{p}\tte^{n}_{\tu} = \int_{\tX} |\tu - V_{\tte} - (g- V_{\tte})|^{p} \tte^{n}_{\tu} \leq 2^{p-1}\left( 
    \int_{\tX} |\tu - V_{\tte}|^{p}\tte^{n}_{\tu} + \int_{\tX} |g-V_{\tte}|^{p} \tte^{n}_{\tu}\right) < \infty.
    \]
    Here we used the Minkowski's inequality ($|a+b|^{p} \leq 2^{p-1}(|a|^{p} + |b|^{p})$ if $p\geq 1$), and the fact that $g$ and $V_{\tte}$ are bounded functions. We can show the other side in the same manner. If $\int_{\tX}|\tu - g|^{p}\tte^{n}_{\tu} < \infty$, then 
    \[
    \int_{\tX}|\tu-V_{\tte}|^{p}\tte^{n}_{\tu} = \int_{\tX} |\tu-g+g-V_{\tte}|^{p}\tte^{n}_{\tu} \leq 2^{p-1}\left( \int_{\tX}|\tu - g|^{p} \tte^{n}_{\tu} + \int_{\tX} |g-V_{\tte}|^{p}\tte^{n}_{\tu}\right) < \infty.
    \]
\end{proof}

The bijection $\PSH(X,\theta,\psi) \leftrightarrow \PSH(\tilde{X},\tilde{\theta})$ does not preserve the finite entropy classes in both directions. But we have

\begin{lem}\label{lem: entropy in desingularization}
    If $u \in \PSH(X,\theta,\psi)$ has finite entropy. Then $\tilde{u} = (u-\psi)\circ \mu + g\in \PSH(\tilde{X},\tilde{\theta})$ has finite entropy as well.
\end{lem}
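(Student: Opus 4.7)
The plan is to mimic the proof of Lemma~\ref{lem: one way finite entropy}, with the key adjustment that the identification between measures on $X$ and $\tilde{X}$ now comes from Theorem~\ref{thm: pushforward of non-pluripolar measure} rather than direct pullback. Finite entropy of $u$ means $\te^{n}_{u} = f \om^{n}$ with $\int_{X} f \log f \, \om^{n} < \infty$. I would first write $(\mu^{*}\om)^{n} = g_{0}\,\tilde{\om}^{n}$ for a nonnegative $g_{0} \in C^{\infty}(\tX)$ (bounded above by some constant $C$), and compute the density of $\tte^{n}_{\tu}$ with respect to $\tilde{\om}^{n}$.

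To get this density, I would note that $\tte^{n}_{\tu}$ is non-pluripolar, so it gives no mass to the exceptional set $E$; and Theorem~\ref{thm: pushforward of non-pluripolar measure} asserts $\mu_{*}\tte^{n}_{\tu} = \te^{n}_{u}$. Since $\mu : \tX \setminus E \to X \setminus \mu(E)$ is a biholomorphism and both $E$ and $\mu(E)$ are analytic (hence negligible for the relevant measures), the biholomorphic change of variables gives $\tte^{n}_{\tu} = (f \circ \mu)\, g_{0}\, \tilde{\om}^{n}$ on $\tX \setminus E$, and therefore almost everywhere on $\tX$. In particular $\tte^{n}_{\tu}$ is absolutely continuous with respect to $\tilde{\om}^{n}$ with density $\tilde{f} := (f \circ \mu)\, g_{0}$.

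Next, I would split the entropy integral:
\[
\int_{\tX} \tilde{f} \log \tilde{f}\, \tilde{\om}^{n} = \int_{\tX} (f\circ \mu) \log(f\circ \mu)\, (\mu^{*}\om)^{n} + \int_{\tX} (f\circ \mu)\, g_{0}\, \log g_{0}\, \tilde{\om}^{n}.
\]
The first summand equals $\int_{X} f \log f\, \om^{n} < \infty$ by pushing forward (equivalently, by the biholomorphic change of variables off the pluripolar set). For the second summand, since $g_{0} \leq C$ we have $\log g_{0} \leq \log C$, so
\[
\int_{\tX} (f\circ \mu)\, g_{0}\, \log g_{0}\, \tilde{\om}^{n} \leq \log(C)\int_{\tX} (f\circ \mu)(\mu^{*}\om)^{n} = \log(C)\int_{X} \te^{n}_{u} < \infty.
\]
Together with the fact that $x \log x \geq -1/e$ guarantees lower integrability of $\tilde{f} \log \tilde{f}$ automatically, this gives finiteness of the entropy of $\tte^{n}_{\tu}$ with respect to $\tilde{\om}^{n}$, completing the argument. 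The only real subtlety is carefully justifying the computation of the density of $\tte^{n}_{\tu}$ off the exceptional set, but this is handled cleanly by combining non-pluripolarity with Theorem~\ref{thm: pushforward of non-pluripolar measure}; the rest is the same estimate used in Lemma~\ref{lem: one way finite entropy}.
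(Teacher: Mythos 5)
Your proposal is correct and is essentially the paper's argument: the paper simply cites Lemma~\ref{lem: one way finite entropy} for the finite entropy of $\langle(\mu^{*}\theta+dd^{c}(u\circ\mu))^{n}\rangle$ and then identifies this non-pluripolar measure with $\tte^{n}_{\tu}$ via the identity established in (the proof of) Theorem~\ref{thm: pushforward of non-pluripolar measure}, whereas you recover the same density $(f\circ\mu)\,g_{0}$ directly from $\mu_{*}\tte^{n}_{\tu}=\te^{n}_{u}$, non-pluripolarity, and the biholomorphism off $E$, and then redo the same change-of-variables estimate. Both routes rest on the same two ingredients, so this is the same proof up to whether Lemma~\ref{lem: one way finite entropy} is cited or inlined.
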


\begin{proof}
    Recall from Lemma~\ref{lem: one way finite entropy} that $u\circ \mu \in \text{Ent}(X,\mu^{*}\theta)$. Thus the measure $\langle (\mu^{*}\theta  + dd^{c}u \circ \mu)^{n} \rangle$ has finite entropy with respect to the background K\"ahler volume form $\tilde{\om}^{n}$ on $\tilde{X}$. As non-pluripolar measures, we know that $\tilde{\theta}_{\tilde{u}}^{n} = \langle(\mu^{*}\theta + dd^{c}u\circ \mu)^{n} \rangle$, we get that the measure $\tilde{\theta}_{\tilde{u}}^{n}$ has finite entropy as well. Thus $\tilde{u}$ has finite entropy in $\PSH(\tilde{X},\tilde{\theta})$ as well.
\end{proof}

The bijective correspondence between $\PSH(\tilde{X},\tilde{\theta}) \leftrightarrow \PSH(X,\theta,\psi)$ preserves the weak geodesics. 

\begin{thm}\label{thm: Mabuchi geodesics are preserved}
    If $u_{t} \in \PSH(X,\theta,\psi)$ is the weak geodesic joining $u_{0}, u_{1} \in \PSH(X,\theta, \psi)$, then $\tilde{u}_{t} \in\PSH(\tilde{X},\tilde{\theta})$ is the weak geodesic joining $\tilde{u}_{0}, \tilde{u}_{1} \in \PSH(\tilde{X},\tilde{\theta})$. 
\end{thm}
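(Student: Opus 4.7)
The plan is to lift the bijection $u \mapsto \tilde u$ of Theorem~\ref{thm: bijection between analytic singularity and big and nef} to subgeodesic families over the strip $S$, and then match the weak geodesics by taking suprema. First, I would consider the product modification $\hat\mu := \mu \times \mathrm{id}_S : \tilde X \times S \to X \times S$, whose exceptional set is $E \times S$. The same Poincar\'e--Lelong computation that produced Equation~\eqref{eq: primary modification equation}, applied slice-wise, shows that for any candidate subgeodesic $V(x,z) := v_{\mathrm{Re}(z)}(x)$ representing $(x,z)\mapsto v_{\mathrm{Re}(z)}(x)$ as a $\pi^*\theta$-psh function on $X \times S$, the transformed function
\[
\hat V(y,z) := V(\mu(y), z) - c\log|s(y)|_h^2
\]
is $\pi^*\tilde\theta$-psh on $(\tilde X \setminus E) \times S$.

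Because the relation $v_t \le \psi + C$ holds uniformly in $t$ (from convexity and $u_0, u_1 \le \psi + C'$), the function $\hat V$ is locally bounded above near $E \times S$, so by the standard extension theorem for psh functions across analytic subsets it extends uniquely to a $\pi^*\tilde\theta$-psh function on all of $\tilde X \times S$. Slice-wise this extension agrees with $\tilde v_t = (v_t - \psi)\circ \mu + g$, so $\tilde v_t$ is a subgeodesic in $\PSH(\tilde X, \tilde\theta)$. The reverse direction runs symmetrically, adding $c\log|s|_h^2$ back and pushing forward via Lemma~\ref{lem: on pushforward of psh functions}.

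Next, I would check the endpoint condition. Since the bijection takes the pointwise form $\tilde v_t(y) = v_t(\mu(y)) + (g(y) - \psi(\mu(y)))$ with the second term independent of $t$, taking $\limsup_{t \to 0,1}$ commutes with the transformation on $\tilde X \setminus E$; hence $\limsup_{t \to 0,1} v_t \le u_{0,1}$ transfers to $\limsup_{t \to 0,1} \tilde v_t \le \tilde u_{0,1}$, and since both sides are $\tilde\theta$-psh this inequality persists across $E$. The bijection therefore restricts to an order-preserving bijection between the candidate sets $\mathcal S$ on $X$ joining $u_0, u_1$ and $\tilde{\mathcal S}$ on $\tilde X$ joining $\tilde u_0, \tilde u_1$.

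Finally, for $y \in \tilde X \setminus E$ I would compute pointwise
\[
\sup_{\tilde v \in \tilde{\mathcal S}} \tilde v_t(y) = \sup_{v \in \mathcal S} v_t(\mu(y)) + (g(y) - \psi(\mu(y))) = u_t(\mu(y)) + g(y) - \psi(\mu(y)) = \tilde u_t(y),
\]
so the weak geodesic $\sup_{\tilde v} \tilde v_t$ and the candidate $\tilde u_t$ are two $\pi^*\tilde\theta$-psh functions on $\tilde X \times S$ (both being equal to their own upper semicontinuous regularizations, per the excerpt's remark on weak geodesics) which agree off the analytic set $E$; hence they agree on all of $\tilde X \times S$. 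This identifies $\tilde u_t$ as the weak geodesic joining $\tilde u_0$ and $\tilde u_1$. The main delicacy I anticipate is the psh extension of $\hat V$ across $E \times S$ with the non-compact factor $S$, but this is a purely local question on $\tilde X \times S$ and reduces to the classical boundedness-above Riemann extension; no new global input is needed.
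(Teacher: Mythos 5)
Your proposal is correct and follows essentially the same route as the paper: transfer subgeodesics through the product modification $\mu\times\mathrm{id}_S$ using Equation~\eqref{eq: primary modification equation} slice-wise, extend across $E\times S$ by boundedness from above, and then identify the suprema of the two candidate families. The extra details you supply (the uniform bound $v_t\le\psi+C$, the endpoint check, and matching the suprema off $E$) are exactly the points the paper leaves implicit, so no genuinely different idea is involved.
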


\begin{proof}
    First, we will show that a subgeodesic $(0,1) \ni t \to v_{t} \in \PSH(X,\theta,\psi)$ maps to a subgeodesic $(0,1) \ni t \to \tilde{v}_{t} := (v_{t}-\psi)\circ \mu +g \in \PSH(\tilde{X},\tilde{\theta})$ and vice versa.
    
    We consider the following diagram of maps. 
    \[
    \begin{tikzcd}
        \tilde{X} \times S \arrow[swap]{d}{\mu\times \text{id}}\arrow{r}{\tilde{\pi}} & \tilde{X} \arrow{d}{\mu}\\
        X \times S \arrow{r}{\pi} & X
    \end{tikzcd}
    \]
    We will show that the map $\tilde{X} \times S \ni (x,z) \mapsto (v_{\text{Re}(z)} - \psi)\circ \mu(x) + g(x)$ is $\tilde{\pi}^{*}\tilde{\theta}$-psh map. As earlier, we will show that it is true on $(\tilde{X} \setminus E) \times S$ and then use boundedness of $(v_{\text{Re}(z)} - \psi)\circ \mu + g$ to conclude that it's true on all of $\tilde{X} \times S$. 

    Since $t \mapsto v_{t} \in \PSH(X,\theta,\psi)$ is a subgeodesic we get 
    \begin{align*}
        \pi^{*}\theta + dd^{c}v_{\text{Re}(z)}(x) &\geq 0.
        \intertext{Pull it back by $\mu \times \text{id}$ so that }
        (\mu\times \text{id})^{*}\pi^{*}\theta + dd^{c}(v_{\text{Re}(z)}\circ \mu(x)) &\geq 0.
        \intertext{Using the fact that $\pi \circ (\mu \times \text{id}) = \mu \circ \tilde{\pi}$ we get}
        \tilde{\pi}^{*}\mu^{*}\theta + dd^{c}(v_{\text{Re}(z)}\circ \mu(x)) &\geq 0 \\
        \implies \tilde{\pi}^{*}(\mu^{*} \theta + dd^{c}\psi) + dd^{c}((v_{\text{Re}(z)} - \psi) \circ \mu(x)) &\geq 0.
        \intertext{Since on $\tilde{X} \setminus E$, $\mu^{*}\theta + dd^{c}\psi = \tilde{\theta}+ dd^{c}g $ (see Equation~\eqref{eq: primary modification equation}) we get}
        \tilde{\pi}^{*}\tilde{\theta} + dd^{c}((v_{\text{Re}(z)} - \psi)\circ \mu(x) + g(x)) &\geq 0.
    \end{align*}
    Thus we see that the function $(\tilde{X}\setminus E) \times S \ni (x,z) \mapsto (v_{\text{Re}(z)} - \psi)\circ \mu(x) +g(x)$ is $\tilde{\pi}^{*}\tilde{\theta}$-psh function. Since the function is also bounded from above it extends to all of $\tilde{X}\times S$. Thus $(0,1) \ni t \mapsto (v_{t} - \psi)\circ \mu + g \in \PSH(\tilde{X},\tilde{\theta})$  is a subgeodesic.  

    Now we see the other direction. Let $(0,1) \ni t \mapsto \tilde{v}_{t} \in \PSH(\tilde{X},\tilde{\theta})$ is a subgeodesic.  This means $\tilde{\pi}^{*}\tilde{\theta} + dd^{c}\tilde{v}_{\text{Re}(z)}(x) \geq 0$. We saw earlier that for each $\tilde{v}_{t}$ there exists a unique $v_{t} \in \PSH(X,\theta,\psi)$ such that $(v_{t} - \psi)\circ \mu + g = \tilde{v}_{t}$. We need to show that $(0,1) \ni t \mapsto v_{t}$ is a subgeodesic. 

    To see this notice
    \begin{align*}
        \tilde{\pi}^{*}\tilde{\theta} + dd^{c}\tilde{v}_{\text{Re}(z)}(x) &\geq 0. 
        \intertext{Since $\tilde{\theta} = \mu^{*}\theta - [[E]] + dd^{c}\psi\circ \mu - dd^{c}g$ from Equation~\eqref{eq: primary modification equation}, above equation implies}
        \tilde{\pi}^{*}\mu^{*}\theta + dd^{c}(\tilde{v}_{\text{Re}(z)} + \psi\circ \mu - g)(x) &\geq 0. 
        \intertext{Now use that $v_{t} \circ \mu = \psi\circ \mu + \tilde{v}_{t} - g$ and commutation of the diagram, to see}
        (\mu \times \text id)^{*}\pi^{*}\theta + dd^{c}v_{\text{Re}(z)} \circ \mu (x) &\geq 0.
        \intertext{Now pushforward by $(\mu\times\text{id})_{*}$ to $X\times S$ to see}
        \pi^{*}\theta + dd^{c}v_{\text{Re}(z)}(x) &\geq 0.
    \end{align*}
    Hence $X \times S \ni (x,z) \mapsto v_{\text{Re}(z)}(x)$ is a $\pi^{*}\theta$-psh function. Thus $(0,1)\ni t \mapsto v_{t}$ is a subgeodesic. 

    Since subgeodesics correspond to subgeodesics under the correspondence $\PSH(X,\theta,\psi) \leftrightarrow \PSH(\tilde{X},\tilde{\theta})$, and geodesics are just supremum over subgeodesics, we get that the geodesics correspond to geodesics as well. In particular, if $u_{0}, u_{1} \in \PSH(X,\theta,\psi)$ and $u_{t} \in \PSH(X,\theta,\psi)$ is a geodesic joining $u_{0}$ and $u_{1}$, then $\tilde{u}_{t} = (u-\psi)\circ \mu + g \in \PSH(\tilde{X},\tilde{\theta})$ is the geodesic joining $\tilde{u}_{0}$ and $\tilde{u}_{1}$.
\end{proof}

Our next theorem allows us to extend Theorem~\ref{thm: linearity of MongeAmpere along geodesic in big case} in the case of prescribed singularity setting. 

\begin{thm}\label{thm: Monge ampere is linear in analytic singularity case} 
    If $u_{0}, u_{1} \in \PSH(X,\theta,\psi)$ have the same singularity type as $\psi$, and if $u_{t}$ is the weak geodesic joining $u_{0}$ and $u_{1}$, then 
    \[
    I(u_{t}) = (1-t)I(u_{0}) + tI(u_{1}).
    \]
\end{thm}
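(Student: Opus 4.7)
The plan is to transfer the question to the big and nef setting on $\tilde X$ through the bijection of Theorem~\ref{thm: bijection between analytic singularity and big and nef}, apply the linearity already established in that setting (Theorem~\ref{thm: linearity of MongeAmpere along geodesic in big case}), and finally transport the conclusion back via a cocycle identity for the Monge--Amp\`ere energy.

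First I would verify that each $u_t$ inherits the singularity type of $\psi$. This is immediate from the sandwich $P_\theta(u_0,u_1) \leq u_t \leq (1-t)u_0 + tu_1$, whose endpoints both share the singularity type of $\psi$. Corollary~\ref{cor: minimal singularity to minimal singularity} then guarantees that the transforms $\tilde u_t := (u_t - \psi)\circ \mu + g$ have minimal singularity type in $\PSH(\tilde X, \tilde\theta)$, and Theorem~\ref{thm: Mabuchi geodesics are preserved} identifies $t\mapsto \tilde u_t$ as the weak geodesic joining $\tilde u_0$ and $\tilde u_1$. Since $\tilde\theta$ is big (indeed, big and nef as observed before Theorem~\ref{thm: bijection between analytic singularity and big and nef}), applying Theorem~\ref{thm: linearity of MongeAmpere along geodesic in big case} on $\tilde X$ produces
\[
I_{\tilde\theta}(\tilde u_t) = (1-t)I_{\tilde\theta}(\tilde u_0) + tI_{\tilde\theta}(\tilde u_1).
\]

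The transport back to $X$ rests on the standard primitive (``cocycle'') identity for the Monge--Amp\`ere energy: for $u,v \in \PSH(X,\theta)$ sharing a common singularity type (so that $u-v$ is bounded),
\[
I(u) - I(v) = \frac{1}{n+1}\sum_{j=0}^{n} \int_X (u-v)\,\theta_u^j \wedge \theta_v^{n-j},
\]
and the analogous identity on $\tilde X$. I would apply this on $X$ with $v = \psi$ and on $\tilde X$ with $v = g$, noting that $g$ corresponds to $\psi$ under the bijection since $(\psi-\psi)\circ \mu + g = g$. Invoking Theorem~\ref{thm: pushforward of non-pluripolar measure} on each mixed product yields, for every $0\leq j \leq n$,
\[
\int_X (u_t-\psi)\,\theta_{u_t}^j \wedge \theta_\psi^{n-j} = \int_{\tilde X} (\tilde u_t - g)\,\tilde\theta_{\tilde u_t}^j \wedge \tilde\theta_g^{n-j}.
\]
Summing over $j$ gives $I(u_t) - I(\psi) = I_{\tilde\theta}(\tilde u_t) - I_{\tilde\theta}(g)$. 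Since $I(\psi)$ and $I_{\tilde\theta}(g)$ are constants, the right-hand side is affine in $t$; reading off the endpoints identifies the affine function with $(1-t)I(u_0)+tI(u_1)$.

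The main technical obstacle is the primitive identity itself in the prescribed singularity setting. The formula is standard for smooth/bounded potentials by direct integration by parts, and every integral appearing in our application is finite (because $u_t - \psi$ and $\tilde u_t - g$ are bounded), but the extension from smooth potentials to analytic-singularity potentials requires approximating $u_t$ and $\psi$ from above by less singular $\theta$-psh functions (for instance by $\max(u_t,\psi-k)$ after suitable normalization) and passing to the limit in the mixed non-pluripolar products, in the spirit of Theorem~\ref{thm: Ip converges along decreasing sequences}.
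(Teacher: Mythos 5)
Your proposal is correct and follows essentially the same route as the paper: transfer to $\tilde X$ via the bijection of Theorem~\ref{thm: bijection between analytic singularity and big and nef}, use Theorem~\ref{thm: pushforward of non-pluripolar measure} to identify $I(u_t)$ with $I(\tilde u_t)-I(g)$ up to additive constants, and conclude by Theorems~\ref{thm: Mabuchi geodesics are preserved}, \ref{cor: minimal singularity to minimal singularity} and~\ref{thm: linearity of MongeAmpere along geodesic in big case} on $\tilde X$. The only differences are cosmetic: you make explicit that $u_t$ retains the singularity type of $\psi$ (which the paper leaves implicit), and the ``cocycle identity'' you flag as the main obstacle is not really one, since on $X$ (with $v=\psi$) it is just the definition of the $\psi$-relative energy, and on $\tilde X$ (with $v=g$, both potentials of minimal singularity type) it is the standard identity in the big setting, exactly as used in the paper's computation.
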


\begin{proof}
    First, we will show that the correspondence between $\PSH(X,\theta,\psi)$ and $\PSH(\tilde{X},\tilde{\theta})$ preserves the Monge-Am\'ere energy up to a constant. Second, we use Theorem~\ref{thm: linearity of MongeAmpere along geodesic in big case} to obtain that the Monge-Amp\`ere energy is linear along $u_{t}$.

    Take $u \in \PSH(X,\te,\psi)$ with the same singularity type as $\psi$ and let $\tilde{u} := (u-\psi)\circ \mu + g \in \PSH(\tilde{X},\tilde{\theta})$. Then Theorem~\ref{thm: pushforward of non-pluripolar measure}, tells us that
    \begin{equation}\label{eq: measures in monge ampere energy}
    \mu_{*} (\tilde{\theta}_{\tilde{u}}^{j} \wedge \tilde{\theta}^{n-j}_{g}) = \theta^{j}_{u} \wedge \theta^{n-j}_{\psi}.     
    \end{equation}
    The Monge-Amp\`ere energy of $u$ is given by 
    \[
    I(u) = \frac{1}{n+1} \sum_{j=0}^{n}\int_{X} (u-\psi) \theta^{j}_{u} \wedge \theta^{n-j}_{\psi}.
    \]
    From Equation~\eqref{eq: measures in monge ampere energy} we get
    \[
    I(u) = \frac{1}{n+1} \sum_{j=0}^{n} \int_{\tilde{X}} (u-\psi) \mu_{*}(\tilde{\theta}_{\tilde{u}}^{j} \wedge \tilde{\theta}^{n-j}_{g})= \int_{\tilde{X}} (u-\psi)\circ \mu \,\tilde{\theta}^{j}_{\tilde{u}} \wedge \tilde{\theta}_{g}^{n-j}.
    \]
    Thus we have, 
    \[
    I(u) = \frac{1}{n+1}\sum_{j=0}^{n} \int_{\tilde{X}} (\tilde{u}-g) \,\tilde{\theta}_{\tilde{u}}^{j}\wedge \tilde{\theta}_{g}^{n-j}  =I(\tilde{u}) - I(g).
    \]

    By Theorem~\ref{thm: Mabuchi geodesics are preserved} we know that $\tilde{u}_{t}$ is a geodesic joining $\tilde{u}_{0}$ and $\tilde{u}_{1}$ and by Corollary~\ref{cor: minimal singularity to minimal singularity}, $\tilde{u}_{0}$ and $\tilde{u}_{1}$ have minimal singularity in $\PSH(\tilde{X},\tilde{\theta})$. Thus we can use Theorem~\ref{thm: linearity of MongeAmpere along geodesic in big case}, to get that $I(\tilde{u}_{t}) = (1-t)I(\tilde{u}_{0}) + t I(\tilde{u}_{0})$. From the calculation above, we have 
    \[
    I(u_{t}) = I(\tilde{u}_{t}) - I(g) = (1-t)I(\tilde{u}_{0}) + tI(\tilde{u}_{1}) - I(g) = (1-t) I(u_{0}) + tI(u_{0})
    \]
    as desired.
\end{proof}

\subsection{Metric space structure on $\E^{p}(X,\theta,\psi)$}\label{subsec: metric space structure on analytic singularity}

In this section, we will import the metric space structure on $\E^{p}(X,\theta,\psi)$ from the metric space structure in $\E^{p}(\tilde{X},\tilde{\theta})$ when $\psi$ is a model singularity with analytic singularity type. 

We can define the distance between $u_{0}, u_{1} \in \E^{p}(X,\theta,\psi)$ as follows. Let $\tilde{u}_{0} = (u_{0}- \psi)\circ \mu + g$ and $\tilde{u}_{1} = (u_{1} - \psi)\circ \mu + g$ be the corresponding potentials in $\E^{p}(\tilde{X},\tilde{\theta})$. Corollary~\ref{cor: bijection preserves mass and energy} tells us that $\tilde{u}_{0}$, $\tilde{u}_{1} \in \E^{p}(\tilde{X},\tilde{\theta})$. So we can define 
\begin{equation}\label{eq: definition of dp in analytic singualarity setting}
d_{p}(u_{0},u_{1}) := d_{p}(\tilde{u}_{0}, \tilde{u}_{1}).    
\end{equation}

Theorem~\ref{thm: metric in analytic singularity type doesn't depend on resulution} below shows that the metric as defined above, does not depend on the choice of the resolution of the singularities of $\psi$.

\begin{thm}\label{thm: Complete metric space structure on analytic singularity space}
The map $d_{p}$ as defined by Equation~\eqref{eq: definition of dp in analytic singualarity setting} makes $\E^{p}(X,\theta,\psi)$ a complete geodesic metric space.
\end{thm}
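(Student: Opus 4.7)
The strategy is simply to transport structure along the bijection established in Section 3. By Corollary~\ref{cor: bijection preserves mass and energy}, the map $\Phi : \E^{p}(X,\theta,\psi) \to \E^{p}(\tilde{X}, \tilde{\theta})$ sending $u \mapsto \tilde{u} = (u-\psi)\circ \mu + g$ is a bijection, and $\tilde{\theta}$ is big and nef by the discussion following Equation~\eqref{eq: equation for theta tilde}. Thus the target carries the complete geodesic metric $d_p$ of Di Nezza--Lu recalled in Section~\ref{subsec: metric geometry in big and nef case}. Definition~\eqref{eq: definition of dp in analytic singualarity setting} makes $\Phi$ an isometry by fiat; so the metric axioms (symmetry, triangle inequality, non-degeneracy) on $\E^p(X,\theta,\psi)$ are inherited instantly from $\E^p(\tilde{X},\tilde{\theta})$.

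For completeness, I would take a $d_p$-Cauchy sequence $\{u_j\} \subset \E^p(X,\theta,\psi)$, observe that $\{\tilde{u}_j\} = \{\Phi(u_j)\}$ is $d_p$-Cauchy in $\E^p(\tilde{X},\tilde{\theta})$, and use the completeness result of Di Nezza--Lu to produce a limit $\tilde{u} \in \E^p(\tilde{X},\tilde{\theta})$. Applying $\Phi^{-1}$ gives a unique $u \in \E^p(X,\theta,\psi)$ with $\Phi(u) = \tilde{u}$; since $\Phi$ is an isometry, $d_p(u_j, u) = d_p(\tilde{u}_j, \tilde{u}) \to 0$, so $u_j \to u$ in $(\E^p(X,\theta,\psi), d_p)$.

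For the geodesic property, given $u_0, u_1 \in \E^p(X,\theta,\psi)$, I would form the weak geodesic $t \mapsto u_t$ joining them, which lies in $\E^p(X,\theta,\psi)$ by the discussion in Section~\ref{subsec: weak geodesics} (invoking the prescribed-singularity analogue of \cite[Theorem 2.9]{GuptaCompletemetrictopology}). Theorem~\ref{thm: Mabuchi geodesics are preserved} then guarantees that $\tilde{u}_t = \Phi(u_t)$ is the weak geodesic in $\E^p(\tilde{X},\tilde{\theta})$ joining $\tilde{u}_0$ and $\tilde{u}_1$. By Di Nezza--Lu this $\tilde{u}_t$ is a metric geodesic in $(\E^p(\tilde{X},\tilde{\theta}), d_p)$, so for all $s,t\in[0,1]$, $d_p(u_s, u_t) = d_p(\tilde{u}_s, \tilde{u}_t) = |s-t|d_p(\tilde{u}_0, \tilde{u}_1) = |s-t|d_p(u_0, u_1)$, showing that $u_t$ is a metric geodesic.

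There is essentially no obstacle in the proof of this particular theorem beyond the groundwork already assembled in Section~\ref{sec: analytic singularities to big and nef}: the bijection preserves finite energy (Corollary~\ref{cor: bijection preserves mass and energy}) and weak geodesics (Theorem~\ref{thm: Mabuchi geodesics are preserved}), and the target is a complete geodesic space. The truly nontrivial fact, which is independence from the choice of log-resolution $\mu$, is postponed to the subsequent Theorem~\ref{thm: metric in analytic singularity type doesn't depend on resulution} and is not required here, since Equation~\eqref{eq: definition of dp in analytic singualarity setting} fixes a particular $\mu$ once and for all.
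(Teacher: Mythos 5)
Your proposal is correct and follows essentially the same route as the paper: the metric axioms and completeness are inherited through the isometric bijection $u \mapsto \tilde{u}$ with $(\E^{p}(\tX,\tte), d_{p})$, and the geodesic property comes from Theorem~\ref{thm: Mabuchi geodesics are preserved} together with the fact (from the proof of \cite[Theorem 3.17]{dinezza2018lp}) that weak geodesics in $\E^{p}(\tX,\tte)$ are metric geodesics. The extra details you spell out (Cauchy-sequence transport, non-degeneracy, membership of the weak geodesic in $\E^{p}(X,\te,\psi)$) are consistent with what the paper leaves implicit.
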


\begin{proof}
    $(\E^{p}(X,\theta,\psi), d_{p})$ is a complete metric space because $(\E^{p}(\tilde{X},\tilde{\theta}), d_{p})$ is a complete metric space. Moreover, if $u_{t} \in \E^{p}(X,\theta,\psi)$ is the weak geodesic joining $u_{0}, u_{1} \in \E^{p}(X,\theta,\psi)$, we claim $u_{t}$ is also the metric geodeisc. This means that for $0 \leq t \leq s \leq 1$, we have $d_{p}(u_{t},u_{s}) = |t-s|d_{p}(u_{0},u_{1})$. 

    We know $\tilde{u}_{0}, \tilde{u}_{1} \in \E^{p}(\tilde{X},\tilde{\theta})$. In the proof of \cite[Theorem 3.17]{dinezza2018lp}, authors show that the weak geodesic $\tilde{u}_{t}$ joining $\tilde{u}_{0}$ and $\tilde{u}_{1}$ satisfies $d_{p}(\tilde{u}_{t}, \tilde{u}_{s}) = |t-s|d_{p}(\tilde{u}_{0},\tilde{u}_{1})$. Thus by the definition of $d_{p}$ on $\E^{p}(X,\theta,\psi)$, we obtain that $d_{p}(u_{t},u_{s}) = |t-s|d_{p}(u_{0},u_{1})$. Hence $(\E^{p}(X,\theta,\psi), d_{p})$ is a complete geodesic metric space, with the weak geodesics being the metric geodesics as well.
\end{proof}

Now we prove useful some properties of the metric $(\E^{p}(X,\theta,\psi),d_{p})$. 

\begin{lem}[Pythagorean formula] \label{lem: Pythagorean formula in analytic singularity type}
    For $u_{0}, u_{1} \in \E^{p}(X,\te,\psi)$, we have 
    \[
    d_{p}^{p}(u_{0},u_{1}) = d_{p}^{p}(u_{0}, P_{\te}(u_{0}, u_{1})) + d_{p}^{p}(u_{1}, P_{\te}(u_{0}, u_{1})).
    \]
\end{lem}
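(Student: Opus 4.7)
The plan is to leverage the bijective correspondence $\PSH(X,\theta,\psi) \leftrightarrow \PSH(\tilde X, \tilde\theta)$ established in Theorem~3.1, together with the Pythagorean identity already known in the big and nef setting (Theorem~2.12), to transport the desired identity back to the prescribed-singularity setting.

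First I would verify that the bijection sends the rooftop envelope $P_{\te}(u_0,u_1)$ to $P_{\tte}(\tilde u_0,\tilde u_1)$. Since $u_0,u_1 \preceq \psi$, any candidate $v \in \PSH(X,\te)$ with $v \leq u_0, u_1$ automatically lies in $\PSH(X,\te,\psi)$, so that $P_{\te}(u_0,u_1) \in \PSH(X,\te,\psi)$. The bijection is order-preserving in both directions, and under it the class of candidates $\{v \in \PSH(X,\te,\psi) : v \leq u_0, u_1\}$ maps to $\{\tilde v \in \PSH(\tilde X,\tte) : \tilde v \leq \tilde u_0, \tilde u_1\}$; taking suprema (both $\min(u_0,u_1)$ and $\min(\tilde u_0,\tilde u_1)$ are upper semicontinuous, so no regularization issue arises) yields $\widetilde{P_{\te}(u_0,u_1)} = P_{\tte}(\tilde u_0,\tilde u_1)$.

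Next, by Corollary~3.3 the potentials $\tilde u_0,\tilde u_1$ lie in $\E^p(\tilde X,\tte)$, and since $\tte$ is big and nef the Pythagorean identity of Theorem~2.12 applies:
\[
d_p^p(\tilde u_0,\tilde u_1) = d_p^p\bigl(\tilde u_0, P_{\tte}(\tilde u_0,\tilde u_1)\bigr) + d_p^p\bigl(\tilde u_1, P_{\tte}(\tilde u_0,\tilde u_1)\bigr).
\]
Combining this with the previous step and the very definition~\eqref{eq: definition of dp in analytic singualarity setting} of $d_p$ on $\E^p(X,\te,\psi)$ (i.e.\ $d_p(u,v) = d_p(\tilde u,\tilde v)$), the right-hand side is exactly $d_p^p(u_0, P_{\te}(u_0,u_1)) + d_p^p(u_1, P_{\te}(u_0,u_1))$, and the left-hand side is $d_p^p(u_0,u_1)$.

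The only point that requires any care is the first step, namely checking that the bijection intertwines the two rooftop envelopes; once one notes that the order-preserving property and the inclusion $P_{\te}(u_0,u_1) \in \PSH(X,\te,\psi)$ are automatic, this is essentially formal. The rest is a one-line translation from the big-and-nef Pythagorean identity.
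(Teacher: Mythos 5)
Your proposal is correct and follows essentially the same route as the paper: transport the rooftop envelope through the order-preserving bijection $u \mapsto (u-\psi)\circ\mu + g$ so that $\widetilde{P_{\te}(u_{0},u_{1})} = P_{\tte}(\tilde{u}_{0},\tilde{u}_{1})$, then apply the big-and-nef Pythagorean identity and the definition of $d_{p}$ on $\E^{p}(X,\te,\psi)$. The extra detail you give on matching the candidate sets is exactly the justification the paper leaves implicit.
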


\begin{proof}
    The proof follows from Theorem~\ref{thm: Pythagorean identity in big and nef}, the Pythagorean identity for $d_{p}$ in the big and nef case, and the fact that $\widetilde{P_{\theta}(u_{0},u_{1})} = P_{\tte}(\tilde{u}_{0}, \tilde{u}_{1})$. This fact holds because the bijection $u \leftrightarrow \tilde{u}:= (u-\psi)\circ \mu + g$ is order-preserving. 
\end{proof}

The following Lemma says that the $d_{p}$ distance is controlled by the $I_{p}$ ``distance''. Given $u,v \in \E^{p}(X,\te, \psi)$, we define 
\[
I_{p}(u,v) = \int_{X} |u-v|^{p}(\te^{n}_{u} + \te^{n}_{v}).
\]
We have 
\begin{lem}\label{lem: Ip compares with dp in analytic singularity type as well}
    There is a constant $C > 1$, that depends only on $n$, such that for any $u_{0}, u_{1} \in \E^{p}(X,\te,\psi)$, 
    \[
    \frac{1}{C} I_{p}(u_{0}, u_{1}) \leq d_{p}^{p}(u_{0}, u_{1}) \leq CI_{p}(u_{0},u_{1}).
    \]
\end{lem}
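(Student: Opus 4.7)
The plan is to pull everything back to $\tilde{X}$ via the modification $\mu: \tilde{X} \to X$ and reduce to the big and nef estimate already recorded as Theorem~\ref{thm: control by p-energy}. Since $d_p$ on $\E^{p}(X,\te,\psi)$ was \emph{defined} in Equation~\eqref{eq: definition of dp in analytic singualarity setting} by $d_p(u_0,u_1) = d_p(\tilde{u}_0, \tilde{u}_1)$, and Corollary~\ref{cor: bijection preserves mass and energy} ensures $\tilde{u}_0, \tilde{u}_1 \in \E^{p}(\tX, \tte)$, the right object to identify is the $I_p$ quantity on the two sides.

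First I would set $\tilde{u}_j := (u_j - \psi)\circ \mu + g$ for $j = 0,1$ and apply Theorem~\ref{thm: control by p-energy} on $(\tX,\tte)$ (which is legitimate because $\tte$ represents a big and nef class by the discussion following Equation~\eqref{eq: equation for theta tilde}) to obtain
\[
\tfrac{1}{C}\,I_p(\tilde{u}_0, \tilde{u}_1) \leq d_p^{p}(\tilde{u}_0, \tilde{u}_1) \leq C\, I_p(\tilde{u}_0, \tilde{u}_1),
\]
with $I_p(\tilde{u}_0, \tilde{u}_1) = \int_{\tX} |\tilde{u}_0 - \tilde{u}_1|^{p}(\tte^{n}_{\tilde{u}_0} + \tte^{n}_{\tilde{u}_1})$.

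Next I would show $I_p(\tilde{u}_0,\tilde{u}_1) = I_p(u_0,u_1)$. Observe the pointwise identity $\tilde{u}_0 - \tilde{u}_1 = (u_0 - u_1)\circ \mu$, so $|\tilde{u}_0 - \tilde{u}_1|^p = |u_0 - u_1|^p \circ \mu$. Theorem~\ref{thm: pushforward of non-pluripolar measure} (applied $n$ times with $\tilde{u}_j$) gives $\mu_*\, \tte^{n}_{\tilde{u}_j} = \te^{n}_{u_j}$. Combining these via the change of variables formula for push-forward measures,
\[
\int_{\tX} |\tilde{u}_0 - \tilde{u}_1|^{p}\, \tte^{n}_{\tilde{u}_j} = \int_{\tX} |u_0 - u_1|^{p}\circ \mu \,\, \tte^{n}_{\tilde{u}_j} = \int_{X} |u_0 - u_1|^{p}\, \mu_* \tte^{n}_{\tilde{u}_j} = \int_{X} |u_0 - u_1|^{p}\, \te^{n}_{u_j},
\]
for $j = 0, 1$. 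Summing over $j$ yields $I_p(\tilde{u}_0,\tilde{u}_1) = I_p(u_0, u_1)$, and substituting into the inequality from Theorem~\ref{thm: control by p-energy} with $d_p^{p}(u_0,u_1) = d_p^{p}(\tilde{u}_0, \tilde{u}_1)$ finishes the proof.

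There is no serious obstacle: all the machinery is already in place, and the constant $C$ is inherited directly from the big and nef case (depending only on $n$). The only subtlety is making sure the change of variables is legal for the non-pluripolar measure, which is exactly what Theorem~\ref{thm: pushforward of non-pluripolar measure} provides.
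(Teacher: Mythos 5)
Your proposal is correct and follows essentially the same route as the paper: both pull back via the definition $d_p(u_0,u_1) = d_p(\tilde u_0,\tilde u_1)$, identify $I_p(u_0,u_1) = I_p(\tilde u_0,\tilde u_1)$ through the pointwise identity $\tilde u_0 - \tilde u_1 = (u_0-u_1)\circ\mu$ together with $\mu_*\tte^n_{\tilde u_j} = \te^n_{u_j}$, and then apply the big and nef estimate of Theorem~\ref{thm: control by p-energy} on $(\tX,\tte)$. No gaps; the constant is inherited exactly as you say.
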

\begin{proof}
    The proof follows from Theorem~\ref{thm: control by p-energy}, and the fact that $I_{p}(u_{0}, u_{1}) = I_{p}(\tilde{u}_{0}, \tilde{u}_{1})$. We observe
    \[
    I_{p}(\tilde{u}_{0}, \tilde{u}_{1}) = \int_{\tX} |\tilde{u}_{0} - \tilde{u}_{1}|^{p}(\tte^{n}_{\tilde{u}_{0}} + \tte^{n}_{\tilde{u}_{1}}) = \int_{\tX}|(\tilde{u}_{0} - \tilde{u}_{1})\circ\mu|(\tte^{n}_{\tilde{u}_{0}} + \tte^{n}_{\tilde{u}_{1}}).
    \]
    Pushing forward to $X$ by $\mu$ and using the fact that $\mu_{*}\tte^{n}_{\tilde{u}} = \te^{n}_{u}$, we get 
    \[
    I_{p}(\tilde{u}_{0}, \tilde{u}_{1}) = \int_{X}|u_{0} - u_{1}|^{p}(\te^{n}_{u_{0}}  +  \te^{n}_{u_{1}}) = I_{p}(u_{0}, u_{1}).
    \]
    From Theorem~\ref{thm: control by p-energy}, we know that there exists $C> 1$ such that 
    \[
    \frac{1}{C}I_{p}(\tilde{u}_{0}, \tilde{u}_{1}) \leq d_{p}^{p}(\tilde{u}_{0}, \tilde{u}_{1}) \leq C I_{p}(\tilde{u}_{0}, \tilde{u}_{1}).
    \]
    Therefore, from the above calculation we obtain that for the same $C$, we have 
    \[
    \frac{1}{C} I_{p}(u_{0}, u_{1}) \leq d_{p}^{p}(u_{0}, u_{1}) \leq C I_{p}(u_{0}, u_{1}).
    \]
    
\end{proof}

\begin{thm}\label{thm: distance between smooth objects in analytic singularity}
    Let $f_{0},f_{1} \in C^{1,\bar{1}}(X)$, $u_{0} = P_{\theta}[\psi](f_{0})$, $u_{1} = P_{\theta}[\psi](f_{1})$, and $u_{t}$ be the Mabuchi geodesic joining $u_{0}$ and $u_{1}$. Then 
    \[
    d_{p}^{p}(u_{0},u_{1}) = \int_{X} |\dot{u}_{t}|^{p}\theta^{n}_{u_{t}} \qquad \forall t \in [0,1].
    \]
\end{thm}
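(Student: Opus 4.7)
The plan is to reduce the claim to the big and nef case via the bijection $\PSH(X,\theta,\psi)\leftrightarrow\PSH(\tilde X,\tilde\theta)$ built in Section~\ref{sec: analytic singularities to big and nef}, and then invoke Theorem~\ref{thm: distance for finite entropy}.

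First I would verify the input hypotheses for the big and nef theorem. The envelopes $u_{0}=P_{\te}[\psi](f_{0})$ and $u_{1}=P_{\te}[\psi](f_{1})$ have the same singularity type as $\psi$ (this is a standard feature of the operator $P_{\te}[\psi](\cdot)$ applied to a $C^{1,\bar 1}$ function, which is bounded, so $P_{\te}[\psi](f)$ is squeezed between a translate of $\psi$ and $f$). By Corollary~\ref{cor: minimal singularity to minimal singularity}, the corresponding potentials $\tilde u_{0}=(u_{0}-\psi)\circ\mu+g$ and $\tilde u_{1}=(u_{1}-\psi)\circ\mu+g$ have minimal singularity type in $\PSH(\tilde X,\tilde\theta)$. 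Next, Lemma~\ref{lem: quasi-smoooth potentials have bounded entropy} gives $u_{0},u_{1}\in\mathrm{Ent}(X,\te,\psi)$, and Lemma~\ref{lem: entropy in desingularization} then promotes this to $\tilde u_{0},\tilde u_{1}\in\mathrm{Ent}(\tilde X,\tilde\theta)$. Since $\tilde\theta$ is big and nef, Theorem~\ref{thm: distance for finite entropy} applies.

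Second, I would transport the formula back to $X$. By Theorem~\ref{thm: Mabuchi geodesics are preserved}, the weak geodesic $\tilde u_{t}$ joining $\tilde u_{0}$ and $\tilde u_{1}$ in $\PSH(\tilde X,\tilde\theta)$ is exactly the image of $u_{t}$ under the bijection, i.e.\ $\tilde u_{t}=(u_{t}-\psi)\circ\mu+g$. Because $\psi$ and $g$ are independent of $t$, one has $\dot{\tilde u}_{t}=\dot u_{t}\circ\mu$ pointwise off the pluripolar locus where the derivatives are defined. Applying Theorem~\ref{thm: distance for finite entropy} to $\tilde u_{0},\tilde u_{1}$ yields
\[
d_{p}^{p}(\tilde u_{0},\tilde u_{1})=\int_{\tilde X}|\dot{\tilde u}_{t}|^{p}\tilde\theta^{n}_{\tilde u_{t}}=\int_{\tilde X}|\dot u_{t}\circ\mu|^{p}\tilde\theta^{n}_{\tilde u_{t}},\quad t\in[0,1].
\]
Pushing this measure forward by $\mu$ and using Theorem~\ref{thm: pushforward of non-pluripolar measure} (which gives $\mu_{*}\tilde\theta^{n}_{\tilde u_{t}}=\theta^{n}_{u_{t}}$) turns the right-hand side into $\int_{X}|\dot u_{t}|^{p}\theta^{n}_{u_{t}}$. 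Combined with the definition $d_{p}(u_{0},u_{1})=d_{p}(\tilde u_{0},\tilde u_{1})$ from Equation~\eqref{eq: definition of dp in analytic singualarity setting}, this yields the claimed identity.

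The only point that needs a small amount of care is the identification $\dot{\tilde u}_{t}=\dot u_{t}\circ\mu$ at the level of measurable functions: one should note that for every $t\in[0,1]$ the non-pluripolar measure $\tilde\theta^{n}_{\tilde u_{t}}$ puts no mass on the exceptional divisor $E$ (it is non-pluripolar and $E$ is analytic), so $\mu$ is a measure-theoretic isomorphism between $(\tilde X\setminus E,\tilde\theta^{n}_{\tilde u_{t}})$ and $(X\setminus\mu(E),\theta^{n}_{u_{t}})$, and the pointwise relation $\tilde u_{t}-g=(u_{t}-\psi)\circ\mu$ on $\tilde X\setminus E$ differentiates term by term in $t$. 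The rest of the argument is then a mechanical application of the ingredients listed above, so I expect the only genuinely non-trivial step to be pinning down the singularity type of $P_{\te}[\psi](f_{j})$ so that Corollary~\ref{cor: minimal singularity to minimal singularity} is applicable.
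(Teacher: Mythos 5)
Your proposal is correct and follows essentially the same route as the paper: both reduce to the big and nef class $\tilde\theta$ via the bijection, verify the hypotheses of Theorem~\ref{thm: distance for finite entropy} using Lemma~\ref{lem: quasi-smoooth potentials have bounded entropy}, Lemma~\ref{lem: entropy in desingularization} and Corollary~\ref{cor: minimal singularity to minimal singularity}, and then transport the formula back with Theorem~\ref{thm: Mabuchi geodesics are preserved}, the identity $\dot{\tilde u}_{t}=\dot u_{t}\circ\mu$, and $\mu_{*}\tilde\theta^{n}_{\tilde u_{t}}=\theta^{n}_{u_{t}}$. Your extra remarks (checking that $P_{\te}[\psi](f_{j})$ has the singularity type of $\psi$, and that the non-pluripolar measures ignore the exceptional set) are correct points of care that the paper leaves implicit.
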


\begin{proof}
    From Lemma~\ref{lem: quasi-smoooth potentials have bounded entropy}, $u_{0}, u_{1} \in \text{Ent}(X,\theta,\psi)$. From Lemma~\ref{lem: entropy in desingularization}, the potentials $\tilde{u}_{0}$ and $\tilde{u}_{1}$ have finite entropy and from Corollary~\ref{cor: minimal singularity to minimal singularity} $\tilde{u}_{0}, \tilde{u}_{1}$ have minimal singularity. Thus using Theorem~\ref{thm: distance for finite entropy}, 
    \[
    d_{p}^{p}(\tilde{u}_{0}, \tilde{u}_{1}) = \int_{\tilde{X}}|\dot{\tilde{u}}_{t}|^{p}\tte^{n}_{\tilde{u}_{t}} \qquad \forall t \in [0,1]
    \]
    where $\tilde{u}_{t}$ is the weak geodesic joining $\tilde{u}_{0}$ and $\tilde{u}_{1}$. We emphasize that Theorem~\ref{thm: distance for finite entropy}, which is about geodesic distance for potentials with finite entropy, plays a crucial role here. In our procedure for importing geometry from the big and nef setting to the analytic singularity setting, we lose the property that $\tilde{u}$ is of the form $P_{\tte}(\tilde{f})$ for some $\tilde{f} \in C^{1,\bar{1}}(\tilde{X})$ where $u = P_{\te}[\psi](f)$ for some $f \in C^{1,\bar{1}}(X)$. 
    
    Since $\tilde{u}_{t} = (u_{t} - \psi)\circ \mu + g$ where $u_{t}$ is the weak geodesic joining $u_{0}$ and $u_{1}$, we have $\dot{\tilde{u}}_{t}  = \dot{u}_{t} \circ \mu$. We also have $\mu_{*} \tilde\theta^{n}_{\tilde{u}_{t}} = \theta^{n}_{u_{t}}$. Combining these we get 
    \[
    d_{p}^{p}(u_{0},u_{1}) := d^{p}_{p}(\tilde{u}_{0},\tilde{u}_{1}) = \int_{\tilde{X}} |\dot{\tilde{u}}_{t}|^{p}\theta^{n}_{\tilde{u}_{t}} = \int_{\tilde{X}} |\dot{u}_{t}\circ \mu|^{p}\theta^{n}_{\tilde{u}_{t}} = \int_{X}|\dot{u}_{t}|^{p}\theta^{n}_{u_{t}}
    \]
    for all $t \in [0,1]$. 
\end{proof}

In the special setting of $p = 1$, we have 
\begin{lem}\label{lem: E1 in analytic singularity case}
    Let  $f_{0}, f_{1} \in C^{1,\bar{1}}(X)$ satisfy $f_{0} \leq f_{1}$, and $u_{0} = P_{\theta}[\psi](f_{0})$ and $u_{1} = P_{\theta}[\psi](f_{1})$. If $u_{t}$ is the weak geodesic joining $u_{0}$ and $u_{1}$, then
    \[
    I(u_{1}) - I(u_{0}) = \int_{X} \dot{u}_{t} \theta^{n}_{u_{t}} \qquad \text{ for all } t \in [0,1].
    \]
\end{lem}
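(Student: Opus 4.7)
The plan is to reduce to the big and nef setting on $\tilde X$ via the desingularization bijection of Section~\ref{sec: analytic singularities to big and nef}, and then invoke Lemma~\ref{lem: comparable potential in E1 and big and nef}, which is the analogous statement for comparable entropy potentials with minimal singularity in a big and nef class.

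First I would set $\tilde u_i = (u_i - \psi) \circ \mu + g$ for $i = 0, 1$ and check the three hypotheses needed to apply Lemma~\ref{lem: comparable potential in E1 and big and nef} to $\tilde u_0, \tilde u_1 \in \PSH(\tilde X, \tilde \theta)$. (i) Since $f_0 \leq f_1$ we get $u_0 \leq u_1$ from the definition of $P_\theta[\psi](\cdot)$, hence $\tilde u_0 \leq \tilde u_1$ by monotonicity of the bijection (Theorem~\ref{thm: bijection between analytic singularity and big and nef}). (ii) By the standard properties of $P_\theta[\psi](\cdot)$ the potentials $u_0, u_1$ have the same singularity type as $\psi$, so by Corollary~\ref{cor: minimal singularity to minimal singularity} both $\tilde u_0$ and $\tilde u_1$ have minimal singularity on $\tilde X$. (iii) By Lemma~\ref{lem: quasi-smoooth potentials have bounded entropy}, $u_0, u_1 \in \text{Ent}(X, \theta, \psi)$, and then by Lemma~\ref{lem: entropy in desingularization}, $\tilde u_0, \tilde u_1 \in \text{Ent}(\tilde X, \tilde \theta)$. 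Since $\tilde \theta$ is big and nef, Lemma~\ref{lem: comparable potential in E1 and big and nef} applies and gives
\[
I(\tilde u_1) - I(\tilde u_0) = \int_{\tilde X} \dot{\tilde u}_t \, \tilde\theta^n_{\tilde u_t} \quad \text{for all } t \in [0,1],
\]
where $\tilde u_t$ is the weak geodesic joining $\tilde u_0$ and $\tilde u_1$ in $\PSH(\tilde X, \tilde \theta)$.

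Next I would translate each side back to $X$. By Theorem~\ref{thm: Mabuchi geodesics are preserved}, the weak geodesic $\tilde u_t$ in $\PSH(\tilde X, \tilde \theta)$ corresponds to the weak geodesic $u_t$ in $\PSH(X, \theta, \psi)$ via $\tilde u_t = (u_t - \psi) \circ \mu + g$. Differentiating in $t$ gives $\dot{\tilde u}_t = \dot u_t \circ \mu$, and by Theorem~\ref{thm: pushforward of non-pluripolar measure} we have $\mu_* \tilde \theta^n_{\tilde u_t} = \theta^n_{u_t}$, so that
\[
\int_{\tilde X} \dot{\tilde u}_t \, \tilde \theta^n_{\tilde u_t} = \int_{\tilde X} (\dot u_t \circ \mu) \, \tilde \theta^n_{\tilde u_t} = \int_X \dot u_t \, \theta^n_{u_t}.
\]
For the left side, the computation in the proof of Theorem~\ref{thm: Monge ampere is linear in analytic singularity case} shows $I(u) = I(\tilde u) - I(g)$ for any $u$ with the same singularity type as $\psi$, so $I(u_1) - I(u_0) = I(\tilde u_1) - I(\tilde u_0)$. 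Combining the three identities yields the desired conclusion.

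The main subtlety is verifying that $\tilde u_0, \tilde u_1$ genuinely have minimal singularities on $\tilde X$: this requires that $P_\theta[\psi](f_i)$ has the same singularity type as $\psi$ (not merely more singular), which comes from the $f_i$ being in $C^{1,\bar 1}(X)$ so that one can bound $P_\theta[\psi](f_i)$ below by $\psi - C$ for a large constant $C$ using the monotone limit definition. Everything else is a bookkeeping translation between $X$ and $\tilde X$ using the tools already assembled in Section~\ref{sec: analytic singularities to big and nef}.
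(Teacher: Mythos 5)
Your proposal is correct and follows essentially the same route as the paper: transport $u_0,u_1$ to $\tilde u_0,\tilde u_1$ on $\tilde X$, verify comparability, minimal singularity (Corollary~\ref{cor: minimal singularity to minimal singularity}) and finite entropy (Lemmas~\ref{lem: quasi-smoooth potentials have bounded entropy} and~\ref{lem: entropy in desingularization}) so that Lemma~\ref{lem: comparable potential in E1 and big and nef} applies, then pull the identity back using $\dot{\tilde u}_t=\dot u_t\circ\mu$, $\mu_*\tilde\theta^n_{\tilde u_t}=\theta^n_{u_t}$, and $I(u)=I(\tilde u)-I(g)$. Your write-up just makes explicit the hypothesis checks that the paper delegates to the proofs of Theorems~\ref{thm: distance between smooth objects in analytic singularity} and~\ref{thm: Monge ampere is linear in analytic singularity case}.
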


\begin{proof}
    From the proof of Theorem~\ref{thm: distance between smooth objects in analytic singularity}, we know that $\dot{u}_{t}\circ \mu = \dot{\tilde{u}}_{t}$. From the proof of Theorem~\ref{thm: Monge ampere is linear in analytic singularity case}, we know that $I(\tilde{u}_{0}) - I(g) = I(u_{0})$ and $I(\tilde{u}_{1})- I(g) = I(u_{1})$. Moreover, $\mu_{*}\tilde{\theta}_{\tilde{u}_{t}}^{n} = \theta^{n}_{u_{t}}$.  Combining these facts with Lemma~\ref{lem: comparable potential in E1 and big and nef}, we get 
    \[
    I(u_{1}) - I(u_{0}) = I(\tilde{u}_{0}) - I(\tilde{u}_{1}) = \int_{\tilde{X}} \dot{\tilde{u}}_{t}\tilde{\theta}^{n}_{\tilde{u}_{t}} = \int_{\tilde{X}} \dot{u}_{t} \circ \mu \tilde{\theta}^{n}_{\tilde{u}_{t}} = \int_{X} \dot{u}_{t} \theta^{n}_{u_{t}}
    \]
    as desired.
\end{proof}

Now we will show that the metric $d_{p}$ as defined by Equation~\eqref{eq: definition of dp in analytic singualarity setting} does not depend on the choice of resolution. 
\begin{lem}\label{lem: decreasing sequences converge}
    Let $u_{0}^{k}, u_{1}^{k}, u_{0}, u_{1} \in \E^{p}(X,\te,\psi)$ satisfy $u_{0}^{k} \searrow u_{0}$ and $u_{1}^{k} \searrow u_{1}$. Then $d_{p}(u_{0}^{k}, u_{1}^{k}) \to d_{p}(u_{0}, u_{1})$ as $k \to \infty$.
\end{lem}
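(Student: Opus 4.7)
The plan is to reduce this to a triangle inequality estimate, using the fact that $d_p$ is controlled by the quantity $I_p$, which has already been shown to behave well along decreasing sequences in the prescribed singularity setting.

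First, I would observe that for a single decreasing sequence $u_0^k \searrow u_0$ in $\E^p(X,\te,\psi)$, we have $d_p(u_0^k, u_0) \to 0$. To see this, I apply Theorem~\ref{thm: Ip converges along decreasing sequences} with the pair of decreasing sequences $u_0^k \searrow u_0$ and the constant sequence $u_0 \searrow u_0$ (which trivially satisfies the hypothesis). This gives
\[
I_p(u_0^k, u_0) \to I_p(u_0, u_0) = 0 \quad \text{as } k \to \infty.
\]
Then Lemma~\ref{lem: Ip compares with dp in analytic singularity type as well} supplies a dimensional constant $C > 1$ with
\[
d_p^p(u_0^k, u_0) \leq C\, I_p(u_0^k, u_0),
\]
so $d_p(u_0^k, u_0) \to 0$. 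The same argument applied to $u_1^k \searrow u_1$ gives $d_p(u_1^k, u_1) \to 0$.

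Finally, the triangle inequality for the metric $d_p$ on $\E^p(X,\te,\psi)$ yields
\[
|d_p(u_0^k, u_1^k) - d_p(u_0, u_1)| \leq d_p(u_0^k, u_0) + d_p(u_1^k, u_1),
\]
and the right-hand side tends to zero, proving the claim.

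There is no real obstacle here: the content of the lemma is entirely packaged into the already-established continuity of $I_p$ along decreasing sequences (Theorem~\ref{thm: Ip converges along decreasing sequences}) together with the two-sided comparison between $d_p$ and $I_p$ in the analytic singularity setting (Lemma~\ref{lem: Ip compares with dp in analytic singularity type as well}). The only point to double-check is that Theorem~\ref{thm: Ip converges along decreasing sequences} permits one of the decreasing sequences to be constant, which it does since a constant sequence trivially satisfies $u \searrow u$.
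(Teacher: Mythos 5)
Your proof is correct, but it takes a different route from the paper. The paper's own argument passes through the resolution: it notes that $u_{0}^{k} \searrow u_{0}$ forces $\tilde{u}_{0}^{k} \searrow \tilde{u}_{0}$ in $\E^{p}(\tX,\tte)$, applies the triangle inequality upstairs, and then invokes the big-and-nef result of Di Nezza--Lu (\cite[Proposition 3.12]{dinezza2018lp}) to get $d_{p}(\tilde{u}_{0}^{k},\tilde{u}_{0}) \to 0$ and $d_{p}(\tilde{u}_{1}^{k},\tilde{u}_{1}) \to 0$, before pulling the conclusion back through the defining identity $d_{p}(u_{0},u_{1}) = d_{p}(\tilde{u}_{0},\tilde{u}_{1})$. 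You instead stay on $X$: you apply Theorem~\ref{thm: Ip converges along decreasing sequences} with the constant second sequence to get $I_{p}(u_{0}^{k},u_{0}) \to I_{p}(u_{0},u_{0}) = 0$, convert via the two-sided comparison of Lemma~\ref{lem: Ip compares with dp in analytic singularity type as well} (which is established before this lemma, so the order of dependencies is respected), and finish with the triangle inequality. Your worry about the constant sequence is harmless, since $u_{0} \searrow u_{0}$ trivially satisfies the hypothesis. The trade-off: your version is more intrinsic and avoids a second appeal to the external big-and-nef machinery, at the cost of leaning on Theorem~\ref{thm: Ip converges along decreasing sequences}, whose proof is itself the heavier piece of the paper's preliminaries; the paper's version is a one-line transfer once the bijection and the Di Nezza--Lu convergence result are in hand. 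Both are valid proofs of the lemma.
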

\begin{proof}
    $u_{0}^{k} \searrow u_{0}$ implies that $\tilde{u}_{0}^{k} = (u_{0}^{k} - \psi)\circ \mu + g \searrow (u_{0} -\psi)\circ \mu + g = \tilde{u}_{0}$. Similarly, $\tilde{u}_{1}^{k} \searrow \tilde{u}_{1}$. We claim that in the space $(\E^{p}(\tX, \tte), d_{p})$ we have $d_{p}(\tilde{u}_{0}^{k}, \tilde{u}_{1}^{k}) \to d_{p}(\tilde{u}_{0}, \tilde{u}_{1})$. To see this, we observe by triangle inequality we have 
    \[
    d_{p}(\tilde{u}_{0}, \tilde{u}_{1}) -   d_{p}(\tilde{u}_{0}^{k} , \tilde{u}_{1}^{k}) \leq d_{p}(\tilde{u}_{0}, \tilde{u}_{0}^{k})+d_{p}(\tilde{u}_{1}^{k}, \tilde{u}_{1}).
    \]
    As the other side is obtained similarly, we have
    \[
    |d_{p}(\tilde{u}_{0}, \tilde{u}_{1}) -   d_{p}(\tilde{u}_{0}^{k} , \tilde{u}_{1}^{k})| \leq d_{p}(\tilde{u}_{0}, \tilde{u}_{0}^{k})+d_{p}(\tilde{u}_{1}^{k}, \tilde{u}_{1}).
    \]
    From \cite[Proposition 3.12]{dinezza2018lp}, we have $d_{p}(\tilde{u}_{0}^{k},\tilde{u}_{0}) \to 0$ and $d_{p}(\tilde{u}_{1}^{k}, \tilde{u}_{1}) \to 0$ as $k \to \infty$. Thus $d_{p}(\tilde{u}_{0}^{k}, \tilde{u}_{1}^{k}) \to d_{p}(\tilde{u}_{0}, \tilde{u}_{1})$ as $k \to \infty$.

    Now from Equation~\eqref{eq: definition of dp in analytic singualarity setting}, we obtain that $d_{p}(u_{0}^{k}, u_{1}^{k}) \to d_{p}(u_{0}, u_{1})$ as well. 
\end{proof}

\begin{thm}\label{thm: metric in analytic singularity type doesn't depend on resulution}
    The metric $d_{p}$ as defined by Equation~\eqref{eq: definition of dp in analytic singualarity setting} on $\E^{p}(X,\te,\psi)$ does not depend on the choice of resolution. 
\end{thm}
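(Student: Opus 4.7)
The plan is to compare two resolutions through a common refinement. Given two log resolutions $\mu_i : \tX_i \to X$ of the ideal $\I$ associated to $\psi$, for $i = 1, 2$, Hironaka's theorem provides a further modification $\mu : \tX \to X$ together with modifications $\nu_i : \tX \to \tX_i$ such that $\mu = \mu_i \circ \nu_i$ and $\mu^{*}\I$ defines a simple normal crossing divisor on $\tX$. Choosing the sections and Hermitian metrics on $\tX$ as pullbacks of the ones on $\tX_i$, the background data from Equation~\eqref{eq: equation for theta tilde} transform cleanly: one verifies $\tte_\mu = \nu_i^{*}\tte_i$ and $g_\mu = g_i \circ \nu_i$. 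Consequently, for any $u \in \E^{p}(X, \te, \psi)$, the potentials associated to $u$ through the bijection of Theorem~\ref{thm: bijection between analytic singularity and big and nef} satisfy $\tilde{u}_\mu = \tilde{u}_i \circ \nu_i$. It therefore suffices to show that for each $i$ the pullback $\nu_i^{*} : \E^{p}(\tX_i, \tte_i) \to \E^{p}(\tX, \nu_i^{*}\tte_i)$, $v \mapsto v \circ \nu_i$, is an isometry for $d_p$; combining this for $i = 1, 2$ then identifies the two resolution-dependent metrics on $\E^{p}(X, \te, \psi)$.

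To establish this isometry, I would first treat the dense family $\cH_{\tte_i}$ from Equation~\eqref{eq: definition of Hbb}. For $v_0, v_1 \in \cH_{\tte_i}$, Lemma~\ref{lem: quasi-smoooth potentials have bounded entropy} (together with their manifest minimal-singularity property) lets Theorem~\ref{thm: distance for finite entropy} apply, giving
\[
d_p^{p}(v_0, v_1) = \int_{\tX_i} |\dot{v}_t|^{p}\, (\tte_i)^{n}_{v_t}
\]
along the weak geodesic $v_t$. Pulling back, the same $\pi^{*}\te$-psh argument used in Theorem~\ref{thm: Mabuchi geodesics are preserved} (now applied to the modification $\nu_i$ between big and nef manifolds, with no $\psi$ to subtract) shows that $v_t \circ \nu_i$ is the weak geodesic joining $v_0 \circ \nu_i$ and $v_1 \circ \nu_i$, so $(v_t \circ \nu_i)^{\cdot} = \dot{v}_t \circ \nu_i$. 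Theorem~\ref{thm :DiNezza on pluripolar product being invariant} pushes the non-pluripolar product forward, $(\nu_i)_{*}(\nu_i^{*}\tte_i)^{n}_{v_t \circ \nu_i} = (\tte_i)^{n}_{v_t}$; moreover $v_0 \circ \nu_i, v_1 \circ \nu_i$ still have minimal singularity (since $\nu_i$ is a modification) and finite entropy (Lemma~\ref{lem: one way finite entropy}), so Theorem~\ref{thm: distance for finite entropy} also applies on $\tX$, yielding
\[
d_p^{p}(v_0 \circ \nu_i, v_1 \circ \nu_i) = \int_{\tX} |\dot{v}_t \circ \nu_i|^{p}\, (\nu_i^{*}\tte_i)^{n}_{v_t \circ \nu_i} = \int_{\tX_i} |\dot{v}_t|^{p}\, (\tte_i)^{n}_{v_t} = d_p^{p}(v_0, v_1).
\]

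For general $v_0, v_1 \in \E^{p}(\tX_i, \tte_i)$, I would approximate by decreasing sequences $v_j^{k} \in \cH_{\tte_i}$ with $v_j^{k} \searrow v_j$; then $v_j^{k} \circ \nu_i \searrow v_j \circ \nu_i$ on $\tX$. By \cite[Proposition 3.12]{dinezza2018lp} applied on $\tX_i$ and on $\tX$, $d_p(v_0^{k}, v_1^{k}) \to d_p(v_0, v_1)$ and $d_p(v_0^{k} \circ \nu_i, v_1^{k} \circ \nu_i) \to d_p(v_0 \circ \nu_i, v_1 \circ \nu_i)$, so the isometry on $\cH_{\tte_i}$ extends to all of $\E^{p}(\tX_i, \tte_i)$ by continuity. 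The main technical point is the pullback identification of geodesics and of non-pluripolar products in the finite-entropy step; both are direct analogues of arguments already appearing in the excerpt, but some care is needed to ensure that the compatible choice of defining sections and metrics on the common refinement produces $\tte_\mu = \nu_i^{*}\tte_i$ and $g_\mu = g_i \circ \nu_i$ cleanly.
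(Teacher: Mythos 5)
Your route is genuinely different from the paper's (common refinement of two resolutions plus a pullback isometry, rather than an intrinsic formula on $X$), and the isometry step itself is plausible as outlined: every $\nu_i^{*}\tte_i$-psh function descends through the modification $\nu_i$, so minimal singularities, weak geodesics and non-pluripolar measures correspond, and the finite-entropy distance formula transfers; decreasing approximation then extends the isometry from the dense family to all of $\E^{p}(\tX_i,\tte_i)$. The problem is the final sentence of your first paragraph. For each fixed $i$ you choose the sections and metrics on $\tX$ as pullbacks from $\tX_i$, obtaining $\tte_\mu=\nu_i^{*}\tte_i$ and $g_\mu=g_i\circ\nu_i$; but these are two \emph{different} sets of auxiliary data on the same manifold $\tX$. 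One has $\nu_1^{*}\tte_1-\nu_2^{*}\tte_2=dd^{c}\varphi$ for a smooth $\varphi$ (the log-ratio of the two pulled-back metrics on $\OO(E)$), and $g_1\circ\nu_1$, $g_2\circ\nu_2$ differ by $\varphi$ up to a constant, so the two embeddings of $\E^{p}(X,\te,\psi)$ land in the distinct spaces $\E^{p}(\tX,\nu_1^{*}\tte_1)$ and $\E^{p}(\tX,\nu_2^{*}\tte_2)$. Your argument shows that the metric defined via $\tX_i$ agrees with the metric defined via $(\tX,\text{data}_i)$, but it never compares $(\tX,\text{data}_1)$ with $(\tX,\text{data}_2)$; without showing that the construction on a fixed resolution is independent of the choice of Hermitian metrics $h$ (equivalently, that the Di Nezza--Lu metric is invariant under the translation $v\mapsto v+\varphi$ accompanying a change of smooth representative of the class), the two resolution-dependent metrics are not identified, and this invariance is exactly the kind of statement that still has to be proved -- it is not contained in the results you cite.

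This gap is fixable (for instance, the translation invariance follows from the same finite-entropy geodesic-speed formula, which is manifestly unchanged under $v\mapsto v+\varphi$, $\beta\mapsto\beta-dd^{c}\varphi$), but once you argue that way you are effectively reproducing the paper's shorter proof: Theorem~\ref{thm: distance between smooth objects in analytic singularity} already expresses $d_{p}^{p}(u_0,u_1)=\int_{X}|\dot u_t|^{p}\te^{n}_{u_t}$ for $u_i=P_{\te}[\psi](f_i)$ with $f_i\in C^{1,\bar 1}(X)$, a formula that makes no reference to the resolution, and Lemma~\ref{lem: decreasing sequences converge} together with the density of such envelopes extends resolution-independence to all of $\E^{p}(X,\te,\psi)$, with no common refinement needed.
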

\begin{proof}
    If $u_{0}, u_{1} \in \E^{p}(X,\te,\psi)$ are of the form $u_{0} = P_{\te}[\psi](f_{0})$ and $u_{1} = P_{\te}[\psi](f_{1})$ for some functions $f_{0}, f_{1} \in C^{1,\bar{1}}(X)$, then from Theorem~\ref{thm: distance between smooth objects in analytic singularity}, we know that $d_{p}(u_{0}, u_{1})$ does not depend on the choice of resolution, it is determined by the weak geodesic joining them. 

    More generally, given any $u_{0}, u_{1} \in \E^{p}(X,\te,\psi)$, from \cite{BlockiKolodziejregularization}, we can find smooth functions $f_{0}^{k}, f_{1}^{k} \in C^{\infty}(X)$ such that $f_{0}^{k} \searrow u_{0}$ and $f_{1}^{k} \searrow u_{1}$. Then $P_{\te}[\psi](f_{0}^{k}) \searrow u_{0}$ and $P_{\te}[\psi](f_{1}^{k}) \searrow u_{1}$ as well. From Lemma~\ref{lem: decreasing sequences converge}, $d_{p}(P_{\te}[\psi](f_{0}^{k}), P_{\te}[\psi](f_{1}^{k})) \to d_{p}(u_{0}, u_{1})$. From the discussion in the previous paragraph, $d_{p}(P_{\te}[\psi](f_{0}^{k}), P_{\te}[\psi](f_{1}^{k}))$ does not depend on the choice of resolution, thus the distance $d_{p}(u_{0}, u_{1})$ can be determined without the choice of resolution as well. 
\end{proof}

\section{Metric on $\E^{p}(X,\theta)$} \label{sec: metric in the big case}

In this section, we define a metric $d_{p}$ on $\E^{p}(X,\theta)$ that makes $(\E^{p}(X,\theta), d_{p})$ a complete geodesic metric space. The idea is to approximate the potentials in $\E^{p}(X,\theta)$ from the potentials in $\E^{p}(X,\theta, \psi)$ and use the metric structure on $\E^{p}(X,\theta,\psi)$ as described in Section~\ref{subsec: metric space structure on analytic singularity}.

 In the big class represented by $\theta$, we can find a K\"ahler potential $\varphi \in \PSH(X,\theta)$ with analytic singularities. We can also assume, by subtracting a constant if needed, that $\varphi \leq V_{\theta}$. Define $\varphi_{j}  = \frac{1}{j} \varphi + \frac{j-1}{j} V_{\theta}$, so $\varphi_{j} \leq V_{\theta}$ and $\varphi_{j} \nearrow V_{\theta}$ outside a pluripolar set. Unfortunately, $\varphi_{j}$ does not have analytic singularities. Since $\varphi$ is a K\"ahler potential, $\varphi_{j}$ is also a K\"ahler potential. By Demailly's regularization, we can find $\phi_{j} \geq \varphi_{j}$ such that $\phi_{j}$ is a K\"ahler potential with analytic singularities. But the sequence $\phi_{j}$ is not monotone.  

Now, consider $P_{\theta}[\phi_{j}] := \sup \{ u \in \PSH(X,\theta) : u \preceq \phi_{j}, u \leq V_{\theta}\}$. As $\varphi_{j} \leq \phi_{j}$ and $\varphi_{j} \leq V_{\theta}$, we have $\varphi_{j} \leq P_{\theta}[\phi_{j}] \leq V_{\theta}$. As $\varphi_{j} \nearrow V_{\theta}$ outside a pluripolar set, we find that $P_{\theta}[\phi_{j}] \to V_{\theta}$ pointwise outside a pluripolar set. Also, since $\phi_{j}$ has analytic singularities, and $[P_{\theta}[\phi_{j}]] = [\phi_{j}]$, which follows from \cite[Proposition 2.20]{darvasxiaclosureoftestconfigurations}, we get $P_{\theta}[\phi_{j}]$ has analytic singularities as well. Now consider 
\[
\psi_{j} = \max\{ P_{\theta}[\phi_{1}], \dots, P_{\theta}[\phi_{j}]\}.
\]
Then by \cite[Lemma 2.4]{darvas2023transcendental}, $\psi_{j}$ has analytic singularities, and $\psi_{j} \nearrow V_{\theta}$ except on a pluripolar set. We fix such a sequence $\psi_{j} \nearrow V_{\te}$ for the rest of the paper. 

Following \cite{darvasgeoemtryoffiniteenergy} and \cite{dinezza2018lp}, we will first define the $d_{p}$ metric on the space of ``smooth'' potentials, and then extend it to the whole space $\E^{p}(X,\theta)$. In general, $\E^{p}(X,\theta)$ has no smooth potentials, but the space 
\[
\mathcal{H}_{\theta} = \{ u \in \PSH(X,\theta) \, | \, u = P_{\theta}(f) \text{ for some } f \in C^{1,\bar{1}}(X)\}
\]
will act as the space of ``smooth'' potentials for us.

\begin{lem}\label{lem: Hthetea closed under rooftop envelope}
    If $u_{0}, u_{1} \in \mathcal{H}_{\theta}$, then $P_{\theta}(u_{0}, u_{1}):= P_{\theta}(\min\{u_{0}, u_{1}\}) \in \mathcal{H}_{\theta}$. 
\end{lem}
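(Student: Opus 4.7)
The plan is to produce a single function $h \in C^{1,\bar{1}}(X)$ whose $\theta$-psh envelope equals $P_{\theta}(u_0, u_1)$. Since $u_0, u_1 \in \mathcal{H}_\theta$, I may write $u_i = P_\theta(f_i)$ with $f_i \in C^{1,\bar{1}}(X)$ for $i=0,1$.

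The first step is the identity
\[
P_\theta(u_0, u_1) \;=\; P_\theta(\min\{f_0, f_1\}).
\]
Because $f_i$ is continuous, the pointwise inequality $u_i = P_\theta(f_i) \leq f_i$ holds everywhere. For any $v \in \PSH(X,\theta)$, the conditions $v \leq u_0$ and $v \leq u_1$ are therefore equivalent to $v \leq f_0$ and $v \leq f_1$ (one direction uses $u_i \leq f_i$; the other uses that $v \leq f_i$ forces $v \leq P_\theta(f_i) = u_i$). Taking supremum and the upper-semi-continuous regularization over both candidate classes yields the identity.

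The second step is to replace $\min\{f_0, f_1\}$, which is only Lipschitz, by a $C^{1,\bar{1}}$ substitute. A natural candidate is the smoothed minimum
\[
h_\epsilon \;:=\; \frac{f_0 + f_1}{2} - \frac{1}{2}\sqrt{(f_0 - f_1)^2 + \epsilon^2}, \qquad \epsilon > 0.
\]
A direct computation using that $\phi_\epsilon(t) = \sqrt{t^2 + \epsilon^2}$ is $C^\infty$ with $|\phi_\epsilon'|, |\phi_\epsilon''| \leq C/\epsilon$ and that $f_0 - f_1 \in C^{1,\bar{1}}$ shows
\[
dd^c h_\epsilon = \tfrac{1}{2}(dd^c f_0 + dd^c f_1) - \tfrac{1}{2}\bigl(\phi_\epsilon'(f_0-f_1)\,dd^c(f_0-f_1) + \phi_\epsilon''(f_0-f_1)\,d(f_0-f_1)\wedge d^c(f_0-f_1)\bigr)
\]
is bounded on the compact $X$, so $h_\epsilon \in C^{1,\bar{1}}(X)$. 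Moreover, the pointwise bounds $0 \leq \min\{f_0,f_1\} - h_\epsilon \leq \epsilon/2$ give $h_\epsilon \nearrow \min\{f_0, f_1\}$ as $\epsilon \to 0$, and hence
\[
P_\theta(h_\epsilon) \;\leq\; P_\theta(u_0, u_1) \;\leq\; P_\theta(h_\epsilon) + \epsilon/2.
\]

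The main obstacle is upgrading this approximate equality to an exact equality: uniform closeness of boundary data does not automatically give equality of envelopes, so the naive approximation only yields $P_\theta(h_\epsilon) \nearrow P_\theta(u_0, u_1)$. The intended resolution should exploit the rigid structure of the contact set $\{P_\theta(\min\{f_0, f_1\}) = \min\{f_0, f_1\}\}$ provided by Theorems~\ref{thm: measure on contact sets} and~\ref{measures of rooftop envelope potentials} (which determine $\theta^n_{P_\theta(f_0, f_1)}$ explicitly as $\mathds{1}_{\Lambda_0}\theta^n_{f_0} + \mathds{1}_{\Lambda_1 \setminus \Lambda_0}\theta^n_{f_1}$), allowing one to modify $h_\epsilon$ to agree with $\min\{f_0, f_1\}$ on the contact set while remaining globally $C^{1,\bar{1}}$. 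With such an $h$, both envelopes have the same Monge-Amp\`ere measure and the same boundary values on the contact set, and the domination principle would then force $P_\theta(h) = P_\theta(u_0, u_1)$, placing the latter in $\mathcal{H}_\theta$.
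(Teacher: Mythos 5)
There is a genuine gap. Your reduction $P_{\theta}(u_{0},u_{1}) = P_{\theta}(\min\{f_{0},f_{1}\})$ is correct and matches the paper's first step, and the smoothed minimum $h_{\epsilon}$ is indeed $C^{1,\bar{1}}$ for each fixed $\epsilon$; but, as you concede, this only yields $P_{\theta}(h_{\epsilon}) \nearrow P_{\theta}(u_{0},u_{1})$, whereas membership in $\mathcal{H}_{\theta}$ requires a \emph{single} obstacle $h \in C^{1,\bar{1}}(X)$ with $P_{\theta}(h) = P_{\theta}(u_{0},u_{1})$ exactly. No fixed $\epsilon$ can work: on the contact set $\{P_{\theta}(\min\{f_{0},f_{1}\}) = \min\{f_{0},f_{1}\}\}$, which is nonempty (it carries the full Monge--Amp\`ere mass by Theorem~\ref{measures of rooftop envelope potentials}), one has $P_{\theta}(h_{\epsilon}) \leq h_{\epsilon} < \min\{f_{0},f_{1}\} = P_{\theta}(u_{0},u_{1})$. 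Your proposed repair --- modifying $h_{\epsilon}$ so that it agrees with $\min\{f_{0},f_{1}\}$ on the contact set while staying globally $C^{1,\bar{1}}$ --- is not a construction but a restatement of the difficulty: the contact set typically meets $\{f_{0}=f_{1}\}$, where $\min\{f_{0},f_{1}\}$ is in general not $C^{1}$, so such a $C^{1,\bar{1}}$ interpolant need not exist; producing one is exactly the kind of regularity statement that has to be imported from a theorem, not assumed. The subsequent appeal to the domination principle also presupposes the object whose existence is in question.

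The missing idea, which is how the paper argues, is to use the \emph{K\"ahler} rooftop envelope as the obstacle. Choose $C>0$ with $\theta \leq C\omega$ and set $h := P_{C\omega}(f_{0},f_{1})$. By the Darvas--Rubinstein regularity theorem for the rooftop obstacle problem in the K\"ahler setting (\cite[Theorem 2.5]{DarvasRubinsteinrooftopobstacle}), $h \in C^{1,\bar{1}}(X)$. Then $h \leq \min\{f_{0},f_{1}\}$ gives $P_{\theta}(h) \leq P_{\theta}(f_{0},f_{1})$, while any $\theta$-psh function is also $C\omega$-psh, so $P_{\theta}(f_{0},f_{1}) \leq P_{C\omega}(f_{0},f_{1}) = h$ and hence $P_{\theta}(f_{0},f_{1}) \leq P_{\theta}(h)$. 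Combined with your first step this gives $P_{\theta}(u_{0},u_{1}) = P_{\theta}(h)$ with $h \in C^{1,\bar{1}}(X)$, i.e.\ $P_{\theta}(u_{0},u_{1}) \in \mathcal{H}_{\theta}$. Without invoking some such regularity result for envelopes, your argument does not close.
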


\begin{proof}
    Let $f_{0}, f_{1} \in C^{1,\bar{1}}(X)$ and $u_{0} = P_{\theta}(f_{0})$ and $u_{1} = P_{\theta}(f_{1})$. Let $C$ be such that $\theta \leq C\om$. Then from \cite[Theorem 2.5]{DarvasRubinsteinrooftopobstacle} $P_{C\om}(f_{0}, f_{1})$ is a $C^{1,\bar{1}}$ function. We claim that $P_{\theta}(f_{0}, f_{1}) = P_{\theta}(P_{C\om}(f_{0},f_{1}))$. 

    Since $P_{C\om}(f_{0}, f_{1}) \leq \min\{f_{0},f_{1}\}$, we have $P_{\theta}(P_{C\om}(f_{0},f_{1})) \leq P_{\theta}(f_{0},f_{1})$. For the other direction, note that $0 \leq \theta + dd^{c}P_{\theta}(f_{0},f_{1}) \leq C\om + dd^{c}P_{\theta}(f_{0},f_{1})$. Thus $P_{\theta}(f_{0},f_{1})$ is a $C\om$-psh as well. As $P_{\theta}(f_{0},f_{1}) \leq \min\{f_{0},f_{1}\}$, we have $P_{\theta}(f_{0},f_{1}) \leq P_{C\om}(f_{0},f_{1})$. Thus $P_{\theta}(f_{0},f_{1}) \leq P_{\theta}(P_{C\om}(f_{0},f_{1}))$. 

    \sloppy Also, $P_{\theta}(u_{0},u_{1}) = P_{\theta}(f_{0},f_{1})$ by a similar argument. So $P_{\theta}(u_{0}, u_{1}) = P_{\theta}(P_{C\om}(f_{0},f_{1}))$ where $P_{C\om}(f_{0},f_{1}) \in C^{1,\bar{1}}(X)$. 
\end{proof}

\subsection{Metric on $\mathcal{H}_{\theta}$}\label{subsec: metric on smooth potentials}

In this subsection, we will construct the metric $d_{p}$ on $\mathcal{H}_{\theta}$. The idea is to approximate for $f_{0}, f_{1} \in C^{1,\bar{1}}(X)$, potentials $u_{0} = P_{\theta}(f_{0}), u_{1} = P_{\theta}(f_{1})\in \mathcal{H}_{\theta}$  via $u_{0}^{k}:= P_{\theta}[\psi_{k}](f_{0}), u_{1}^{k}:= P_{\theta}[\psi_{k}](f_{1}) \in \PSH(X,\theta,\psi_{k})$ for the increasing sequence of potentials with analytic singularity type $\psi_{k} \nearrow V_{\theta}$ as fixed in the beginning of the section. We fix the notation for $f_{0}, f_{1}, u_{0}, u_{1}, u_{0}^{k}, u_{1}^{k}$ for the rest of the section. Since $u_{0}^{k}, u_{1}^{k}$ have the same singularity type as $\psi_{k}$, $u_{0}^{k}$, we get that $u_{1}^{k} \in \E^{p}(X,\te,\psi_{k})$. We wish to define 
\begin{equation}\label{eq: definition on Htheta}
    d_{p}(u_{0}, u_{1}) := \lim_{k\to\infty}d_{p}(u_{0}^{k},u_{1}^{k}).
\end{equation}
Here $d_{p}(u_{0}^{k}, u_{1}^{k})$ is the distance defined in Section~\ref{subsec: metric space structure on analytic singularity} on $\E^{p}(X,\te,\psi_{k})$. 

In this subsection, we will first show that indeed $u_{0}^{k}$ and $u_{1}^{k}$ increase to $u_{0}$ and $u_{1}$ respectively. Moreover, the limit in Equation~\eqref{eq: definition on Htheta} exists, is independent of the choice of the approximating sequence $\psi_{k}$, and defines a metric on $\cH_{\te}$.

The following lemma shows that envelopes with respect to $V_{\theta}$ can be approximated by envelopes with respect to $\psi_{k}$.

\begin{lem}\label{Envelopes for increasing singularity type}
    Let $\psi, \psi_{k} \in \PSH(X,\theta)$ be an increasing sequence and $\psi = \left( \lim_{k \to \infty}\psi_{k}\right)^{*}$ (here $u^{*}(x) = \limsup_{y \to x}u(y)$ is the upper semicontinuous regularization). Then for any continuous $f: X \to \R$, $P_{\theta}[\psi_{k}](f)$ is an increasing sequence and $(\lim_{k \to \infty} P_{\theta}[\psi_{k}](f))^{*} = P_{\theta}[\psi](f)$. 
\end{lem}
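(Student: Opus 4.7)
The plan is to prove monotonicity of $P_\te[\psi_k](f)$ and then the equality $(\lim_k P_\te[\psi_k](f))^* = P_\te[\psi](f)$ in both directions.

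Monotonicity and the easy inequality $(\lim_k P_\te[\psi_k](f))^* \leq P_\te[\psi](f)$ both follow from direct candidate-set comparisons. Unfolding the definition $P_\te[\psi_k](f) = (\lim_{C\to\infty} P_\te(\psi_k + C, f))^*$ and using $\psi_k \leq \psi_{k+1} \leq \psi$, any $\te$-psh $v$ with $v \leq \psi_k + C$ automatically satisfies $v \leq \psi_{k+1} + C$ and $v \leq \psi + C$, so the candidate sets for $P_\te(\cdot + C, f)$ are nested in $k$ and
\[
P_\te(\psi_k + C, f) \leq P_\te(\psi_{k+1} + C, f) \leq P_\te(\psi + C, f).
\]
Letting $C \to \infty$ and passing to upper semicontinuous regularizations (using that $P_\te[\psi](f)$ is already usc) gives both claims.

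For the reverse inequality, set $u := (\lim_k P_\te[\psi_k](f))^*$. Since $P_\te[\psi](f) = (\lim_{C\to\infty} P_\te(\psi + C, f))^*$, it suffices to show $P_\te(\psi + C, f) \leq u$ for each fixed $C$. Writing $w_k := P_\te(\psi_k + C, f)$, one has $w_k \leq P_\te[\psi_k](f) \leq u$, so the problem reduces to establishing the stability identity
\[
\bigl(\lim_k w_k\bigr)^* = P_\te(\psi + C, f),
\]
of which only the $\geq$ direction is nontrivial.

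The main obstacle is this stability of the $\te$-psh envelope under the increasing convergence of usc obstacles $g_k := \min(\psi_k + C, f) \nearrow \min(\psi + C, f) =: g$ off a pluripolar set. The subtle point is that a candidate $v \in \PSH(X,\te)$ with $v \leq g$ need not satisfy $v \leq \psi_k + C$, since $\psi_k$ may be strictly more singular than $\psi$, so $v$ is not directly a candidate for $w_k$. I would handle this by a perturbation, e.g.\ replacing $v$ by $(1-\epsilon)v + \epsilon \psi_k$ to force the resulting function to have the singularity type of $\psi_k$, verifying that it becomes a legitimate candidate for $P_\te(\psi_k + C', f)$ for a suitably enlarged $C'$, and then passing to the limits $k \to \infty$ and $\epsilon \to 0$. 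The closure of the argument uses the pointwise convergence $\psi_k \to \psi$ off a pluripolar set together with quasi-uniform (Egoroff-type) convergence in capacity and a final upper semicontinuous regularization.
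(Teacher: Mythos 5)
Your first half (monotonicity in $k$ and the inequality $(\lim_k P_{\te}[\psi_k](f))^{*}\leq P_{\te}[\psi](f)$ via nested candidate sets) is fine and matches the paper. The gap is in the hard direction. Your reduction to the fixed-$C$ stability $(\lim_k P_{\te}(\psi_k+C,f))^{*}=P_{\te}(\psi+C,f)$ is harmless, but that statement is of essentially the same difficulty as the lemma itself, and the perturbation you propose to prove it does not work: for a candidate $v\in\PSH(X,\te)$ with $v\leq \min(\psi+C,f)$, the function $(1-\ee)v+\ee\psi_k$ satisfies $(1-\ee)v+\ee\psi_k\leq \psi_k+C'$ if and only if $(1-\ee)(v-\psi_k)\leq C'$, i.e.\ $v$ is \emph{more} singular than $\psi_k$ (up to a constant). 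But the whole point is that $v$ typically has the singularity type of $\psi$, which is strictly less singular than $\psi_k$ (e.g.\ $\psi=V_\te$ with minimal singularities and $\psi_k$ with nontrivial analytic singularities), so $v-\psi_k$ is unbounded above and no convex combination $a v+(1-a)\psi_k$, for any $a\in(0,1)$ and any enlargement $C'$, is a legitimate candidate for $P_{\te}(\psi_k+C',f)$. The closing appeal to Egoroff/capacity convergence does not remove this obstruction, since the failure of $v\leq\psi_k+C'$ is a singularity-type issue, not a small-capacity issue.

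The missing ingredient is exactly what the paper uses: since $\psi_k\nearrow\psi$ implies $\int_X\te^n_{\psi_k}\to\int_X\te^n_{\psi}$, Lemma~\ref{The surprising Lemma from DDLs} produces $\al_k\to 0$ and $v_k\in\PSH(X,\te)$ with $\al_k v_k+(1-\al_k)\psi\leq\psi_k$; the convex-combination trick is thus applied to $\psi_k$ itself (writing the more singular potential as dominating a combination involving $\psi$), not to the candidate $v$. Concavity of the operator $u\mapsto P_{\te}[u](f)$ (Lemma~\ref{lem: concavity of envelope operator}) then gives
\[
\al_k P_{\te}[v_k](f)+(1-\al_k)P_{\te}[\psi](f)\leq P_{\te}[\psi_k](f),
\]
and a uniform bound on $\sup_X P_{\te}[v_k](f)$ lets one pass to the limit and conclude $P_{\te}[\psi](f)\leq\lim_k P_{\te}[\psi_k](f)$ almost everywhere. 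Without this (or an equivalent mass-based argument) your proof of the reverse inequality does not go through.
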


\begin{proof}
    \sloppy If $k > l$, then $P_{\te}(\psi_{l} + C, f) \leq P_{\te}(\psi_{k} + C,f)$. Taking the limit $C \to \infty$, we get that $P_{\te}
    [\psi_{l}](f) \leq P_{\te}[\psi_{k}](f)$. Therefore, $P_{\theta}[\psi_{k}](f)$ is an increasing sequence of $\theta$-psh 
    functions. Thus, $(\lim_{k\to\infty}P_{\theta}[\psi_{k}](f))^{*}$ is a $\theta$-psh function. Since $P_{\theta}[\psi_{k}](f) 
    \leq P_{\theta}[\psi](f)$ for all $k$ and is upper semicontinuous, $(\lim_{k\to\infty} P_{\theta} [\psi_{k}](f))^{*} $ $ \leq 
    P_{\theta}[\psi](f)$. 

    To show the other direction we use Lemma~\ref{The surprising Lemma from DDLs}. Since $\psi_{k} \nearrow \psi$, \cite[Theorem 2.3]{Darvasmonotonicity} implies that $\int_{X} \te^{n}_{\psi_{k}} \nearrow \int_{X} \te^{n}_{\psi}$. From Lemma~\ref{The surprising Lemma from DDLs}, we can find $\al_{k} \to 0$ such that 
    \[
    v_{k} = P_{\te}\left( \frac{1}{\al_{k}} \psi_{k} + \left( 1-\frac{1}{\al_{k}}\right)\psi\right) \in \PSH(X,\te).
    \]
    This implies 
    \[
    \al_{k} v_{k} + (1-\al_{k})\psi \leq \psi_{k}.
    \]
    Using Lemma~\ref{lem: concavity of envelope operator}, we get
    \begin{equation}\label{eq: equation from concavity}
        \al_{k}P_{\te}[v_{k}](f) + (1-\al_{k})P_{\te}[\psi](f) \leq P_{\te}[\al_{k}v_{k} + (1-\al_{k})\psi](f) \leq P_{\te}[\psi_{k}](f).        
    \end{equation}
    Now $\sup_{X}P_{\te}[v_{k}](f)$ are bounded. As $P_{\te}[v_{k}](f) \leq f$, so they are bounded from above. Also if $f \geq C$ for some $C$, then $\sup_{X}P_{\te}[v_{k}](f) \geq C$ as $v_{k} + C_{k}$ such that $\sup_{X}(v_{k} + C_{k}) = C$ is a valid candidate for the definition of $P_{\te}[v_{k}](f)$. Therefore, $\sup_{X} P_{\te}[v_{k}](f)$ is bounded. Hence after taking the weak $L^1$-limit in Equation~\eqref{eq: equation from concavity} we get 
    \[
    P_{\te}[\psi](f) \leq \lim_{k\to\infty}P_{\te}[\psi_{k}](f)
    \]
    almost everywhere. 
    Thus we get $(\lim_{k\to \infty}P_{\te}[\psi_{k}](f))^{*} = P_{\te}[\psi](f)$. 
\end{proof}

In the following, we use Lemma~\ref{lem: weak convergence of measures}  to prove that in the approximation scheme discussed above, Monge-Amp\`ere energy and the $I_{p}$-``distance'' converge. 

\begin{lem}\label{thm: convergence of Monge-Ampere energy}
     Let $f \in C^{1,\bar{1}}$ and $\psi_{k}$ are model potentials of analytic singularity type such that $\psi_{k} \nearrow V_{\theta}$ outside a pluripolar set. Let $u_{k} = P_{\theta}[\psi_{k}](f)$ and $u = P_{\theta}(f)$, then the $\psi_{k}$-relative Monge-Amp\`ere energy of $u_{k}$ converge to the Monge-Amp\`ere energy of $u$. 
\end{lem}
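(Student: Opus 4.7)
The strategy is to combine Lemma~\ref{Envelopes for increasing singularity type} with the weak convergence result Lemma~\ref{lem: weak convergence of measures}. First I would observe that since $V_{\theta}$ has minimal singularity and $f$ is bounded, $P_{\theta}[V_{\theta}](f) = P_{\theta}(f) = u$; hence Lemma~\ref{Envelopes for increasing singularity type} applied with $\psi = V_{\theta}$ yields $u_{k} \nearrow u$ outside a pluripolar set (after upper semicontinuous regularization). In particular $u_{k} \to u$ and $\psi_{k} \to V_{\theta}$ in capacity.

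Next I would verify mass convergence of the relevant non-pluripolar products. Since $\psi_{k} \leq u_{k} \leq f$, the potential $u_{k}$ has the same singularity type as $\psi_{k}$, so $u_{k} \in \E(X,\theta,\psi_{k})$. By the relative full-mass characterization, $\int_{X} \theta_{u_{k}}^{j} \wedge \theta_{\psi_{k}}^{n-j} = \int_{X} \theta_{\psi_{k}}^{n}$ for every $j$, and monotonicity along increasing sequences gives $\int_{X} \theta_{\psi_{k}}^{n} \nearrow \int_{X} \theta_{V_{\theta}}^{n} = \vol(\theta)$. Because $u$ has full mass, $\int_{X} \theta_{u}^{j} \wedge \theta_{V_{\theta}}^{n-j} = \vol(\theta)$ as well, so the hypotheses of Lemma~\ref{lem: weak convergence of measures} are met and we obtain the weak convergence
\[
\theta_{u_{k}}^{j} \wedge \theta_{\psi_{k}}^{n-j} \longrightarrow \theta_{u}^{j} \wedge \theta_{V_{\theta}}^{n-j}, \qquad j = 0, \ldots, n.
\]

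Finally, to pass to the limit in $I_{\psi_{k}}(u_{k}) = \frac{1}{n+1} \sum_{j} \int_{X} (u_{k} - \psi_{k}) \theta_{u_{k}}^{j} \wedge \theta_{\psi_{k}}^{n-j}$, I would write $u_{k} - \psi_{k} = (u - V_{\theta}) - (u - u_{k}) + (V_{\theta} - \psi_{k})$. The first term is bounded quasi-continuous and independent of $k$, so applying the weak convergence above (after adding a constant to make $u - V_{\theta}$ nonnegative) delivers the principal limit $\int_{X}(u - V_{\theta})\theta_{u}^{j} \wedge \theta_{V_{\theta}}^{n-j}$. The main obstacle is that the two error terms $u - u_{k} \geq 0$ and $V_{\theta} - \psi_{k} \geq 0$ converge to $0$ in capacity but are not uniformly bounded in $k$. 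I would handle this by truncation: for fixed $M > 0$, Lemma~\ref{lem: weak convergence of measures} applied to the bounded quasi-continuous weights $\min(u - u_{k}, M)$ and $\min(V_{\theta} - \psi_{k}, M)$ shows that their integrals against $\theta_{u_{k}}^{j} \wedge \theta_{\psi_{k}}^{n-j}$ tend to $0$ as $k \to \infty$, and the residual tails are controlled uniformly in $k$ by the uniform $\E^{1}$-bound that $u_{k}$ inherits from the bounded obstacle $f$ together with the monotonicity of non-pluripolar mixed products along $\psi_{k} \nearrow V_{\theta}$. Establishing this uniform integrability is the delicate step; once it is in hand, letting $M \to \infty$ completes the argument.
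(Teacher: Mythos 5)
Your first two steps (monotone convergence $u_{k}\nearrow u$ via Lemma~\ref{Envelopes for increasing singularity type}, and convergence of the mixed masses $\int_{X}\theta^{j}_{u_{k}}\wedge\theta^{n-j}_{\psi_{k}}\to\int_{X}\theta^{j}_{u}\wedge\theta^{n-j}_{V_{\theta}}$ so that Lemma~\ref{lem: weak convergence of measures} applies) are exactly the paper's route, modulo the harmless imprecision that $\psi_{k}\leq u_{k}\leq f$ should read $\psi_{k}+\inf_{X}f\leq u_{k}\leq \psi_{k}+\sup_{X}f$. The problem is the last step. The decomposition $u_{k}-\psi_{k}=(u-V_{\theta})-(u-u_{k})+(V_{\theta}-\psi_{k})$ splits a bounded quantity into two pieces, $u-u_{k}$ and $V_{\theta}-\psi_{k}$, each of which blows up along the singular locus of $\psi_{k}$, and your truncation argument then requires a uniform-in-$k$ tail bound for these unbounded weights against the varying measures $\theta^{j}_{u_{k}}\wedge\theta^{n-j}_{\psi_{k}}$. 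You acknowledge this is the delicate step but do not prove it, and the hint you give (a "uniform $\E^{1}$-bound inherited from the bounded obstacle $f$") does not obviously produce it: the bounded obstacle controls $u_{k}-\psi_{k}$, not $u-u_{k}$ or $V_{\theta}-\psi_{k}$ separately, and it is not even clear a priori that $\int_{X}(V_{\theta}-\psi_{k})\,\theta^{j}_{u_{k}}\wedge\theta^{n-j}_{\psi_{k}}$ is finite, let alone uniformly small on tails. As it stands the argument is incomplete.

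The gap is also unnecessary, because the quantity you split up is itself uniformly bounded: since $\psi_{k}$ is a model potential and $\psi_{k}\leq 0$, monotonicity of the envelope gives $\psi_{k}+\inf_{X}f\leq P_{\theta}[\psi_{k}](f)\leq\psi_{k}+\sup_{X}f$, hence $|u_{k}-\psi_{k}|\leq C:=\sup_{X}|f|$ for every $k$. This is the observation the paper's proof starts from: the weights $\chi_{k}=u_{k}-\psi_{k}+C$ are quasi-continuous, take values in $[0,2C]$, and converge in capacity to $u-V_{\theta}+C$, so Lemma~\ref{lem: weak convergence of measures} applies directly (using the mass inequality you already verified) and yields
\[
\int_{X}(u_{k}-\psi_{k}+C)\,\theta^{j}_{u_{k}}\wedge\theta^{n-j}_{\psi_{k}}\longrightarrow\int_{X}(u-V_{\theta}+C)\,\theta^{j}_{u}\wedge\theta^{n-j}_{V_{\theta}},
\]
which together with the convergence of the total masses gives the convergence of each term in the energy, with no truncation needed. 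If you replace your decomposition and truncation by this uniform bound on $u_{k}-\psi_{k}$, your argument becomes the paper's proof.
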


\begin{proof}
    Let $C$ be such that $\sup_{X}|f| \leq C$, then $|u_{k} - \psi_{k}| \leq C$. Thus $0 \leq u_{k} - \psi_{k} + C \leq 2C$. From Lemma~\ref{Envelopes for increasing singularity type} we know that $u_{k}\nearrow u$. Thus $u_{k} - \psi_{k}+C$ are uniformly bounded quasi-continuous functions that converge in capacity to $u - V_{\theta}+C$. Moreover, as $u$ and $V_{\theta}$ have minimal singularity, we know 
    \[
    \int_{X} \theta^{j}_{u}\wedge \theta^{n-j}_{V_{\te}} \geq \limsup_{k\to \infty}\int_{X} \te^{j}_{u_{k}} \wedge \te^{n-j}_{\psi_{k}}.
    \]
    Thus from Lemma~\ref{lem: weak convergence of measures}, we know that the measures 
    \[
    (u_{k} - \psi_{k} + C) \theta^{j}_{u_{k}}\wedge \theta^{n-j}_{\psi_{k}} \to (u - V_{\theta} + C) \theta^{j}_{u}\wedge \theta^{n-j}_{V_{\theta}}
    \]
    and 
    \[
    \te^{j}_{u_{k}} \wedge \te^{n-j}_{\psi_{k}} \to \te^{j}_{u} \wedge \te^{n-j}_{V_{\te}}
    \]
    weakly as $k \to \infty$. Thus the $\psi_{k}$-relative Monge-Amp\`ere energy 
    \[
    I(u_{k}) = \frac{1}{n+1} \sum_{j=0}^{n} \int_{X} (u_{k} - \psi_{k}) \theta^{j}_{u_{k}}\wedge \theta^{n-j}_{\psi_{k}} \to \frac{1}{n+1} \sum_{j=0}^{n} \int_{X} (u-V_{\theta}) \theta^{j}_{u} \wedge \theta^{n-j}_{V_{\theta}} = I(u)
    \]
    as $k \to \infty$. 
\end{proof}

\begin{lem}\label{convergence of p-energy}
Let $u_{0}, u_{1}, u_{0}^{k}, u_{1}^{k}$ be as in the beginning of Section~\ref{subsec: metric on smooth potentials}, then the $I_{p}$ functional
\[
I_{p}(u_{0}^{k}, u_{1}^{k}) = \int_{X} |u_{0}^{k} - u_{1}^{k}|^{p}(\te^{n}_{u_{0}^{k}} + \te^{n}_{u_{1}^{k}}) \to \int_{X} |u_{0} - u_{1}|^{p}(\te^{n}_{u_{0}} + \te^{n}_{u_{1}}) = I_{p}(u_{0}, u_{1})
\]
as $k \to \infty$.
\end{lem}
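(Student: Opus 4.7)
The plan is to apply Lemma~\ref{lem: weak convergence of measures} with the quasi-continuous weights $\chi_k := |u_0^k - u_1^k|^p$ and $\chi := |u_0 - u_1|^p$, separately against the measures $\te^n_{u_0^k}$ and $\te^n_{u_1^k}$. Testing the resulting weak-convergence statements against the constant function $1$ and summing the two contributions yields $I_p(u_0^k, u_1^k) \to I_p(u_0, u_1)$. Three ingredients must be verified: (i) a uniform $L^{\infty}$ bound on $\chi_k$; (ii) convergence in capacity $u_j^k \to u_j$ for $j = 0,1$, whence $\chi_k \to \chi$ in capacity; (iii) mass convergence $\int_X \te^n_{u_j^k} \to \int_X \te^n_{u_j}$.

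For (i) I would use a direct envelope comparison. Setting $C := \|f_0 - f_1\|_{L^{\infty}(X)}$, the function $u_0^k - C$ is $\te$-psh, has the same singularity type as $\psi_k$, and satisfies $u_0^k - C \leq f_0 - C \leq f_1$, so it is a candidate in the supremum defining $u_1^k = P_{\te}[\psi_k](f_1)$; hence $u_0^k - C \leq u_1^k$, and by symmetry $|u_0^k - u_1^k| \leq C$ off the pluripolar set $\{\psi_k = -\infty\}$. For (iii), since $u_j^k$ has the same singularity type as $\psi_k$ one has $\int_X \te^n_{u_j^k} = \int_X \te^n_{\psi_k}$, and the monotone mass convergence $\int_X \te^n_{\psi_k} \nearrow \vol(\te) = \int_X \te^n_{u_j}$ (using \cite[Theorem 2.3]{Darvasmonotonicity}, already invoked in the proof of Lemma~\ref{Envelopes for increasing singularity type}, together with $u_j = P_{\te}(f_j)$ having minimal singularities) finishes the job.

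The main obstacle is (ii). Lemma~\ref{Envelopes for increasing singularity type} gives $u_j^k \nearrow u_j$ off a pluripolar set, and since $u_j^k \leq u_j \in L^{\infty}(X)$, the decreasing sequence of sets $\{u_j - u_j^k > \ee\}$ has intersection contained in a pluripolar set; a standard Bedford--Taylor argument then upgrades monotone a.e.\ convergence of quasi-psh functions to convergence in capacity. Combined with the uniform bound from (i) and the quasi-continuity of quasi-psh differences, this gives $\chi_k \to \chi$ in capacity, with $0 \leq \chi_k \leq C^p$. All hypotheses of Lemma~\ref{lem: weak convergence of measures} are then in force (setting every $u_j$ in its statement equal to $u_0^k$, resp.\ $u_1^k$), yielding the weak convergences $\chi_k \te^n_{u_j^k} \to \chi \te^n_{u_j}$ for $j=0,1$. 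Pairing each with the continuous test function $1$ and adding completes the proof.
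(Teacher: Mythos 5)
Your proposal is correct and follows essentially the same route as the paper: the uniform bound $|u_0^k-u_1^k|\leq \sup_X|f_0-f_1|$ via the envelope-candidate comparison, capacity convergence from the monotone convergence $u_j^k\nearrow u_j$ of Lemma~\ref{Envelopes for increasing singularity type}, and then Lemma~\ref{lem: weak convergence of measures} (you are merely more explicit about the mass hypothesis, which indeed holds since $u_j^k$ has the singularity type of $\psi_k$ and $\int_X\te^n_{\psi_k}\nearrow \vol(\te)=\int_X\te^n_{u_j}$). The only small inaccuracy is the parenthetical claim that $u_j\in L^{\infty}(X)$ — in the big case $u_j$ only has minimal singularities and $V_{\te}$ need not be bounded below — but this is not needed, as increasing a.e.\ convergence of quasi-psh functions to a quasi-psh limit already gives convergence in capacity.
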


\begin{proof}
    We notice that 
    \[
    |u_{0}^{k} - u_{1}^{k}| \leq \sup_{X}|f_{0} - f_{1}|.
    \]
    This is true because if $C = \sup_{X}|f_{0} - f_{1}|$, then $f_{0} - C \leq f_{1}$, and therefore $P_{\theta}[\psi_{k}](f_{0}) - C$ is a candidate function for $P_{\theta}[\psi_{k}](f_{1})$, therefore, 
    \[
    P_{\theta}[\psi_{k}](f_{0}) - C \leq P_{\theta}[\psi_{k}](f_{1})
    \]
    and the other direction is shown similarly. Thus 
    \[
    |P_{\theta}[\psi_{k}](f_{0}) - P_{\theta}[\psi_{k}](f_{1})| \leq \sup_{X}|f_{0} - f_{1}|.
    \]
    
    Moreover, from Lemma~\ref{Envelopes for increasing singularity type} the functions the functions $u_{0}^{k} \nearrow u_{0}$ ad $u_{1}^{k} \nearrow u_{1}$ away from a pluripolar set, therefore $u_{0}^{k} \to u_{0}$ and $u_{1}^{k} \to u_{1}$ in capacity as $k\to \infty$. Moreover, $|u_{0}^{k} - u_{1}^{k}|^{p}$ are quasi-continuous and uniformly bounded, therefore from Lemma~\ref{lem: weak convergence of measures}, we get that 
    \[
    \lim_{k\to \infty}\int_{X} |u_{0}^{k} - u_{1}^{k}|^{p}(\te_{u_{0}^{k}}^{n} + \te_{u_{1}^{k}}^{n}) = \int_{X} |u_{0} - u_{1}|^{p}(\te_{u_{0}}^{n} + \te_{u_{1}}^{n}).
    \]
    \end{proof}

The next theorem proves that the limit in Equation~\eqref{eq: definition on Htheta} exists in the special setting when $u_{0}\leq u_{1}$. 

\begin{thm}\label{thm: definition is well-defined for u0 less than u1} If $f_{0}, f_{1},u_{0}, u_{1}, u_{0}^{k}, u_{1}^{k}$ are as in the beginning of Section~\ref{subsec: metric on smooth potentials} along with the assumption that $f_{0} \leq f_{1}$, then 
\[
\lim_{k\to \infty} d_{p}^{p}(u_{0}^{k}, u_{1}^{k}) = \int_{X} |\dot{u}_{0}|^{p} \theta^{n}_{u_{0}} = \int_{X} |\dot{u}_{1}|^{p}\theta^{n}_{u_{1}},
\]
where $u_{t}$ is the weak geodesic joining $u_{0}$ and $u_{1}$. Thus the limit in Equation~\eqref{eq: definition on Htheta} exists  and is independent of the approximating sequence $\psi_{k}$ if $f_{0} \leq f_{1}$. 
\end{thm}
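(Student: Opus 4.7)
\medskip

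\textbf{Proof plan.} The plan is to compute $d_p^p(u_0^k, u_1^k)$ exactly by Theorem~\ref{thm: distance between smooth objects in analytic singularity}, identify the super-level sets of the geodesic speed with contact sets as in the proof of Theorem~\ref{thm: left hand speed same as right hand speed}, and finally pass to the limit $k \to \infty$. Since $f_0 \leq f_1$ forces $u_0^k = P_\theta[\psi_k](f_0) \leq P_\theta[\psi_k](f_1) = u_1^k$, the constant path $t \mapsto u_0^k$ is a $\psi_k$-relative subgeodesic from $u_0^k$ to $u_1^k$; hence the weak geodesic $u_t^k$ lies above $u_0^k$ and $\dot u_0^k \geq 0$. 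Applying Theorem~\ref{thm: distance between smooth objects in analytic singularity} at $t=0$ and the layer cake representation,
\[
d_p^p(u_0^k, u_1^k) \;=\; \int_X (\dot u_0^k)^p \,\theta^n_{u_0^k} \;=\; p \int_0^\infty \tau^{p-1} \,\theta^n_{u_0^k}(\{\dot u_0^k \geq \tau\})\, d\tau.
\]

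Next, through the bijection of Theorem~\ref{thm: bijection between analytic singularity and big and nef}, the lifts $\tilde u_0^k, \tilde u_1^k$ have minimal singularity in $\PSH(\tilde X, \tilde\theta_k)$ (Corollary~\ref{cor: minimal singularity to minimal singularity}), and Theorem~\ref{thm: Mabuchi geodesics are preserved} transfers geodesics. Thus Lemma~\ref{rooftop envelops set and derivatives of geodesics} applied on $\tilde X$ and transported back via $\mu$ gives, up to a pluripolar set,
\[
\{\dot u_0^k \geq \tau\} \;=\; \{P_\theta[\psi_k](u_0^k, u_1^k - \tau) = u_0^k\}.
\]
A direct comparison of candidate sets shows $P_\theta[\psi_k](u_0^k, u_1^k - \tau) = P_\theta[\psi_k](f_0, f_1 - \tau)$, and combining Theorem~\ref{thm: measure on contact sets} (which yields $\theta^n_{u_0^k} = \mathds{1}_{\{u_0^k = f_0\}}\theta^n_{f_0}$) with the trivial inclusion $\{P_\theta[\psi_k](f_0, f_1-\tau)=f_0\} \subseteq \{u_0^k = f_0\}$ produces
\[
\theta^n_{u_0^k}(\{\dot u_0^k \geq \tau\}) \;=\; \theta^n_{f_0}\bigl(\{P_\theta[\psi_k](f_0, f_1 - \tau) = f_0\}\bigr).
\]
The analogous identity for $u_0 = P_\theta(f_0)$ and $u_1 = P_\theta(f_1)$, where $V_\theta$ has minimal singularity and Lemma~\ref{rooftop envelops set and derivatives of geodesics} applies directly, reads $\theta^n_{u_0}(\{\dot u_0 \geq \tau\}) = \theta^n_{f_0}(\{P_\theta(f_0, f_1 - \tau) = f_0\})$.

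Finally, by Lemma~\ref{lem: Hthetea closed under rooftop envelope} one has $P_\theta(f_0, f_1 - \tau) = P_\theta(g^\tau)$ and $P_\theta[\psi_k](f_0, f_1 - \tau) = P_\theta[\psi_k](g^\tau)$ for some $g^\tau \in C^{1,\bar 1}(X)$; Lemma~\ref{Envelopes for increasing singularity type} then yields $P_\theta[\psi_k](g^\tau) \nearrow P_\theta(g^\tau)$ off a pluripolar set, so the contact sets $\{P_\theta[\psi_k](g^\tau) = g^\tau\}$ increase up to a pluripolar set to $\{P_\theta(g^\tau) = g^\tau\}$. Since $\theta^n_{f_0}$ puts no mass on pluripolar sets, monotone convergence in $k$ gives
\[
\theta^n_{u_0^k}(\{\dot u_0^k \geq \tau\}) \;\nearrow\; \theta^n_{u_0}(\{\dot u_0 \geq \tau\}) \qquad \forall \tau \geq 0.
\]
Testing the candidate $u_1^k + \inf_X(f_0 - f_1)$ against the envelope defining $u_0^k$ yields $u_1^k - u_0^k \leq \sup_X |f_1 - f_0| =: C$, hence $\dot u_0^k \leq C$ uniformly in $k$, so the integrand in $\tau$ is supported on $[0, C]$; dominated convergence gives $\lim_{k\to\infty} d_p^p(u_0^k, u_1^k) = \int_X |\dot u_0|^p \theta^n_{u_0}$. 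The remaining equality $\int_X |\dot u_0|^p \theta^n_{u_0} = \int_X |\dot u_1|^p \theta^n_{u_1}$ is exactly Theorem~\ref{thm: left hand speed same as right hand speed}, and since the limit depends only on $u_0, u_1$, it is independent of the approximating sequence $\psi_k$.

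\medskip

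\textbf{Main obstacle.} The most delicate step is the contact-set identification, which requires combining the transfer through $\tilde X$ (to apply Lemma~\ref{rooftop envelops set and derivatives of geodesics}, which needs minimal singularity) with the envelope identity $P_\theta[\psi_k](u_0^k, u_1^k - \tau) = P_\theta[\psi_k](f_0, f_1 - \tau)$; the subsequent passage to the limit relies crucially on reducing to an increasing sequence of contact sets inside a fixed $C^{1,\bar 1}$ upper bound $g^\tau$ and exploiting that $\theta^n_{f_0}$ does not charge pluripolar sets.
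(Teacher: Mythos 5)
Your reduction to the layer-cake formula and the contact-set identities is fine (and closely parallels the paper's Theorem~\ref{thm: distance between smooth objects in analytic singularity}, Lemma~\ref{rooftop envelops set and derivatives of geodesics} and Theorem~\ref{thm: measure on contact sets} machinery), but the limiting step contains a genuine gap. You claim that since $P_{\te}[\psi_k](g^{\tau}) \nearrow P_{\te}(g^{\tau})$ off a pluripolar set (Lemma~\ref{Envelopes for increasing singularity type}), the contact sets $\{P_{\te}[\psi_k](g^{\tau})=g^{\tau}\}$ increase, up to a pluripolar set, to $\{P_{\te}(g^{\tau})=g^{\tau}\}$, whence $\theta^n_{u_0^k}(\{\dot u_0^k\geq\tau\}) \nearrow \theta^n_{u_0}(\{\dot u_0\geq\tau\})$. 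That inference is not valid: pointwise increasing convergence of the envelopes tells you nothing about whether equality with the obstacle is attained at any finite $k$. A point can satisfy $P_{\te}(g^{\tau})=g^{\tau}$ while $P_{\te}[\psi_k](g^{\tau})<g^{\tau}$ for every $k$, and the exceptional set where this happens has no reason to be pluripolar. In the present situation the exhaustion does hold modulo a $\theta^n_{f_0}$-null set, but for a more subtle reason that you neither state nor prove: one needs the mass convergence $\int_X\te^n_{\psi_k}\to\mathrm{Vol}(\te)$ together with weak convergence of the envelope measures (Lemma~\ref{lem: weak convergence of measures}), the contact-set formula for $\te^n_{P_{\te}[\psi_k](g^{\tau})}$, and a comparison of $\te^n_{f_0}$ with $\te^n_{g^{\tau}}$ on $\{g^{\tau}=f_0\}$ to transfer the null set from the measure $\te^n_{g^{\tau}}$ to $\te^n_{f_0}$. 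This missing convergence of contact-set measures is precisely the analytic heart of the theorem, so as written the proof does not close.

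For comparison, the paper circumvents this issue entirely: it first proves the $p=1$ convergence $\int_X\dot u_0^k\,\te^n_{u_0^k}\to\int_X\dot u_0\,\te^n_{u_0}$ by combining Lemma~\ref{lem: E1 in analytic singularity case}, Lemma~\ref{lem: monge ampere energy and geodesic in big case} and the convergence of Monge--Amp\`ere energies (Lemma~\ref{thm: convergence of Monge-Ampere energy}, where Lemma~\ref{lem: weak convergence of measures} is applied to the uniformly bounded quasi-continuous functions $u_k-\psi_k+C$ rather than to indicator functions of contact sets). Then, writing both sides against $\te^n_{f_0}$, it observes that $\mathds{1}_{D_k}\dot u_0^k$ is increasing in $k$ and dominated pointwise by $\mathds{1}_{D}\dot u_0$, so convergence of the integrals forces pointwise $\te^n_{f_0}$-a.e.\ convergence; the case of general $p$ then follows from the uniform bound $0\leq\dot u_0^k\leq\sup_X|f_0-f_1|$ and dominated convergence. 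If you want to salvage your route, you must supply the contact-set measure convergence sketched above; otherwise the monotonicity-plus-energy argument is the shorter path.
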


\begin{proof}
Since $f_{0} \leq f_{1}$, we have $u_{0} \leq u_{1}$ and $u_{0}^{k} \leq u_{1}^{k}$. Let $u_{t}^{k}$ be the geodesic joining $u_{0}^{k}$ and $u_{1}^{k}$. Since $f_{0}, f_{1}$ are bounded, $u_{0}, u_{1}$ have the minimal singularity type and $u_{0}^{k}, u_{1}^{k}$ have the same singularity type as $\psi_{k}$. 

Now Lemma~\ref{lem: E1 in analytic singularity case} says that 
\[
I(u_{1}^{k}) - I(u_{0}^{k}) = \int_{X}\dot{u}_{0}^{k}\theta^{n}_{u_{0}^{k}}.
\]

From Theorem~\ref{thm: convergence of Monge-Ampere energy}, we know that $I(u_{0}^{k}) \to I(u_{0})$ and $I(u_{1}^{k}) \to I(u_{1})$ as $k \to \infty$. Combining with Lemma~\ref{lem: monge ampere energy and geodesic in big case}, we get that 
\[
\lim_{k\to \infty}\int_{X}\dot{u}_{0}^{k} \theta^{n}_{u_{0}^{k}} = \int_{X}\dot{u}_{0}\theta^{n}_{u_{0}}.
\]
 From Theorem~\ref{thm: measure on contact sets} we obtain that $\theta^{n}_{u_{0}^{k}} = \mathds{1}_{D_{k}}\theta^{n}_{f_{0}}$ where $D_{k} = \{ P_{\theta}[\psi_{k}](f_{0}) = f_{0}\}$, and $\theta^{n}_{u_{0}} = \mathds{1}_{D}\theta^{n}_{u_{0}}$ where $D = \{ P_{\theta}(f_{0}) = f_{0}\}$. So we can write that
\[
\lim_{k \to \infty} \int_{X}\mathds{1}_{D_{k}}\dot{u}^{k}_{0} \theta^{n}_{f_{0}} = \int_{X}\mathds{1}_{D}\dot{u}_{0}\theta^{n}_{f_{0}}.
\]

As $u_{0}^{k} \nearrow u_{0}$ and $u_{1}^{k} \nearrow u_{1}$, we find that the geodesics $u_{t}^{k}$ joining $u_{0}^{k}$ and $u_{1}^{k}$ are also increasing. This holds because if $k < l$, then the geodesic $u_{t}^{k}$ is a candidate subgeodesic joining $u_{0}^{l}$ and $u_{1}^{l}$. Similarly, all geodesics $u_{t}^{k}$ are candidate subgeodesics joining $u_{0}$ and $u_{1}$. Therefore $u_{t}^{k}$ are increasing in $k$ and $u_{t}^{k} \leq u_{t}$. 

Similarly, we can show that the contact sets $D_{k}$ are increasing. If $k < l$, and $x \in D_{k}$, then $P_{\te}[\psi_{k}](f_{0})(x) = f_{0}(x)$ and since $P_{\te}[\psi_k](f_{0}) \leq P_{\te}[\psi_{l}](f_{0}) \leq f_{0}$, we find that $x \in D_{l}$ as well, so $D_{k} \subset D_{l}$. Moreover since $P_{\te}[\psi_{k}](f_{0}) \leq P_{\te}(f_{0}) \leq f_{0}$, we have $D_{k} \subset D$ for all $D$.

 If $x \in D_{k}$ and $k < l$, then 
 \[
 \dot{u}^{k}_{0}(x) = \lim_{t\to 0} \frac{u^{k}_{t}(x) - u^{k}_{0}(x)}{t} = \lim_{t \to 0} \frac{u_{t}^{k}(x) - f_{0}(x)}{t} \leq \lim_{t\to 0}\frac{u^{l}_{t}(x) - f_{0}(x)}{t} = \dot{u}^{l}_{0}(x).
 \]
 Similarly, if $x \in D_{k}$, then 
 \[
  \dot{u}^{k}_{0}(x) = \lim_{t\to 0} \frac{u^{k}_{t}(x) - u^{k}_{0}(x)}{t} = \lim_{t \to 0} \frac{u_{t}^{k}(x) - f_{0}(x)}{t} \leq \lim_{t\to 0}\frac{u_{t}(x) - f_{0}(x)}{t} = \dot{u}_{0}(x).
 \]
 Also by assumption $u^{k}_{0} \leq u^{k}_{1}$, so $\dot{u}^{k}_{0}, \dot{u}_{0} \geq 0$. Therefore, $\mathds{1}_{D_{k}}\dot{u}^{k}_{0}$ is an increasing sequence such that for each $k$, $\mathds{1}_{D_{k}} \dot{u}_{0}^{k} \leq \mathds{1}_{D}\dot{u}_{0}$, and  
 \[
\lim_{k\to\infty} \int_{X} \mathds{1}_{D_{k}}\dot{u}^{k}_{0} \theta^{n}_{f_{0}} = \int_{X} \mathds{1}_{D}\dot{u}_{0} \theta^{n}_{f_{0}}.
 \]
 Therefore, $\mathds{1}_{D_{k}}\dot{u}^{k}_{0} \nearrow \mathds{1}_{D}\dot{u}_{0}$ pointwise $\theta^{n}_{f_{0}}$ almost everywhere.

Also $0 \leq \dot{u}^{k}_{0} \leq u^{k}_{1} - u^{k}_{0} \leq \sup_{X}|f_{0} -f_{1}|$. Thus we have a uniform bound on $\dot{u}^{k}_{0}$. Therefore by Lebesgue's Dominated Convergence Theorem, we have 
\begin{equation}\label{eq: convergence in monotone case}
    \int_{X}\mathds{1}_{D_{k}}(\dot{u}^{k}_{0})^{p} \theta^{n}_{f_{0}} \to \int_{X} \mathds{1}_{D}(\dot{u}_{0})^{p} \theta^{n}_{f_{0}}.
\end{equation}

Now, from Theorem~\ref{thm: distance between smooth objects in analytic singularity}, 
\[
d_{p}^{p}(u_{0}^{k}, u_{1}^{k}) = \int_{X}|\dot{u}^{k}_{0}|^{p}\theta^{n}_{u^{k}_{0}} = \int_{X} \mathds{1}_{D_{k}}(\dot{u}_{0}^{k})^{p}\theta^{n}_{f_{0}} \to \int_{X} \mathds{1}_{D}(\dot{u}_{0})^{p}\theta^{n}_{f_{0}} = \int_{X} |\dot{u}_{0}|^{p}\theta^{n}_{u_{0}}
\]
as $k \to \infty$. 

The same proof works for $t =1$ as well.
\end{proof}

We now follow the proof of \cite[Theorem 3.26]{Darvas2019GeometricPT} to get
\begin{thm}\label{pythagorean formula for smooth enough potentials}
    Let $f_{0}, f_{1} \in C^{1,\bar{1}}(X)$ and $u_{0} = P_{\theta}(f_{0})$ and $u_{1} = P_{\theta}(f_{1})$. Let $u_{t}$ be the geodesic joining $u_{0}$ and $u_{1}$. Also assume that $w_{t}$ is a geodesic joining $P_{\theta}(u_{0},u_{1})$ and $u_{0}$, and $v_{t}$ is a geodesic joining $P_{\theta}(u_{0},u_{1})$ and $u_{1}$. Then 
    \[
    \int_{X}|\dot{u}_{0}|^{p} \theta^{n}_{u_{0}} = \int_{X} |\dot{v}_{0}|^{p}\theta^{n}_{P_{\theta}(u_{0},u_{1})} + \int_{X}|\dot{w}_{0}|^{p}\theta^{n}_{P_{\theta}(u_{0},u_{1})}.
    \]
\end{thm}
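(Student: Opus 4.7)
The plan is to set $\phi := P_\theta(u_0, u_1)$ and split both sides of the desired identity according to the geometry of the contact sets $\{\phi = u_0\}$ and $\{\phi = u_1\}$. The main tool is Lemma~\ref{rooftop envelops set and derivatives of geodesics}, which identifies the super-level sets of $\dot v_0$, $\dot w_0$, $\dot u_0$ with appropriate rooftop contact sets. All of $u_0, u_1, \phi$ have minimal singularities, so the lemma applies to each of the three geodesics.

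First I would establish the measure decomposition
\[
\theta^n_\phi = \mathds{1}_{\{\phi = u_0\}}\, \theta^n_{u_0} + \mathds{1}_{\{\phi = u_1\} \setminus \{\phi = u_0\}}\, \theta^n_{u_1}.
\]
Since $\phi = P_\theta(f_0, f_1)$ (as in the proof of Lemma~\ref{lem: Hthetea closed under rooftop envelope}), Theorem~\ref{measures of rooftop envelope potentials} expresses $\theta^n_\phi$ in terms of the contact sets $\{\phi = f_j\}$ and the measures $\theta^n_{f_j}$. Combining this with $\theta^n_{u_j} = \mathds{1}_{\{u_j = f_j\}}\theta^n_{f_j}$ from Theorem~\ref{thm: measure on contact sets}, together with the trivial identity $\{\phi = f_j\} = \{\phi = u_j\} \cap \{u_j = f_j\}$ (immediate from $\phi \leq u_j \leq f_j$), yields the decomposition above.

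Next I would compute $\int_X (\dot v_0)^p \theta^n_\phi$. Because $\phi \leq u_1$, the constant path at $\phi$ is a subgeodesic to $u_1$, hence $\dot v_0 \geq 0$. Splitting the integral via the decomposition above, on the set $\{\phi = u_0\}$ and for $\tau \geq 0$ the identity $P_\theta(\phi, u_1 - \tau) = P_\theta(u_0, u_1, u_1 - \tau) = P_\theta(u_0, u_1 - \tau)$ (valid because $u_1 - \tau \leq u_1$) together with Lemma~\ref{rooftop envelops set and derivatives of geodesics} gives $\{\dot v_0 \geq \tau\} = \{P_\theta(u_0, u_1 - \tau) = \phi\}$. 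The containment $\{P_\theta(u_0, u_1 - \tau) = u_0\} \subseteq \{\phi = u_0\}$ (since $P_\theta(u_0, u_1 - \tau) \leq \phi \leq u_0$) then collapses this to $\{\dot u_0 \geq \tau\}$, using Lemma~\ref{rooftop envelops set and derivatives of geodesics} once more for $u_t$. The layer cake formula produces
\[
\int_{\{\phi = u_0\}} (\dot v_0)^p\, \theta^n_{u_0} = \int_{\{\dot u_0 > 0\}} (\dot u_0)^p\, \theta^n_{u_0}.
\]
The other piece vanishes: on $\{\phi = u_1\}$ one has $P_\theta(u_0, u_1 - \tau) \leq u_1 - \tau < u_1 = \phi$ for $\tau > 0$, so $\{\dot v_0 \geq \tau\}$ misses this set.

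Finally, running the identical argument with the reversed geodesic $\bar u_t := u_{1-t}$ (under which $\bar v_t = w_t$ and $\dot{\bar u}_0 = -\dot u_1$) yields $\int_X (\dot w_0)^p \theta^n_\phi = \int_{\{\dot u_1 < 0\}} |\dot u_1|^p\, \theta^n_{u_1}$, and then the sign-preserving version of the interchange established inside the proof of Theorem~\ref{thm: left hand speed same as right hand speed} upgrades the right-hand side to $\int_{\{\dot u_0 < 0\}} |\dot u_0|^p\, \theta^n_{u_0}$. Summation then recovers the claimed formula. The main technical obstacle is precisely the set identification $\{\dot v_0 \geq \tau\} \cap \{\phi = u_0\} = \{\dot u_0 \geq \tau\}$, which rests on the commutation $P_\theta(\phi, u_1 - \tau) = P_\theta(u_0, u_1 - \tau)$ for $\tau \geq 0$ together with the rooftop containment into $\{\phi = u_0\}$; everything else is bookkeeping via layer cake and the reverse-geodesic symmetry.
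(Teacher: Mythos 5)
Your proposal is correct and follows essentially the same route as the paper's proof: layer-cake integration combined with Lemma~\ref{rooftop envelops set and derivatives of geodesics}, the contact-set description of $\theta^{n}_{P_{\theta}(u_{0},u_{1})}$ coming from Theorem~\ref{thm: measure on contact sets} and Theorem~\ref{measures of rooftop envelope potentials}, and the commutation $P_{\theta}(P_{\theta}(u_{0},u_{1}),u_{1}-\tau)=P_{\theta}(u_{0},u_{1}-\tau)$, with your treatment of the negative-slope half (reversed geodesic plus the sign-split interchange inside Theorem~\ref{thm: left hand speed same as right hand speed}) being only a cosmetic variant of the paper's direct volume-complement computation. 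One minor imprecision: the second indicator in your decomposition of $\theta^{n}_{P_{\theta}(u_{0},u_{1})}$ should be $\{P_{\theta}(u_{0},u_{1})=f_{1}\}\setminus\{P_{\theta}(u_{0},u_{1})=f_{0}\}$, as in Theorem~\ref{measures of rooftop envelope potentials}, rather than $\{P_{\theta}(u_{0},u_{1})=u_{1}\}\setminus\{P_{\theta}(u_{0},u_{1})=u_{0}\}$, but this is harmless because your argument only uses that this piece is carried by $\{P_{\theta}(u_{0},u_{1})=u_{1}\}$, where $\dot{v}_{0}$ vanishes away from a pluripolar set.
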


\begin{proof}
    Just like in \cite[Theorem 3.26]{Darvas2019GeometricPT}, we will use Lemma~\ref{rooftop envelops set and derivatives of geodesics} and Corollary~\ref{measures of rooftop potentials are usually nice} repeatedly to settle the claim. 

    We will prove that 
    \begin{equation}\label{eq: positive slope part}
    \int_{\{\dot{u}_{0}>0\}} |\dot{u}_{0}|^{p}\theta^{n}_{u_{0}} = \int_{X} |\dot{v}_{0}|^{p}\theta^{n}_{P_{\theta}(u_{0},u_{1})}   
    \end{equation}
    and that 
    \begin{equation}\label{eq: negative slope part}
    \int_{\{\dot{u}_{0}<0\}} |\dot{u}_{0}|^{p}\theta^{n}_{u_{0}} = \int_{X} |\dot{w}_{0}|^{p}\theta^{n}_{P_{\theta}(u_{0},u_{1})}  
    \end{equation}

    \begin{align*}
        \int_{\{\dot{u}_{0}>0\}} |\dot{u}_{0}|^{p}\theta^{n}_{u_{0}} &= p \int_{0}^{\infty}\tau^{p-1}\theta^{n}_{u_{0}}(\{ \dot{u}_{0} \geq \tau\})d\tau \\
        &= p\int_{X}\tau^{p-1}\theta^{n}_{u_{0}}(\{P_{\te}(u_{0},u_{1}-\tau) = u_{0}\})d\tau.
    \end{align*}

    On the other hand,
    \begin{align}
        \int_{X}|\dot{v}_{0}|^{p}\theta^{n}_{P_{\theta}(u_{0},u_{1})} &= p\int_{0}^{\infty}\tau^{p-1}\theta^{n}_{P_{\theta}(u_{0},u_{1})}(\{\dot{v}_{0} \geq \tau\})d\tau \nonumber\\
        &= p\int_{0}^{\infty}\tau^{p-1}\theta^{n}_{P_{\theta}(u_{0},u_{1})}(\{P_{\theta}(P_{\theta}(u_{0},u_{1}),u_{1}-\tau) = P_{\theta}(u_{0},u_{1})\})d\tau \nonumber\\
        &= p\int_{0}^{\infty}\tau^{p-1}\theta^{n}_{u_{0}}(\{P_{\theta}(u_{0},u_{1}-\tau)=u_{0}\})d\tau. \label{eq: expression for partial geodesic}
    \end{align}

    For the last step, we used Corollary~\ref{measures of rooftop potentials are usually nice} and the fact that $P_{\theta}(P_{\theta}(u_{0},u_{1}),u_{1}-\tau) = P_{\theta}(u_{0},u_{1}-\tau)$, $\{ P_{\theta}(u_{0},u_{1}-\tau) = u_{0}\}  = \{P_{\theta}(u_{0},u_{1}-\tau) = P_{\theta}(u_{0},u_{1}) = u_{0}\}$, and that the set $\{P_{\theta}(u_{0},u_{1}-\tau) = P_{\theta}(u_{0},u_{1}) = u_{1}\} = \emptyset$. Thus proving Equation~\eqref{eq: positive slope part}.

    For Equation~\eqref{eq: negative slope part},  we observe from Corollary~\ref{measures of rooftop potentials are usually nice} that except for countably many $\tau$, we have 
    \[
    \text{Vol}(\theta) = \theta^{n}_{u_{0}}(\{P_{\theta}(u_{0},u_{1}+\tau)=u_{0}\}) + \theta^{n}_{u_{1}}(\{P_{\theta}(u_{0},u_{1}+\tau) = u_{1}+\tau\}).
    \]
    Now,
    \begin{align*}
        \int_{\{\dot{u}_{0} < 0\}} |\dot{u}_{0}|^{p}\theta^{n}_{u_{0}} &= p\int_{0}^{\infty} \tau^{p-1}\theta^{n}_{u_{0}}(\{-\dot{u}_{0} \geq \tau\})d\tau \\
        &= p\int_{0}^{\infty}\tau^{p-1}\theta^{n}_{u_{0}}(X\setminus\{-\dot{u}_{0} <\tau\})d\tau\\
        &= p\int_{0}^{\infty}\tau^{p-1}(\text{Vol}(\theta) - \theta^{n}_{u_{0}}(\{P_{\te}(u_{0},u_{1}+\tau) = u_{0}\})d\tau\\
        &= p\int_{0}^{\infty}\tau^{p-1}\theta^{n}_{u_{1}}(\{P_{\theta}(u_{0},u_{1}+\tau) = u_{1}+\tau\})d\tau\\
        &= p\int_{0}^{\infty}\tau^{p-1}\theta^{n}_{u_{1}}(\{P_{\theta}(u_{0}-\tau,u_{1}) = u_{1}\})d\tau.
    \end{align*}

    This is the same expression as Equation~\eqref{eq: expression for partial geodesic} with the roles of $u_{0}$ and $u_{1}$ reversed. Therefore, 
    \[
    \int_{\{\dot{u}_{0} < 0\}} |\dot{u}_{0}|^{p}\theta^{n}_{u_{0}} = \int_{X} |\dot{w}_{0}|^{p}\theta^{n}_{P_{\theta}(u_{0},u_{1})}  
    \]
    proving Equation~\eqref{eq: negative slope part}.
\end{proof}

Now we can use this result to show that the limit in Equation~\eqref{eq: definition on Htheta} exists without the monotone assumption. 
\begin{thm}\label{thm: convergence for smooth enough potentials}
Let $f_{0}, f_{1} \in C^{1,\bar{1}}(X)$ and $u_{0} = P_{\theta}(f_{0})$ and $u_{1} = P_{\theta}(f_{1})$. Let $\psi_{k} \in \PSH(X,\theta)$ have analytic singularity such that $\psi_{k} \nearrow V_{\theta}$ almost everywhere. Also define $u_{0}^{k} = P_{\theta}[\psi_{k}](f_{0})$ and $u_{1}^{k} = P_{\theta}[\psi_{k}](f_{1})$. Then the limit in Equation~\eqref{eq: definition on Htheta} exists, and is independent of the choice of the approximating sequence $\psi_{k}$. Moreover, if $u_{t}$ is the weak geodesic joining $u_{0}$ and $u_{1}$, then 
\[
\lim_{k\to \infty} d_{p}^{p}(u_{0}^{k},u_{1}^{k}) = \int_{X} |\dot{u}_{0}|^{p}\theta^{n}_{u_{0}} = \int_{X} |\dot{u}_{1}|^{p}\theta^{n}_{u_{1}}.
\]
\end{thm}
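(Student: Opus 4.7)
The plan is to reduce the non-monotone case of Theorem~\ref{thm: convergence for smooth enough potentials} to the monotone case already settled in Theorem~\ref{thm: definition is well-defined for u0 less than u1}, by splitting each geodesic at the rooftop envelope $P_\theta(u_0, u_1)$ and applying the Pythagorean identity of Lemma~\ref{lem: Pythagorean formula in analytic singularity type} at each approximation level $k$.

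First, using Lemma~\ref{lem: Hthetea closed under rooftop envelope}, I would write $P_\theta(u_0, u_1) = P_\theta(h)$ where $h = P_{C\om}(f_0, f_1) \in C^{1,\bar 1}(X)$ for $C$ large enough with $\theta \leq C\om$. The key preliminary step is to show that this identification lifts to the approximants, namely
\[
P_\theta(u_0^k, u_1^k) = P_\theta[\psi_k](h).
\]
One inclusion follows from $h \leq f_0, f_1$, giving $P_\theta[\psi_k](h) \leq u_0^k, u_1^k$ by monotonicity of the envelope. For the reverse, any $\theta$-psh function is $C\om$-psh, and since $P_\theta(u_0^k, u_1^k) \leq \min(f_0, f_1)$ one concludes $P_\theta(u_0^k, u_1^k) \leq h$; combined with $P_\theta(u_0^k, u_1^k) \preceq \psi_k$ (built into the class $\PSH(X,\theta,\psi_k)$), the definition of $P_\theta[\psi_k](h)$ yields the other direction.

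With this identification, Lemma~\ref{lem: Pythagorean formula in analytic singularity type} gives
\[
d_p^p(u_0^k, u_1^k) = d_p^p\bigl(P_\theta[\psi_k](f_0), P_\theta[\psi_k](h)\bigr) + d_p^p\bigl(P_\theta[\psi_k](f_1), P_\theta[\psi_k](h)\bigr).
\]
Each summand is now a $d_p$-distance between ordered potentials of the $C^{1,\bar 1}$-type covered by Theorem~\ref{thm: definition is well-defined for u0 less than u1}. Applying that theorem to the pairs $(h, f_0)$ and $(h, f_1)$, the first summand tends to $\int_X |\dot w_0|^p \te^n_{P_\theta(u_0,u_1)}$ and the second to $\int_X |\dot v_0|^p \te^n_{P_\theta(u_0,u_1)}$, where $w_t, v_t$ are the weak geodesics joining $P_\theta(u_0, u_1)$ to $u_0, u_1$ respectively.

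Finally, Theorem~\ref{pythagorean formula for smooth enough potentials} assembles these two contributions into $\int_X |\dot u_0|^p \theta^n_{u_0}$, and Theorem~\ref{thm: left hand speed same as right hand speed} identifies this with $\int_X |\dot u_1|^p \theta^n_{u_1}$. Since the limiting expression depends only on $u_0, u_1$ and not on $\psi_k$, independence of the approximating sequence follows automatically. The main obstacle is the commutation identity $P_\theta(u_0^k, u_1^k) = P_\theta[\psi_k](h)$: this is what lets us preserve the $C^{1,\bar 1}$ structure needed to invoke the monotone result and to apply the second Pythagorean splitting at the limit.
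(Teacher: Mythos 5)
Your proposal is correct and follows essentially the same route as the paper: reduce to the monotone case of Theorem~\ref{thm: definition is well-defined for u0 less than u1} via the rooftop potential $h = P_{C\om}(f_0,f_1)$ from Lemma~\ref{lem: Hthetea closed under rooftop envelope}, apply the Pythagorean identity of Lemma~\ref{lem: Pythagorean formula in analytic singularity type} at each level $k$ using $P_{\te}(u_0^k,u_1^k) = P_{\te}[\psi_k](h)$, and reassemble with Theorem~\ref{pythagorean formula for smooth enough potentials}. Your explicit verification of the identity $P_{\te}(u_0^k,u_1^k)=P_{\te}[\psi_k](h)$ (which the paper only asserts) and your use of Theorem~\ref{thm: left hand speed same as right hand speed} for the $\dot{u}_1$ integral are fine, minor variations.
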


\begin{proof}
    We know the result from Theorem~\ref{thm: definition is well-defined for u0 less than u1} if $f_{0} \leq f_{1}$. To prove it in general, recall that Lemma~\ref{lem: Hthetea closed under rooftop envelope} shows that $P_{\theta}(u_{0}, u_{1}) \in \mathcal{H}_{\theta}$ as well. Here $P_{\theta}(u_{0}, u_{1}) = P_{\te}(P_{C\om}(f_{0},f_{1}))$ and $h:= P_{C\om}(f_{0},f_{1}) \in C^{1,\bar{1}}(X)$. Using the notation from the previous theorem, let $w_{t}$ be the weak geodesic joining $P_{\te}(u_{0}, u_{1})$ and $u_{0}$ and $v_{t}$ be the weak geodesic joining $P_{\te}(u_{0},u_{1})$ and $u_{1}$. 

    Now, $h \leq f_{0}, f_{1}$ are $C^{1,\bar{1}}$ functions. From Theorem~\ref{thm: definition is well-defined for u0 less than u1},
    \[
 \lim_{k\to\infty}d_{p}^{p}(P_{\theta}[\psi_{k}](h), u_{0}^{k}) = \int_{X} |\dot{w}_{0}|^{p}\te^{n}_{P_{\te}(u_{0},u_{1})}
    \]
    and
    \[
    \lim_{k\to \infty} d_{p}^{p}(P_{\theta}[\psi_{k}](h), u_{1}^{k}) = \int_{X} |\dot{v}_{0}|^{p} \te^{n}_{P_{\te}(u_{0},u_{1})}.
    \]
    Lemma~\ref{lem: Pythagorean formula in analytic singularity type} says that the distance $d_{p}$ on $\E^{p}(X,\theta, \psi_{k})$ satisfies the Pythagorean formula. Observing $P_{\te}[\psi_{k}](h) = P_{\te}(u_{0}^{k}, u_{1}^{k}) = P_{\te}[\psi_{k}](f_{0},f_{1})$, we get
    \begin{align*}
        \lim_{k\to \infty}d_{p}^{p}(u_{0}^{k}, u_{1}^{k}) &= \lim_{k\to \infty} d_{p}^{p} (u_{0}^{k}, P_{\theta}[\psi_{k}](h)) + d_{p}^{p}(u_{1}^{k}, P_{\theta}[\psi_{k}](h)) \\
        &= \int_{X}|\dot{w}_{0}|^{p}\theta^{n}_{P_{\theta}(u_{0}, u_{1})} + \int_{X}|\dot{v}_{0}|^{p}\theta^{n}_{P_{\theta}(u_{0}, u_{1})}\\
        &= \int_{X} |\dot{u}_{0}|^{p} \theta^{n}_{u_{0}}.
    \end{align*}
    Here, in the last line, we used Theorem~\ref{pythagorean formula for smooth enough potentials}. Similar proof shows that $\lim_{k\to \infty}d_{p}^{p}(u_{0}^{k}, u_{1}^{k}) = \int_{X} |\dot{u}_{1}|^{p}\te^{n}_{u_{1}}.$
\end{proof}
With the help of Theorem~\ref{thm: convergence for smooth enough potentials}, we see that the limit in Equation~\eqref{eq: definition on Htheta} exists and does not depend on the choice of the approximating sequence. Thus we can define
\begin{mydef}\label{def: definition on Htheta}
    Take $u_{0}, u_{1} \in \mathcal{H}_{\theta}$ where $u_{0} = P_{\theta}(f_{0})$ and $u_{1} = P_{\theta}(f_{1})$ for $f_{0}, f_{1} \in C^{1,\bar{1}}(X)$. Let $\psi_{k} \nearrow V_{\theta}$ outside a pluripolar set be an increasing sequence of $\theta$-psh function with analytic singularities. Denote by $u_{0}^{k} = P_{\theta}[\psi_{k}](f_{0})$ and $u_{1}^{k} = P_{\theta}[\psi_{k}](f_{1})$. We define
    \[
    d_{p}(u_{0}, u_{1}):= \lim_{k\to \infty} d_{p}(u_{0}^{k}, u_{1}^{k}).
    \]
    By Theorem~\ref{thm: convergence for smooth enough potentials}, the limit exists and is independent of the choice of approximating sequence. 
\end{mydef}
The next theorem shows that Equation~\eqref{eq: definition on Htheta} indeed defines a metric on $\mathcal{H}_{\theta}$. 

\begin{thm}\label{thm: dp is a metric on Htheta}
    If $d_{p}$ is defined as in Definition~\ref{def: definition on Htheta}, then $(\mathcal{H}_{\theta}, d_{p})$ is a metric space and $d_{p}$ is comparable to $I_{p}$. This means there exits $C > 1$, depending only on dimension such that for all $u_{0}, u_{1} \in \mathcal{H}_{\theta}$, 
    \[
    \frac{1}{C} I_{p}(u_{0}, u_{1}) \leq d_{p}^{p}(u_{0}, u_{1}) \leq C I_{p}(u_{0},u_{1}).
    \]
\end{thm}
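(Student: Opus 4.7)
The plan is to transfer the corresponding assertions from the prescribed-singularity setting, where they have already been established in Lemma~\ref{lem: Ip compares with dp in analytic singularity type as well} and Theorem~\ref{thm: Complete metric space structure on analytic singularity space}, up to $\cH_\te$ by taking limits along the approximating sequence of Definition~\ref{def: definition on Htheta}. Given $u_0, u_1 \in \cH_\te$ with $u_i = P_\te(f_i)$ and $f_i \in C^{1,\bar 1}(X)$, set $u_i^k := P_\te[\psi_k](f_i) \in \E^p(X,\te,\psi_k)$. Theorem~\ref{thm: convergence for smooth enough potentials} and Definition~\ref{def: definition on Htheta} give $d_p(u_0^k, u_1^k) \to d_p(u_0, u_1)$, while Lemma~\ref{convergence of p-energy} gives $I_p(u_0^k, u_1^k) \to I_p(u_0, u_1)$.

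For the $I_p$-comparison, Lemma~\ref{lem: Ip compares with dp in analytic singularity type as well} supplies a constant $C > 1$ depending only on $n$ (hence independent of $k$, since it descends from the dimensional constant of Theorem~\ref{thm: control by p-energy}) with
\[
\frac{1}{C} I_p(u_0^k, u_1^k) \leq d_p^p(u_0^k, u_1^k) \leq C\, I_p(u_0^k, u_1^k).
\]
Passing to the limit $k \to \infty$ in both inequalities immediately yields the same sandwich for $u_0, u_1 \in \cH_\te$ with the same $C$.

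For the metric axioms, symmetry, non-negativity and $d_p(u,u) = 0$ are immediate from Definition~\ref{def: definition on Htheta}. The triangle inequality for a triple in $\cH_\te$ is obtained by applying the triangle inequality at each level $k$ (valid by Theorem~\ref{thm: Complete metric space structure on analytic singularity space}) to the corresponding approximants and letting $k \to \infty$. The only step requiring more than bare limit-passing is the separation axiom: if $d_p(u_0, u_1) = 0$, the lower comparison just established forces $I_p(u_0, u_1) = 0$, so $u_0 = u_1$ both $\te^n_{u_0}$- and $\te^n_{u_1}$-almost everywhere. Because $f_0, f_1$ are bounded, $u_0$ and $u_1$ have minimal singularities, and the domination principle for $\te$-psh functions of minimal singularity type in big cohomology classes (a standard tool from \cite{Boucksom2008MongeAmpreEI}) upgrades this to the pointwise equality $u_0 = u_1$. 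This final appeal to the domination principle is the main obstacle; everything else is pure passage to the limit.
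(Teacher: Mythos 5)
Your proposal is correct and follows essentially the same route as the paper: both pass the comparison of Lemma~\ref{lem: Ip compares with dp in analytic singularity type as well} to the limit via Lemma~\ref{convergence of p-energy}, obtain symmetry and the triangle inequality from the corresponding properties of $(\E^{p}(X,\theta,\psi_{k}),d_{p})$ at each level $k$, and derive non-degeneracy from $I_{p}(u_{0},u_{1})=0$ together with the domination principle (the paper cites \cite[Proposition 2.4]{Darvas2017OnTS} for this last step). No gaps to report.
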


\begin{proof}
    From Lemma~\ref{convergence of p-energy}, we know that $\lim_{k\to \infty} I_{p}(u_{0}^{k}, u_{1}^{k}) = I_{p}(u_{0}, u_{1})$. From Lemma~\ref{lem: Ip compares with dp in analytic singularity type as well}, we know that there exists $C$ such that 
    \[
    \frac{1}{C} I_{p}(u_{0}^{k}, u_{1}^{k}) \leq d_{p}^{p}(u_{0}, u_{1}) \leq C I_{p}(u_{0}, u_{1})
    \]
    Taking limit $k \to \infty$, we get
    \[
    \frac{1}{C} I_{p}(u_{0}, u_{1}) \leq d_{p}^{p}(u_{0}, u_{1}) \leq C I_{p}(u_{0}, u_{1}).
    \]

    Symmetry and triangle inequality for $d_{p}$ follow from the definition and the fact that $(\E^{p}(X,\theta, \psi_{k}), d_{p})$ satisfy these properties. Non-degeneracy of $d_{p}$ follows from comparison with $I_{p}$. If $d_{p}(u_{0}, u_{1}) = 0$, then the above comparison shows that $I_{p}(u_{0}, u_{1}) = 0$. This implies that $u_{0} = u_{1}$ from the domination principle (see \cite[Proposition 2.4]{Darvas2017OnTS}). 
\end{proof}

\subsection{Extending the metric to $\E^{p}(X,\theta)$}\label{subsec: extending the metric}
In this section, we will extend the metric $d_{p}$ from $\mathcal{H}_{\theta}$ to all of $\E^{p}(X,\theta)$. We will do this by approximation. This process of approximation works identically to the one given in \cite{dinezza2018lp}. Given $u \in \E^{p}(X,\theta)$, from \cite{BlockiKolodziejregularization}, we can find smooth functions $f^{j}$ such that $f^{j} \searrow u$. By definition $u^{j} := P_{\te}(f^{j}) \in \cH_{\te}$ and $u^{j} \searrow u$. Based on this we give a tentative definition:
\begin{mydef}\label{def: extending dp to Ep}
Given $u_{0}, u_{1} \in \E^{p}(X,\te)$, we define
\begin{equation}\label{eq: definition of dp}
d_{p}(u_{0},u_{1}) := \lim_{j\to \infty} d_{p}(u_{0}^{j}, u_{1}^{j}),    
\end{equation}
where $u_{0}^{j}, u_{1}^{j} \in \cH_{\te}$ satisfy $u_{0}^{j} \searrow u_{0}$ and $u_{1}^{j} \searrow u_{1}$. 
    
\end{mydef}

Now we need to show the limit in Equation~\eqref{eq: definition of dp} exists and is independent of the choice of the approximating sequence $u_{0}^{j}$ and $u_{1}^{j}$. 

\begin{thm}\label{thm: approximation is well defined}
    The limit in Equation~\eqref{eq: definition of dp} exists and is independent of the choice of the approximating sequence $u_{0}^{j}$ and $u_{1}^{j}$. 
\end{thm}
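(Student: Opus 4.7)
The plan is to establish the theorem in two steps: first, show that the real sequence $d_{p}(u_{0}^{j}, u_{1}^{j})$ is Cauchy, so the limit exists; second, show the limit does not depend on the choice of decreasing approximations. Both parts will follow from combining the triangle inequality and the comparison $d_{p}^{p} \leq C\, I_{p}$ on $\cH_{\te}$ (Theorem~\ref{thm: dp is a metric on Htheta}) with the continuity of $I_{p}$ along decreasing sequences (Theorem~\ref{thm: Ip converges along decreasing sequences}, applied to the model potential $\phi = V_{\te}$, so that $\E^{p}(X,\te, V_{\te}) = \E^{p}(X,\te)$). Note first that each $u_{i}^{j} \in \cH_{\te}$ is bounded and has minimal singularity type, hence lies in $\E^{p}(X,\te)$, so $I_{p}(u_{i}^{j}, u_{i}^{k})$ is well-defined.

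For Cauchyness, the triangle inequality in $(\cH_{\te}, d_{p})$ yields
\[
|d_{p}(u_{0}^{j}, u_{1}^{j}) - d_{p}(u_{0}^{k}, u_{1}^{k})| \leq d_{p}(u_{0}^{j}, u_{0}^{k}) + d_{p}(u_{1}^{j}, u_{1}^{k}),
\]
so it suffices to show $d_{p}(u_{i}^{j}, u_{i}^{k}) \to 0$ as $j,k \to \infty$, for $i = 0,1$. Using $d_{p}^{p} \leq C\, I_{p}$, this reduces to proving $I_{p}(u_{i}^{j}, u_{i}^{k}) \to 0$ as $j,k \to \infty$. I would argue by contradiction: if the double-indexed limit were nonzero, one could extract subsequences $j_{n}, k_{n} \to \infty$ with $I_{p}(u_{i}^{j_{n}}, u_{i}^{k_{n}})$ bounded below by some $\ee > 0$. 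After further refinement, one may assume both $j_{n}$ and $k_{n}$ are strictly increasing, so that $u_{i}^{j_{n}} \searrow u_{i}$ and $u_{i}^{k_{n}} \searrow u_{i}$ are genuine decreasing subsequences. Theorem~\ref{thm: Ip converges along decreasing sequences} then gives $I_{p}(u_{i}^{j_{n}}, u_{i}^{k_{n}}) \to I_{p}(u_{i}, u_{i}) = 0$, a contradiction.

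For independence, suppose $\tilde{u}_{0}^{j}, \tilde{u}_{1}^{j} \in \cH_{\te}$ is another approximating pair with $\tilde{u}_{0}^{j} \searrow u_{0}$ and $\tilde{u}_{1}^{j} \searrow u_{1}$. The same triangle inequality gives
\[
|d_{p}(u_{0}^{j}, u_{1}^{j}) - d_{p}(\tilde{u}_{0}^{j}, \tilde{u}_{1}^{j})| \leq d_{p}(u_{0}^{j}, \tilde{u}_{0}^{j}) + d_{p}(u_{1}^{j}, \tilde{u}_{1}^{j}).
\]
Applying $d_{p}^{p} \leq C\, I_{p}$ and then invoking Theorem~\ref{thm: Ip converges along decreasing sequences} with the two sequences $u_{i}^{j} \searrow u_{i}$ and $\tilde{u}_{i}^{j} \searrow u_{i}$ directly yields $I_{p}(u_{i}^{j}, \tilde{u}_{i}^{j}) \to I_{p}(u_{i}, u_{i}) = 0$, so the right-hand side tends to zero and the two limits agree. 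There is no substantive obstacle here: the main point, already absorbed into earlier results, is the equivalence $d_{p} \sim I_{p}^{1/p}$ on $\cH_{\te}$; the only mild subtlety is the subsequence argument for Cauchyness, which is routine once one observes that any increasingly-indexed subsequence of a decreasing sequence remains decreasing to the same limit.
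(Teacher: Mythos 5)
Your proposal is correct and follows essentially the same route as the paper: triangle inequality on $(\cH_{\te},d_{p})$, the comparison $d_{p}^{p}\leq C I_{p}$ from Theorem~\ref{thm: dp is a metric on Htheta}, and Theorem~\ref{thm: Ip converges along decreasing sequences} to kill the error terms, both for existence of the limit and for independence of the approximating sequences. Your subsequence argument for the double-index limit $I_{p}(u_{i}^{j},u_{i}^{k})\to 0$ is a slightly more careful elaboration of a step the paper states directly, but it is not a different approach.
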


\begin{proof}

From Theorem~\ref{thm: dp is a metric on Htheta} for $u,v \in \mathcal{H}_{\theta}$, there exists $C  > 1$, depending only on $n$, such that 
\[
\frac{1}{C} I_{p}(u,v) \leq d^{p}_{p}(u,v) \leq CI_{p}(u,v).
\]
We will show that $\{d_{p}(u_{0}^{j}, u_{1}^{j})\}$ is a Cauchy sequence. By the triangle inequality we have 
\begin{align*}
    d_{p}(u_{0}^{j}, u_{1}^{j}) &\leq d_{p}(u_{0}^{j}, u_{0}^{k}) + d_{p}(u_{0}^{k}, u_{1}^{k}) + d_{p}(u_{1}^{k}, u_{1}^{j}) \\
   \implies d_{p}(u_{0}^{j}, u_{1}^{j}) - d_{p}(u_{0}^{k}, u_{1}^{k}) &\leq C(I_{p}^{1/p}(u_{0}^{j}, u_{0}^{k}) + I^{1/p}_{p}(u_{1}^{k}, u_{1}^{j}))..
   \intertext{Since the other side is obtained identically, we get}
   |d_{p}(u_{0}^{j}, u_{1}^{j}) - d_{p}(u_{0}^{k}, u_{1}^{k})| &\leq C(I_{p}^{1/p}(u_{0}^{j}, u_{0}^{k}) + I^{1/p}_{p}(u_{1}^{k}, u_{1}^{j})).
\end{align*}
From Theorem~\ref{thm: Ip converges along decreasing sequences} we get $I_{p}(u_{0}^{j}, u_{0}^{k}) \to 0$ and $I_{p}(u_{1}^{j}, u_{1}^{k}) \to 0$ as $j,k \to \infty$. Thus we have $|d_{p}(u_{0}^{j}, u_{1}^{j}) - d_{p}(u_{0}^{k}, u_{1}^{k})| \to 0$. Thus the limit in Equation~\eqref{eq: definition of dp} exists. Now we will show that the limit is unique. For that let $\tilde{u}_{0}^{j},\tilde{u}_{1}^{j} \in \cH_{\te}$ be another sequence of functions decreasing to $u_{0}$ and $u_{1}$ respectively. To show that the definition of $d_{p}$ does not depend on the choice of functions approximating $u_{0}$ and $u_{1}$, we will show that $|d_{p}(u_{0}^{j}, u_{1}^{j}) - d_{p}(\tilde{u}_{0}^{j}, \tilde{u}_{1}^{j})| \to 0$ as $j \to \infty$. The proof is similar to the proof before. 

\begin{align*}
    d_{p}(u_{0}^{j}, u_{1}^{j}) &\leq d_{p}(u_{0}^{j}, \tilde{u}_{0}^{j}) + d_{p}(\tilde{u}_{0}^{j}, \tilde{u}_{1}^{j}) + d_{p}(\tilde{u}_{1}^{j}, u_{1}^{j}) \\
   \implies |d_{p}(u_{0}^{j}, u_{1}^{j}) - d_{p}(\tilde{u}_{0}^{j}, \tilde{u}_{1}^{j})| &\leq C(I_{p}^{1/p}(u_{0}^{j}, \tilde{u}_{0}^{j}) + I_{p}^{1/p}(u_{1}^{j}, \tilde{u}_{1}^{j})).
\end{align*}
Since $u_{0}^{j}$ and $\tilde{u}_{0}^{j}$ both decrease to $u_{0}$, Theorem~\ref{thm: Ip converges along decreasing sequences} implies that $I_{p}(u_{0}^{j}, \tilde{u}_{0}^{j}) \to 0$. Similarly $I_{p}(u_{1}^{j}, \tilde{u}_{1}^{j}) \to 0$ as well. So $d_{p}$ is well defined on $\E^{p}(X,\theta)$ by Equation~\eqref{eq: definition of dp}.
\end{proof}

\begin{lem}\label{lem: compares to Ip}
    There exists $C > 1$ that depends only on the dimension of $X$, such that for all $u_{0},u_{1} \in \E^{p}(X,\theta)$, 
    \[
    \frac{1}{C}I_{p}(u_{0},u_{1}) \leq d_{p}^{p}(u_{0},u_{1}) \leq CI_{p}(u_{0},u_{1}). 
    \]
\end{lem}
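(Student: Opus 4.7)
The plan is to prove the comparison by approximation from $\cH_\te$, where the analogous two-sided bound has already been established in Theorem~\ref{thm: dp is a metric on Htheta}. The point is that both $d_p^p$ and $I_p$ are continuous along the decreasing sequences that define the $d_p$-metric on $\E^p(X,\te)$.

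Concretely, given $u_0, u_1 \in \E^p(X,\te)$, I would invoke the Blocki--Kolodziej regularization together with the rooftop construction from the paragraph preceding Definition~\ref{def: extending dp to Ep} to produce sequences $u_0^j, u_1^j \in \cH_\te$ with $u_0^j \searrow u_0$ and $u_1^j \searrow u_1$. By Theorem~\ref{thm: dp is a metric on Htheta}, there exists a constant $C > 1$ depending only on $n$ such that, for every $j$,
\[
\frac{1}{C} I_p(u_0^j, u_1^j) \le d_p^p(u_0^j, u_1^j) \le C\, I_p(u_0^j, u_1^j).
\]
Since $u_0^j, u_1^j \in \cH_\te \subset \E^p(X,\te)$, Theorem~\ref{thm: Ip converges along decreasing sequences} gives $I_p(u_0^j, u_1^j) \to I_p(u_0, u_1)$ as $j \to \infty$, and Definition~\ref{def: extending dp to Ep} combined with Theorem~\ref{thm: approximation is well defined} ensures that $d_p(u_0^j, u_1^j) \to d_p(u_0, u_1)$. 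Passing to the limit in the displayed inequality yields the desired bound with the same constant $C$.

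There is essentially no obstacle here beyond verifying that the approximating sequences in $\cH_\te$ can indeed be chosen to decrease to arbitrary elements of $\E^p(X,\te)$; this is exactly the construction recorded just before Definition~\ref{def: extending dp to Ep} and is what makes that definition consistent. The only mild care needed is that Theorem~\ref{thm: Ip converges along decreasing sequences} requires $u_0, u_1$ and the approximants to lie in a common $\E^p(X,\te,\phi)$ (here with $\phi = V_\te$, i.e.\ minimal singularity type), which is automatic once we know $\cH_\te \subset \E^p(X,\te)$.
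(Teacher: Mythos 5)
Your proposal is correct and follows essentially the same route as the paper: approximate $u_0,u_1$ by decreasing sequences in $\cH_\te$, invoke the two-sided bound of Theorem~\ref{thm: dp is a metric on Htheta}, and pass to the limit using Theorem~\ref{thm: Ip converges along decreasing sequences} for $I_p$ and the defining limit in Equation~\eqref{eq: definition of dp} for $d_p$. Your extra remark about applying the $I_p$-convergence with $\phi = V_\te$ is exactly the right (and automatic) verification.
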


\begin{proof}
    The statement is true for potentials in $\mathcal{H}_{\theta}$. Let $u_{0}^{j} \searrow u_{0}$ and $u_{1}^{j} \searrow u_{1}$. Then 
 \begin{align*}
       \frac{1}{C} I_{p}(u_{0}^{j}, u_{1}^{j}) &\leq d_{p}^{p}(u_{0}^{j}, u_{1}^{j}) \leq CI_{p}(u_{0}^{j},u_{1}^{j})
       \intertext{Taking the limit $j \to \infty$ and applying Theorem~\ref{thm: Ip converges along decreasing sequences} and using Equation~\eqref{eq: definition of dp} we get}
      \frac{1}{C} I_{p}(u_{0},u_{1}) &\leq d_{p}(u_{0},u_{1}) \leq CI_{p}(u_{0},u_{1}).
   \end{align*}
\end{proof}

\begin{thm}\label{thm: dp is a metric}
    Equation~\eqref{eq: definition of dp} defines a metric on $d_{p}$ on $\E^{p}(X,\theta)$. 
\end{thm}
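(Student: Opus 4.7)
The plan is to verify the three metric axioms (symmetry, non-degeneracy, and the triangle inequality), using the approximation property from Definition~\ref{def: extending dp to Ep} together with the corresponding properties already established on $\mathcal{H}_{\theta}$ in Theorem~\ref{thm: dp is a metric on Htheta}. In particular, given any $u_{0}, u_{1}, u_{2} \in \E^{p}(X,\te)$, we fix by \cite{BlockiKolodziejregularization} smooth decreasing approximations $f_{i}^{j} \searrow u_{i}$, set $u_{i}^{j} = P_{\te}(f_{i}^{j}) \in \cH_{\te}$, and note $u_{i}^{j} \searrow u_{i}$. Each metric property will pass to the limit.

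Symmetry is immediate: $d_{p}(u_{0}^{j}, u_{1}^{j}) = d_{p}(u_{1}^{j}, u_{0}^{j})$ for every $j$, since this symmetry was already established on $\cH_{\te}$ in Theorem~\ref{thm: dp is a metric on Htheta}. Passing to the limit using Equation~\eqref{eq: definition of dp} gives $d_{p}(u_{0}, u_{1}) = d_{p}(u_{1}, u_{0})$.

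For the triangle inequality, one applies the triangle inequality on $\cH_{\te}$:
\[
d_{p}(u_{0}^{j}, u_{2}^{j}) \leq d_{p}(u_{0}^{j}, u_{1}^{j}) + d_{p}(u_{1}^{j}, u_{2}^{j}).
\]
Since the limit in Equation~\eqref{eq: definition of dp} exists and is independent of the decreasing approximating sequences by Theorem~\ref{thm: approximation is well defined}, taking $j \to \infty$ on both sides yields $d_{p}(u_{0}, u_{2}) \leq d_{p}(u_{0}, u_{1}) + d_{p}(u_{1}, u_{2})$.

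For non-degeneracy, the comparison with the $I_{p}$ functional from Lemma~\ref{lem: compares to Ip} gives $d_{p}(u_{0}, u_{1}) \geq 0$ and, if $d_{p}(u_{0}, u_{1}) = 0$, then $I_{p}(u_{0}, u_{1}) = \int_{X} |u_{0}-u_{1}|^{p}(\te^{n}_{u_{0}} + \te^{n}_{u_{1}}) = 0$. Applying the domination principle (as in \cite[Proposition 2.4]{Darvas2017OnTS}) then forces $u_{0} = u_{1}$, since $u_{0}, u_{1} \in \E(X,\te)$ and their non-pluripolar measures agree with $u_{0} = u_{1}$ on the respective contact sets. The only step that is not essentially trivial given the preceding material is the non-degeneracy: one must check that the domination principle applies in the big setting to potentials of full mass (which is standard from the references cited). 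Beyond that, all three axioms reduce to limits of already-proved inequalities in $\cH_{\te}$.
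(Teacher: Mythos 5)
Your proposal is correct and follows essentially the same route as the paper: symmetry and the triangle inequality are obtained by passing to the limit along decreasing approximations in $\mathcal{H}_{\theta}$ (justified by Theorem~\ref{thm: approximation is well defined}), and non-degeneracy comes from the comparison with $I_{p}$ in Lemma~\ref{lem: compares to Ip} together with the domination principle of \cite[Proposition 2.4]{Darvas2017OnTS}. No gaps.
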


\begin{proof}
    Again, using approximation, we can show the triangle inequality. Let $u,v,w \in \E^{p}(X,\theta)$ and $u^{j}, v^{j}, w^{j} \in \mathcal{H}_{\theta}$ approximate $u,v$, and $w$ respectively. Then
     \begin{align*}
         d_{p}(u,v) &= \lim_{j\to\infty}d_{p}(u^{j}, v^{j}) \\
         &\leq \lim_{j\to\infty}(d_{p}(u^{j},w^{j}) + d_{p}(w^{j},v^{j})) \\
         &= d_{p}(u,w) + d_{p}(w,v).
     \end{align*}
     This shows the triangle inequality for $d_{p}$. Symmetry also follows from symmetry of $d_{p}$ on $\mathcal{H}_{\theta}$. Non-degeneracy of $d_{p}$ follows from Lemma~\ref{lem: compares to Ip}. If $u,v\in \E^{p}(X,\theta)$ have satisfy $d_{p}(u,v) = 0$, then Lemma~\ref{lem: compares to Ip} says $I_{p}(u,v) = 0$, which implies that $u = v$ by the domination principle (see \cite[Proposition 2.4]{Darvas2017OnTS}). 
\end{proof}

\section{Properties of the metric}\label{sec: properties}

In this section, we will show that the metric space $(\E^{p}(X,\theta), d_{p})$ is a complete geodesic metric space. 

\begin{thm}\label{thm: Ep is complete}
    The metric space $(\E^{p}(X,\theta), d_{p})$ is a complete metric space. 
\end{thm}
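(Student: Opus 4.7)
The plan is to adapt the completeness argument from the big and nef setting of \cite{dinezza2018lp} using the bridge between $\cH_\theta$, $\E^p(X,\theta,\psi_k)$, and $\E^p(\tilde X,\tilde\theta)$ already developed in Sections~\ref{sec: analytic singularities to big and nef} and~\ref{sec: metric in the big case}. Let $\{u_k\}$ be a $d_p$-Cauchy sequence in $\E^p(X,\theta)$. After extracting a subsequence I may assume $d_p(u_k,u_{k+1})\leq 2^{-k}$.

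The first step is to upgrade the Pythagorean identity (Lemma~\ref{lem: Pythagorean formula in analytic singularity type}) from the analytic singularity setting to $\E^p(X,\theta)$ itself. For $u,v\in\cH_\theta$ with $u=P_\theta(f)$ and $v=P_\theta(g)$, Lemma~\ref{lem: Hthetea closed under rooftop envelope} gives $P_\theta(u,v)\in\cH_\theta$, and applying the analytic singularity Pythagorean identity to $P_\theta[\psi_k](f)$, $P_\theta[\psi_k](g)$ and passing to the limit via Definition~\ref{def: definition on Htheta} and Lemma~\ref{convergence of p-energy} yields
\[
d_p^{\,p}(u,v)=d_p^{\,p}(u,P_\theta(u,v))+d_p^{\,p}(v,P_\theta(u,v)).
\]
A second approximation (decreasing sequences from $\cH_\theta$ together with Theorem~\ref{thm: Ip converges along decreasing sequences} and Lemma~\ref{lem: compares to Ip}) extends this identity to all of $\E^p(X,\theta)$.

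Using this identity I build the candidate limit. Iterating, the rooftop $\phi_{k,N}:=P_\theta(u_k,u_{k+1},\dots,u_{k+N})$ lies in $\E^p(X,\theta)$ and satisfies $d_p(u_k,\phi_{k,N})\leq\sum_{j\geq k}2^{-j}=2^{1-k}$. Since $\phi_{k,N}$ decreases in $N$, and this $d_p$-bound together with Lemma~\ref{lem: compares to Ip} keeps $I_p(u_k,\phi_{k,N})$ uniformly bounded, the limit $\phi_k:=(\lim_N\phi_{k,N})^*\in\PSH(X,\theta)$ is nontrivial. A second application of Theorem~\ref{thm: Ip converges along decreasing sequences} gives $\phi_k\in\E^p(X,\theta)$ and $d_p(u_k,\phi_k)\leq 2^{1-k}$. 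By construction $\{\phi_k\}$ is increasing in $k$ and bounded above (using $d_p(\phi_k,\phi_1)\leq 4$ and a standard normalization via $V_\theta$), so $u:=(\lim_k\phi_k)^*\in\PSH(X,\theta)$.

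Finally, I must show $u\in\E^p(X,\theta)$ and $d_p(u_k,u)\to 0$. Since $\phi_k\nearrow u$ in capacity, Lemma~\ref{lem: weak convergence of measures} gives weak convergence of the non-pluripolar Monge--Amp\`ere measures, which combined with the uniform bound on $\int_X|\phi_k-V_\theta|^p\theta_{\phi_k}^n$ (coming from $d_p(\phi_k,V_\theta)$ being bounded) yields $u\in\E^p(X,\theta)$ and $I_p(\phi_k,u)\to 0$. Through Lemma~\ref{lem: compares to Ip}, this forces $d_p(\phi_k,u)\to 0$, and together with $d_p(u_k,\phi_k)\leq 2^{1-k}$ the triangle inequality gives $d_p(u_k,u)\to 0$, proving completeness.

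The main obstacle is the middle step: ensuring that the decreasing limit $\phi_k$ of the rooftops $\phi_{k,N}$ is not identically $-\infty$ and in fact lies in $\E^p(X,\theta)$. The $d_p$-control of $\phi_{k,N}$ via the Pythagorean identity is what prevents mass from escaping, but making this precise requires care, since a priori we only control $\phi_{k,N}$ metrically and must translate this into a lower bound on Monge--Amp\`ere mass. A secondary subtlety is that the Pythagorean identity in $\cH_\theta$ must be deduced by a double approximation (first through $\E^p(X,\theta,\psi_k)$, then through decreasing sequences), which relies on both Lemma~\ref{convergence of p-energy} and Theorem~\ref{thm: Ip converges along decreasing sequences}.
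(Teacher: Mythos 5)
Your route is genuinely different from the paper's. The paper settles completeness in a few lines: by Lemma~\ref{lem: compares to Ip} one has $\tfrac{1}{C}I_{p}\leq d_{p}^{p}\leq C I_{p}$ on $\E^{p}(X,\te)$, and the completeness of the $I_{p}$-topology was already established in \cite{GuptaCompletemetrictopology}, so a $d_{p}$-Cauchy sequence is $I_{p}$-Cauchy, hence $I_{p}$-convergent to some $u\in\E^{p}(X,\te)$, hence $d_{p}$-convergent. You instead re-run the Darvas/Di Nezza--Lu construction (rooftops of the tail, a decreasing limit, then an increasing limit). The skeleton is viable: the Pythagorean identity on $\E^{p}(X,\te)$ is indeed available without circularity (it is Theorem~\ref{thm: Pythagorean formula for big classes}, whose proof does not use completeness), and your estimate $d_{p}(u_{k},\phi_{k,N})\leq 2^{1-k}$ does follow from it, by writing $\phi_{k,N}=P_{\te}\bigl(u_{k},P_{\te}(u_{k+1},\dots,u_{k+N})\bigr)$, using $d_{p}(u_{k},P_{\te}(u_{k},v))\leq d_{p}(u_{k},v)$ and the triangle inequality, and inducting.

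As written, though, there are two gaps in the second half. First, Theorem~\ref{thm: Ip converges along decreasing sequences} cannot give you $\phi_{k}\in\E^{p}(X,\te)$: it presupposes that the decreasing limit already lies in $\E^{p}$. The correct tool is Lemma~\ref{lem: limit of a decreasing sequence} (with $\psi=V_{\te}$), fed by the uniform bound on $\int_{X}|\phi_{k,N}-V_{\te}|^{p}\te^{n}_{\phi_{k,N}}$ coming from $d_{p}(u_{k},\phi_{k,N})\leq 2^{1-k}$ and Lemma~\ref{lem: compares to Ip}; only then does Lemma~\ref{lem: decreasing sequence in Ep} give $d_{p}(u_{k},\phi_{k})\leq 2^{1-k}$. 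Second, and more seriously, the final claim $I_{p}(\phi_{k},u)\to 0$ does not follow from Lemma~\ref{lem: weak convergence of measures}: that lemma needs uniformly bounded quasi-continuous integrands, while $|u-\phi_{k}|^{p}$ is unbounded, and weak convergence only yields semicontinuity, not the vanishing of $\int_{X}(u-\phi_{k})^{p}\te^{n}_{\phi_{k}}$ -- uniform integrability against the varying measures $\te^{n}_{\phi_{k}}$ is exactly the crux. It can be repaired with what you already have: since $\{\phi_{k}\}$ is $d_{p}$-Cauchy and increasing, for $m\geq k$ one has $\int_{X}(\phi_{m}-\phi_{k})^{p}\te^{n}_{\phi_{k}}\leq C\,d_{p}^{p}(\phi_{k},\phi_{m})$, and monotone convergence in $m$ against the fixed measure $\te^{n}_{\phi_{k}}$ bounds $\int_{X}(u-\phi_{k})^{p}\te^{n}_{\phi_{k}}$ by $C\sup_{m\geq k}d_{p}^{p}(\phi_{k},\phi_{m})\to 0$, while $\int_{X}(u-\phi_{k})^{p}\te^{n}_{u}\to 0$ by monotone convergence once $u\in\E^{p}(X,\te)$ (which follows from your uniform bound after truncating the integrand before applying Lemma~\ref{lem: weak convergence of measures}). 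With these repairs your argument closes; but note that the difficulty you flag as the ``main obstacle'' is precisely the content of \cite{GuptaCompletemetrictopology}, which is why the paper's own proof is a one-step reduction rather than a reconstruction.
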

\begin{proof}
    From Lemma~\ref{lem: compares to Ip}, there exists a $C > 1$ such that for any $u_{0}, u_{1} \in \E^{p}(X,\theta)$
    \[
    \frac{1}{C} I_{p}(u_{0}, u_{1}) \leq d_{p}^{p}(u_{0}, u_{1}) \leq CI_{p}(u_{0},u_{1}).
    \]
    In \cite{GuptaCompletemetrictopology}, the author showed that the quasi-metric space $(\E^{p}(X,\theta), I_{p})$ induces a complete metric topology. This means given a $I_{p}$-Cauchy sequence $\{u_{k}\}$, there exits $u \in \E^{p}(X,\theta)$ such that $I_{p}(u_{k}, u) \to 0$. 

    From the above inequality, a sequence $\{ u_{k}\}$ is Cauchy in $I_{p}$ iff it is Cauchy in $d_{p}$ and similarly, a sequence $u_{k}$ converges to $u$ in $I_{p}$ iff $u_{k}$ converges to $u$ in $d_{p}$.

    This shows that any $d_{p}$-Cauchy sequence $\{u_{k}\}$ converges to some $u \in \E^{p}(X,\theta)$. 
\end{proof}

Now we want to show that the Mabuchi geodesics in $\E^{p}(X,\theta)$ are the metric geodesics as well. For that, we need to better understand the metric space structure of $\E^{p}(X,\theta)$. 
\begin{lem} \label{lem: decreasing sequence in Ep}
    If $u_{0}, u_{1}, u_{0}^{j}, u_{1}^{j} \in \E^{p}(X,\theta)$ satisfy $u_{0}^{j} \searrow u_{0}$ and $u_{1}^{j} \searrow u_{1}$, then $\lim_{j\to \infty} d_{p}(u_{0}^{j}, u_{1}^{j}) = d_{p}(u_{0}, u_{1})$. 
\end{lem}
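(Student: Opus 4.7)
The plan is to reduce continuity of $d_p$ along decreasing sequences to the analogous continuity of $I_p$ (Theorem~\ref{thm: Ip converges along decreasing sequences}) via the two-sided comparison established in Lemma~\ref{lem: compares to Ip}. Since $d_p$ is a bona fide metric on $\E^p(X,\te)$ by Theorem~\ref{thm: dp is a metric}, the triangle inequality gives
\[
|d_p(u_0^j, u_1^j) - d_p(u_0, u_1)| \le d_p(u_0^j, u_0) + d_p(u_1^j, u_1),
\]
so it suffices to prove that $d_p(u_\al^j, u_\al) \to 0$ as $j \to \infty$ for each $\al \in \{0,1\}$.

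For this, Lemma~\ref{lem: compares to Ip} supplies a dimensional constant $C > 1$ with
\[
d_p^p(u_\al^j, u_\al) \le C\, I_p(u_\al^j, u_\al).
\]
Now apply Theorem~\ref{thm: Ip converges along decreasing sequences} to the decreasing sequence $u_\al^j \searrow u_\al$ together with the constant (hence trivially decreasing) sequence $u_\al \searrow u_\al$; this yields
\[
I_p(u_\al^j, u_\al) \to I_p(u_\al, u_\al) = 0.
\]
Combining the two displays gives $d_p(u_\al^j, u_\al) \to 0$ as required, and the triangle-inequality step above finishes the proof.

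I do not anticipate a serious obstacle here: the statement is essentially a packaging of two results already in hand, namely the $I_p$-continuity from Section~\ref{sec: preliminaries} and the comparison from Section~\ref{sec: metric in the big case}. The one small point to check is that Theorem~\ref{thm: Ip converges along decreasing sequences} applies in the present setting $\phi = V_\te$, i.e., that $\E^p(X,\te) = \E^p(X,\te,V_\te)$, which is immediate from the definitions in Section~\ref{subsec: finite energy classes} and the fact that $V_\te$ itself is a model potential of minimal singularity type.
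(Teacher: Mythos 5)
Your proof is correct and follows essentially the same route as the paper: triangle inequality, the comparison $d_p^p \le C\, I_p$ from Lemma~\ref{lem: compares to Ip}, and convergence of $I_p$ along decreasing sequences. The only cosmetic difference is that you obtain $I_p(u_\al^j,u_\al)\to 0$ from Theorem~\ref{thm: Ip converges along decreasing sequences} with a constant second sequence (after noting $\E^p(X,\te)=\E^p(X,\te,V_\te)$), whereas the paper cites \cite[Proposition 1.9]{Guedj2019PlurisubharmonicEA} for the same fact; both are valid.
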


\begin{proof}
    Recall that \cite[Propostion 1.9]{Guedj2019PlurisubharmonicEA} implies that $I_{p}(u_{0}^{j}, u_{0}) \to 0$ and $I_{p}(u_{1}^{j}, u_{1}) \to 0$. As before, we use triangle inequality to write
    \begin{align*}
        d_{p}(u_{0}, u_{1}) &\leq d_{p}(u_{0}, u_{0}^{j}) + d_{p}(u_{0}^{j}, u_{1}^{j}) + d_{p}(u_{1}^{j}, u_{1}). 
        \intertext{Using Lemma~\ref{lem: compares to Ip}}
        d_{p}(u_{0}, u_{1}) - d_{p}(u_{0}^{j}, u_{1}^{j}) &\leq C\left( I_{p}^{1/p}(u_{0}^{j}, u_{0}) + I_{p}^{1/p}(u_{1}^{j}, u_{1})\right).
        \intertext{Noticing that the other side is obtained similarly, and then we take the limit to obtain}
        \lim_{j\to \infty}|d_{p}(u_{0}, u_{1}) - d_{p}(u_{0}^{j}, u_{1}^{j})| &\leq \lim_{j\to \infty}C\left(I_{p}^{1/p}(u_{0}^{j}, u_{0}) + CI_{p}^{1/p}(u_{1}^{j}, u_{1})\right) = 0.
    \end{align*}
\end{proof}

The following is the extension of \cite[Lemma 3.13]{dinezza2018lp} to the big case. The proof is identical.
\begin{lem}\label{lem: distance is speed of geodesic}
    If $u_{0} \in \mathcal{H}_{\theta}$, $u_{1} \in \E^{p}(X,\theta)$, and $u_{t}$ is the weak geodesic joining $u_{0}$ and $u_{1}$, then 
    \[
    d_{p}^{p}(u_{0}, u_{1}) = \int_{X} |\dot{u}_{0}|^{p}\theta^{n}_{u_{0}}.
    \]
\end{lem}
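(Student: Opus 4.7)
The plan is to approximate $u_{1}$ from above by elements of $\cH_{\te}$ and pass to the limit in the identity of Theorem~\ref{thm: convergence for smooth enough potentials}. Using \cite{BlockiKolodziejregularization}, pick smooth $f_{1}^{j} \searrow u_{1}$ and set $u_{1}^{j} := P_{\te}(f_{1}^{j}) \in \cH_{\te}$, so $u_{1}^{j} \searrow u_{1}$. Let $u_{t}^{j}$ be the weak geodesic joining $u_{0}$ and $u_{1}^{j}$, and $u_{t}$ the weak geodesic joining $u_{0}$ and $u_{1}$. From the sup-definition of weak geodesics, the inequality $u_{1}^{j+1} \leq u_{1}^{j}$ forces $u_{t}^{j+1} \leq u_{t}^{j}$, so $u_{t}^{j}$ decreases pointwise to some $u_{t}^{\infty}$. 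Since decreasing limits of $\pi^{*}\te$-psh functions on $X \times S$ are again $\pi^{*}\te$-psh and the endpoint bounds pass to the limit, $u_{t}^{\infty}$ is a subgeodesic for $(u_{0}, u_{1})$, giving $u_{t}^{\infty} \leq u_{t}$. Conversely, any subgeodesic for $(u_{0}, u_{1})$ is a subgeodesic for $(u_{0}, u_{1}^{j})$ for every $j$, so it is bounded above by $u_{t}^{j}$ and hence by $u_{t}^{\infty}$; taking sup gives $u_{t} \leq u_{t}^{\infty}$. Thus $u_{t}^{j} \searrow u_{t}$.

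Next I compare the initial velocities. Since $t \mapsto u_{t}^{j}(x)$ and $t \mapsto u_{t}(x)$ are convex with common value $u_{0}(x)$ at $t = 0$, the difference quotients are nondecreasing in $t$, so $\dot{u}_{0}^{j} := \lim_{t \to 0^{+}}(u_{t}^{j} - u_{0})/t$ and $\dot{u}_{0} := \lim_{t \to 0^{+}}(u_{t} - u_{0})/t$ exist in $[-\infty, \infty)$. The monotonicities $u_{t}^{j+1} \leq u_{t}^{j}$ and $u_{t} \leq u_{t}^{j}$ yield $\dot{u}_{0} \leq \dot{u}_{0}^{j+1} \leq \dot{u}_{0}^{j}$, so the pointwise limit $f := \lim_{j} \dot{u}_{0}^{j}$ satisfies $f \geq \dot{u}_{0}$. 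For the reverse bound, fix $t > 0$ and use $\dot{u}_{0}^{j} \leq (u_{t}^{j} - u_{0})/t$; letting $j \to \infty$ and then $t \to 0^{+}$ gives $f \leq \dot{u}_{0}$. Hence $\dot{u}_{0}^{j} \searrow \dot{u}_{0}$ pointwise.

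Applying Theorem~\ref{thm: convergence for smooth enough potentials} to the pair $u_{0}, u_{1}^{j} \in \cH_{\te}$ yields, for every $j$,
\[
d_{p}^{p}(u_{0}, u_{1}^{j}) = \int_{X} |\dot{u}_{0}^{j}|^{p} \te^{n}_{u_{0}},
\]
and Lemma~\ref{lem: decreasing sequence in Ep} shows the left-hand side converges to $d_{p}^{p}(u_{0}, u_{1})$. For the right-hand side I split $|\dot{u}_{0}^{j}|^{p} = (\dot{u}_{0}^{j})_{+}^{p} + (\dot{u}_{0}^{j})_{-}^{p}$. The positive part decreases to $(\dot{u}_{0})_{+}^{p}$ and is dominated by $(\dot{u}_{0}^{1})_{+}^{p}$, whose integral is at most $d_{p}^{p}(u_{0}, u_{1}^{1}) < \infty$, so dominated convergence applies. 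The negative part $(\dot{u}_{0}^{j})_{-}^{p}$ increases to $(\dot{u}_{0})_{-}^{p}$, so monotone convergence applies. Adding the two limits and matching with the left-hand side gives
\[
d_{p}^{p}(u_{0}, u_{1}) = \int_{X} |\dot{u}_{0}|^{p} \te^{n}_{u_{0}},
\]
finiteness of both summands being guaranteed by finiteness of $d_{p}^{p}(u_{0}, u_{1})$.

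The main obstacle is the monotone convergence $u_{t}^{j} \searrow u_{t}$ of the geodesics, together with its upgrading to the convergence $\dot{u}_{0}^{j} \searrow \dot{u}_{0}$ of initial velocities; both rest on the maximality definition of the weak geodesic and on convexity in the time parameter, and once these are established the remainder reduces to the elementary splitting into positive/negative parts and the two convergence theorems.
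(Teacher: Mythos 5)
Your proposal is correct, and its skeleton is the one the paper uses: approximate $u_{1}$ from above by $u_{1}^{j}=P_{\te}(f_{1}^{j})\in\cH_{\te}$, show the weak geodesics satisfy $u_{t}^{j}\searrow u_{t}$ and hence $\dot{u}_{0}^{j}\searrow\dot{u}_{0}$, and pass to the limit in the identity $d_{p}^{p}(u_{0},u_{1}^{j})=\int_{X}|\dot{u}_{0}^{j}|^{p}\te^{n}_{u_{0}}$ from Theorem~\ref{thm: convergence for smooth enough potentials}. Where you genuinely diverge is the convergence-of-integrals step. The paper argues in two stages: it first treats the comparable case $u_{0}\geq u_{1}+1$, where all $\dot{u}_{0}^{j}\leq 0$ and monotone convergence applies directly, and then handles the general case by introducing the auxiliary geodesic $w_{t}$ from $u_{0}$ to $u_{1}-C-1$, using $\dot{w}_{0}\leq\dot{u}_{0}^{j}\leq C$ to produce the integrable dominating function $C_{1}+|\dot{w}_{0}|^{p}$ and applying dominated convergence. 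You instead avoid both the case split and the auxiliary geodesic by decomposing $|\dot{u}_{0}^{j}|^{p}=(\dot{u}_{0}^{j})_{+}^{p}+(\dot{u}_{0}^{j})_{-}^{p}$: the positive parts decrease and are dominated by the integrable $(\dot{u}_{0}^{1})_{+}^{p}$, the negative parts increase so monotone convergence applies with values in $[0,\infty]$, and the finiteness of $\lim_{j}d_{p}^{p}(u_{0},u_{1}^{j})=d_{p}^{p}(u_{0},u_{1})$ (Lemma~\ref{lem: decreasing sequence in Ep}, or directly Definition~\ref{def: extending dp to Ep} with the constant sequence at $u_{0}$) forces both limiting summands to be finite and yields the identity. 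This is a somewhat cleaner bookkeeping of the same analytic content; its only cost is that it leans on knowing a priori that the left-hand side converges to a finite limit, which the paper's domination argument does not need in the comparable case. One shared loose end, which affects neither argument: on the pluripolar set where $u_{0}=-\infty$ (or $u_{1}=-\infty$) the difference quotients are not literally defined, but $\te^{n}_{u_{0}}$ puts no mass there, so all integrals are unaffected; you may wish to state this explicitly, as the paper does only in passing.
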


\begin{proof}
    First, assume that $u_{0} \geq u_{1} + 1$. We can find $u_{1}^{j} \in \cH_{\te}$ such that $u_{1}^{j} \searrow u_{1}$ and $u_{0} \geq u_{1}^{j}$. Let $u_{t}^{j}$ be the weak geodesic joining $u_{0}$ and $u_{1}^{j}$. Moreover, $\dot{u}_{0},\dot{u}_{0}^{j} \leq 0$. We claim $\dot{u}_{0}^{j} \searrow \dot{u}_{0}$. Since the geodesics $u_{t}^{j} \searrow u_{t}$, and they start at the same point $u_{0}$, we know that $\dot{u}_{0}^{j}$ is decreasing. To see that they decrease to $\dot{u}_{0}$, notice that 
    \[
    \dot{u}_{0} = \lim_{t \to 0}\frac{u_{t} - u_{0}}{t} \leq \lim_{t\to 0} \frac{u_{t}^{j} - u_{0}}{t} = \dot{u}_{0}^{j} \leq \frac{u_{t}^{j}- u_{0}}{t}.
    \]
    Here in the last inequality, we used the convexity of the geodesic. Now, taking limit $j \to \infty$, we get 
    \[
    \dot{u}_{0} \leq \lim_{j\to \infty} \dot{u}_{0}^{j} \leq \frac{u_{t} - u_{0}}{t}.
    \]
    Now taking limit $t \to 0$, we get 
    \[
    \dot{u}_{0} \leq \lim_{j\to \infty} \dot{u}_{0}^{j} \leq \dot{u}_{0}.
    \]
    Thus $\lim_{j\to \infty}\dot{u}_{0}^{j} = \dot{u}_{0}.$

    Now, by definition, $d_{p}(u_{0}, u_{1}) = \lim_{j\to \infty} d_{p}(u_{0}, u_{1}^{j})$ and 
    \[
    d_{p}^{p}(u_{0}, u_{1}^{j}) = \int_{X} (-\dot{u}_{0}^{j})^{p}\theta^{n}_{u_{0}}.
    \]
    By the monotone convergence theorem, 
    \[
    d_{p}^{p}(u_{0}, u_{1}) = \lim_{j\to \infty}d_{p}^{p}(u_{0}, u_{1}^{j}) = \lim_{j\to \infty} \int_{X}(-\dot{u}_{0}^{j})^{p}\theta^{n}_{u_{0}} = \int_{X} (-\dot{u}_{0})^{p}\theta^{n}_{u_{0}}.
    \]

    For the general case, let $C > 0$ satisfy $u_{1} \leq u_{0} + C$. Again choose $u_{1}^{j} \in \cH_{\te}$ such that $u_{1}^{j} \searrow u_{1}$. Consider $w_{0} = u_{0}$ and $w_{1} = u_{1} - C - 1 \leq u_{1} \leq u_{1}^{j}$. Now $w_{0} \geq w_{1} + 1$. If $w_{t}$ is the geodesic joining $w_{0}$ and $w_{1}$ and if $u_{t}^{j}$ are the geodesics joining $u_{0}$ and $u_{t}^{j}$, then we have 
    \[
    \dot{w}_{0} \leq \dot{u}_{0}^{j} \leq u_{1}^{j} - u_{0} \leq (u_{1}^{j} - V_{\theta}) + (V_{\theta} - u_{0}) \leq C,
    \]
    where $C$ is a uniform bound (independent of $j$). Thus, $|\dot{u}_{0}^{j}|^{p} \leq C_{1} + |\dot{w}_{0}|^{p}$. By the same argument as before, $\dot{u}_{0}^{j} \to \dot{u}_{0}$ pointwise  outside the pluripolar set $\{u_{1} = -\infty\}$. Moreover, from the previous calculation
    \[
    \int_{X}|\dot{w}_{0}|^{p}\theta^{n}_{u_{0}} = d_{p}^{p}(u_{0}, u_{1}-C-1).
    \]
    Thus $|\dot{w}_{0}|^{p}$ is integrable with respect to $\theta^{n}_{u_{0}}$. Thus applying Lebesgue's Dominated Convergence Theorem, we obtain
    \[
    d_{p}^{p}(u_{0}, u_{1}) = \lim_{j\to \infty}d_{p}^{p}(u_{0}, u_{1}^{j}) = \lim_{j\to \infty} \int_{X}|\dot{u}_{j}|^{p}\theta^{n}_{u_{0}} = \int_{X}|\dot{u}_{0}|^{p}\theta^{n}_{u_{0}}.
    \]
\end{proof}

\begin{thm} \label{thm: weak geodesics are metric geodesics}
    Take $u_{0}, u_{1} \in \E^{p}(X,\theta)$ and let $u_{t}$ be the weak geodesic joining $u_{0}$ and $u_{1}$. Then $u_{t}$ is a metric geodesic for $(\E^{p}(X,\theta), d_{p})$. This means that for any $0 \leq t \leq s \leq 1$, $d_{p}(u_{t}, u_{s}) = |t-s|d_{p}(u_{0}, u_{1})$. 
\end{thm}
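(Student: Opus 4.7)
The strategy is to first reduce to the case of endpoints in $\mathcal{H}_{\theta}$ by a decreasing approximation, then exploit the sub-arc property of weak geodesics in conjunction with Lemma~\ref{lem: distance is speed of geodesic}. For the reduction, pick $u_{0}^{j}, u_{1}^{j} \in \mathcal{H}_{\theta}$ with $u_{0}^{j} \searrow u_{0}$ and $u_{1}^{j} \searrow u_{1}$, and let $u_{t}^{j}$ denote the weak geodesic joining $u_{0}^{j}$ and $u_{1}^{j}$. Since the candidate class for the weak geodesic shrinks as $j$ grows, $u_{t}^{j}$ is decreasing in $j$; moreover $u_{t}$ is a subgeodesic candidate for each $u_{t}^{j}$, so $u_{t}^{j} \geq u_{t}$, while the usc regularization of $\lim_{j}u_{t}^{j}$ is itself a subgeodesic between $u_{0}$ and $u_{1}$, hence $\leq u_{t}$. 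This forces $u_{t}^{j} \searrow u_{t}$, and analogously at $s$; Lemma~\ref{lem: decreasing sequence in Ep} then gives $d_{p}(u_{t}^{j}, u_{s}^{j}) \to d_{p}(u_{t}, u_{s})$ and $d_{p}(u_{0}^{j}, u_{1}^{j}) \to d_{p}(u_{0}, u_{1})$. Thus the theorem reduces to the case $u_{0}, u_{1} \in \mathcal{H}_{\theta}$.

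For endpoints in $\mathcal{H}_{\theta}$, the key technical tool is the \emph{sub-arc property} for weak geodesics: for any $0 \leq \alpha \leq \beta \leq 1$, the reparameterized path $\sigma \mapsto u_{\alpha + (\beta-\alpha)\sigma}$ is the weak geodesic joining $u_{\alpha}$ and $u_{\beta}$. This is a standard fact in pluripotential theory, provable via the Legendre duality in Lemma~\ref{geodesics determine rooftop envelops}. Applying Lemma~\ref{lem: distance is speed of geodesic} with $u_{0} \in \mathcal{H}_{\theta}$ to the sub-arc $\sigma \mapsto u_{t\sigma}$, whose velocity at $\sigma = 0$ is $t\dot{u}_{0}$, gives
\[
d_{p}^{p}(u_{0}, u_{t}) = t^{p}\int_{X}|\dot{u}_{0}|^{p}\theta^{n}_{u_{0}} = t^{p}d_{p}^{p}(u_{0}, u_{1}),
\]
where the last equality is Theorem~\ref{thm: convergence for smooth enough potentials}. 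Hence $d_{p}(u_{0}, u_{t}) = t\,d_{p}(u_{0}, u_{1})$, and analogously $d_{p}(u_{t}, u_{1}) = (1-t)d_{p}(u_{0}, u_{1})$.

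To compute $d_{p}(u_{t}, u_{s})$ for $0 < t < s < 1$, approximate $u_{t}$ from above by $v^{j} \in \mathcal{H}_{\theta}$ with $v^{j} \searrow u_{t}$, and let $w_{\sigma}^{j}$ be the weak geodesic joining $v^{j}$ and $u_{1}$. A monotonicity argument analogous to the one in the reduction step shows $w_{\sigma}^{j} \searrow $ the weak geodesic joining $u_{t}$ and $u_{1}$, which by the sub-arc property equals $u_{t + (1-t)\sigma}$; in particular $w^{j}_{(s-t)/(1-t)} \searrow u_{s}$. Applying Lemma~\ref{lem: distance is speed of geodesic} with $v^{j} \in \mathcal{H}_{\theta}$, together with the sub-arc of $w^{j}$ connecting $v^{j}$ to $w^{j}_{(s-t)/(1-t)}$, yields
\[
d_{p}^{p}(v^{j}, w^{j}_{(s-t)/(1-t)}) = \left(\frac{s-t}{1-t}\right)^{p}\int_{X}|\dot{w}^{j}_{0}|^{p}\theta^{n}_{v^{j}} = \left(\frac{s-t}{1-t}\right)^{p}d_{p}^{p}(v^{j}, u_{1}),
\]
and letting $j \to \infty$ via Lemma~\ref{lem: decreasing sequence in Ep} gives $d_{p}(u_{t}, u_{s}) = ((s-t)/(1-t))\,d_{p}(u_{t}, u_{1}) = (s-t)d_{p}(u_{0}, u_{1})$, as required. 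The main obstacle I anticipate is rigorously justifying the sub-arc property in the big setting; if a direct proof is unavailable, one can bypass it by working instead at the level of the analytic singularity approximations $u_{t}^{k}$, which are already metric geodesics in $(\E^{p}(X,\theta,\psi_{k}), d_{p})$ by Theorem~\ref{thm: Complete metric space structure on analytic singularity space}, and then establishing $d_{p}(u_{t}^{k}, u_{s}^{k}) \to d_{p}(u_{t}, u_{s})$ through $I_{p}$-convergence as $k \to \infty$.
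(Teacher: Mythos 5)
Your proposal is correct and follows essentially the same route as the paper: both rest on Lemma~\ref{lem: distance is speed of geodesic} applied along a sub-arc of the weak geodesic (the paper uses the restriction property just as implicitly, writing ``let $w_{s}=u_{ts}$ be the geodesic joining $u_{0}$ and $u_{t}$''), on Theorem~\ref{thm: convergence for smooth enough potentials} for endpoints in $\mathcal{H}_{\theta}$, and on decreasing approximation together with Lemma~\ref{lem: decreasing sequence in Ep}. The only difference is cosmetic: the paper first establishes $d_{p}(u_{0},u_{t})=t\,d_{p}(u_{0},u_{1})$ and $d_{p}(u_{t},u_{1})=(1-t)\,d_{p}(u_{0},u_{1})$ for arbitrary $\E^{p}$ endpoints and then gets $d_{p}(u_{t},u_{s})=(s-t)\,d_{p}(u_{0},u_{1})$ by applying these identities twice along sub-arcs, whereas you treat the interior pair by an extra approximation of $u_{t}$ from $\mathcal{H}_{\theta}$, which just re-runs the same argument for the pair $(u_{t},u_{1})$.
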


\begin{proof}
    The same proof as in \cite[Theorem 3.17]{dinezza2018lp} works in our case as well. We will first show that given $0 \leq t \leq 1$, we have 
    \[
    d_{p}(u_{0}, u_{t}) = t d_{p}(u_{0}, u_{1}) \qquad \text{ and } \qquad d_{p}(u_{1}, u_{t}) = (1-t)d_{p}(u_{0}, u_{1}).
    \]
    First, assume that $u_{0}, u_{1} \in \mathcal{H}_{\theta}$. Let $w_{s} = u_{ts}$ be the geodesic joining $u_{0}$ and $u_{t}$. By Lemma~\ref{lem: distance is speed of geodesic}, we obtain that 
    \[
    d_{p}^{p}(u_{0}, u_{t}) = \int_{X} |\dot{w}_{0}|^{p}\theta^{n}_{u_{0}} = t^{p}\int_{X}|\dot{u}_{0}|^{p}\theta^{n}_{u_{0}} = t^{p}d_{p}^{p}(u_{0},u_{1}).
    \]
    The other equality is proved similarly. 

    For arbitrary $u_{0}, u_{1} \in \E^{p}(X,\theta)$, find $u_{0}^{j}, u_{1}^{j} \in \cH_{\te}$ such that $u_{0}^{j} \searrow u_{0}$ and $u_{1}^{j} \searrow u_{1}$. If $u_{t}^{j}$ is the weak geodesic joining $u_{0}^{j}$ and $u_{1}^{j}$, then $u_{t}^{j} \searrow u_{t}$. As $d_{p}(u_{0}^{j}, u_{t}^{j}) = td_{p}(u_{0}^{j}, u_{1}^{j})$, taking limit $j\to \infty$ using Lemma~\ref{lem: decreasing sequence in Ep}, we obtain $d_{p}(u_{0}, u_{t}) = t d_{p}(u_{0}, u_{1})$. 

    Now, for a more general case, if $0 < t < s < 1$, then applying the above result twice, we get 
    \[
    d_{p}(u_{t}, u_{s}) = \frac{s-t}{s}d_{p}(u_{0}, u_{s}) = (s-t) d_{p}(u_{0}, u_{1}).
    \]
\end{proof}

Lastly, we prove that the metric $d_{p}$ satisfies the Pythagorean identity. 
\begin{thm}\label{thm: Pythagorean formula for big classes}
    If $u_{0}, u_{1} \in \E^{p}(X,\te)$, then 
    \[
    d_{p}^{p}(u_{0}, u_{1}) = d_{p}^{p}(u_{0}, P_{\te}(u_{0}, u_{1})) + d_{p}^{p}(u_{1}, P_{\te}(u_{0}, u_{1})).
    \]
\end{thm}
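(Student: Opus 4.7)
The plan is to establish the identity first for $u_0, u_1 \in \cH_\te$ by exploiting the Pythagorean identity already known in the analytic singularity setting (Lemma~\ref{lem: Pythagorean formula in analytic singularity type}), and then to bootstrap to all of $\E^p(X,\te)$ via decreasing approximation together with the continuity provided by Lemma~\ref{lem: decreasing sequence in Ep}.

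For the first step, write $u_0 = P_\te(f_0)$ and $u_1 = P_\te(f_1)$ with $f_0, f_1 \in C^{1,\bar 1}(X)$, and set $u_0^k = P_\te[\psi_k](f_0)$ and $u_1^k = P_\te[\psi_k](f_1)$ using the fixed sequence $\psi_k \nearrow V_\te$ of analytic singularity potentials. For each $k$, Lemma~\ref{lem: Pythagorean formula in analytic singularity type} inside $\E^p(X,\te,\psi_k)$ gives
\begin{equation*}
d_p^p(u_0^k, u_1^k) = d_p^p(u_0^k, P_\te(u_0^k, u_1^k)) + d_p^p(u_1^k, P_\te(u_0^k, u_1^k)).
\end{equation*}
The key identification is that $P_\te(u_0^k, u_1^k) = P_\te[\psi_k](h)$, where $h := P_{C\om}(f_0, f_1) \in C^{1,\bar 1}(X)$ by \cite[Theorem 2.5]{DarvasRubinsteinrooftopobstacle} (so that in particular $P_\te(u_0, u_1) = P_\te(h) \in \cH_\te$, as observed in Lemma~\ref{lem: Hthetea closed under rooftop envelope}). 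One direction uses that $P_\te(u_0^k, u_1^k) \preceq \psi_k$ and $P_\te(u_0^k, u_1^k) \leq h$; the reverse uses monotonicity of $P_\te[\psi_k](\cdot)$ applied to $h \leq f_0$ and $h \leq f_1$ to get $P_\te[\psi_k](h) \leq P_\te[\psi_k](f_0), P_\te[\psi_k](f_1)$.

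With this identification in hand, Definition~\ref{def: definition on Htheta} applied to the pairs $(u_0, P_\te(u_0,u_1))$ and $(u_1, P_\te(u_0,u_1))$, using the approximating sequence $\psi_k$, yields
\begin{equation*}
\lim_{k\to\infty} d_p(u_i^k, P_\te(u_0^k, u_1^k)) = d_p(u_i, P_\te(u_0,u_1)), \qquad i = 0, 1,
\end{equation*}
while Theorem~\ref{thm: convergence for smooth enough potentials} (equivalently the very definition of $d_p$ on $\cH_\te$) gives $d_p(u_0^k, u_1^k) \to d_p(u_0, u_1)$. Passing to the limit in the $\psi_k$-relative Pythagorean identity proves the claim for $u_0, u_1 \in \cH_\te$.

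For the second step, take arbitrary $u_0, u_1 \in \E^p(X,\te)$ and pick $u_0^j, u_1^j \in \cH_\te$ with $u_0^j \searrow u_0$ and $u_1^j \searrow u_1$ (via Demailly regularization composed with $P_\te$, as used above Definition~\ref{def: extending dp to Ep}). A direct two-sided inequality from the definition of $P_\te$ shows that $P_\te(u_0^j, u_1^j) \searrow P_\te(u_0, u_1)$, and all three limits sit inside $\E^p(X,\te)$ by \cite[Theorem 2.10]{Darvas2017OnTS}. Applying the $\cH_\te$-version of the Pythagorean identity to each $j$ and invoking Lemma~\ref{lem: decreasing sequence in Ep} on every term (noting that both arguments in each $d_p$ decrease to the appropriate limit) yields the desired identity.

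The main obstacle is the envelope identification $P_\te(u_0^k, u_1^k) = P_\te[\psi_k](h)$: this is what bridges the rooftop computed in the $\psi_k$-relative metric space with the definition of $d_p$ on $\cH_\te$, and without the smoothness of $h = P_{C\om}(f_0, f_1)$ the approximation scheme of Definition~\ref{def: definition on Htheta} could not be applied to the rooftop potential. Everything else is a routine passage to the limit.
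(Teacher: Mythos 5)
Your proposal is correct and takes essentially the same route as the paper: the $\cH_{\te}$ case rests on the $\psi_{k}$-relative Pythagorean identity (Lemma~\ref{lem: Pythagorean formula in analytic singularity type}) together with the identification $P_{\te}(u_{0}^{k},u_{1}^{k})=P_{\te}[\psi_{k}](h)$ for $h=P_{C\om}(f_{0},f_{1})$, which is precisely the mechanism the paper packages into Theorem~\ref{pythagorean formula for smooth enough potentials} combined with Theorem~\ref{thm: convergence for smooth enough potentials}. The extension to all of $\E^{p}(X,\te)$ by decreasing approximation, using $P_{\te}(u_{0}^{j},u_{1}^{j})\searrow P_{\te}(u_{0},u_{1})$ and Lemma~\ref{lem: decreasing sequence in Ep}, is the same argument as in the paper.
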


\begin{proof}
    If $u_{0}, u_{1} \in \cH_{\te}$, then this is the content of Theorem~\ref{pythagorean formula for smooth enough potentials} when combined with Theorem~\ref{thm: convergence for smooth enough potentials}.  

    More generally, if $u_{0}, u_{1} \in \E^{p}(X,\te)$, then we can find $u_{0}^{k}, u_{1}^{k} \in \cH_{\te}$ such that $u_{0}^{k} \searrow u_{0}$ and $u_{1}^{k} \searrow u_{1}$. Then $P_{\te}(u_{0}^{k}, u_{1}^{k}) \searrow P_{\te}(u_{0}, u_{1})$ as well. Thus 
    \begin{align*}
        d_{p}^{p}(u_{0}, u_{1}) &= \lim_{k \to \infty} d_{p}^{p}(u_{0}^{k}, u_{1}^{k}) \\
        &= \lim_{k \to \infty} d_{p}^{p}(u_{0}^{k}, P_{\te}(u_{0}^{k}, u_{1}^{k})) + d_{p}^{p} (u_{1}^{k}, P_{\te}(u_{0}^{k}, u_{1}^{k}))\\
        &= d_{p}^{p}(u_{0}, P_{\te}(u_{0},u_{1})) + d_{p}^{p}(u_{1}, P_{\te}(u_{0}, u_{1})).
    \end{align*}
\end{proof}

\subsection{Connection with the metric in the literature}
In this subsection, we prove that when $\te$ is big and nef, or when $p = 1$, then the metric $d_{p}$ on $\E^{p}(X,\te)$ constructed in Section~\ref{sec: metric in the big case} coincides with the metric $d_{p}$ constructed in \cite{dinezza2018lp} and \cite{darvas2021l1}. 

\begin{thm}\label{thm: dp agrees with dinezzalp}
    If $\bb$ is a smooth closed real $(1,1)$-form representing a big and nef cohomology class, then the metric $d_{p}$ constructed in Section~\ref{sec: metric in the big case} agrees with the one constructed in \cite{dinezza2018lp}. 
\end{thm}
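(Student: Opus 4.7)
The plan is to show that both metrics agree on the ``smooth'' dense subset $\cH_{\bb}$ and then pass to $\E^{p}(X,\bb)$ by the decreasing approximation that defines both metrics. Let $d_{p}^{\text{new}}$ denote the metric constructed in Section~\ref{sec: metric in the big case} and $d_{p}^{\text{DNL}}$ denote the metric of \cite{dinezza2018lp} recalled in Section~\ref{subsec: metric geometry in big and nef case}.

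First I reduce to the class $\cH_{\bb}$. Pick $u_{0}, u_{1} \in \cH_{\bb}$, say $u_{0} = P_{\bb}(f_{0})$ and $u_{1} = P_{\bb}(f_{1})$ with $f_{0}, f_{1} \in C^{1,\bar{1}}(X)$, and let $u_{t}$ be the weak geodesic joining them. By Theorem~\ref{thm: convergence for smooth enough potentials} applied to $\te = \bb$,
\[
d_{p}^{\text{new}}(u_{0}, u_{1})^{p} = \int_{X} |\dot{u}_{0}|^{p}\bb^{n}_{u_{0}}.
\]
On the other hand, since $\bb$ is big and nef and $f_{0}, f_{1}$ are bounded, $u_{0}$ and $u_{1}$ have minimal singularities and they lie in $\text{Ent}(X,\bb)$ by Lemma~\ref{lem: quasi-smoooth potentials have bounded entropy}. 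Therefore Theorem~\ref{thm: distance for finite entropy} applies and yields
\[
d_{p}^{\text{DNL}}(u_{0}, u_{1})^{p} = \int_{X} |\dot{u}_{t}|^{p}\bb^{n}_{u_{t}} \quad \text{for every } t \in [0,1],
\]
and evaluating at $t = 0$ gives the same expression as for $d_{p}^{\text{new}}$. Hence $d_{p}^{\text{new}} = d_{p}^{\text{DNL}}$ on potentials of the form $P_{\bb}(f)$ with $f \in C^{1,\bar{1}}(X)$.

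Next I extend the equality to $\E^{p}(X,\bb)$. Given $u_{0}, u_{1} \in \E^{p}(X,\bb)$, use Bloc\-ki--Kolodziej regularization to get smooth $f_{0}^{j}, f_{1}^{j} \searrow u_{0}, u_{1}$, and set $u_{0}^{j} = P_{\bb}(f_{0}^{j})$, $u_{1}^{j} = P_{\bb}(f_{1}^{j}) \in \cH_{\bb}$; these decrease to $u_{0}$ and $u_{1}$. By the definition of $d_{p}^{\text{new}}$ in Section~\ref{subsec: extending the metric} we have $d_{p}^{\text{new}}(u_{0}^{j}, u_{1}^{j}) \to d_{p}^{\text{new}}(u_{0}, u_{1})$, while \cite[Proposition~3.12]{dinezza2018lp} (or equivalently the $I_{p}$ comparison of Theorem~\ref{thm: control by p-energy} combined with Theorem~\ref{thm: Ip converges along decreasing sequences}) gives $d_{p}^{\text{DNL}}(u_{0}^{j}, u_{1}^{j}) \to d_{p}^{\text{DNL}}(u_{0}, u_{1})$. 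Since the two sequences of distances are equal term by term from the previous paragraph, their limits coincide.

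The main (and essentially only) point of substance is the first step: matching the ``endpoint'' integral $\int_{X} |\dot{u}_{0}|^{p}\bb^{n}_{u_{0}}$ coming from our approximation scheme with the ``any-time'' integral $\int_{X} |\dot{u}_{t}|^{p}\bb^{n}_{u_{t}}$ coming from the big-and-nef theory. In our setting this is handled by Theorem~\ref{thm: left hand speed same as right hand speed} (which shows the endpoint integrals agree) together with Theorems~\ref{thm: convergence for smooth enough potentials} and \ref{thm: distance for finite entropy}; no further calculation is required.
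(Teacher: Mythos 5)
Your proposal is correct and follows essentially the same route as the paper: on $\cH_{\bb}$ identify both metrics with the endpoint integral $\int_{X}|\dot{u}_{0}|^{p}\bb^{n}_{u_{0}}$ via Theorem~\ref{thm: convergence for smooth enough potentials}, then pass to decreasing approximations to cover all of $\E^{p}(X,\bb)$. The only cosmetic difference is that you recover the Di Nezza--Lu side from the finite-entropy formula (Theorem~\ref{thm: distance for finite entropy} together with Lemma~\ref{lem: quasi-smoooth potentials have bounded entropy}), while the paper quotes \cite[Theorem 3.7]{dinezza2018lp} directly; both are valid.
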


\begin{proof}
    Let us use $D_{p}$ to represent the metric constructed in \cite{dinezza2018lp}. In case $u_{0}, u_{1} \in \cH_{\te}$, then by Theorem~\ref{thm: convergence for smooth enough potentials} and by \cite[Theorem 3.7]{dinezza2018lp}
    \[
    d_{p}(u_{0}, u_{1}) = \int_{X}|\dot{u}_{0}|^{p} \bb^{n}_{u_{0}} = D_{p}(u_{0}, u_{1}),
    \]
    where $u_{t}$ is the weak geodesic joining $u_{0}$ and $u_{1}$. 

    By Definition~\ref{def: extending dp to Ep} and the definition of $D_{p}$ in \cite[Equation above Proposition 3.12]{dinezza2018lp} when $u_{0}, u_{1} \in \E^{p}(X,\bb)$, then 
    \[
    d_{p}(u_{0}, u_{1}) = \lim_{k \to \infty} d_{p}(u_{0}^{k}, u_{1}^{k}) = \lim_{k \to \infty} D_{p}(u_{0}^{k}, u_{1}^{k}) = D_{p}(u_{0}, u_{1})
    \]
    where $u_{0}^{k}, u_{1}^{k} \in \cH_{\bb}$ such that $u_{0}^{k} \searrow u_{0}$ and $u_{1}^{k} \searrow u_{1}$. 
\end{proof}

\begin{thm}\label{thm: d1 agrees with DNL d1}
    When $p = 1$, then $u_{0}, u_{1} \in \E^{1}(X,\te)$ satisfy
    \[
    d_{1}(u_{0}, u_{1}) = I(u_{0}) + I(u_{1}) - 2I(P_{\te}(u_{0}, u_{1})).
    \]
    Thus $d_{1}$ agrees with the metric constructed in \cite{darvas2021l1}. 
\end{thm}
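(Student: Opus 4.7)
The plan is to combine the Pythagorean identity from Theorem~\ref{thm: Pythagorean formula for big classes} (at $p=1$) with the identity $d_1(v,u) = I(u) - I(v)$ in the comparable case $v \leq u$, and then extend from $\mathcal{H}_\theta$ to $\mathcal{E}^1(X,\theta)$ by decreasing approximation. Setting $w := P_\theta(u_0,u_1)$, Theorem~\ref{thm: Pythagorean formula for big classes} gives $d_1(u_0,u_1) = d_1(u_0,w) + d_1(u_1,w)$, and since $w \leq u_0$ and $w \leq u_1$, applying the comparable case twice produces exactly $I(u_0) + I(u_1) - 2I(w)$.

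For the comparable case with $v, u \in \mathcal{H}_\theta$ and $v \leq u$, I would first arrange $C^{1,\bar{1}}$ representatives that are pointwise ordered. Starting from $v = P_\theta(f'_0)$ and $u = P_\theta(f'_1)$, pick $C$ with $\theta \leq C\omega$ and set $f_0 := P_{C\omega}(f'_0,f'_1)$ and $f_1 := f'_1$; by the argument in the proof of Lemma~\ref{lem: Hthetea closed under rooftop envelope} the function $f_0$ is $C^{1,\bar{1}}$, and since $v$ is $C\omega$-psh with $v \leq \min(f'_0,f'_1)$, one checks $P_\theta(f_0) = v$ while $f_0 \leq f_1$. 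Let $u_t$ be the weak geodesic joining $v$ and $u$; convexity in $t$ together with $v \leq u$ forces $\dot u_0 \geq 0$. Lemma~\ref{lem: monge ampere energy and geodesic in big case} then yields $\int_X \dot u_0\, \theta^n_v = I(u) - I(v)$, and Theorem~\ref{thm: convergence for smooth enough potentials} at $p=1$ gives $d_1(v,u) = \int_X |\dot u_0|\, \theta^n_v = \int_X \dot u_0\, \theta^n_v$, so $d_1(v,u) = I(u)-I(v)$.

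For the final extension to $\mathcal{E}^1$, take $u_0^k, u_1^k \in \mathcal{H}_\theta$ with $u_0^k \searrow u_0$, $u_1^k \searrow u_1$. Then $P_\theta(u_0^k,u_1^k) \in \mathcal{H}_\theta$ decreases to $P_\theta(u_0,u_1) \in \mathcal{E}^1$. Lemma~\ref{lem: decreasing sequence in Ep} gives $d_1(u_0^k,u_1^k) \to d_1(u_0,u_1)$, while the Monge-Amp\`ere energy $I$ is continuous along decreasing sequences in $\mathcal{E}^1$ in the big setting, so $I(u_0^k) \to I(u_0)$, $I(u_1^k) \to I(u_1)$, and $I(P_\theta(u_0^k,u_1^k)) \to I(P_\theta(u_0,u_1))$. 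Passing to the limit in the identity proved on $\mathcal{H}_\theta$ completes the argument. The main delicate point is Step 1: ensuring ordered $C^{1,\bar{1}}$ representatives so that Lemma~\ref{lem: monge ampere energy and geodesic in big case} and Theorem~\ref{thm: convergence for smooth enough potentials} both apply without losing the fact that $\dot u_0$ has a definite sign.
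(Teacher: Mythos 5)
Your proposal is correct and follows essentially the same route as the paper: the comparable case $v\leq u$ handled via Lemma~\ref{lem: monge ampere energy and geodesic in big case} together with Theorem~\ref{thm: convergence for smooth enough potentials}, then the Pythagorean identity of Theorem~\ref{thm: Pythagorean formula for big classes} applied with $w=P_{\te}(u_{0},u_{1})$, then decreasing approximation from $\cH_{\te}$ using Lemma~\ref{lem: decreasing sequence in Ep} and continuity of $I$ along decreasing sequences. The only deviation is your Step 1 construction of ordered $C^{1,\bar{1}}$ representatives, which is valid but unnecessary, since the hypotheses of Lemma~\ref{lem: monge ampere energy and geodesic in big case} and Theorem~\ref{thm: convergence for smooth enough potentials} only require the ordering $u_{0}\leq u_{1}$ of the envelopes (from which $\dot{u}_{0}\geq 0$ already follows), not an ordering of the functions $f_{0},f_{1}$.
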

\begin{proof}
    The proof is the same as in \cite[Propositiion 3.18]{dinezza2018lp}. We recall the steps for completion. If $u_{0}, u_{1} \in \cH_{\te}$ and $u_{0} \leq u_{1}$, then from Lemma~\ref{lem: monge ampere energy and geodesic in big case} and Theorem~\ref{thm: convergence for smooth enough potentials}, 
    \[
    d_{1}(u_{0}, u_{1}) = \int_{X}\dot{u}_{0} \te^{n}_{u_{0}} = \int_{X}\dot{u}_{1}\te^{n}_{u_{1}} = I(u_{1}) - I(u_{0}). 
    \]
    If $u_{0}, u_{1} \in \cH_{\te}$ be arbitrary (we drop the condition that $u_{0} \leq u_{1}$), then by the Pythagorean identity (see Theorem~\ref{thm: Pythagorean formula for big classes}), 
    \begin{align*}
        d_{1}(u_{0}, u_{1}) &= d_{1}(u_{0}, P_{\te}(u_{0}, u_{1})) + d_{1}(u_{1}, P_{\te}(u_{0}, u_{1})) \\ 
        &= I(u_{0}) + I(u_{1}) - 2 I(P_{\te}(u_{0}, u_{1})).
    \end{align*}

    More generally, when $u_{0}, u_{1} \in \E^{p}(X,\te)$, then using $u_{0}^{k}, u_{1}^{k} \in \cH_{\te}$ such that $u_{0}^{k} \searrow u_{0}$ and $u_{1}^{k} \searrow u_{1}$, we can prove that 
    \begin{align*}
        d_{1}(u_{0}, u_{1}) &= \lim_{k \to \infty} d_{1}(u_{0}^{k}, u_{1}^{k}) \\
        &= \lim_{k \to \infty} I(u_{0}^{k}) + I(u_{1}^{k}) - 2I(P_{\te}(u_{0}^{k}, u_{1}^{k})) \\
        &= I(u_{0}) + I(u_{1}) - 2I(P_{\te}(u_{0}, u_{1})).
    \end{align*}    
\end{proof}

\section{Uniform Convexity for the big and nef classes}\label{sec: uniform convexity for big and nef}
On a compact K\"ahler manifold $(X,\om)$, in \cite{DarvasLuuniformconvexity} Darvas-Lu proved that for $p> 1$, $u,v_{0}, v_{1} \in \E^{p}(X,\om)$, and $(0,1) \ni \lambda \mapsto v_{\lambda} \in \E^{p}(X,\om)$, the weak geodesic joining $v_{0}$ and $v_{1}$, satisfy
\begin{align}\label{eq: convexity in Kahler case}
d_{p}(u,v_{\lambda})^{2} &\leq (1-\la)d_{p}(u,v_{0})^{2} + \la d_{p}(u,v_{1})^{2} - (p-1)\la(1-\la)d_{p}(v_{0},v_{1})^{2}, \text{ if } 1< p \leq 2 \text{ and }\\
d_{p}(u,v_{\la})^{p} &\leq (1-\la)d_{p}(u,v_{0})^{p} + \la d_{p}(u,v_{1})^{p} - \la^{p/2}(1-\la)^{p/2}d_{p}(v_{0},v_{1})^{p}, \text{ if } p \leq 2.
\end{align}

If $\bb$ represents a big and nef cohomology class, using the approximation method used to construct the metric $d_{p}$ on $\E^{p}(X,\bb)$, in this section we will extend these inequalities to $\E^{p}(X,\bb)$.

First, we will show the convexity property on $\cH_{\bb}$ (see Equation~\eqref{eq: definition of Hbb}). If $u, v_{0}, v_{1} \in \cH_{\bb}$ and $\lambda \mapsto v_{\lambda}$ is the weak geodesic joining $v_{0}$ and $v_{1}$, then we claim 
\begin{align}
    d_{p}(u,v_{\lambda})^{2} &\leq (1-\la)d_{p}(u,v_{0})^{2} + \la d_{p}(u,v_{1})^{2} - (p-1)\la(1-\la)d_{p}(v_{0},v_{1})^{2}, \text{ if } 1< p \leq 2 \text{ and }\\
d_{p}(u,v_{\la})^{p} &\leq (1-\la)d_{p}(u,v_{0})^{p} + \la d_{p}(u,v_{1})^{p} - \la^{p/2}(1-\la)^{p/2}d_{p}(v_{0},v_{1})^{p}, \text{ if } p \leq 2.
\end{align}

We will show it by the approximation process. Let $u = P_{\bb}(f)$, $v_{0} = P_{\bb}(f_{0})$, and $v_{1} = P_{\bb}(f_{1})$ for $f,f_{0}, f_{1} \in C(X)$ such that $dd^{c}f, dd^{c}f_{0}, dd^{c}f_{2} \leq C\om$. Recall that we defined $\om_{\ee} := \bb + \ee \om$. Let $u_{\ee} = P_{\om_{\ee}}(f)$, $v_{0,\ee} = \Poe(f_{0})$ and $v_{1,\ee} = \Poe(f_{1})$. Let $v_{\la,\ee}$ be the geodesic joining $v_{0,\ee}$ and $v_{1,\ee}$. From the result in the K\"ahler case, we know that 
\begin{align}
     d_{p}(u_{\ee},v_{\lambda,\ee})^{2} &\leq (1-\la)d_{p}(u_{\ee},v_{0,\ee})^{2} + \la d_{p}(u_{\ee},v_{1,\ee})^{2} - (p-1)\la(1-\la)d_{p}(v_{0,\ee},v_{1,\ee})^{2}, \text{ if } 1< p \leq 2 \text{ and } \label{eq: convexity for approximating sequence}\\
d_{p}(u_{\ee},v_{\la,\ee})^{p} &\leq (1-\la)d_{p}(u_{\ee},v_{0,\ee})^{p} + \la d_{p}(u_{\ee},v_{1,\ee})^{p} - \la^{p/2}(1-\la)^{p/2}d_{p}(v_{0,\ee},v_{1,\ee})^{p}, \text{ if } p \leq 2. \label{eq: convexity for approximating sequence p > 2}
\end{align}

By the definition of $d_{p}$ on $\cH_{\bb}$ (see Section~\ref{subsec: metric geometry in big and nef case}), we know that the $\lim_{\ee \to 0} d_{p}(u_{\ee}, v_{0,\ee}) = d_{p}(u,v_{0})$, $\lim_{\ee \to 0} d_{p}(u_{\ee}, v_{1,\ee}) = d_{p}(u,v_{1})$, and $\lim_{\ee\to 0} d_{p}(v_{0,\ee}, v_{1,\ee}) = d_{p}(v_{0},v_{1})$. So we are done if we can prove that 
\[
\lim_{\ee \to 0}d_{p}(u_{\ee}, v_{\la,\ee}) = d_{p}(u, v_{\la}).
\]

Let $w_{t}$ be the weak geodesic joining $u$ and $v_{\la}$ and $w_{t,\ee}$ be the weak geodesic joining $u_{\ee}$ and $v_{\la, \ee}$. 
Since $u \in \cH_{\bb}$ and $u_{\ee} \in \cH_{\om_{\ee}}$ from \cite[Lemma 3.13]{dinezza2018lp} we get that 
\[
d_{p}(u,v_{\la})^{p} = \int_{X} |\dot{w}_{0}|^{p}(\bb + dd^{c}u)^{n},
\]
and 
\[
d_{p}(u_{\ee}, v_{\la,\ee})^{p} = \int_{X} |\dot{w}_{0,\ee}|^{p}(\om_{\ee} + dd^{c}u_{\ee})^{n}.
\]
Using \cite[Lemma 3.5]{dinezza2018lp}, we get that if $(\bb + dd^{c}u)^{n} = \rho \om^{n}$ and $(\om_{\ee} + dd^{c}u_{\ee})^{n} = \rho_{\ee}\om^{n}$, then $\ee \mapsto \rho_{\ee}$ is increasing, uniformly bounded and $\rho_{\ee} \searrow \rho$ pointwise as $\ee \to 0$. Moreover, from Theorem~\ref{thm: measure on contact sets}, the measure $(\bb + dd^{c}u)^{n}$ is supported on $D:= \{P_{\bb}(f) = f\}$ and the measures $(\om_{\ee} + dd^{c}u_{\ee})^{n}$ are supported on $D_{\ee}:= \{ \Poe(f) = f\}$. Moreover, $\cap_{\ee >0}D_{\ee} = D$. 

We will show that 
\[
\lim_{\ee \to 0} \int_{X} |\dot{w}_{0,\ee}|^{p}(\om_{\ee} + dd^{c}u_{\ee})^{n} = \int_{X} |\dot{w}_{0}|^{p}(\bb + dd^{c}u)^{n}.
\]
The proof is similar to \cite[ Lemma 3.6, and Theorem 3.7]{dinezza2018lp}.

\begin{lem}\label{lem: convergence of integrands}
Let $w_{t}$ and $w_{t,\ee}$ be the weak geodesics joining $u, v_{\la}$ and $u_{\ee}, v_{\la,\ee}$ respectively as described above. Then
\[
\lim_{\ee \to 0}\mathds{1}_{D_{\ee}}|\dot{w}_{0,\ee}|^{p} = \mathds{1}_{D}|\dot{w}_{0}|^{p}.
\]
\end{lem}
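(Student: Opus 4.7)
The plan is to verify the claimed identity pointwise on $X$, by analyzing the nested contact sets and the convexity of the geodesic in the time variable.

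First I would establish that all approximations decrease monotonically as $\ee \to 0$. Since $\om_{\ee'} \leq \om_{\ee}$ for $\ee' < \ee$, every $\om_{\ee'}$-psh candidate is automatically $\om_\ee$-psh, giving $u_{\ee'} \leq u_\ee$, and hence $u_\ee \searrow u$; the same argument yields $v_{0,\ee} \searrow v_0$ and $v_{1,\ee} \searrow v_1$. The weak geodesics inherit this monotonicity: a $\pi^*\om_{\ee'}$-psh subgeodesic joining $v_{0,\ee'}, v_{1,\ee'}$ is also a $\pi^*\om_\ee$-psh subgeodesic joining the larger endpoints $v_{0,\ee}, v_{1,\ee}$, so taking suprema gives $v_{\la,\ee} \searrow v_\la$. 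The same argument applied once more yields $w_{t,\ee} \searrow w_t$ for each fixed $t \in [0,1]$. The nesting of the contact sets $D_\ee \subset D_{\ee'}$ for $\ee < \ee'$ is immediate from $u_\ee \leq u_{\ee'} \leq f$, and the paper has already recorded $\bigcap_{\ee > 0} D_\ee = D$.

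Next I would analyze the limit pointwise. If $x \notin D$, then by the nesting and intersection above there exists $\ee_0$ with $x \notin D_\ee$ for all $\ee \leq \ee_0$, so $\mathds{1}_{D_\ee}(x)|\dot w_{0,\ee}(x)|^p = 0 = \mathds{1}_D(x)|\dot w_0(x)|^p$ eventually. If $x \in D$, then $x \in D_\ee$ for every $\ee$ (as $D \subset D_\ee$), and in particular $w_{0,\ee}(x) = u_\ee(x) = f(x) = u(x) = w_0(x)$. The pointwise inequality $w_{t,\ee}(x) \geq w_t(x)$, divided by $t > 0$ and passed to $t \to 0^+$, gives the lower bound $\dot w_{0,\ee}(x) \geq \dot w_0(x)$. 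For the upper bound, convexity of $t \mapsto w_{t,\ee}(x)$ (which vanishes, after subtracting $f(x)$, at $t = 0$) gives $\dot w_{0,\ee}(x) \leq (w_{t,\ee}(x) - f(x))/t$ for every $t > 0$; sending $\ee \to 0$ first (using $w_{t,\ee}(x) \searrow w_t(x)$) and then $t \to 0^+$ yields $\limsup_{\ee \to 0}\dot w_{0,\ee}(x) \leq \dot w_0(x)$. Combining the two bounds and applying continuity of $r \mapsto |r|^p$ gives the claim on $D$.

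The main obstacle is verifying the monotone convergence of the geodesics $w_{t,\ee} \searrow w_t$ under the simultaneous change of cohomology class and endpoints; this is precisely where the inequality $\om_{\ee'} \leq \om_\ee$ is essential, since without it a subgeodesic in the smaller class need not remain admissible in the larger one. Once that monotonicity is in place, the convexity-in-$t$ argument at points of $D$ is a direct adaptation of the calculation already used in the proof of Theorem~\ref{thm: definition is well-defined for u0 less than u1}.
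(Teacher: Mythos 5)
Your proposal is correct and follows essentially the same route as the paper's proof: monotonicity of $u_{\ee}$, $v_{\la,\ee}$, $w_{t,\ee}$ in $\ee$ via the candidate-subgeodesic argument, then at points of $D$ a two-sided estimate on $\dot{w}_{0,\ee}$ (lower bound from $w_{t,\ee}\geq w_{t}$, upper bound from convexity in $t$ followed by $\ee \to 0$ and then $t\to 0^{+}$), and eventual vanishing of $\mathds{1}_{D_{\ee}}$ off $D$. The one step to spell out, which you flag but do not complete, is the identification of the decreasing limits ($v_{\la,\ee}\searrow v_{\la}$ and $w_{t,\ee}\searrow w_{t}$): besides monotonicity one checks that the limit dominates $v_{\la}$ (since each $v_{\la,\ee}\geq v_{\la}$) and that the limit is itself a candidate subgeodesic for the limiting endpoints in the class $\{\bb\}$, hence is $\leq v_{\la}$, exactly as in the paper's argument.
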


\begin{proof}
    First we observe that $u_{\ee} \searrow u$ and $v_{\la,\ee} \searrow v_{\la}$ as $\ee \to 0$. We will explain why $v_{\la, \ee} \searrow v_{\la}$. This follows because $v_{0,\ee} \searrow v_{0}$ and $v_{1,\ee} \searrow v_{1}$ as $\ee \to 0$. If $\ee_{1} < \ee_{2}$, then the geodesic $v_{\la, \ee_{1}}$ is a candidate subgeodesic joining $v_{0, \ee_{2}}$ and $v_{1,\ee_{2}}$. Therefore the geodesics $v_{\la,\ee}$ are decreasing sequence of $\om_{\ee}$-psh functions. If $v_{\la,\ee} \searrow \phi_{\la}$, then $\phi_{\la} \geq v_{\la}$ because $v_{\la,\ee} \geq v_{\la}$. But $\phi_{\la}$ is a candidate subgeodesic joining $v_{0}$ and $v_{1}$, therefore $\phi_{\la} \leq v_{\la}$. Thus $v_{\la,\ee} \searrow v_{\la}$ as $\ee \to 0$.  

    We can obtain that $w_{t,\ee} \searrow w_{t}$ as well, by the same reasoning. 

    If $x \in D$, then 
    \[
    \dot{w}_{0}(x) = \lim_{t\to 0}\frac{w_{t}(x) - f(x)}{t} \leq \lim_{t\to 0}\frac{w_{t,\ee}(x) - f(x)}{t} = \dot{w}_{0,\ee} \leq \frac{w_{t,\ee}(x) - w_{0,\ee}(x)}{t}.
    \]
    Here we used $w_{t,\ee} \geq w_{t}$ and the convexity of the geodesic $w_{t,\ee}$ for the last inequality. Taking $\ee \to 0$, and using $w_{t,\ee} \searrow w_{t}$ we obtain
    \[
    \dot{w}_{0}(x) \leq \lim_{\ee \to 0} \dot{w}_{0,\ee}(x) \leq \frac{w_{t}(x) - w_{0}(x)}{t}.
    \]
    Taking $t \to 0$, we get 
    \[
    \dot{w}_{0}(x) \leq \lim_{\ee \to 0} \dot{w}_{0,\ee}(x) \leq \dot{w}_{0}(x).
    \]
    Thus if $x \in D$, then $\lim_{\ee \to 0}\dot{w}_{0,\ee}(x) = \dot{w}_{0}(x)$. If $x \notin D$, then for $\ee$ small enough $x \notin D_{\ee}$. Thus we get $\mathds{1}_{D_{\ee}} |\dot{w}_{0,\ee}|^{p} = \mathds{1}_{D} |\dot{w}_{0}|^{p} $ as $\ee \to 0$ pointwise.
\end{proof}

\begin{thm} \label{thm: convergence of geodesics in Hbb}
    \[
\lim_{\ee \to 0} \int_{X} |\dot{w}_{0,\ee}|^{p}(\om_{\ee} + dd^{c}u_{\ee})^{n} = \int_{X} |\dot{w}_{0}|^{p}(\bb + dd^{c}u)^{n}.
\]
\end{thm}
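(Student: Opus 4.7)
The plan is to apply Lebesgue's dominated convergence theorem after writing both integrals against the fixed volume form $\om^n$:
\[
\int_X |\dot w_{0,\ee}|^p (\om_\ee + dd^c u_\ee)^n = \int_X \mathds{1}_{D_\ee} |\dot w_{0,\ee}|^p \rho_\ee \, \om^n,
\]
and analogously for the limit integral with $\rho$ and $D$. By Lemma~\ref{lem: convergence of integrands}, $\mathds{1}_{D_\ee} |\dot w_{0,\ee}|^p \to \mathds{1}_D |\dot w_0|^p$ pointwise, and by \cite[Lemma 3.5]{dinezza2018lp} the densities satisfy $\rho_\ee \searrow \rho$ pointwise with a uniform bound $0 \leq \rho_\ee \leq M$. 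Thus the integrand converges pointwise, and it remains to produce a uniform $L^\infty$ majorant for $\mathds{1}_{D_\ee}|\dot w_{0,\ee}|^p$.

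Since $f, f_0, f_1 \in C(X)$ are uniformly bounded and their Laplacians are controlled, the envelopes $u_\ee, v_{0,\ee}, v_{1,\ee}$, the geodesic point $v_{\la,\ee}$, and the geodesic $w_{t,\ee}$ are all uniformly bounded in sup norm. Convexity of $t \mapsto w_{t,\ee}(x)$ gives the upper bound $\dot w_{0,\ee}(x) \leq v_{\la,\ee}(x) - u_\ee(x) \leq C$ with $C$ independent of $\ee$. The bound on $|\dot w_{0,\ee}|$ from below, however, does \emph{not} follow from convexity alone, and this is the main obstacle.

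To circumvent it, I would split the integral into its positive- and negative-slope contributions and apply the Kähler-class analogue of Theorem~\ref{pythagorean formula for smooth enough potentials} to rewrite them as integrals along monotone geodesics:
\begin{align*}
\int_{\{\dot w_{0,\ee} \geq 0\}} |\dot w_{0,\ee}|^p (\om_\ee + dd^c u_\ee)^n &= \int_X |\dot a^{\ee}_0|^p (\om_\ee + dd^c P_{\om_\ee}(u_\ee, v_{\la,\ee}))^n, \\
\int_{\{\dot w_{0,\ee} < 0\}} |\dot w_{0,\ee}|^p (\om_\ee + dd^c u_\ee)^n &= \int_X |\dot b^{\ee}_0|^p (\om_\ee + dd^c P_{\om_\ee}(u_\ee, v_{\la,\ee}))^n,
\end{align*}
where $a^{\ee}_t$ (resp.\ $b^{\ee}_t$) is the geodesic from $P_{\om_\ee}(u_\ee, v_{\la,\ee})$ to $v_{\la,\ee}$ (resp.\ $u_\ee$). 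Each of these geodesics runs between comparable bounded potentials, so its initial slope is one-signed and bounded in absolute value by the difference of its endpoints, hence by $2C$. Combined with $\rho_\ee \leq M$, this furnishes the uniform majorant needed for dominated convergence on each piece.

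Applying Lemma~\ref{lem: convergence of integrands} separately to each monotone subgeodesic gives pointwise convergence of the new integrands, and DCT yields integral convergence. Reassembling via the Pythagorean identity for $u$ and $v_\la$ on the limit side (Theorem~\ref{pythagorean formula for smooth enough potentials}) produces
\[
\lim_{\ee \to 0}\int_X |\dot w_{0,\ee}|^p (\om_\ee + dd^c u_\ee)^n = \int_X |\dot w_0|^p (\bb + dd^c u)^n.
\]
Beyond the lower-bound issue, the remaining delicate point is verifying that the Pythagorean formula applies when one endpoint is $v_{\la,\ee}$ rather than a $P$-envelope of a $C^{1,\bar 1}$ datum; this is legitimate in the Kähler class $\{\om_\ee\}$ because weak geodesics between $C^{1,\bar 1}$ endpoints inherit the $C^{1,\bar 1}$ regularity (Chu--Tosatti--Weinkove), so $v_{\la,\ee}$ may itself be written as $P_{\om_\ee}$ of a $C^{1,\bar 1}$ function.
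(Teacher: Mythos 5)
Your detour is built on a premise that is not actually an obstacle, and the detour itself contains a step that cannot be justified with the tools available. First, the two-sided bound $|\dot{w}_{0,\ee}|\leq \sup_{X}|w_{1,\ee}-w_{0,\ee}|$ is a standard Lipschitz/barrier estimate for weak geodesics between potentials of minimal singularity type (compare with the subgeodesic $t\mapsto\max(w_{0,\ee}-Ct,\,w_{1,\ee}-C(1-t))$); this is exactly what the paper invokes via \cite[Lemma 3.1]{Darvas2017OnTS}, so the ``lower-bound issue'' you try to circumvent does not arise. What does require an argument is that $\sup_{X}|u_{\ee}-v_{\la,\ee}|$ is bounded uniformly in $\ee$, and your justification --- that $u_{\ee},v_{0,\ee},v_{1,\ee},v_{\la,\ee}$ are uniformly bounded in sup norm --- is not valid in a general big and nef class, where minimal-singularity potentials (in particular $P_{\om_{\ee}}(f)\geq P_{\bb}(f)$) need not be bounded below. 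The correct route is to control differences: sandwich $\Poe(v_{0,\ee},v_{1,\ee})\leq v_{\la,\ee}\leq\max(v_{0,\ee},v_{1,\ee})$ and use the contraction $|\Poe(h_{1})-\Poe(h_{2})|\leq\sup_{X}|h_{1}-h_{2}|$ to bound $|u_{\ee}-v_{\la,\ee}|$ by $\max\{\sup_{X}|f-f_{0}|,\sup_{X}|f-f_{1}|,\sup_{X}|f-\min(f_{0},f_{1})|\}$, independent of $\ee$. With this majorant, Lemma~\ref{lem: convergence of integrands} and dominated convergence finish the proof with no splitting at all; this is the paper's argument.

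Second, even granting your splitting at each fixed $\ee$ (where Chu--Tosatti--Weinkove regularity in the K\"ahler class $\{\om_{\ee}\}$ plausibly lets you write $v_{\la,\ee}$ as an envelope of a $C^{1,\bar{1}}$ function), the reassembly on the limit side is a genuine gap: you need the slope decomposition of Theorem~\ref{pythagorean formula for smooth enough potentials} for the geodesic from $u$ to $v_{\la}$ in the class $\bb$, and that theorem (through Corollary~\ref{measures of rooftop potentials are usually nice}) requires both endpoints to be $P_{\bb}$-envelopes of $C^{1,\bar{1}}$ data. No bounded-Laplacian regularity is known for weak geodesics in merely big and nef classes --- Chu--Tosatti--Weinkove is a K\"ahler statement --- so $v_{\la}$ cannot be put in the required form and the decomposition at $\ee=0$ is unjustified. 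Relatedly, your appeal to the density control of \cite[Lemma 3.5]{dinezza2018lp} for $(\om_{\ee}+dd^{c}P_{\om_{\ee}}(u_{\ee},v_{\la,\ee}))^{n}$ is not immediate, since that lemma concerns envelopes of a fixed datum while your obstacle $\min(u_{\ee},v_{\la,\ee})$ varies with $\ee$. As written, the proposal does not close; the direct domination argument avoids all of these difficulties.
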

\begin{proof}
    We first notice that since $(\om_{\ee} + dd^{c}u_{\ee})^{n} = \rho_{\ee} \om^{n}$ and is supported on $D_{\ee}$, therefore 
    \[
    \int_{X} |\dot{w}_{0,\ee}|^{p}(\om_{\ee} + dd^{c}u_{\ee})^{n} = \int_{X}\mathds{1}_{D_{\ee}}|\dot{w}_{0,\ee}|^{p}\rho_{\ee}\om^{n}.
    \]
    Similarly 
    \[
    \int_{X} |\dot{w}_{0}|^{p}(\bb + dd^{c}u)^{n} = \int_{X}\mathds{1}_{D}|\dot{w}_{0}|^{p}\rho \om^{n}.
    \]
    As $\rho_{\ee} \to \rho$ pointwise and are uniformly bounded (from \cite[Lemma 3.5]{dinezza2018lp}), we just need to show that $|\dot{w}_{0,\ee}|$ are uniformly bounded in $\ee$. 

    From convexity of $v_{\la,\ee}$ in $\la$, we obtain that $v_{\la,\ee} \leq (1-\la)v_{0,\ee} + \la v_{1,\ee} \leq \max(v_{0,\ee}, v_{1,\ee})$. Also $\Poe(v_{0,\ee},v_{1,\ee}) \leq v_{\la,\ee}$. Combining we have $\Poe(v_{0,\ee}, v_{1,\ee}) \leq v_{\la,\ee} \leq \max\{v_{0,\ee},v_{1,\ee}\}$. 

    From \cite[Lemma 3.1]{Darvas2017OnTS}, we obtain
    \begin{align*}
          |\dot{w}_{0,\ee}| &\leq |w_{1,\ee} - w_{0,\ee}|\\
    &= |u_{\ee} - v_{\la,\ee}| \\
    &\leq \max\{|u_{\ee} - \max\{v_{0,\ee},v_{1,\ee}\}|, |u_{\ee}-\Poe(v_{0,\ee},v_{1,\ee})|\}\\
    &= \max\{|u_{\ee} - v_{0,\ee}|, |u_{\ee} - v_{1,\ee}|, |u_{\ee} - \Poe(v_{0,\ee}, v_{1,\ee})|\}.
    \end{align*}
  
    Since $\Poe(v_{0,\ee}, v_{1,\ee}) = \Poe(\min\{f_{0},f_{1}\})$ and $u_{0,\ee} = \Poe(f)$, $v_{1,\ee} = \Poe(f_{1})$ and $v_{0,\ee} = \Poe(f_{0})$, and using that for any continuous $h_{1}, h_{2} \in C(X)$, $|\Poe(h_{1} - \Poe(h_{2})| \leq \sup_{X} |h_{1}-h_{2}|$, we obtain that 
    \[
    |\dot{w}_{0,\ee}| \leq \max\{ \sup_{X}|f-f_{0}|, \sup_{X}|f-f_{1}|, \sup_{X}|f-\min\{f_{0},f_{1}\}|\}.
    \]
    Therefore by Lebesgue's Dominated Convergence theorem, and Lemma~\ref{lem: convergence of integrands},
    \[
    \lim_{\ee \to 0} \int_{X}\mathds{1}_{D_{\ee}}|\dot{w}_{0,\ee}|^{p}\rho_{\ee}\om^{n} = \int_{X}\mathds{1}_{D}|\dot{w}_{0}|^{p}\rho \om^{n}.
    \]
\end{proof}

Now the previous theorem proves
\begin{thm}\label{thm: convexity in big and nef case for Hbb}
    If $u, v_{0}, v_{1} \in \cH_{\bb}$, and $v_{\la}$ is the weak geodesic joining $v_{0}$ and $v_{1}$, then 
   \begin{align*}
          d_{p}(u,v_{\lambda})^{2} &\leq (1-\la)d_{p}(u,v_{0})^{2} + \la d_{p}(u,v_{1})^{2} - (p-1)\la(1-\la)d_{p}(v_{0},v_{1})^{2}, \text{ if } 1< p \leq 2 \text{ and }\\
d_{p}(u,v_{\la})^{p} &\leq (1-\la)d_{p}(u,v_{0})^{p} + \la d_{p}(u,v_{1})^{p} - \la^{p/2}(1-\la)^{p/2}d_{p}(v_{0},v_{1})^{p}, \text{ if } p \leq 2.
   \end{align*}
    
\end{thm}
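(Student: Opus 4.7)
The plan is to pass to the limit $\ee \to 0$ in the uniform convexity inequalities already available for the K\"ahler classes $\{\om_{\ee}\}$ with $\om_{\ee} = \bb + \ee\om$. Writing $u = P_{\bb}(f)$, $v_{0} = P_{\bb}(f_{0})$, $v_{1} = P_{\bb}(f_{1})$ and $u_{\ee} = P_{\om_{\ee}}(f)$, $v_{0,\ee} = \Poe(f_{0})$, $v_{1,\ee} = \Poe(f_{1})$, the approximating potentials lie in $\cH_{\om_{\ee}}$, so the Darvas--Lu inequalities \eqref{eq: convexity for approximating sequence} and \eqref{eq: convexity for approximating sequence p > 2} apply to $u_{\ee}, v_{0,\ee}, v_{1,\ee}$ and the weak geodesic $\la\mapsto v_{\la,\ee}$ joining $v_{0,\ee}$ and $v_{1,\ee}$.

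Next I would take $\ee \to 0$ in these inequalities. By the very definition of $d_{p}$ on $\cH_{\bb}$ recalled in Section~\ref{subsec: metric geometry in big and nef case}, one has
\[
d_{p}(u_{\ee}, v_{0,\ee}) \to d_{p}(u,v_{0}), \quad d_{p}(u_{\ee}, v_{1,\ee}) \to d_{p}(u,v_{1}), \quad d_{p}(v_{0,\ee}, v_{1,\ee}) \to d_{p}(v_{0},v_{1}).
\]
The only term that is not immediate is $d_{p}(u_{\ee}, v_{\la,\ee})$, and controlling it is the genuine step. As in the proof of Lemma~\ref{lem: convergence of integrands}, a geodesic for the smaller class is a candidate subgeodesic for the larger one, so $v_{\la,\ee}$ is a decreasing family converging to the weak geodesic $v_{\la}$ joining $v_{0}$ and $v_{1}$ in $\E^{p}(X,\bb)$. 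Applying \cite[Lemma 3.13]{dinezza2018lp} to the weak geodesics $w_{t,\ee}$ joining $u_{\ee}$ and $v_{\la,\ee}$ and $w_{t}$ joining $u$ and $v_{\la}$, both distances become integrals of $|\dot w_{0}|^{p}$ against the Monge--Amp\`ere measure of the starting point. Theorem~\ref{thm: convergence of geodesics in Hbb} is precisely the statement that these integrals converge, and so delivers $d_{p}(u_{\ee}, v_{\la,\ee}) \to d_{p}(u, v_{\la})$. Passing to the limit in the K\"ahler inequalities then yields the claimed convexity.

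The main obstacle in this plan is exactly the convergence $d_{p}(u_{\ee}, v_{\la,\ee}) \to d_{p}(u, v_{\la})$, which is the reason Theorem~\ref{thm: convergence of geodesics in Hbb} was established in advance: one must simultaneously control the Monge--Amp\`ere densities $\rho_{\ee} \searrow \rho$ (via \cite[Lemma 3.5]{dinezza2018lp}), the pointwise convergence $\mathds{1}_{D_{\ee}}|\dot w_{0,\ee}|^{p} \to \mathds{1}_{D}|\dot w_{0}|^{p}$ from Lemma~\ref{lem: convergence of integrands}, and the uniform sup-bound on $|\dot w_{0,\ee}|$ coming from $\Poe(v_{0,\ee}, v_{1,\ee}) \leq v_{\la,\ee} \leq \max(v_{0,\ee}, v_{1,\ee})$, so that dominated convergence applies. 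Once this step is in hand, every remaining manipulation is either definitional or a direct consequence of the K\"ahler inequalities.
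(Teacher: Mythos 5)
Your proposal is correct and follows essentially the same route as the paper: apply the Darvas--Lu inequalities to the K\"ahler approximants $u_{\ee}, v_{0,\ee}, v_{1,\ee}$, note that three of the four distances converge by the definition of $d_{p}$ on $\cH_{\bb}$, and use Theorem~\ref{thm: convergence of geodesics in Hbb} (built on Lemma~\ref{lem: convergence of integrands} and the uniform bound on $|\dot w_{0,\ee}|$) to get $d_{p}(u_{\ee}, v_{\la,\ee}) \to d_{p}(u, v_{\la})$ before passing to the limit. This is exactly the paper's argument, with the same key convergence step identified.
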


\begin{proof}
   We only needed to prove that $\lim_{\ee \to 0}d_{p}(u_{\ee}, v_{\la,\ee}) = d_{p}(u,v_{\la})$  which is proved by Theorem~\ref{thm: convergence of geodesics in Hbb}. Now taking the limit $\ee \to 0$ in Equations~\eqref{eq: convexity for approximating sequence} and~\eqref{eq: convexity for approximating sequence p > 2} proves the result. 
\end{proof}

Now we will extend this proof to all of $\E^{p}(X,\bb)$. 
\begin{thm}\label{thm: uniform convexity in big and nef}
    Let $u,v_{0},v_{1} \in \E^{p}(X,\bb)$ and $v_{\la}$ be the weak geodesic joining $v_{0}$ and $v_{1}$, then 
    \begin{align*}
           d_{p}(u,v_{\lambda})^{2} &\leq (1-\la)d_{p}(u,v_{0})^{2} + \la d_{p}(u,v_{1})^{2} - (p-1)\la(1-\la)d_{p}(v_{0},v_{1})^{2}, \text{ if } 1< p \leq 2 \text{ and }\\
d_{p}(u,v_{\la})^{p} &\leq (1-\la)d_{p}(u,v_{0})^{p} + \la d_{p}(u,v_{1})^{p} - \la^{p/2}(1-\la)^{p/2}d_{p}(v_{0},v_{1})^{p}, \text{ if } p \leq 2.
    \end{align*}
\end{thm}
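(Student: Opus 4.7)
The plan is to extend Theorem~\ref{thm: convexity in big and nef case for Hbb} from $\cH_{\bb}$ to all of $\E^{p}(X,\bb)$ by a monotone approximation argument, exploiting the continuity of $d_{p}$ along decreasing sequences (Lemma~\ref{lem: decreasing sequence in Ep}).

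First, for $u, v_{0}, v_{1} \in \E^{p}(X,\bb)$, I would pick sequences $u^{k}, v_{0}^{k}, v_{1}^{k} \in \cH_{\bb}$ with $u^{k} \searrow u$, $v_{0}^{k} \searrow v_{0}$, and $v_{1}^{k} \searrow v_{1}$, which is possible by the same regularization procedure used in the definition of $d_{p}$ on $\E^{p}(X,\bb)$ (Section~\ref{subsec: metric geometry in big and nef case}). Let $v_{\la}^{k}$ denote the weak geodesic joining $v_{0}^{k}$ and $v_{1}^{k}$.

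Next, I would verify that $v_{\la}^{k} \searrow v_{\la}$ as $k \to \infty$. Indeed, if $k \leq l$ then $v_{0}^{l} \leq v_{0}^{k}$ and $v_{1}^{l} \leq v_{1}^{k}$, so $v_{\la}^{l}$ is an admissible subgeodesic candidate joining $v_{0}^{k}, v_{1}^{k}$; hence $v_{\la}^{l} \leq v_{\la}^{k}$, and the sequence is decreasing. Moreover $\lim_{k} v_{\la}^{k} \geq v_{\la}$ (since each $v_{\la}^{k} \geq v_{\la}$), while the limit is itself a subgeodesic joining $v_{0}$ and $v_{1}$, so the reverse inequality follows from the definition of $v_{\la}$ as the supremum of such subgeodesics. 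Therefore $v_{\la}^{k} \searrow v_{\la}$.

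Now Theorem~\ref{thm: convexity in big and nef case for Hbb} applies to $u^{k}, v_{0}^{k}, v_{1}^{k}, v_{\la}^{k} \in \cH_{\bb}$ and yields, for every $k$,
\begin{align*}
d_{p}(u^{k},v_{\la}^{k})^{2} &\leq (1-\la)d_{p}(u^{k},v_{0}^{k})^{2} + \la\, d_{p}(u^{k},v_{1}^{k})^{2} - (p-1)\la(1-\la)d_{p}(v_{0}^{k},v_{1}^{k})^{2} \quad (1<p\leq 2),\\
d_{p}(u^{k},v_{\la}^{k})^{p} &\leq (1-\la)d_{p}(u^{k},v_{0}^{k})^{p} + \la\, d_{p}(u^{k},v_{1}^{k})^{p} - \la^{p/2}(1-\la)^{p/2}d_{p}(v_{0}^{k},v_{1}^{k})^{p} \quad (p\geq 2).
\end{align*}
Finally I would pass to the limit $k \to \infty$: by Lemma~\ref{lem: decreasing sequence in Ep}, each of the four distances $d_{p}(u^{k},v_{\la}^{k})$, $d_{p}(u^{k},v_{0}^{k})$, $d_{p}(u^{k},v_{1}^{k})$, $d_{p}(v_{0}^{k},v_{1}^{k})$ converges to its corresponding limit $d_{p}(u,v_{\la})$, $d_{p}(u,v_{0})$, $d_{p}(u,v_{1})$, $d_{p}(v_{0},v_{1})$, giving the desired inequalities.

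The main technical point is the monotone convergence $v_{\la}^{k} \searrow v_{\la}$; everything else is a direct limit in the inequality, and I do not anticipate any serious obstacle since the continuity of $d_{p}$ along decreasing sequences in $\E^{p}(X,\bb)$ is already established.
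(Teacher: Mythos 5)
Your proposal is correct and follows essentially the same route as the paper: approximate $u, v_{0}, v_{1}$ by decreasing sequences in $\cH_{\bb}$, verify $v_{\la}^{k} \searrow v_{\la}$ by the subgeodesic-candidate argument, and pass to the limit in the inequalities of Theorem~\ref{thm: convexity in big and nef case for Hbb} using continuity of $d_{p}$ along decreasing sequences. The only cosmetic difference is that you invoke Lemma~\ref{lem: decreasing sequence in Ep} (stated for the big-case metric, which applies here via Theorem~\ref{thm: dp agrees with dinezzalp}), whereas the paper cites \cite[Proposition 3.12]{dinezza2018lp} together with the triangle inequality; these give the same convergences.
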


\begin{proof}
    We give the proof by approximation from $\cH_{\bb}$. Let $u^{j}, v_{0}^{j}, v_{1}^{j} \in \cH_{\bb}$ satisfy $u^{j} \searrow u$, $v_{0}^{j} \searrow v_{0}$, and $v_{1}^{j} \searrow v_{1}$. If $v_{\la}^{j}$ is the weak geodesic joining $v_{0}^{j}$ and $v_{1}^{j}$, then by Theorem~\ref{thm: convexity in big and nef case for Hbb} 
    \begin{align*}
           d_{p}(u^{j},v^{j}_{\lambda})^{2} &\leq (1-\la)d_{p}(u^{j},v^{j}_{0})^{2} + \la d_{p}(u^{j},v^{j}_{1})^{2} - (p-1)\la(1-\la)d_{p}(v^{j}_{0},v^{j}_{1})^{2}, \text{ if } 1< p \leq 2 \text{ and }\\
d_{p}(u^{j},v^{j}_{\la})^{p} &\leq (1-\la)d_{p}(u^{j},v^{j}_{0})^{p} + \la d_{p}(u^{j},v^{j}_{1})^{p} - \la^{p/2}(1-\la)^{p/2}d_{p}(v^{j}_{0},v^{j}_{1})^{p}, \text{ if } p \leq 2.
    \end{align*}
    By definition of $d_{p}$ on $\E^{p}(X,\bb)$, we know that as we take the limit $j \to \infty$, $d_{p}(u^{j},v_{0}^{j}) \to d_{p}(u,v_{0})$, $d_{p}(u^{j}, v_{1}^{j}) \to d_{p}(u,v_{1})$ and $d_{p}(v_{0}^{j}, v_{1}^{j}) \to d_{p}(v_{0},v_{1})$. So we are done if we could prove that $d_{p}(u^{j}, v_{\la}^{j}) \to d_{p}(u,v_{\la})$. 

    By the same reasoning as in the proof of Lemma~\ref{lem: convergence of integrands}, we can see that $v_{\la}^{j} \searrow v_{\la}$. Since $u^{j} \searrow u$ and $v_{\la}^{j} \searrow v_{\la}$, from \cite[Proposition 3.12]{dinezza2018lp}, we get that $d_{p}(u^{j},u) \to 0$ and $d_{p}(v_{\la}^{j}, v_{\la}) \to 0$. Combining with the triangle inequality we get $d_{p}(u_{j},v_{\la}^{j}) \to d_{p}(u, v_{\la})$. 
\end{proof}

\section{Contraction Property and a Consequence}\label{sec: contraction property and a consequence}

Let $(X,\om)$ be a compact K\"ahler manifold, $\te$ be a smooth closed real $(1,1)$- form representing a big cohomology class, and $\psi \in \PSH(X,\te)$ have analytic singularities. In this section, we will prove that the map $\E^{p}(X,\te) \ni u \mapsto P_{\te}[\psi](u) \in \E^{p}(X,\te,\psi)$ is well defined and is a contraction, i.e., $d_{p}(P_{\te}[\psi](u), P_{\te}[\psi](v)) \leq d_{p}(u,v)$. When $p = 1$, the results in this section were proved in \cite[Section 4.1]{TrusianiL1metric}. 

We need a technical lemma, whose proof can be obtained by modifying the proof in \cite[Lemma 5.1]{GuptaCompletemetrictopology} by changing the weight function.
\begin{lem}\label{lem: limit of a decreasing sequence}
If $u_{j} \in \E^{p}(X,\te,\psi)$ is a decreasing sequence of functions such that for some $\varphi \in \E^{p}(X,\te, \psi)$, 
\[
\sup_{j} \int_{X}|u_{j} - \varphi|^{p}\te^{n}_{u_{j}} < \infty,
\]
then $u := \lim_{j\to\infty}u_{j} \in \E^{p}(X,\te,\psi)$. 
\end{lem}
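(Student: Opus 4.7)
The strategy, modeled on the proof of \cite[Lemma 5.1]{GuptaCompletemetrictopology} with the weight specialized to $\chi(t)=|t|^p$, is to verify in turn that the decreasing limit $u$ is not identically $-\infty$, that $u\in\PSH(X,\te,\psi)$, that $\int_X \te^n_u=\int_X \te^n_\psi$, and finally that $\int_X |u-\psi|^p\te^n_u<\infty$.

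First I would show $u\not\equiv -\infty$. Each $u_j$ satisfies $u_j\leq \psi+C$ for a fixed constant $C$ (from $u_j\preceq\psi$ together with the fact that the sequence is decreasing), so the pointwise limit is either $\te$-psh and dominated by $\psi+C$, or identically $-\infty$. To rule out the latter I would exploit the hypothesis: if $u_j\searrow -\infty$ then on any set of positive $\te^n_{u_j}$-mass the quantity $(\varphi-u_j)_+^p$ blows up, forcing $\int_X |u_j-\varphi|^p\te^n_{u_j}$ to diverge and contradicting the uniform bound. This gives $u\in \PSH(X,\te,\psi)$.

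Since each $u_j\in \E(X,\te,\psi)$ satisfies $\int_X \te^n_{u_j}=\int_X \te^n_\psi$ and $u_j\searrow u$ within $\PSH(X,\te,\psi)$, the total-mass monotonicity of \cite{Darvasmonotonicity,Wittnystrommonotonicity} (compare Lemma~\ref{lem: weak convergence of measures}) yields $\int_X \te^n_u=\int_X \te^n_\psi$. As a consequence, the non-pluripolar measures $\te^n_{u_j}$ converge weakly to $\te^n_u$ along the decreasing sequence.

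For the $p$-energy bound, Fatou applied to these weakly convergent Monge--Amp\`ere measures together with quasi-continuity of $|u-\psi|^p$ gives
\[
\int_X |u-\psi|^p\te^n_u \leq \liminf_{j\to\infty} \int_X |u_j-\psi|^p\te^n_{u_j},
\]
so it remains to bound the right-hand side uniformly in $j$. The inequality $|a+b|^p\leq 2^{p-1}(|a|^p+|b|^p)$ together with the hypothesis $\sup_j \int_X |u_j-\varphi|^p\te^n_{u_j}\leq M$ reduces this to a uniform bound on $\int_X |\varphi-\psi|^p\te^n_{u_j}$, which I expect to be the main obstacle since the varying measures $\te^n_{u_j}$ can in principle concentrate where $|\varphi-\psi|$ is large. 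My plan is to handle it by splitting along $\{u_j\geq \varphi\}$ and its complement: on the first set I would dominate the restriction of $\te^n_{u_j}$ by $\te^n_{\max(u_j,\varphi)}$ and use the Pythagorean identity (Lemma~\ref{lem: Pythagorean formula in analytic singularity type}) together with the hypothesis; on the complement the finiteness of $\int_X|\varphi-\psi|^p\te^n_\varphi$ (which holds because $\varphi\in\E^p(X,\te,\psi)$) should close the argument, giving $u\in\E^p(X,\te,\psi)$.
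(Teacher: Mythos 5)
There is a genuine gap at the second step. You claim that since each $u_{j}$ has full relative mass and $u_{j}\searrow u$, ``total-mass monotonicity'' gives $\int_{X}\te^{n}_{u}=\int_{X}\te^{n}_{\psi}$. Monotonicity of the non-pluripolar mass with respect to singularity type only gives the inequality $\int_{X}\te^{n}_{u}\leq\int_{X}\te^{n}_{\psi}$, and a decreasing limit of full-mass potentials can genuinely lose mass: for instance, if $\varphi_{0}\in\PSH(X,\te,\psi)$ has $\int_{X}\te^{n}_{\varphi_{0}}<\int_{X}\te^{n}_{\psi}$, then $u_{j}=\max(\varphi_{0},\psi-j)$ is a decreasing sequence with full relative mass for every $j$, yet its limit does not have full mass (of course, in that example the $p$-energies blow up --- which is exactly the point). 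Preventing this loss of mass is the entire content of the lemma, and it can only come from the uniform energy bound; your argument never uses the bound at this stage, so it assumes what is to be proved. The subsequent steps inherit the problem: the weak convergence $\te^{n}_{u_{j}}\to\te^{n}_{u}$ you invoke requires, via Lemma~\ref{lem: weak convergence of measures}, precisely the mass inequality $\int_{X}\te^{n}_{u}\geq\limsup_{j}\int_{X}\te^{n}_{u_{j}}$ that is in question, so the Fatou step is also unsupported as written. (The first step is shakier than you suggest as well --- the measures $\te^{n}_{u_{j}}$ move with $j$ and could a priori concentrate where $\varphi$ is very negative --- but that is fixable; the mass step is the real issue.)

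The intended route, which is what the proof of \cite[Lemma 5.1]{GuptaCompletemetrictopology} does with the weight $\chi(t)=|t|^{p}$, runs in the opposite order: one works with the canonical cutoffs $u^{k}:=\max(u,\psi-k)$, which have the same singularity type as $\psi$ and hence automatically full relative mass, and uses the uniform bound $\sup_{j}\int_{X}|u_{j}-\varphi|^{p}\te^{n}_{u_{j}}<\infty$ together with the standard comparison/fundamental inequalities for relative energies (comparing $\max(u_{j},\psi-k)$ with $u_{j}$, and handling the shift from $\varphi$ to $\psi$ by estimates of the type you sketch in your last step) to obtain $\sup_{k}\int_{X}|u^{k}-\psi|^{p}\te^{n}_{u^{k}}<\infty$. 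One then concludes $u\in\E^{p}(X,\te,\psi)$ from the characterization of the relative finite-energy class through its canonical cutoffs; both the full-mass statement and the finiteness of $\int_{X}|u-\psi|^{p}\te^{n}_{u}$ come out of this simultaneously, rather than being established beforehand. Your reduction of the energy bound to controlling $\int_{X}|\varphi-\psi|^{p}\te^{n}_{u_{j}}$ via the plurifine locality $\mathds{1}_{\{u_{j}\geq\varphi\}}\te^{n}_{u_{j}}\leq\te^{n}_{\max(u_{j},\varphi)}$ is a reasonable ingredient, but it must be embedded in the cutoff scheme above; as organized now, the proof does not go through.
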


\begin{lem}\label{lem: the projection is defined on Ep}
    If $\psi$ is a model potential, i.e., $P_{\te}[\psi] = \psi$, then $u \in \E^{p}(X,\te)$ implies $P_{\te}[\psi](u) \in \E^{p}(X,\te, \psi)$. 
\end{lem}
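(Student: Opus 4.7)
The plan is to realize $P_{\te}[\psi](u)$ as the pointwise decreasing limit of envelopes of bounded truncations of $u$, then invoke Lemma~\ref{lem: limit of a decreasing sequence}. After normalizing so that $u \leq V_{\te}$, set $u_{k} := \max(u, V_{\te} - k)$, so that $u_{k} \searrow u$ and each $u_{k}$ has minimal singularities with $V_{\te} - k \leq u_{k} \leq V_{\te}$. For such bounded-relative $u_{k}$, the envelope $v_{k} := P_{\te}[\psi](u_{k})$ is defined in the sense of Section~\ref{subsec: envelopes}, has the same singularity type as $\psi$ (so $|v_{k}-\psi|$ is bounded), and thus lies trivially in $\E^{p}(X,\te,\psi)$. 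Since $u_{k} \leq V_{\te}$ and $P_{\te}[\psi](V_{\te}) = \psi$ (as $\psi$ is a model), monotonicity of the operator yields the uniform upper bound $v_{k} \leq \psi$. Monotonicity in the argument also forces $v_{k} \searrow v$ for some $v \in \PSH(X,\te,\psi)$, and the identification $v = P_{\te}[\psi](u)$ follows from Lemma~\ref{Envelopes for increasing singularity type} applied to the envelopes $P_\te(\psi + C, u_k)$ as $k \to \infty$, followed by the limit $C \to \infty$.

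With this setup, applying Lemma~\ref{lem: limit of a decreasing sequence} with reference potential $\psi \in \E^{p}(X,\te,\psi)$ reduces the proof to the uniform bound
\[
\sup_{k} \int_{X} (\psi - v_{k})^{p}\, \te^{n}_{v_{k}} < \infty.
\]
This uniform $p$-energy estimate is the main obstacle, as the measures $\te^{n}_{v_{k}}$ are only implicitly described through the envelope and must be controlled as the lower bound $v_{k} \geq \psi - C_{k}$ degenerates with $k$. The plan is to pass to the big-and-nef setting via the desingularization of Section~\ref{sec: analytic singularities to big and nef}: let $\mu: \tilde{X} \to X$ resolve the analytic singularities of $\psi$ and produce the big-and-nef form $\tte$. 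Under the bijection $w \mapsto \tilde{w} = (w - \psi) \circ \mu + g$ of Theorem~\ref{thm: bijection between analytic singularity and big and nef}, together with the mass and energy identities of Corollary~\ref{cor: bijection preserves mass and energy}, the desired bound becomes
\[
\sup_{k} \int_{\tilde{X}} |\tilde{v}_{k} - g|^{p}\, \tte^{n}_{\tilde{v}_{k}} < \infty
\]
for the decreasing sequence $\tilde{v}_{k} \in \E^{p}(\tilde{X},\tte)$.

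In the big-and-nef setting, Theorem~\ref{thm: control by p-energy} identifies the $I_{p}$ integrand with $d_{p}^{p}$ up to a dimensional constant, and so it suffices to exhibit a contraction-type estimate of the form $d_p(\tilde v_k, g) \leq C(d_p(\tilde u_k, g) + 1)$ on the desingularization, where $\tilde u_k := u_k \circ \mu - \psi \circ \mu + g$ corresponds to $u_k$ under the same bijection (after adjusting for the singular part). The key point is that envelope operators are known to be contractions with respect to $d_{p}$ in the K\"ahler approximants $\om_{\ee} = \tte + \ee\tilde{\om}$, and this property passes to the big-and-nef limit through the approximation of Section~\ref{subsec: metric geometry in big and nef case}. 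Combined with the observation that $I_p(\tilde u_k, g)$ is controlled by $I_p(u, V_\te) < \infty$ uniformly in $k$, this yields the required uniform bound. Once the uniform $p$-energy estimate is secured, Lemma~\ref{lem: limit of a decreasing sequence} concludes $v = P_{\te}[\psi](u) \in \E^{p}(X,\te,\psi)$, completing the proof.
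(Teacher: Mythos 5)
Your reduction to a uniform $p$-energy bound for $v_{k}=P_{\te}[\psi](u_{k})$, $u_{k}=\max(u,V_{\te}-k)$, followed by Lemma~\ref{lem: limit of a decreasing sequence}, is exactly the right frame (and is how the paper proceeds), but the way you propose to obtain the bound has a genuine gap. The transfer to the desingularization is ill-defined for the truncations: $u_{k}$ has minimal singularities in $\PSH(X,\te)$ and is in general \emph{less} singular than $\psi$, so it does not lie in $\PSH(X,\te,\psi)$ and the formula $(u_{k}-\psi)\circ\mu+g$ tends to $+\infty$ along the exceptional divisor; it is not a $\tte$-psh function, so the objects $d_{p}(\tilde{u}_{k},g)$ and $I_{p}(\tilde{u}_{k},g)$ you want to control are simply not defined (Theorem~\ref{thm: bijection between analytic singularity and big and nef} and Corollary~\ref{cor: bijection preserves mass and energy} only apply to potentials more singular than $\psi$). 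The phrase ``after adjusting for the singular part'' hides precisely the missing construction. Moreover, the ``contraction-type estimate'' you invoke is essentially Theorem~\ref{thm: contraction property} of the paper, which is proved \emph{after} this lemma and whose statement needs this lemma to make sense; no concrete envelope operator on $\tilde{X}$ is identified whose $d_{p}$-contraction (in the K\"ahler approximants $\om_{\ee}$, surviving the limit $\ee\to 0$) would yield your inequality, and the claim that $I_{p}(\tilde{u}_{k},g)$ is controlled by $I_{p}(u,V_{\te})$ again compares measures of potentials living in different classes ($\mu^{*}\te$ versus $\tte$) without justification. There is also a smaller slip in the identification of the limit: Lemma~\ref{Envelopes for increasing singularity type} concerns an increasing family of singularity types with fixed obstacle, while here you would need to exchange an increasing limit in $C$ with a decreasing limit in $k$; this is fixable by the direct candidate argument ($v\leq u$, $v\preceq\psi$ gives $v\leq P_{\te}[\psi](u)$, and $P_{\te}[\psi](u_{k})\geq P_{\te}[\psi](u)$ gives the reverse), but as written it does not follow from that lemma.

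The estimate can in fact be closed entirely on $X$, without any resolution: since $P_{\te}[\psi](u_{k})$ has the same singularity type as $\psi$ and $u\leq K$, one has $P_{\te}[\psi](u_{k})-K\leq\psi$, and the key input is the contact-set inequality $\te^{n}_{P_{\te}[\psi](u_{k})}\leq \mathds{1}_{\{P_{\te}[\psi](u_{k})=u_{k}\}}\te^{n}_{u_{k}}$ from \cite[Theorem 3.8]{Darvasmonotonicity}. This transfers the Monge--Amp\`ere mass of the envelope to the set where it equals $u_{k}$, giving
\[
\int_{X}|P_{\te}[\psi](u_{k})-\psi|^{p}\,\te^{n}_{P_{\te}[\psi](u_{k})}\leq 2^{p-1}\left(\int_{X}|u_{k}-K-V_{\te}|^{p}\,\te^{n}_{u_{k}}+K^{p}\int_{X}\te^{n}_{\psi}\right),
\]
and the right-hand side is bounded uniformly in $k$ by the quasi-triangle inequality and monotonicity results of \cite{Guedj2019PlurisubharmonicEA}, because $u\in\E^{p}(X,\te)$ and $u_{k}=\max(u,V_{\te}-k)$. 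With this bound in hand your appeal to Lemma~\ref{lem: limit of a decreasing sequence} and the limit identification complete the proof; replacing your desingularization-plus-contraction step by this direct measure-theoretic estimate is what is needed to make the argument correct.
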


\begin{proof}
    If $u$ has minimal singularity type, then $|V_{\te} - u| \leq D$ for some constant $D > 0$. Therefore, 
    \[
    P_{\te}(\psi+ C, u) \leq P_{\te}(\psi + C, V_{\te} + D) = P_{\te}(\psi+C-D, V_{\te}) + D.
    \]
    Taking the limit $C \to \infty$ we get
    \[
    \lim_{C\to\infty}P_{\te}(\psi+C,u) \leq \lim_{C\to \infty}P_{\te}(\psi+C-D,V_{\te}) + D= \lim_{C\to \infty}P_{\te}(\psi+ C, V_{\te}) + D.
    \]
    Taking upper semicontinuous regularization we get
    \[
    P_{\te}[\psi](u) \leq P_{\te}[\psi](V_{\te}) + D = \psi + D.
    \]
    Similarly, 
    \[
    \psi \leq P_{\te}[\psi](u) + D.
    \]
    Thus $P_{\te}[\psi](u)$ has the same singularity type as $\psi$, thus $P_{\te}[\psi](u) \in \E^{p}(X,\te, \psi)$. 

    More generally, if $u \in \E^{p}(X,\te)$, then $u_{j}:= \max(u, V_{\te}- j)$ has the minimal singularity type. Then $P_{\te}[\psi](u_{j})$ has minimal singularity type and we claim that $P_{\te}[\psi](u_{j}) \searrow P_{\te}[\psi](u)$. Moreover, we will show that 
    \[
    \sup_{j} \int_{X}|P_{\te}[\psi](u_{j}) - \psi|^{p}\te^{n}_{P_{\te}[\psi]u_{j}} < \infty
    \]
    concluding with Lemma~\ref{lem: limit of a decreasing sequence} that $P_{\te}[\psi](u) \in \E^{p}(X,\te,\psi)$. 

    Let $u\leq K$. Then $u_{j} \leq K$ and $P_{\te}[\psi](u_{j}) \leq K$ as well. Therefore, $P_{\te}[\psi](u_{j}) - K \leq 0$ and $P_{\te}[\psi](u_{j})$ has the same singularity type as $\psi$, which is a model singularity, thus $P_{\te}[\psi](u_{j}) - K \leq \psi$. Hence we get 
    \[
    P_{\te}[\psi](u_{j}) - K - V_{\te} \leq P_{\te}[\psi](u_{j}) - K - \psi \leq 0.
    \]
    We also need \cite[Theorem 3.8]{Darvasmonotonicity} that says $\te^{n}_{P_{\te}[\psi](u_{j})} \leq \mathds{1}_{\{P_{\te}[\psi](u_{j})= u_{j}\}} \te^{n}_{u_{j}}$. Using $(a + b)^{p} \leq 2^{p-1}(a^{p} + b^{p})$, we get 
    \begin{align*}
    \int_{X}|P_{\te}[\psi](u_{j}) - \psi|^{p}\te^{n}_{P_{\te}[\psi](u_{j})} &\leq 2^{p-1}\int_{X}( |P_{\te}[\psi](u_{j}) - K - \psi|^{p} + K^{p}) \te^{n}_{P_{\te}[\psi](u_{j})}\\
    &\leq 2^{p-1}\left( \int_{X} |P_{\te}[\psi](u_{j})  - K - V_{\te}|^{p}\te^{n}_{P_{\te}[\psi](u_{j})} + K^{p}\int_{X}\te^{n}_{\psi} \right)\\
    &\leq 2^{p-1}\left(\int_{\{P_{\te}[\psi](u_{j})= u_{j}\}} |u_{j} - K - V_{\te}|^{p} \te^{n}_{u_{j}}+ K^{p}\int_{X}\te^{n}_{\psi}\right)\\
    &\leq 2^{p-1}\left( \int_{X} |u_{j} - K - V_{\te}|^{p}\te^{n}_{u_{j}} + K^{p}\int_{X}\te^{n}_{\psi}\right) 
    \end{align*}
    is uniformly bounded. We obtain the uniform boundedness of the integral in the last equation by combining the quasi-triangle inequality \cite[Theorem 1.6]{Guedj2019PlurisubharmonicEA} and \cite[Lemma 1.9]{Guedj2019PlurisubharmonicEA}.

    Since $P_{\te}[\psi](u_{j})$ is a decreasing sequence of potentials in $\E^{p}(X,\te,\psi)$, the above calculation and Lemma~\ref{lem: limit of a decreasing sequence} imply that $v := \lim_{j\to \infty}P_{\te}[\psi](u_{j}) \in \E^{p}(X,\te,\psi)$. As $v \leq u_{j}$ for all $j$, we get that $v \leq u$. Moreover, $v \preceq \psi$, thus $v$ is a candidate for $P_{\te}[\psi](u)$. Hence $P_{\te}[\psi](u)$ exists and $v \leq P_{\te}[\psi](u)$. Since $u_{j} \geq u$, we also have that $P_{\te}[\psi](u_{j}) \geq P_{\te}[\psi](u)$ for each $j$. Taking limit we get $v \geq P_{\te}[\psi](u)$. Thus $\lim_{j\to \infty}P_{\te}[\psi](u_{j}) = P_{\te}[\psi](u) \in \E^{p}(X,\te, \psi)$. 
\end{proof}

\begin{thm}\label{thm: contraction property}
    If $\psi \in \PSH(X,\te)$ has analytic singularities, then the map $P_{\te}[\psi](\cdot) : (\E^{p}(X,\te),d_{p}) \to (\E^{p}(X,\te,\psi),d_{p})$ is a contraction. This means for any $u_{0}, u_{1} \in \E^{p}(X,\te)$, 
    \begin{equation}\label{eq: contraction formula}
     d_{p}(P_{\te}[\psi](u_{0}), P_{\te}[\psi](u_{1})) \leq d_{p}(u_{0}, u_{1}).    
    \end{equation}
\end{thm}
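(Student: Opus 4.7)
The plan is to proceed in two stages: first prove the contraction on the dense subset $\cH_\te \subset \E^p(X,\te)$ by approximating $d_p$ through the analytic-singularity spaces $\E^p(X,\te,\psi_k)$, and then extend to all of $\E^p(X,\te)$ by a standard decreasing-approximation argument.

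For the first stage, fix $u_i = P_\te(f_i) \in \cH_\te$ with $f_i \in C^{1,\bar{1}}(X)$, and set $v_i := P_\te[\psi](u_i)$. A short envelope computation gives $P_\te[\psi](u_i) = P_\te[\psi](f_i)$, since $P_\te[\psi](f_i)$ is a $\te$-psh function bounded above by $f_i$, hence by $u_i$. Take the analytic-singularity sequence $\psi_k \nearrow V_\te$ from Section~\ref{sec: metric in the big case}, and replace it by $\max(\psi,\psi_k)$ --- still analytic and increasing to $V_\te$ --- so that $\psi_k \geq \psi$. Put $u_i^k := P_\te[\psi_k](f_i)$; the same argument shows $P_\te[\psi](u_i^k) = v_i$. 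The key target will be $d_p^p(v_0, v_1) \leq d_p^p(u_0^k, u_1^k)$ for each $k$, where both distances live in their respective analytic-singularity spaces; passing $k \to \infty$ and invoking Definition~\ref{def: definition on Htheta} will then deliver the contraction on $\cH_\te$.

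The heart of the argument is this key estimate. Assuming first $f_0 \leq f_1$, both initial geodesic velocities $\dot{v}_0$ and $\dot{u}_0^k$ are nonnegative. Theorem~\ref{thm: distance between smooth objects in analytic singularity} gives $d_p^p(v_0, v_1) = \int_X |\dot{v}_0|^p \te^n_{v_0}$ and $d_p^p(u_0^k, u_1^k) = \int_X |\dot{u}_0^k|^p \te^n_{u_0^k}$, while Theorem~\ref{thm: measure on contact sets} identifies $\te^n_{v_0} = \mathds{1}_{D_\psi}\te^n_{f_0}$ and $\te^n_{u_0^k} = \mathds{1}_{D_k}\te^n_{f_0}$ on contact sets $D_\psi = \{v_0=f_0\} \subseteq D_k = \{u_0^k=f_0\}$. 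On $D_\psi$ the equality $v_0 = f_0 = u_0^k$ combined with the global domination $v_t \leq u_t^k$ (which holds because $v_t$ is a subgeodesic for the endpoints $u_0^k \leq u_1^k$) forces $0 \leq \dot{v}_0 \leq \dot{u}_0^k$, and the chain
\[
\int_X |\dot{v}_0|^p \te^n_{v_0} = \int_{D_\psi}|\dot{v}_0|^p \te^n_{f_0} \leq \int_{D_\psi}|\dot{u}_0^k|^p \te^n_{f_0} \leq \int_{D_k}|\dot{u}_0^k|^p \te^n_{f_0}
\]
settles the monotone case. For arbitrary $f_0, f_1 \in C^{1,\bar{1}}(X)$, set $h := P_{C\om}(f_0,f_1) \in C^{1,\bar{1}}(X)$; then $P_\te(v_0,v_1) = P_\te[\psi](h)$ and $P_\te(u_0^k,u_1^k) = P_\te[\psi_k](h)$, so the Pythagorean identity (Lemma~\ref{lem: Pythagorean formula in analytic singularity type}) in both spaces reduces the estimate to the two already-treated pairs $(P_\te[\psi](h), v_i)$ and $(P_\te[\psi_k](h), u_i^k)$ for $i=0,1$.

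For the second stage, given $u_0, u_1 \in \E^p(X,\te)$, choose $u_i^j \searrow u_i$ with $u_i^j \in \cH_\te$. Monotonicity of $P_\te[\psi](\cdot)$ together with Lemma~\ref{lem: the projection is defined on Ep} gives $P_\te[\psi](u_i^j) \searrow P_\te[\psi](u_i)$. The first stage applied level by level supplies $d_p(P_\te[\psi](u_0^j),P_\te[\psi](u_1^j)) \leq d_p(u_0^j,u_1^j)$, and Lemma~\ref{lem: decreasing sequences converge} in $\E^p(X,\te,\psi)$ together with Lemma~\ref{lem: decreasing sequence in Ep} in $\E^p(X,\te)$ let both sides pass to the limit, yielding~\eqref{eq: contraction formula}. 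The main anticipated obstacle is the contact-set comparison at the core of the first stage --- the inclusion $D_\psi \subseteq D_k$ and the pointwise bound $\dot{v}_0 \leq \dot{u}_0^k$ on $D_\psi$ --- which rests on careful use of monotonicity of both the envelope operator and the weak-geodesic construction; everything downstream is a routine assembly of tools already developed in the paper.
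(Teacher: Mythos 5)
Your proposal is correct and follows essentially the same route as the paper: the contact-set inclusion together with geodesic domination to compare initial velocities (via Theorem~\ref{thm: distance between smooth objects in analytic singularity} and Theorem~\ref{thm: measure on contact sets}), the Pythagorean identity to remove the ordering assumption, and decreasing approximation from $\cH_{\te}$ to extend to all of $\E^{p}(X,\te)$. The only difference is cosmetic: you compare $d_{p}(v_{0},v_{1})$ with the finite-level distances $d_{p}(u_{0}^{k},u_{1}^{k})$ in $\E^{p}(X,\te,\psi_{k})$ (after replacing $\psi_{k}$ by $\max(\psi,\psi_{k})$) and then let $k\to\infty$ via Definition~\ref{def: definition on Htheta}, whereas the paper compares directly with $d_{p}(u_{0},u_{1})$ using the limit formula already established in Theorem~\ref{thm: convergence for smooth enough potentials}.
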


\begin{proof}
    First we assume that there are functions $f_{0}, f_{1} \in C^{1,\bar{1}}(X)$ such that $u_{0} = P_{\te}(f_{0})$ and $u_{1} =P_{\te}(f_{1})$. Let $v_{0}:= P_{\te}[\psi](u_{0}) = P_{\te}[\psi](f_{0})$ and $v_{1}:= P_{\te}[\psi](u_{1}) = P_{\te}[\psi](f_{1})$. Moreover assume that $u_{0} \leq u_{1}$. In this case, we know from Theorem~\ref{thm: convergence for smooth enough potentials} combined with Theorem~\ref{thm: measure on contact sets} that
    \begin{equation}\label{eq: dp in minimal singularity}
    d_{p}^{p}(u_{0}, u_{1}) =\int_{X}|\dot{u}_{0}|^{p}\te^{n}_{u_{0}} =  \int_{X} \mathds{1}_{\{P_{\te}(f_{0}) = f_{0}\}}(\dot{u}_{0})^{p}\te^{n}_{f_{0}}    
    \end{equation}
    and from Theorem~\ref{thm: distance between smooth objects in analytic singularity} combined with Theorem~\ref{thm: measure on contact sets} that
    \begin{equation}\label{eq: dp in analytic singularity}
    d_{p}^{p}(v_{0}, v_{1}) = \int_{X}|\dot{v}_{0}|^{p}\te^{n}_{v_{0}} = \int_{X} \mathds{1}_{\{P_{\te}[\psi](f_{0}) = f_{0}\}} (\dot{v}_{0})^{p}\te^{n}_{f_{0}},    
    \end{equation}
    where $u_{t}$ and $v_{t}$ are the weak geodesics joining $u_{0}, u_{1}$ and $v_{0}, v_{1}$ respectively. Since $P_{\te}[\psi](f_{0}) \leq P_{\te}(f_{0}) \leq f_{0}$, we know $\{ P_{\te}[\psi](f_{0}) = f_{0}\} \subset \{ P_{\te}(f_{0}) = f_{0}\}$. As $u_{0} \geq v_{0}$ and $u_{1} \geq v_{1}$, we have $u_{t} \geq v_{t}$. If $x \in \{ P_{\te}[\psi](f_{0}) = f_{0}\}$, then 
    \[
    \dot{v}_{0}(x) = \lim_{t\to\infty} \frac{v_{t}(x) - v_{0}(x)}{t} \leq  \lim_{t\to\infty}\frac{u_{t}(x) - f_{0}(x)}{t}  = \lim_{t\to\infty}\frac{u_{t}(x) - u_{0}(x)}{t} = \dot{u}_{0}(x).
    \]
    Thus $\mathds{1}_{\{P_{\te}[\psi](f_{0})=f_{0}\}} (\dot{v}_{0})^{p} \leq \mathds{1}_{P_{\te}(f_{0}) = f_{0}\}}(\dot{u}_{0})^{p}$. Now Equations~\eqref{eq: dp in minimal singularity} and~\eqref{eq: dp in analytic singularity} give 
    \[
    d_{p}(P_{\te}[\psi](u_{0}), P_{\te}[\psi](u_{1})) = d_{p}(v_{0},v_{1}) \leq d_{p}(u_{0}, u_{1}).
    \]

    Now we will remove the assumption that $u_{0} \leq u_{1}$. We still assume that $u_{0} = P_{\te}(f_{0})$ and $u_{1} = P_{\te}(f_{1})$ for some $f_{0}, f_{1} \in C^{1,\bar{1}}(X)$. We will use the Pythagorean formula for $d_{p}$ metrics to establish Equation~\eqref{eq: contraction formula} in this case. As before let $v_{0}:= P_{\te}[\psi](u_{0}) = P_{\te}[\psi](f_{0})$ and $v_{1} := P_{\te}[\psi](u_{1}) = P_{\te}[\psi](f_{1})$. Let $C>0$ be a constant such that $\te \leq C\om$. Then $h = P_{C\om}(f_{0},f_{1}) \in C^{1,\bar{1}}(X)$,  $P_{\te}(u_{0},u_{1}) = P_{\te}(h)$, and $P_{\te}(v_{0},v_{1}) = P_{\te}[\psi](h)$. Also observe that $P_{\te}[\psi](P_{\te}(u_{0},u_{1})) = P_{\te}(v_{0}, v_{1})$. Applying the result in the previous paragraph we obtain $d_{p}(u_{0}, P_{\te}(u_{0},u_{1})) \geq d_{p}(v_{0}, P_{\te}(v_{0},v_{1}))$ and $d_{p}(u_{1}, P_{\te}(u_{0},u_{1})) \geq d_{p}(v_{1},P_{\te}(v_{0},v_{1}))$. 

    Using the Pythagorean formula, we write
    \begin{align*}
        d_{p}^{p}(u_{0}, u_{1}) &= d_{p}^{p}(u_{0}, P_{\te}(u_{0},u_{1})) + d_{p}^{p}(u_{1}, P_{\te}(u_{0},u_{1})) \\
        &\geq d_{p}^{p}(v_{0},P_{\te}(v_{0},v_{1})) + d_{p}^{p}(v_{1}, P_{\te}(v_{0},v_{1}))\\
        &= d_{p}^{p}(v_{0},v_{1}).
    \end{align*}

    Thus we have shown that Equation~\eqref{eq: contraction formula} holds when $u_{0} = P_{\te}(f_{0})$ and $u_{1} = P_{\te}(f_{1})$ for $f_{0},f_{1} \in C^{1,\bar{1}}(X)$. We show it more generally by approximation. 

    If $u_{0}, u_{1} \in \E^{p}(X,\te)$, then we can find $u_{0}^{j}, u_{1}^{j} \in \cH_{\te}$ such that $u_{0}^{j}\searrow u_{0}$ and $u_{1}^{j} \searrow u_{1}$. Moreover, $P_{\te}[\psi](u_{0}^{j}) \searrow P_{\te}[\psi](u_{0})$ and $P_{\te}[\psi](u_{1}^{j}) \searrow P_{\te}[\psi](u_{1})$. The proof is the same as in Lemma~\ref{lem: limit of a decreasing sequence}. Thus from Lemma~\ref{lem: decreasing sequence in Ep} we get that $\lim_{j\to\infty}d_{p}(P_{\te}[\psi](u_{0}^{j}), P_{\te}[\psi](u_{1}^{j})) = d_{p}(P_{\te}[\psi](u_{0}), P_{\te}[\psi](u_{1}))$. Hence from the result in the previous paragraph we have 
    \begin{align*}
    d_{p}(u_{0}, u_{1}) &= \lim_{j\to \infty}d_{p}(u_{0}^{j}, u_{1}^{j}) \\
    &\geq \lim_{j\to \infty}d_{p}(P_{\te}[\psi](u_{0}^{j}), P_{\te}[\psi](u_{1}^{j}))\\
    &= d_{p}(P_{\te}[\psi](u_{0}), P_{\te}[\psi](u_{1})).    
    \end{align*}
    This proves Equation~\eqref{eq: contraction formula} in the full generality as desired. 
\end{proof}

A consequence of this contraction formula is that the approximation formula for $d_{p}$ on $\cH_{\te} \subset \E^{p}(X,\te)$ from potentials in analytic singularity type can be extended to any potentials in $\E^{p}(X,\te)$. More precisely, we can prove 
\begin{thm}\label{thm: general approximation from analytic singularity type}
  Let $\psi_{k} \in \PSH(X,\te)$ have analytic singularities and $\psi_{k} \nearrow V_{\te}$ as described in the beginning of Section~\ref{sec: metric in the big case}. If $u_{0}, u_{1} \in \E^{p}(X,\te)$, then 
    \begin{equation}\label{eq: general equation by approximation}
    d_{p}(u_{0}, u_{1}) = \lim_{k\to\infty} d_{p}(P_{\te}[\psi_{k}](u_{0}), P_{\te}[\psi_{k}](u_{1})).    
    \end{equation}
    
\end{thm}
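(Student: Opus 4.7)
The plan is to leverage Theorem~\ref{thm: contraction property} together with the defining approximation of $d_{p}$ on $\cH_{\te}$ from Definition~\ref{def: definition on Htheta}. The contraction immediately yields one direction: applying $P_{\te}[\psi_{k}]$ to $u_{0}, u_{1}$ gives
\[
d_{p}(P_{\te}[\psi_{k}](u_{0}), P_{\te}[\psi_{k}](u_{1})) \leq d_{p}(u_{0}, u_{1})
\]
for every $k$, whence $\limsup_{k\to\infty} d_{p}(P_{\te}[\psi_{k}](u_{0}), P_{\te}[\psi_{k}](u_{1})) \leq d_{p}(u_{0}, u_{1})$.

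For the reverse inequality I would approximate $u_{0}, u_{1}$ from above by $\cH_{\te}$ potentials. Using Blocki--Kolodziej, pick $f_{i}^{j} \in C^{\infty}(X)$ with $f_{i}^{j} \searrow u_{i}$, and set $u_{i}^{j} := P_{\te}(f_{i}^{j}) \in \cH_{\te}$, so $u_{i}^{j} \searrow u_{i}$. A preliminary observation is the identification $P_{\te}[\psi_{k}](u_{i}^{j}) = P_{\te}[\psi_{k}](f_{i}^{j})$: indeed $P_{\te}[\psi_{k}](f_{i}^{j}) \leq f_{i}^{j}$ is $\te$-psh and therefore bounded above by $P_{\te}(f_{i}^{j}) = u_{i}^{j}$, yielding one bound, and the other follows from monotonicity of the envelope in the function argument. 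Definition~\ref{def: definition on Htheta} then rewrites as
\[
d_{p}(u_{0}^{j}, u_{1}^{j}) = \lim_{k\to\infty} d_{p}\bigl(P_{\te}[\psi_{k}](u_{0}^{j}), P_{\te}[\psi_{k}](u_{1}^{j})\bigr)
\]
for each $j$.

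The crucial step is a uniform--in--$k$ triangle inequality, which the contraction supplies for free. Applying Theorem~\ref{thm: contraction property} to the pairs $(u_{i}^{j}, u_{i})$ and then the triangle inequality gives
\[
\Bigl| d_{p}\bigl(P_{\te}[\psi_{k}](u_{0}^{j}), P_{\te}[\psi_{k}](u_{1}^{j})\bigr) - d_{p}\bigl(P_{\te}[\psi_{k}](u_{0}), P_{\te}[\psi_{k}](u_{1})\bigr) \Bigr| \leq d_{p}(u_{0}^{j}, u_{0}) + d_{p}(u_{1}^{j}, u_{1}),
\]
whose right-hand side tends to zero as $j \to \infty$ by Lemma~\ref{lem: decreasing sequence in Ep}, \emph{independently of $k$}. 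Fixing $\varepsilon > 0$, choosing $j$ large enough that this error is $<\varepsilon$ and simultaneously $|d_{p}(u_{0}^{j}, u_{1}^{j}) - d_{p}(u_{0}, u_{1})| < \varepsilon$, taking $\liminf_{k\to\infty}$ and substituting the identity above, I obtain $\liminf_{k} d_{p}(P_{\te}[\psi_{k}](u_{0}), P_{\te}[\psi_{k}](u_{1})) \geq d_{p}(u_{0}, u_{1}) - 2\varepsilon$; letting $\varepsilon \to 0$ closes the gap with the $\limsup$ bound.

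The only non-routine point is the commutation of the $j$ and $k$ limits, and the contraction property is precisely the uniformity tool that makes it work. Without it one would have to prove a direct $d_{p}$-convergence $P_{\te}[\psi_{k}](u_{i}) \to u_{i}$, which is more delicate since a general $u_{i} \in \E^{p}(X,\te)$ need not admit a $C^{1,\bar{1}}$ representative $f$ with $u_{i} = P_{\te}(f)$; routing through the $\cH_{\te}$ approximation bypasses this issue cleanly.
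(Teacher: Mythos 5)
Your argument is correct and is essentially the paper's own proof: approximate $u_0,u_1$ from above by $\cH_\te$ potentials, use Theorem~\ref{thm: contraction property} to make the $j$-limit uniform in $k$ (via $d_p(P_\te[\psi_k](u_i^j),P_\te[\psi_k](u_i))\leq d_p(u_i^j,u_i)\to 0$), and exchange the two limits. Your explicit verification of $P_\te[\psi_k](u_i^j)=P_\te[\psi_k](f_i^j)$ is a detail the paper leaves implicit, but the route is the same.
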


\begin{proof}
    Equation~\eqref{eq: general equation by approximation} is the definition of $d_{p}$ when $u_{0}, u_{1} \in \cH_{\te}$. Here we want to prove it more generally. 

    Let $u_{0}^{j}, u_{1}^{j} \in\cH_{\te}$ be such that $u_{0}^{j} \searrow u_{0}$ and $u_{1}^{j} \searrow u_{1}$. Then by definition of $d_{p}$ on $\E^{p}(X,\te)$, we have 
    \[
    d_{p}(u_{0}, u_{1}) = \lim_{j\to\infty} d_{p}(u_{0}^{j}, u_{1}^{j})
    \]
    and 
    \[
    d_{p}(u_{0}^{j}, u_{1}^{j}) = \lim_{k\to \infty} d_{p}(P_{\te}[\psi_{k}](u_{0}^{j}), P_{\te}[\psi_{k}](u_{1}^{j})).
    \]
    Combining the two we get 
    \begin{equation}\label{eq: to exchange the limit}
    d_{p}(u_{0}, u_{1}) = \lim_{j\to \infty} \lim_{k \to \infty} d_{p}(P_{\te}[\psi_{k}](u_{0}^{j}), P_{\te}[\psi_{k}](u_{1}^{j})).  
    \end{equation}
    
     We want to exchange the limit. First, observe that as $j \to \infty$, $P_{\te}[\psi_{k}](u_{0}^{j}) \searrow P_{\te}[\psi_{k}](u_{0})$ and $P_{\te}[\psi_{k}](u_{1}^{j}) \searrow P_{\te}[\psi_{k}](u_{1})$. Thus from Lemma~\ref{lem: decreasing sequences converge}, 
     \begin{equation}\label{eq: equation for uniform limit}
     \lim_{j\to \infty} d_{p}(P_{\te}[\psi_{k}](u_{0}^{j}), P_{\te}[\psi_{k}](u_{1}^{j})) = d_{p}(P_{\te}[\psi_{k}
     (u_{0}), P_{\te}[\psi_{k}](u_{1})).    
     \end{equation}
    Now we will show that the limit in Equation~\eqref{eq: equation for uniform limit} is uniform in $k$. For that, we observe by triangle inequality that 
    \begin{align*}
        &|d_{p}(P_{\te}[\psi_{k}](u_{0}^{j}), P_{\te}[\psi_{k}](u_{1}^{j})) - d_{p}(P_{\te}[\psi_{k}](u_{0}), P_{\te}[\psi_{k}](u_{1}))| \\ \leq  &d_{p}(P_{\te}[\psi_{k}](u_{0}^{j}), P_{\te}[\psi_{k}](u_{0})) + d_{p}(P_{\te}[\psi_{k}](u_{1}^{j}), P_{\te}[\psi_{k}](u_{1})) \\
        \leq &d_{p}(u_{0}^{j}, u_{0}) + d_{p}(u_{1}^{j},u_{0}),
    \end{align*}
    where in the last line we used Theorem~\ref{thm: contraction property}. Moreover, $\lim_{j \to \infty}d_{p}(u_{0}^{j}, u_{0}) = 0$ and $\lim_{j\to\infty} d_{p}(u_{1}^{j}, u_{1}) = 0$. 
 Using the uniform convergence in $k$, we obtain that we can exchange the limits in Equation~\eqref{eq: to exchange the limit}. Thus 
    \begin{align*}
         d_{p}(u_{0}, u_{1}) &= \lim_{k\to\infty}\lim_{j\to \infty} d_{p}(P_{\te}[\psi_{k}](u_{0}^{j}), P_{\te}[\psi_{k}](u_{1}^{j}))\\  &= \lim_{k\to\infty}d_{p}(P_{\te}[\psi_{k}](u_{0}), P_{\te}[\psi_{k}](u_{1})),
    \end{align*}
   as desired.
    \end{proof}

\section{Uniform Convexity in the big case}\label{sec: uniform convexity in the big case}
With the help of Theorem~\ref{thm: general approximation from analytic singularity type}, we can prove uniform convexity in the big case as well. First, we see that the uniform convexity extends to the analytic singularity setting as well. 
\begin{thm}\label{thm: uniform convexity in analytic singularity setting}
    If $\te$ represents a big cohomology class and $\psi \in \PSH(X,\te)$ has analytic singularities, then the metric space $(\E^{p}(X,\te, \psi), d_{p})$ for $p > 1$, as described in Section~\ref{sec: analytic singularities to big and nef}, is uniformly convex. 
\end{thm}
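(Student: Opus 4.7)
The plan is to transfer the uniform convexity from the big and nef setting $(\E^{p}(\tX,\tte),d_{p})$, established in Theorem~\ref{thm: uniform convexity in big and nef}, to the analytic singularity setting via the bijection from Section~\ref{sec: analytic singularities to big and nef}. Since $\psi$ has analytic singularities, we fix a log resolution $\mu : \tX \to X$ and the associated smooth closed real $(1,1)$-form $\tte$ on $\tX$ representing a big and nef class, together with the bounded function $g$, exactly as set up at the start of Section~\ref{sec: analytic singularities to big and nef}. We then exploit the order-preserving bijection $\E^{p}(X,\te,\psi) \ni u \mapsto \tu := (u-\psi)\circ \mu + g \in \E^{p}(\tX,\tte)$ from Corollary~\ref{cor: bijection preserves mass and energy}.

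The key facts we need are that this bijection (i) preserves the metric, i.e.\ $d_{p}(u_{0},u_{1}) = d_{p}(\tilde{u}_{0},\tilde{u}_{1})$ by the very definition in Equation~\eqref{eq: definition of dp in analytic singualarity setting}, and (ii) preserves weak geodesics, i.e.\ if $v_{\la}$ is the weak geodesic joining $v_{0},v_{1} \in \E^{p}(X,\te,\psi)$, then $\widetilde{v_{\la}}$ is the weak geodesic joining $\tilde{v}_{0}$ and $\tilde{v}_{1}$ in $\E^{p}(\tX,\tte)$; this is Theorem~\ref{thm: Mabuchi geodesics are preserved}. Given these two facts, the proof is immediate: starting from $u,v_{0},v_{1} \in \E^{p}(X,\te,\psi)$ and the weak geodesic $v_{\la}$, we pass to $\tilde{u},\tilde{v}_{0},\tilde{v}_{1} \in \E^{p}(\tX,\tte)$ with the corresponding weak geodesic $\widetilde{v_{\la}}$, apply Theorem~\ref{thm: uniform convexity in big and nef} there, and then replace each $d_{p}$ on $\tX$ by the equal $d_{p}$ on $X$.

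Thus for $1 < p \leq 2$ we obtain
\begin{align*}
d_{p}(u,v_{\la})^{2} &= d_{p}(\tilde{u},\widetilde{v_{\la}})^{2} \\
&\leq (1-\la)d_{p}(\tilde{u},\tilde{v}_{0})^{2} + \la d_{p}(\tilde{u},\tilde{v}_{1})^{2} - (p-1)\la(1-\la)d_{p}(\tilde{v}_{0},\tilde{v}_{1})^{2} \\
&= (1-\la) d_{p}(u,v_{0})^{2} + \la d_{p}(u,v_{1})^{2} - (p-1)\la(1-\la)d_{p}(v_{0},v_{1})^{2},
\end{align*}
and the analogous chain of equalities and inequalities in the case $p \geq 2$. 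There is essentially no obstacle: the analytic singularity case inherits uniform convexity from the big and nef case for free, because all the technical work that the bijection preserves (mass, finite energy, weak geodesics, rooftop envelopes, and ultimately the distance $d_{p}$) has already been established in Section~\ref{sec: analytic singularities to big and nef}. The only minor point to note is that we do not need any further approximation here, since the bijection is a genuine isometry of complete geodesic metric spaces.
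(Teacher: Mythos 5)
Your proposal is correct and follows essentially the same route as the paper: both pass through the bijection $u \mapsto \tilde{u} = (u-\psi)\circ\mu + g$ of Corollary~\ref{cor: bijection preserves mass and energy}, use Theorem~\ref{thm: Mabuchi geodesics are preserved} to identify $\widetilde{v_{\la}}$ with the weak geodesic joining $\tilde{v}_{0},\tilde{v}_{1}$, apply Theorem~\ref{thm: uniform convexity in big and nef} on $(\E^{p}(\tX,\tte),d_{p})$, and translate back using the defining identity $d_{p}(u_{0},u_{1}) = d_{p}(\tilde{u}_{0},\tilde{u}_{1})$ from Equation~\eqref{eq: definition of dp in analytic singualarity setting}. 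No gaps; your observation that no further approximation is needed matches the paper's argument.
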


\begin{proof}
    Recall that we constructed the metric $d_{p}$ on $\E^{p}(X,\te,\psi)$ in Section~\ref{subsec: metric space structure on analytic singularity} by resolving the singularities of $\psi$.  Let $\mu : \tilde{X} \to X$ be the resolution as described in Section~\ref{sec: analytic singularities to big and nef}. Recall that from Theorem~\ref{thm: bijection between analytic singularity and big and nef} there is a smooth closed real $(1,1)$-form $\tte$ on $\tX$ and a bounded function $g \in \PSH(\tX, \tte)$ such that the map $\PSH(X,\te,\psi) \ni u \mapsto \tilde{u}:= (u-\psi)\circ \mu + g \in \PSH(\tX, \tte)$ is an order preserving bijection and from Corollary~\ref{cor: bijection preserves mass and energy}, $\E^{p}(X,\te,\psi) \ni u \mapsto \tilde{u} \in \E^{p}(\tX, \tte)$ is a bijection as well. 

    If $u, v_{0}, v_{1} \in \E^{p}(X,\te, \psi)$ and $v_{\la}$ is the weak geodesic joining $v_{0}$ and $v_{1}$, then $\tilde{u}, \tilde{v}_{0}, \tilde{v}_{1} \in \E^{p}(\tX, \tte)$ and by Theorem~\ref{thm: Mabuchi geodesics are preserved} $\tilde{v}_{\la}:= (v_{\la} - \psi)\circ \mu + g$ is the weak geodesic joining $\tilde{v}_{0}$ and $\tilde{v}_{1}$.  From Theorem~\ref{thm: uniform convexity in big and nef}, $(\E^{p}(\tX,\tte),d_{p})$ is uniformly convex, thus 
    \begin{align*}
        d_{p}(\tilde{u},\tilde{v}_{\lambda})^{2} &\leq (1-\la)d_{p}(\tilde{u},\tilde{v}_{0})^{2} + \la d_{p}(\tilde{u},\tilde{v}_{1})^{2} - (p-1)\la(1-\la)d_{p}(\tilde{v}_{0},\tilde{v}_{1})^{2}, \text{ if } 1< p \leq 2 \text{ and }\\
d_{p}(\tilde{u},\tilde{v}_{\la})^{p} &\leq (1-\la)d_{p}(\tilde{u},\tilde{v}_{0})^{p} + \la d_{p}(\tilde{u},\tilde{v}_{1})^{p} - \la^{p/2}(1-\la)^{p/2}d_{p}(\tilde{v}_{0},\tilde{v}_{1})^{p}, \text{ if } p \leq 2.
    \end{align*} 

        For $u_{0}, u_{1} \in \E^{p}(X,\te,\psi)$, we defined $d_{p}(u_{0}, u_{1}) := d_{p}(\tilde{u}_{0}, \tilde{u}_{1})$ in Equation~\eqref{eq: definition of dp in analytic singualarity setting}. Applying this we get
        \begin{align*}
        d_{p}(u,v_{\lambda})^{2} &\leq (1-\la)d_{p}(u,v_{0})^{2} + \la d_{p}(u,v_{1})^{2} - (p-1)\la(1-\la)d_{p}(v_{0},v_{1})^{2}, \text{ if } 1< p \leq 2 \text{ and }\\
d_{p}(u,v_{\la})^{p} &\leq (1-\la)d_{p}(u,v_{0})^{p} + \la d_{p}(u,v_{1})^{p} - \la^{p/2}(1-\la)^{p/2}d_{p}(v_{0},v_{1})^{p}, \text{ if } p \leq 2
    \end{align*} 
implying uniform convexity of $(\E^{p}(X,\te, \psi), d_{p})$ for $p > 1$. 
\end{proof}

We would also need the analytic singularity version of \cite[Proposition 3.6]{DarvasLuuniformconvexity} which holds true, because the proof in \cite{DarvasLuuniformconvexity} only relies on the uniform convexity of Theorem~\ref{thm: uniform convexity in analytic singularity setting}. 
\begin{thm} \label{thm: points close to geodesic}
Let $\psi \in \PSH(X,\te)$ have analytic singularities. Let $u_{0}, u_{1} \in \E^{p}(X,\te,\psi)$ for $p > 1$, and $u_{t}$ be the weak geodesic joining $u_{0}$ and $u_{1}$. If $v \in \E^{p}(X,\te,\psi)$ satisfies $d_{p}(u_{0},v) \leq (t+\ee) d_{p}(u_{0},u_{1})$ and $d_{p}(u_{1},v) \leq (1-t+ \ee)d_{p}(u_{0},u_{1})$, for some $\ee >0$ and $t \in [0,1]$, then for some constant $C(p) > 0$, 
\[
d_{p}(v,u_{t}) \leq \ee^{1/r}Cd_{p}(u_{0},u_{1})
\]
where $r = \max\{2,p\}$.
\end{thm}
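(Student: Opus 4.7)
The strategy is to reproduce the argument of \cite[Proposition 3.6]{DarvasLuuniformconvexity} inside the metric space $(\E^{p}(X,\te,\psi), d_{p})$, since Darvas--Lu's proof of that proposition invokes exactly one non-trivial geometric input, namely the uniform convexity of $d_{p}$ along weak geodesics in the form
\[
d_{p}(u, v_{\la})^{r} \leq (1-\la) d_{p}(u, v_{0})^{r} + \la d_{p}(u, v_{1})^{r} - c(p,\la)\, d_{p}(v_{0}, v_{1})^{r},
\]
where $r = \max\{2, p\}$ and $c(p,\la) > 0$ is the appropriate strong-convexity coefficient (equal to $(p-1)\la(1-\la)$ for $1 < p \leq 2$ with $r = 2$, and $(\la(1-\la))^{p/2}$ for $p \geq 2$ with $r = p$). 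This is precisely the content of Theorem~\ref{thm: uniform convexity in analytic singularity setting}, which is now available in our setting.

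Combined with the hypothesis $d_{p}(u_{0}, v) \leq (t + \ee) d_{p}(u_{0}, u_{1})$ and $d_{p}(u_{1}, v) \leq (1-t+\ee) d_{p}(u_{0}, u_{1})$ --- which, by the triangle inequality, also yields the matching lower bounds $d_{p}(u_{0}, v) \geq (t-\ee) d_{p}(u_{0}, u_{1})$ and $d_{p}(u_{1}, v) \geq (1-t-\ee) d_{p}(u_{0}, u_{1})$, thereby pinning the two auxiliary distances near their expected ``geodesic values'' $t\, d_{p}(u_{0}, u_{1})$ and $(1-t)\, d_{p}(u_{0}, u_{1})$ --- the Darvas--Lu argument applies the uniform convexity inequality to appropriately chosen configurations of reference point and geodesic, and then performs an elementary algebraic manipulation to extract an estimate of the form $d_{p}(v, u_{t})^{r} \leq C(p)\, \ee\, d_{p}(u_{0}, u_{1})^{r}$. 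Taking $r$-th roots gives the claimed bound.

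The essential observation is that the Darvas--Lu argument is purely metric: beyond the uniform convexity inequality, it uses only the triangle inequality and elementary real-variable estimates, none of which reference the K\"ahler assumption or any specific feature of $\E^{p}(X, \om)$. Consequently, once Theorem~\ref{thm: uniform convexity in analytic singularity setting} is in place, the transcription is automatic. The main technical points to verify are merely that the auxiliary weak geodesics used in the argument lie in $\E^{p}(X, \te, \psi)$, which follows from \cite[Theorem 2.9]{GuptaCompletemetrictopology}, and that these weak geodesics are metric geodesics for $(\E^{p}(X, \te, \psi), d_{p})$, which is the content of Theorem~\ref{thm: Complete metric space structure on analytic singularity space}. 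The only step I would expect to require real care is the elementary but slightly delicate algebraic manipulation in the case $1 < p < 2$, where the coefficient $(p-1)\la(1-\la)$ in the uniform convexity inequality is weaker than the Hilbert-space analogue and the estimate must be extracted through a suitable averaging argument rather than a single direct substitution.
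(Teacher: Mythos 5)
Your proposal matches the paper's treatment exactly: the paper likewise disposes of this statement by noting that the proof of \cite[Proposition 3.6]{DarvasLuuniformconvexity} is purely metric and uses only the uniform convexity inequality, which Theorem~\ref{thm: uniform convexity in analytic singularity setting} supplies in $(\E^{p}(X,\te,\psi),d_{p})$. Your additional remarks (weak geodesics lying in $\E^{p}(X,\te,\psi)$ and being metric geodesics, and the triangle-inequality lower bounds) are correct and consistent with that argument.
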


Now we can prove one of our main results:
\begin{thm}\label{thm: uniform convexity in the big case}
If $\te$ represents a big cohomology class, then the metric space $(\E^{p}(X,\te),d_{p})$ for $p > 1$ is uniformly convex. This means for $u, v_{0}, v_{1} \in \E^{p}(X,\te)$, if $v_{\la}$ is the geodesic joining $v_{0}$ and $v_{1}$, then
\begin{align*}
        d_{p}(u,v_{\lambda})^{2} &\leq (1-\la)d_{p}(u,v_{0})^{2} + \la d_{p}(u,v_{1})^{2} - (p-1)\la(1-\la)d_{p}(v_{0},v_{1})^{2}, \text{ if } 1< p \leq 2 \text{ and }\\
d_{p}(u,v_{\la})^{p} &\leq (1-\la)d_{p}(u,v_{0})^{p} + \la d_{p}(u,v_{1})^{p} - \la^{p/2}(1-\la)^{p/2}d_{p}(v_{0},v_{1})^{p}, \text{ if } p \leq 2.
    \end{align*} 
\end{thm}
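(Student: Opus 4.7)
The plan is to transfer the uniform convexity already established for the analytic-singularity classes $(\E^{p}(X,\te,\psi_{k}), d_{p})$ (Theorem~\ref{thm: uniform convexity in analytic singularity setting}) to the big setting via the approximation $\psi_{k} \nearrow V_{\te}$ fixed in Section~\ref{sec: metric in the big case}. Given $u, v_{0}, v_{1} \in \E^{p}(X,\te)$, set $u^{k} := P_{\te}[\psi_{k}](u)$, $v_{0}^{k} := P_{\te}[\psi_{k}](v_{0})$, $v_{1}^{k} := P_{\te}[\psi_{k}](v_{1})$; these lie in $\E^{p}(X,\te,\psi_{k})$ by Lemma~\ref{lem: the projection is defined on Ep}. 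Let $v_{\la}^{k} \in \E^{p}(X,\te,\psi_{k})$ be the weak geodesic joining $v_{0}^{k}$ and $v_{1}^{k}$. Theorem~\ref{thm: uniform convexity in analytic singularity setting} applied to the data $(u^{k}, v_{0}^{k}, v_{1}^{k}, v_{\la}^{k})$ gives the claimed inequality at each level $k$, and by Theorem~\ref{thm: general approximation from analytic singularity type} the three right-hand side distances $d_{p}(u^{k}, v_{0}^{k})$, $d_{p}(u^{k}, v_{1}^{k})$, $d_{p}(v_{0}^{k}, v_{1}^{k})$ converge to $d_{p}(u, v_{0})$, $d_{p}(u, v_{1})$, $d_{p}(v_{0}, v_{1})$ respectively.

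The only real content is to verify convergence of the left-hand side, i.e. $d_{p}(u^{k}, v_{\la}^{k}) \to d_{p}(u, v_{\la})$. Here $v_{\la}^{k}$ is the $\E^{p}(X,\te,\psi_{k})$-geodesic joining $v_{0}^{k}$ and $v_{1}^{k}$, whereas $P_{\te}[\psi_{k}](v_{\la})$ need not be a geodesic. I will show these are close via Theorem~\ref{thm: points close to geodesic}. The contraction property (Theorem~\ref{thm: contraction property}) combined with the metric geodesic property (Theorem~\ref{thm: weak geodesics are metric geodesics}) yields
\begin{align*}
d_{p}(v_{0}^{k}, P_{\te}[\psi_{k}](v_{\la})) &\leq d_{p}(v_{0}, v_{\la}) = \la \, d_{p}(v_{0}, v_{1}),\\
d_{p}(v_{1}^{k}, P_{\te}[\psi_{k}](v_{\la})) &\leq d_{p}(v_{1}, v_{\la}) = (1-\la)\, d_{p}(v_{0}, v_{1}).
\end{align*}
Assuming $d_{p}(v_{0}, v_{1}) > 0$ (the other case is trivial), define $\delta_{k} := d_{p}(v_{0}, v_{1})/d_{p}(v_{0}^{k}, v_{1}^{k}) - 1 \geq 0$, which tends to $0$ by Theorem~\ref{thm: general approximation from analytic singularity type}. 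The two estimates above become $d_{p}(v_{0}^{k}, P_{\te}[\psi_{k}](v_{\la})) \leq (\la + \la \delta_{k}) d_{p}(v_{0}^{k}, v_{1}^{k})$ and $d_{p}(v_{1}^{k}, P_{\te}[\psi_{k}](v_{\la})) \leq (1-\la + (1-\la) \delta_{k}) d_{p}(v_{0}^{k}, v_{1}^{k})$, which are precisely the hypotheses of Theorem~\ref{thm: points close to geodesic} in $\E^{p}(X,\te,\psi_{k})$ with common small parameter $\ee_{k} := \max(\la, 1-\la)\, \delta_{k}$. Hence
\[
d_{p}(v_{\la}^{k}, P_{\te}[\psi_{k}](v_{\la})) \leq C \, \ee_{k}^{1/r}\, d_{p}(v_{0}^{k}, v_{1}^{k}) \xrightarrow{k \to \infty} 0.
\]

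Finally, a second application of Theorem~\ref{thm: general approximation from analytic singularity type} to the pair $(u, v_{\la})$ gives $d_{p}(u^{k}, P_{\te}[\psi_{k}](v_{\la})) \to d_{p}(u, v_{\la})$, and the triangle inequality combined with the previous display yields $d_{p}(u^{k}, v_{\la}^{k}) \to d_{p}(u, v_{\la})$. Passing to the limit $k \to \infty$ in the analytic-singularity uniform convexity inequality completes the proof for both ranges of $p > 1$. The main obstacle is precisely the comparison of $v_{\la}^{k}$ with $P_{\te}[\psi_{k}](v_{\la})$, since the envelope operator does not commute with geodesics; the contraction property together with the ``close-to-endpoints implies close-to-geodesic'' estimate (Theorem~\ref{thm: points close to geodesic}) is exactly what is needed to bypass this, which is why Sections~\ref{sec: uniform convexity for big and nef}--\ref{sec: contraction property and a consequence} are organized to supply both tools in advance.
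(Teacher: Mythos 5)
Your proposal is correct and follows essentially the same route as the paper's own proof: approximate by $P_{\te}[\psi_{k}]$, apply the analytic-singularity uniform convexity, and handle the only delicate point — comparing $v_{\la}^{k}$ with $P_{\te}[\psi_{k}](v_{\la})$ — via the contraction property together with Theorem~\ref{thm: points close to geodesic}, exactly as in the paper. The small bookkeeping differences (your explicit $\delta_{k}$ and $\ee_{k}:=\max(\la,1-\la)\delta_{k}$ versus the paper's $\ee_{k}$ defined by the same ratio) are immaterial.
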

\begin{proof}
    Let $\psi_{k} \nearrow V_{\te}$ be the increasing sequence of $\te$-psh functions with analytic singularities. Let $u^{k} = P_{\te}[\psi_{k}](u)$, $v_{0}^{k} = P_{\te}[\psi_{k}](v_{0})$, and $v_{1}^{k} = P_{\te}[\psi_{k}](v_{1})$. Let $v_{\la}^{k}$ be the weak geodesic joining $v_{0}^{k}$ and $v_{1}^{k}$. From Theorem~\ref{thm: uniform convexity in analytic singularity setting}, we know that 
    \begin{align*}
        d_{p}(u^{k},v^{k}_{\lambda})^{2} &\leq (1-\la)d_{p}(u^{k},v^{k}_{0})^{2} + \la d_{p}(u^{k},v^{k}_{1})^{2} - (p-1)\la(1-\la)d_{p}(v^{k}_{0},v^{k}_{1})^{2}, \text{ if } 1< p \leq 2 \text{ and }\\
d_{p}(u^{k},v^{k}_{\la})^{p} &\leq (1-\la)d_{p}(u^{k},v^{k}_{0})^{p} + \la d_{p}(u^{k},v^{k}_{1})^{p} - \la^{p/2}(1-\la)^{p/2}d_{p}(v^{k}_{0},v^{k}_{1})^{p}, \text{ if } p \leq 2.
    \end{align*}
    From Theorem~\ref{thm: general approximation from analytic singularity type} we know that $\lim_{k\to \infty} d_{p}(u^{k}, v_{0}^{k}) = d_{p}(u, v_{0})$, $\lim_{k\to \infty} d_{p}(u^{k}, v_{1}^{k}) = d_{p}(u, v_{1})$, and $d_{p}(v_{0}^{k}, v_{1}^{k}) = d_{p}(v_{0}, v_{1})$. Thus to finish the proof by taking the limit $k \to \infty$, we need to show that $d_{p}(u^{k}, v_{\la}^{k}) \to d_{p}(u,v_{\la})$. Unfortunately, it may not be true that $P_{\te}[\psi_{k}](v_{\la}) = v_{\la}^{k}$. But by using Theorem~\ref{thm: contraction property}, and Theorem~\ref{thm: points close to geodesic}, we can show that $v_{\la}^{k}$ and $P_{\te}[\psi_{k}](v_{\la})$ are $d_{p}$-close. 

    From Theorem~\ref{thm: contraction property}, and the fact that $(\E^{p}(X,\te,\psi_{k}), d_{p})$ is a geodesic metric space, we know that 
    \[
    d_{p}(v_{0}^{k}, P_{\te}[\psi_{k}](v_{\la})) \leq d_{p}(v_{0},v_{\la}) = \la d_{p}(v_{0},v_{1})
    \]
    and 
    \[
    d_{p}(v_{1}^{k}, P_{\te}[\psi_{k}](v_{\la})) \leq d_{p}(v_{1}, v_{\la}) = (1-\la) d_{p}(v_{0},v_{1}).
    \]
    Again by the contraction theorem $d_{p}(v_{0}^{k}, v_{1}^{k}) \leq d_{p}(v_{0}, v_{1})$, moreover by Theorem~\ref{thm: general approximation from analytic singularity type}, $\lim_{k \to \infty}d_{p}(v_{0}^{k}, v_{1}^{k}) = d_{p}(v_{0},v_{1})$. Thus we can write 
    \[
    \frac{d_{p}(v_{0},v_{1})}{d_{p}(v_{0}^{k}, v_{1}^{k})} \leq  1+\ee_{k}
    \]
    where $\ee_{k} > 0$ and $\ee_{k} \to 0$ as $k \to \infty$. Thus we have 
    \[
    d_{p}(v_{0}^{k}, P_{\te}[\psi_{k}](v_{\la})) \leq (\la+\la \ee_{k})d_{p}(v_{0}^{k}, v_{1}^{k}) \leq (\la + \ee_{k})d_{p}(v_{0}^{k}, v_{1}^{k})
    \]
    and 
    \[
    d_{p}(v_{1}^{k}, P_{\te}[\psi_{k}](v_{\la})) \leq (1-\la)(1+\ee_{k}) d_{p}(v_{0}^{k}, v_{1}^{k}) \leq (1-\la +\ee_{k})d_{p}(v_{0}^{k},v_{1}^{k}).
    \]
    Applying Theorem~\ref{thm: points close to geodesic} we get that 
    \[
    d_{p}(v_{\la}^{k}, P_{\te}[\psi_{k}](v_{\la})) \leq (\ee_{k})^{1/r}Cd_{p}(v_{0}^{k}, v_{1}^{k}) \leq (\ee_{k})^{1/r}Cd_{p}(v_{0},v_{1}).
    \]
    Taking the limit $k \to \infty$ and using that $\ee_{k} \to 0$, we get 
    \begin{equation}\label{eq: convergence of close points}
    \lim_{k\to \infty} d_{p}(v_{\la}^{k}, P_{\te}[\psi_{k}](v_{\la})) = 0.     
    \end{equation}

    Now we will show that $d_{p}(u^{k}, v_{\la}^{k}) \to d_{p}(u,v_{\la})$ as $k \to \infty$. By applying the triangle inequality twice we get
    \begin{align*}
        |d_{p}(u^{k}, v_{\la}^{k}) - d_{p}(u,v_{\la})| &\leq |d_{p}(u^{k}, v_{\la}^{k}) - d_{p}(u^{k}, P_{\te}[\psi_{k}](v_{\la}))| + |d_{p}(u^{k}, P_{\te}[\psi_{k}](v_{\la})) - d_{p}(u,v_{\la})|\\
        &\leq d_{p}(v_{\la}^{k}, P_{\te}[\psi_{k}](v_{\la})) +  |d_{p}(u^{k}, P_{\te}[\psi_{k}](v_{\la})) - d_{p}(u,v_{\la})|
    \end{align*}
    As $k \to \infty$, the first term goes to 0 due to Equation~\eqref{eq: convergence of close points}, and the second term goes to 0 due to Theorem~\ref{thm: general approximation from analytic singularity type}.
\end{proof}

The same proofs as in \cite[Theorem 3.5]{DarvasLuuniformconvexity} gives
\begin{cor}
In the metric space $(\E^{p}(X,\te), d_{p})$ for $p > 1$, the weak geodesics are the only metric geodesics. 
\end{cor}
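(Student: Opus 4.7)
The plan is to combine the uniform convexity of Theorem~\ref{thm: uniform convexity in the big case} with the fact from Theorem~\ref{thm: weak geodesics are metric geodesics} that weak geodesics are already metric geodesics. The heart of the argument is a midpoint uniqueness step, followed by a dyadic iteration and a $d_p$-continuity argument, following the standard recipe for uniqueness of geodesics in uniformly convex spaces as in \cite[Theorem 3.5]{DarvasLuuniformconvexity}.

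First I would establish uniqueness of metric midpoints. Given $u_0, u_1 \in \E^p(X,\te)$ with $D := d_p(u_0, u_1)$, suppose $v, w \in \E^p(X,\te)$ both satisfy $d_p(u_0, v) = d_p(u_1, v) = d_p(u_0, w) = d_p(u_1, w) = D/2$. Let $\lambda \mapsto z_\lambda$ be the weak geodesic joining $v$ and $w$. Applying Theorem~\ref{thm: uniform convexity in the big case} first with $u = u_0$, $v_0 = v$, $v_1 = w$, $\lambda = 1/2$, and then with $u = u_1$ at $\lambda = 1/2$, and summing the two inequalities, I obtain
\begin{equation*}
d_p(u_0, z_{1/2})^r + d_p(u_1, z_{1/2})^r \;\leq\; \frac{D^r}{2^{r-1}} - C(p) \, d_p(v, w)^r,
\end{equation*}
where $(r, C(p)) = (2, (p-1)/2)$ when $1 < p \leq 2$, and $(r, C(p)) = (p, 1/2^{p-1})$ when $p \geq 2$. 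On the other hand, the triangle inequality yields $d_p(u_0, z_{1/2}) + d_p(u_1, z_{1/2}) \geq D$, and the elementary convexity bound $a^r + b^r \geq (a+b)^r/2^{r-1}$ for $r \geq 1$ then gives $d_p(u_0, z_{1/2})^r + d_p(u_1, z_{1/2})^r \geq D^r / 2^{r-1}$. Comparing the two bounds forces $d_p(v, w) = 0$, so $v = w$.

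Second, I would deduce uniqueness of metric geodesics. Given any metric geodesic $v_t$ joining $u_0, u_1$ and the weak geodesic $u_t$ joining $u_0, u_1$ (a metric geodesic by Theorem~\ref{thm: weak geodesics are metric geodesics}), both $u_{1/2}$ and $v_{1/2}$ are metric midpoints of $u_0, u_1$, hence equal by the previous step. Restricting to the subintervals $[0, 1/2]$ and $[1/2, 1]$ (on which both paths are again metric geodesics) and iterating, I conclude $u_t = v_t$ at every dyadic $t \in [0,1]$. Since $t \mapsto u_t$ and $t \mapsto v_t$ are $d_p$-continuous by the metric-geodesic property, the equality extends to all $t \in [0,1]$.

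The main obstacle is the midpoint uniqueness step; it splits into the two regimes $1 < p \leq 2$ and $p \geq 2$, each using the appropriate inequality from Theorem~\ref{thm: uniform convexity in the big case}, but the structural argument is identical in both cases and no essentially new difficulty arises beyond what uniform convexity already furnishes.
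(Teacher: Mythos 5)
Your proof is correct and follows essentially the same route as the paper, which simply invokes the standard uniform-convexity argument of \cite[Theorem 3.5]{DarvasLuuniformconvexity}: midpoint uniqueness from Theorem~\ref{thm: uniform convexity in the big case} applied to the weak geodesic joining the two candidate midpoints, then dyadic subdivision and $d_{p}$-continuity, with Theorem~\ref{thm: weak geodesics are metric geodesics} supplying that weak geodesics are metric geodesics. Your constants check out, and you correctly read the second inequality of the uniform convexity theorem as the $p \geq 2$ case (the paper's ``$p \leq 2$'' there is a typo).
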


The same proof as in \cite[Theorem 3.6]{DarvasLuuniformconvexity} proves that 
\begin{cor}
    Let $u,v_{0}, v_{1} \in \E^{p}(X,\te)$ for $p > 1$. Let $t \in [0,1]$ and $\ee > 0$ such that $d_{p}(u,v_{0}) \leq (t+\ee) d_{p}(v_{0},_{1})$ and $d_{p}(u,v_{1}) \leq (1-t+\ee)d_{p}(v_{0},v_{1})$. If $v_{s}$ is the weak geodesic joining $v_{0}$ and $v_{1}$, then there exists $C(p) > 0$ such that
    \[
    d_{p}(u,v_{t}) \leq \ee^{\frac{1}{r}}d_{p}(v_{0},v_{1})
    \]
    where $r  = \max\{2,p\}$. 
\end{cor}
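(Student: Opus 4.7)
The approach is to prove the corollary by approximation from the analytic singularity setting, mirroring the strategy used in Theorem~\ref{thm: uniform convexity in the big case}. The key ingredients are the contraction property (Theorem~\ref{thm: contraction property}), the convergence formula (Theorem~\ref{thm: general approximation from analytic singularity type}), and Theorem~\ref{thm: points close to geodesic}, which is the analytic-singularity analogue of the statement to be proved.

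First I would fix the sequence $\psi_k \nearrow V_{\te}$ of $\te$-psh functions with analytic singularities as in Section~\ref{sec: metric in the big case}, and set $u^{k} := P_{\te}[\psi_{k}](u)$ and $v_{j}^{k} := P_{\te}[\psi_{k}](v_{j})$ for $j \in \{0,1\}$, all elements of $\E^{p}(X,\te,\psi_{k})$. Let $v_{s}^{k}$ denote the weak geodesic in $\E^{p}(X,\te,\psi_{k})$ joining $v_{0}^{k}$ and $v_{1}^{k}$. By Theorem~\ref{thm: contraction property} we have $d_{p}(u^{k}, v_{j}^{k}) \leq d_{p}(u, v_{j})$ and $d_{p}(v_{0}^{k}, v_{1}^{k}) \leq d_{p}(v_{0}, v_{1})$, while Theorem~\ref{thm: general approximation from analytic singularity type} gives $d_{p}(v_{0}^{k}, v_{1}^{k}) \to d_{p}(v_{0}, v_{1})$. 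Writing $\la_{k} := d_{p}(v_{0},v_{1})/d_{p}(v_{0}^{k},v_{1}^{k}) \searrow 1$ and setting $\ee_{k} := \max(t, 1-t)(\la_{k}-1) + \ee \la_{k}$, one verifies $\ee_{k} \to \ee$ and
\[
d_{p}(u^{k}, v_{0}^{k}) \leq (t+\ee)d_{p}(v_{0},v_{1}) \leq (t + \ee_{k}) d_{p}(v_{0}^{k}, v_{1}^{k}),
\]
and analogously $d_{p}(u^{k}, v_{1}^{k}) \leq (1-t+\ee_{k}) d_{p}(v_{0}^{k}, v_{1}^{k})$. Theorem~\ref{thm: points close to geodesic} applied inside $\E^{p}(X,\te,\psi_{k})$ then yields $d_{p}(v_{t}^{k}, u^{k}) \leq C \ee_{k}^{1/r} d_{p}(v_{0}^{k}, v_{1}^{k}) \leq C \ee_{k}^{1/r} d_{p}(v_{0}, v_{1})$.

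To pass to the limit it remains to establish $d_{p}(u^{k}, v_{t}^{k}) \to d_{p}(u, v_{t})$. Precisely as in the proof of Theorem~\ref{thm: uniform convexity in the big case}, the contraction property applied to $P_{\te}[\psi_{k}](v_{t})$ and the geodesic property of $v_{s}$ give $d_{p}(v_{0}^{k}, P_{\te}[\psi_{k}](v_{t})) \leq t \, d_{p}(v_{0},v_{1})$ and $d_{p}(v_{1}^{k}, P_{\te}[\psi_{k}](v_{t})) \leq (1-t)\, d_{p}(v_{0},v_{1})$, which, up to an error $\eta_{k} \to 0$ coming from $\la_{k} \to 1$, place $P_{\te}[\psi_{k}](v_{t})$ in the hypothesis of Theorem~\ref{thm: points close to geodesic} relative to the geodesic $v_{s}^{k}$. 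This delivers $d_{p}(v_{t}^{k}, P_{\te}[\psi_{k}](v_{t})) \leq C \eta_{k}^{1/r} d_{p}(v_{0},v_{1}) \to 0$. Combining with the triangle inequality and Theorem~\ref{thm: general approximation from analytic singularity type}, which supplies $d_{p}(u^{k}, P_{\te}[\psi_{k}](v_{t})) \to d_{p}(u, v_{t})$, gives $d_{p}(u^{k}, v_{t}^{k}) \to d_{p}(u, v_{t})$. Letting $k \to \infty$ in the bound above yields $d_{p}(u,v_{t}) \leq C \ee^{1/r} d_{p}(v_{0}, v_{1})$.

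The main obstacle is the verification that $d_{p}(v_{t}^{k}, P_{\te}[\psi_{k}](v_{t})) \to 0$, since the quantitative statement needed for this is itself an invocation of Theorem~\ref{thm: points close to geodesic} in $\E^{p}(X,\te,\psi_{k})$; however, the required hypothesis translation is exactly the same bookkeeping already carried out in the proof of Theorem~\ref{thm: uniform convexity in the big case}, so there is no genuinely new difficulty. Everything else is a straightforward passage to the limit using the contraction and approximation results developed in Sections~\ref{sec: contraction property and a consequence} and the definition of $d_{p}$ on $\E^{p}(X,\te)$.
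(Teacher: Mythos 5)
Your proposal is correct, but it takes a different route than the paper. The paper's proof is the one-line observation that the argument of \cite[Theorem 3.6]{DarvasLuuniformconvexity} is purely metric-geometric: it only uses that weak geodesics are metric geodesics (Theorem~\ref{thm: weak geodesics are metric geodesics}) and the uniform convexity inequalities of Theorem~\ref{thm: uniform convexity in the big case}, now available for $(\E^{p}(X,\te),d_{p})$ itself, so no approximation is needed at this stage — exactly as Theorem~\ref{thm: points close to geodesic} was obtained from the uniform convexity in the analytic singularity setting. You instead bypass the abstract Darvas--Lu computation in the big case and transfer the quantitative estimate from the approximating spaces $\E^{p}(X,\te,\psi_{k})$: contraction (Theorem~\ref{thm: contraction property}) puts $u^{k}=P_{\te}[\psi_{k}](u)$ in the hypotheses of Theorem~\ref{thm: points close to geodesic} relative to the geodesic $v^{k}_{s}$ up to an error $\ee_{k}\to\ee$, and the passage to the limit $d_{p}(u^{k},v^{k}_{t})\to d_{p}(u,v_{t})$ repeats verbatim the bookkeeping already carried out in the proof of Theorem~\ref{thm: uniform convexity in the big case} (including the fact that the constant $C(p)$ of Theorem~\ref{thm: points close to geodesic} depends only on $p$, hence is uniform in $k$, and the trivial degenerate case $d_{p}(v_{0},v_{1})=0$). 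What each approach buys: the paper's route is shorter and cleaner once uniform convexity in the big case is established, while yours is longer but stays entirely within results explicitly stated in the paper (contraction, Theorem~\ref{thm: general approximation from analytic singularity type}, and the analytic-singularity estimate), at the cost of essentially re-proving the convergence $d_{p}(u^{k},v^{k}_{t})\to d_{p}(u,v_{t})$ that the proof of Theorem~\ref{thm: uniform convexity in the big case} already contains; both yield the intended bound $d_{p}(u,v_{t})\le C(p)\,\ee^{1/r}d_{p}(v_{0},v_{1})$.
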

\emergencystretch 2em
\printbibliography
\end{document}